\renewcommand*{\backref}[1]{}
\renewcommand*{\backrefalt}[4]{%
  \ifcase #1 (Not cited.)%
  \or        (Page~#2.)%
  \else      (Pages~#2.)%
  \fi}
\renewcommand{\[}{\begin{equation}\begin{aligned}}
\renewcommand{\]}{\end{aligned} \end{equation}}
\newtheorem{thm}{Theorem}
\newtheorem{prop}[thm]{Proposition}
\newtheorem{lemma}[thm]{Lemma}
\newtheorem{conj}[thm]{Conjecture}
\theoremstyle{remark}
\newtheorem{remark}[thm]{Remark}
\theoremstyle{definition}
\newtheorem{definition}[thm]{Definition}
\numberwithin{equation}{section}
\numberwithin{thm}{section}
\theoremstyle{definition}
\newtheorem{ex}[thm]{Example}
\author{Shih-Kai Chiu}
\address{Department of Mathematics, Vanderbilt University, Nashville TN 37240, USA}
\email{shih-kai.chiu@vanderbilt.edu}
\author{Yu-Shen Lin}
\address{Department of Mathematics, Boston University, Boston MA 02215, USA}
\email{yslin@bu.edu}
\title[Special Lagrangian spheres]{Special Lagrangian submanifolds in
  K3-fibered Calabi-Yau 3-folds}
\date{}
\begin{document}

\begin{abstract}
  We construct special Lagrangian submanifolds in collapsing
  Calabi-Yau $3$-folds fibered by K3 surfaces. As these $3$-folds
  collapse, the special Lagrangians shrink to $1$-dimensional graphs
  in the base, mirroring the conjectured tropicalization of
  holomorphic curves in collapsing SYZ torus-fibered Calabi-Yau
  manifolds. This confirms predictions of Donaldson and
  Donaldson-Scaduto in the Calabi-Yau setting. Additionally, we
  discuss our results in the contexts of the Thomas-Yau conjecture,
  the Donaldson-Scaduto conjecture, and mirror symmetry.
\end{abstract}

\maketitle

\tableofcontents

\section{Introduction}

A fundamental problem in complex and symplectic geometry is to find
special Lagrangian submanifolds in compact Calabi-Yau manifolds. Given
a homology or Hamiltonian isotopy class, the existence of a special
Lagrangian representative remains a widely open question. The
important works of Schoen-Wolfson~\cite{SW} and Wolfson~\cite{W}
highlight the complexities involved in the variational approach to
this problem. On the other hand, the Thomas-Yau conjecture~\cite{TY}
suggests a link between the convergence of Lagrangian mean curvature
flow (LMCF) and stability conditions, although Neves’ seminal work
\cite{N} shows that singularities along LMCF are, to some extent,
inevitable. Note that Neves' examples do not contradict the
aforementioned conjecture, as they do not satisfy the almost
calibrated condition required by the conjecture, a notion originally
introduced in M.-T. Wang’s pioneering
work~\cite{Wang}. Joyce~\cite{J2} has since refined the original
conjecture significantly by identifying key additional structures and
possible singularities. We refer the reader to Y. Li's work
\cite{Li22}, which covers recent developments and introduces new ideas
along these lines.

Another promising approach is to look for special Lagrangian
submanifolds in certain homology classes, such as vanishing cycles or
special fibration structures. Hein-Sun~\cite{HS} produces first
examples of special Lagrangian vanishing spheres in dimension three
and higher; see Collins-Gukov-Picard-Yau~\cite{CGPY} for an analogous
result in the non-K\"ahler setting. A recent breakthrough on the
Strominger-Yau-Zaslow conjecture, due to Y.~Li~\cite{Li22}, is the
construction of special Lagrangian torus fibrations in generic regions
of Fermat quintics near the large complex structure limit. A common
theme in both examples is the analysis of metric behavior under
degenerations of the metric or the complex structure of the ambient
Calabi-Yau manifold.

The main goal of this paper is to present a new gluing construction of
special Lagrangian submanifolds in compact Calabi-Yau 3-folds. Let $X$
be a compact Calabi-Yau 3-fold with a Lefschetz K3 fibration
$\pi: X \to Y$, where the base is $Y = \mathbb{P}^1$. For simplicity,
assume each singular fiber contains exactly one singular point. Let
$\Omega$ be a holomorphic volume form on $X$, and fix reference
K\"ahler metrics $\omega_X$ on $X$ and $\omega_Y$ on $Y$, suitably
normalized. For small $t>0$, consider the K\"ahler class
\[
  [\omega_X]+\frac{1}{t}\pi^*[\omega_Y] \in H^2(X, \mathbb{R}),
\]
and let $\tilde\omega_t$ denote the unique Calabi-Yau metric \cite{Y}
in this class. We denote the ambient Calabi-Yau manifolds as
$(X,\tilde\omega_t,\Omega)$, and note that the rescaled metrics
$t\tilde\omega_t$ has diameter $\sim 1$ and converge in the
Gromov-Hausdorff sense to the generalized K\"ahler-Einstein metric
$\tilde\omega_Y$ on $Y$ as $t\to 0$. See Tosatti~\cite{Tosatti} for a
comprehensive survey.

Next, we describe the topological setup for our construction. Let
$S \subset Y$ denote the discriminant locus of the fibration, and
suppose $y_0, y_1\in S$, contained in an open set
$U$. Assume the vanishing cycles of $y_0$ and $y_1$ in each regular
fiber over $U$ are identical up to a sign, and denote the vanishing
cycle in $X_y$ for each $y \in U$ by $L_y$. Then the quadratic
differential $\phi = \alpha \otimes \alpha$, where
$\alpha(y) = \int_{L_y} \Omega$, is well-defined and holomorphic in
$U$. We define a path $\gamma \subset U$ connecting $y_0$
and $y_1$ as admissible if
\begin{itemize}
\item it is a geodesic with respect to the flat metric on $U$ induced
  by the quadratic differential $\phi$, and
\item each $L_y$ for $y \in \gamma$ can be represented by a special
  Lagrangian $2$-sphere.
\end{itemize}
See Section~\ref{sec: quadratic diff} for more details about quadratic
differentials and admissible paths. Our main result is the following:

\begin{thm}\label{thm:main}
  Let $\gamma$ be an admissible path with phase $\theta$ connecting
  two points $y_0, y_1$ in the discriminant locus $S \subset Y$. Then
  for sufficiently small $t > 0$, there exists a special Lagrangian
  $3$-sphere $\tilde L_{\gamma, t}$ with phase $\theta$ in the
  Calabi-Yau manifold $(X, \tilde\omega_t, \Omega)$. Let $g_t$ denote
  the induced metric on $\tilde L_{\gamma,t}$. As $t\to 0$,
  $(\tilde L_{\gamma, t}, tg_t)$ converges in the Gromov-Hausdorff
  sense to $\gamma \subset (Y,\tilde\omega_Y)$.
\end{thm}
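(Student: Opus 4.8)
The plan is to carry out an adiabatic gluing construction, modeled on the collapse of $(X,\tilde\omega_t)$ to $(Y,\tilde\omega_Y)$, in which the sphere $\tilde L_{\gamma,t}$ fibers over $\gamma$ with generic fiber the vanishing $2$-sphere $L_y$, the fibers degenerating to points at the two endpoints $y_0,y_1\in S$. Thus the special Lagrangian equation is split, in the $t\to 0$ limit, into a fiberwise part (each slice should be a special Lagrangian $2$-sphere in the K3 fiber $X_y$) and a base part (the projection to $Y$ should be a $\phi$-geodesic) — and it is precisely the admissibility hypotheses on $\gamma$ that make both parts solvable.

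First I would construct an approximate special Lagrangian. Over the interior of $\gamma$, each regular fiber $X_y$ is a K3 surface carrying the hyperkähler structure induced by $\tilde\omega_t$ and $\Omega$; the assumption that $L_y$ is represented by a special Lagrangian $2$-sphere means that, after a twistor rotation of the fiberwise hyperkähler triple through the angle determined by the period $\alpha(y)$, the class $[L_y]$ becomes a holomorphic $(-2)$-curve and hence carries a unique special Lagrangian representative $\Sigma_y\subset X_y$ (the K3 analogue of the picture in \cite{HS}; uniqueness within the class follows from the hyperkähler rotation together with uniqueness of the holomorphic sphere in a $(-2)$-class). I would then assemble $\bigcup_{y\in\gamma}\Sigma_y$, fixing the horizontal lift by the metric/symplectic connection of $\pi$; the flat-geometry geodesic condition defining an admissible path is exactly what forces $\arg\big(e^{-i\theta}\,\Omega|_{\bigcup\Sigma_y}\big)$ to be asymptotically constant, so this family is Lagrangian with Lagrangian angle $\theta+o(1)$ for $\tilde\omega_t$ as $t\to 0$. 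Near each endpoint the Lefschetz normal form $\pi(z)=z_1^2+z_2^2+z_3^2$ applies, and there a suitably phase‑rotated real locus $\{z=e^{i\psi}x:\ x\in\mathbb{R}^3,\ |x|^2\le\delta\}$, with $\psi$ chosen so that its Lagrangian angle is $\theta$, is a special Lagrangian $3$-ball for the model Calabi-Yau structure, fibering over a segment by round $2$-spheres that collapse at the critical value. Interpolating these pieces over the overlap annuli produces a smooth $\tilde L^{\mathrm{app}}_{\gamma,t}\cong B^3\cup_{S^2}(S^2\times I)\cup_{S^2}B^3\cong S^3$.

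Next I would estimate the error. Using the known structure of the collapsing Calabi-Yau metrics (see \cite{Tosatti}) — that $t\tilde\omega_t$ is, away from the singular fibers, exponentially close to the semiflat metric of the variation of Hodge structure, and is modeled on the appropriate gravitational instanton near the discriminant — one shows that $\tilde L^{\mathrm{app}}_{\gamma,t}$ may be made exactly Lagrangian by a $C^\infty$-small Hamiltonian perturbation, and that its Lagrangian angle satisfies $\|\theta_{\mathrm{app}}-\bar\theta\|\le C\,t^{\kappa}$ for some $\kappa>0$ in weighted Hölder norms adapted to the geometry; the weights must simultaneously track the collapse of the $S^2$-fibers at rate $\sim\sqrt{t}$ along the neck and the degeneration of the fibers into the $3$-ball regions near $y_0$ and $y_1$.

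Finally I would perturb $\tilde L^{\mathrm{app}}_{\gamma,t}$ to a genuine special Lagrangian by a quantitative inverse function theorem in a Weinstein neighborhood: writing nearby Lagrangians as graphs of exact $1$-forms $df$, the special Lagrangian condition takes the schematic form $\Delta_{g_t}f+Q_t(\nabla^2 f,\nabla f)=\bar\theta-\theta_{\mathrm{app}}$ with $Q_t$ quadratically small. Since $H^1(S^3)=0$, special Lagrangian spheres are infinitesimally rigid, so the kernel of the linearization consists only of constants, and this is removed by the normalization $\int\theta_{\mathrm{app}}=\int\bar\theta$ that is built into the period matching along an admissible path. The crux — and the step I expect to be the main obstacle — is to establish a uniform‑in‑$t$ Schauder estimate and spectral lower bound for $\Delta_{g_t}$ on these collapsing weighted spaces, patching the model operators on the neck and on the endpoint balls; with this in hand, a contraction-mapping argument on the nonlinear operator yields $\tilde L_{\gamma,t}$ close to $\tilde L^{\mathrm{app}}_{\gamma,t}$ in the relevant norm. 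The Gromov-Hausdorff convergence of $(\tilde L_{\gamma,t},tg_t)$ to $\gamma\subset(Y,\tilde\omega_Y)$ then follows from this $C^1$-closeness, the fact that the fibers of $\tilde L^{\mathrm{app}}_{\gamma,t}$ have $t g_t$-diameter $O(\sqrt{t})$, the established convergence $t\tilde\omega_t\to\tilde\omega_Y$, and the identification of $\tilde\omega_Y|_\gamma$ with the flat metric $|\phi|$ along the geodesic $\gamma$.
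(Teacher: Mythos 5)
Your overall geometric picture (vanishing spheres over the interior of $\gamma$, thimble-like caps at the two nodes, then a perturbation argument) matches the paper's construction, but the analytic core of your final step contains a genuine gap. You propose to close the argument by a contraction mapping based on ``a uniform-in-$t$ Schauder estimate and spectral lower bound for $\Delta_{g_t}$''. No such uniform spectral lower bound exists: $(\tilde L^{\mathrm{app}}_{\gamma,t},g_t)$ collapses to an interval of length $\sim t^{-1/2}$, so the first nonzero eigenvalue of the Laplacian decays like $t$, and the norm of the inverse of the linearized operator grows linearly in the diameter along the base direction. Moreover your error estimate is too optimistic: away from the singular fibers the Calabi--Yau metric is only \emph{polynomially} close to the semi-Ricci-flat metric in the relevant weighted norms, and in the crucial transition region $|y|\sim t^{9/20}$ near the nodes the errors are governed by the refined weighted estimates of Theorem~\ref{thm:improved}; there is no exponential closeness to exploit. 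The upshot is that (initial error)$\times$(inverse norm)$^2\times$(quadratic constant) is \emph{not} small, so the naive iteration in the domain of the linearization -- whether for the scalar equation $\Delta_{g_t}f+Q_t=\cdots$ or for $d+d^*$ -- does not close. This is exactly the obstruction identified in the paper, and overcoming it requires additional structure: the right inverse must be split so that the base direction is tracked separately (Proposition~\ref{prop:dirac2}), and the iteration must be run in the \emph{range} of $d+d^*$, using that the quadratic term sees $d^*\eta$ (a base-direction derivative) rather than $\eta$ itself (Proposition~\ref{prop:acyl} and Section~\ref{subsec:quadratic}); your scheme has no mechanism playing this role.

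Two further points where your outline glosses over real work. First, the correct local model at each node is not the flat structure of the Lefschetz normal form but the complete Calabi--Yau metric $\omega_{\mathbb{C}^3}$ on $\mathbb{C}^3$ with tangent cone $\mathbb{C}\times\mathbb{C}^2/\mathbb{Z}_2$; that the (phase-rotated) real thimble is special Lagrangian for \emph{this} metric is not automatic and is proved via the anti-holomorphic involution together with a Liouville-type uniqueness statement (Proposition~\ref{prop:antiholo}). Second, the linear theory on the cap is nonstandard: the thimble has volume growth $\rho^{3/2}$, intermediate between asymptotically conical and asymptotically cylindrical, and the right-hand sides have only polynomial decay, so Lockhart--McOwen/Fredholm theory alone gives solutions with exponential growth; producing a right inverse of $d+d^*$ with polynomial control requires the parametrix-plus-Hein argument of Section~\ref{sec:thimble}. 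Your two-step reduction (first an exact-Lagrangian Moser step, then a scalar equation) also hides the same analysis, since solving $d\eta=\tilde\omega_t|_L$ with uniform weighted bounds on the degenerating sphere is precisely the content of the paper's Hodge--Dirac estimates; treating the $0$-form and $2$-form parts together, as the paper does, is what makes the bookkeeping feasible.
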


By the nature of adiabatic limits, Theorem~\ref{thm:main} constructs
special Lagrangian spheres that are elongated and thin, with their
``ends'' sharpening as $t \to 0$. This stands in contrast to the more
rounded shape of the special Lagrangian vanishing spheres described in
\cite{HS}.

In a similar vein, we can consider admissible loops
$\gamma: S^1 \to Y$, which satisfy the following conditions:
\begin{itemize}
\item There exists a smooth fibration $\pi: L_\gamma \to \gamma$ whose
  fibers are smooth special Lagrangian $2$-cycles, and
\item The parallel transport along $\gamma$ fixes
  $[L_y] \in H_2(X_y, \mathbb{Z})$.
\item $\gamma$ is a geodesic with phase $\theta$ with respect to the
  quadratic differential associated with the fibers of
  $\pi: L_\gamma \to \gamma$.
\item There exists a horizontal lifting of $\gamma$ that satisfies an
  integrability condition.
\end{itemize}
See Definition~\ref{def: admissible loop} for full details. We note
that a similar integrability condition was discussed by Donaldson
during the 2023 Simons Collaboration on Special Holonomy in Geometry,
Analysis and Physics Annual Meeting.

It follows from the definition that for an admissible loop $\gamma$,
the submanifold $L_\gamma$ is a mapping torus of some
orientation-preserving fiber diffeomorphism, and the topological
obstructions to perturbing $L_\gamma$ into a special Lagrangian
submanifold vanish, i.e.,
$\operatorname{Im}(e^{-i\theta}\Omega)|_{L_\gamma} = 0$ and
$[\tilde\omega_t|_{L_\gamma}] = 0$. Hence, we have the following
result:

\begin{thm}\label{thm:loop}
  Suppose that $\gamma: S^1 \to Y$ is an admissible loop with phase
  $\theta$. Then for sufficiently small $t>0$ there exists a special
  Lagrangian $L_{\gamma,t}$ with phase $\theta$ in the Calabi-Yau
  manifold $(X,\tilde\omega_t,\Omega)$. Let $g_t$ denote the induced
  metric on $L_{\gamma,t}$. As $t \to 0$, $(L_{\gamma,t},tg_t)$
  converges in the Gromov-Hausdorff sense to
  $\gamma \subset (Y,\tilde\omega_Y)$.
\end{thm}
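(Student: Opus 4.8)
The plan is to run the same adiabatic gluing scheme used to prove Theorem~\ref{thm:main}, but now over the closed loop $\gamma$ rather than over a path connecting two singular fibers, which has the advantage that there are \emph{no} ends over singular points to glue in and hence no local models near $y_0, y_1$ to worry about. First I would construct an approximate special Lagrangian by working on the restriction of the fibration to a tubular neighborhood of $\gamma$ in $Y$: using the horizontal lifting of $\gamma$ guaranteed by the admissibility hypothesis, together with the family of special Lagrangian $2$-cycles $L_y \subset X_y$, I would assemble a smooth $3$-manifold $L_\gamma$ (a mapping torus of the holonomy diffeomorphism, as noted in the excerpt) and produce from it a one-parameter family $L_{\gamma,t}^{\mathrm{app}}$ of submanifolds that are Lagrangian and ``approximately calibrated'' with respect to $(\tilde\omega_t, \operatorname{Re}(e^{-i\theta}\Omega))$. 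The geodesic condition on $\gamma$ in the $\phi$-flat metric is exactly what makes the leading-order obstruction $\operatorname{Im}(e^{-i\theta}\Omega)|_{L_{\gamma,t}^{\mathrm{app}}}$ vanish to the needed order, while the integrability condition on the horizontal lift kills the $\tilde\omega_t|_{L_{\gamma,t}^{\mathrm{app}}}$ term; the excerpt already records that the cohomological obstructions $[\operatorname{Im}(e^{-i\theta}\Omega)|_{L_\gamma}]$ and $[\tilde\omega_t|_{L_\gamma}]$ vanish, so the error is exact and can be corrected.

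Next I would set up the deformation problem. A Lagrangian deformation of $L_{\gamma,t}^{\mathrm{app}}$ is governed by a $1$-form, and the special Lagrangian condition becomes, after the usual identification, a nonlinear first-order elliptic system whose linearization is essentially $d + d^*$ on $L_{\gamma,t}^{\mathrm{app}} \cong L_\gamma$, i.e.\ the problem reduces to solving $\Delta u = f$ (plus higher-order terms) for a function $u$ on the mapping torus, with $f$ the measured defect of the approximate solution. The key analytic input is a uniform (in $t$, after rescaling by $t$) lower bound on the first nonzero eigenvalue of the Laplacian on $(L_{\gamma,t}^{\mathrm{app}}, t g_t)$: since this space collapses with bounded geometry to the circle $\gamma$, the spectrum converges to that of $\gamma$ together with fiberwise contributions that are bounded below, giving the needed estimate. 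With the uniform linear estimate in hand, a contraction-mapping / implicit-function argument on appropriately weighted Hölder or Sobolev spaces produces, for all sufficiently small $t$, an honest special Lagrangian $L_{\gamma,t}$ near $L_{\gamma,t}^{\mathrm{app}}$ with phase $\theta$.

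Finally, the Gromov-Hausdorff statement is essentially built into the construction: $L_{\gamma,t}^{\mathrm{app}}$ is, by design, a $t^{-1/2}$-thin neighborhood (in the $\tilde\omega_t$ metric) fibered over $\gamma$, so after rescaling the metric by $t$ the fibers shrink to points and $(L_{\gamma,t}^{\mathrm{app}}, t g_t) \to \gamma \subset (Y, \tilde\omega_Y)$ in the Gromov-Hausdorff sense; since the correction $u$ is controlled and tends to $0$ in the relevant rescaled norm, the same limit holds for $(L_{\gamma,t}, t g_t)$. The main obstacle I expect is not the existence argument per se but verifying that the approximate solution is accurate enough — that is, showing the defect $f$ decays fast enough in $t$ relative to the (rescaled) spectral gap and to the quadratic error of the nonlinearity — which requires carefully tracking how the geodesic and integrability conditions interact with the adiabatic metric $\tilde\omega_t$, and in particular controlling the deviation of $\tilde\omega_t$ from the semi-Ricci-flat model along the thin neighborhood of $\gamma$.
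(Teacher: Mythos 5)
Your overall strategy (no thimble caps, perturb the mapping torus $L_\gamma$ by a contraction argument, Gromov--Hausdorff convergence built into the ansatz) matches the paper's, but two of your key steps contain genuine gaps. First, the linearized problem is not a scalar Poisson equation $\Delta u = f$ for a function on the mapping torus: the deformation is governed by a $1$-form $\eta$, and the relevant operator is $d+d^*:\Lambda^1 \to \Lambda^0\oplus\Lambda^2$. On $L_{\gamma,t}$ this operator has a nontrivial cokernel coming from fiberwise harmonic $1$-forms of the fibers $L_y$ (this is vacuous only when the fibers are $2$-spheres), so the $2$-form part of the defect need not be solvable. You also misattribute the roles of the hypotheses: the vanishing of $[\tilde\omega_t|_{L_\gamma}]$ is purely topological (Lemma~\ref{lemma:loop}, using that $L_\gamma$ is a fiberwise Lagrangian mapping torus and $b_1(K3)=0$), and $\operatorname{Im}(e^{-i\theta}\Omega)|_{L_\gamma}=0$ holds exactly by the geodesic condition (Lemma~\ref{lemma:special}); the integrability condition in Definition~\ref{def: admissible loop}(3) is instead exactly what is needed in the linear theory, to guarantee that the $dy\wedge\mu$ component of the error is fiberwise $L^2$-orthogonal to $\mathcal{H}^1(L_y)$ so that the cylinder model result (Proposition~\ref{prop:dirac_cylinder}) applies and $d+d^*$ can be inverted on the subspace $C^{\infty,ave}\oplus C^\infty(E^2)$; the residual harmonic projection is then shown to be $O(t^{1/2})$ smaller and absorbed in the iteration.

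Second, the contraction argument as you set it up does not close. The inverse of the linearization is not uniformly bounded in the natural norms: its base-direction component loses a factor comparable to the diameter, $\|\eta_2\|_{C^{1,\alpha}(\gamma)} \le Ct^{-1/2}\|\psi\|_{C^{0,\alpha}}$, while the defect is only $O(t^{1/2})$ (estimated via Proposition~\ref{prop:error2} plus the bounded fiberwise variation). A naive quadratic estimate in terms of $\|\eta\|_{C^{1,\alpha}}$ therefore yields a Lipschitz constant of order $t^{-1/2}\cdot t^{1/2}=O(1)$ (or worse), and a ``uniform rescaled spectral gap'' does not repair this, because the obstruction is the anisotropy between base and fiber directions, not the size of the gap. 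The paper's resolution is structural: the iteration is performed in the range of $d+d^*$ rather than in the domain, and the quadratic term is estimated using Lemma~\ref{lem:cyl} and Proposition~\ref{prop:acyl}, which show that $Q$ only sees $d^*\eta$ and the fiber component $\eta_L$ --- i.e.\ the quadratic term effectively differentiates the base direction --- giving the contraction factor $Ct^{1/2}$ for data of size $O(t^{1/2})$. Without this observation (or a substitute for it), the implicit-function step in your proposal fails precisely at the point you flagged only as a bookkeeping issue.
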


As a corollary, we could construct special Lagrangians which are
mapping tori. However, note that the integrability condition imposes
significant constraints on the loop in the moduli space of special
Lagrangian $2$-cycles. It is currently unclear whether examples of
admissible loops exist when the $2$-cycles have higher
genus. Nevertheless, when the $2$-cycles are topologically $S^2$, the
integrability condition is automatically satisfied. We can then
construct examples of admissible loops, and consequently special
Lagrangian $S^1 \times S^2$, using a doubling construction (see
Example~\ref{ex: S1S2}).

We remark that the notion of admissible paths dates back at least to
Klemm-Lerche-Mayr-Vafa-Warner \cite[Section~3]{KLMVW}, although they
did not attempt to construct the corresponding special Lagrangians. A
related picture appears in Smith~\cite[Section~1.4]{Smi15}, where
admissible paths are studied in the context of Fukaya categories. We
anticipate that our work could connect to the results of
Bridgeland-Smith~\cite{BS} and Haiden-Katzarkov-Kontsevich~\cite{HKK}
by providing a more geometric framework for their ideas.

In the $G_2$ setting, similar constructions of associative
submanifolds in Kovalev-Lefschetz fibrations have been predicted by
Donaldson~\cite{Don} and later extended by
Donaldson-Scaduto~\cite{DS}. Their program suggests that, in the
adiabatic limit, associative submanifolds should correspond to certain
graphs, known as gradient cycles, in the base $S^3$ of the
fibration. The concept of gradient cycles generalizes the notion of
admissible paths. Generically, these gradient cycles are trivalent
graphs. Very recently, Esfahani-Li~\cite{EL} proved a conjecture of
Donaldson-Scaduto by constructing special Lagrangians in $A_2$-type
$\mathrm{ALE}\times \mathbb{C}$, whose three ends are asymptotic to
special Lagrangian $S^2\times \mathbb{R}$. It would be intriguing to
combine our result with theirs to construct the first examples of
compact special Lagrangians corresponding to trivalent graphs in the
base.

One advantage of our setting, compared to the $G_2$ case, is that
examples of admissible paths are more readily available. In
Section~\ref{sec: applications}, we present several examples,
including special Lagrangian $3$-spheres, evidence of wall-crossing
phenomena of Thomas-Yau stability, and the aforementioned examples of
special Lagrangians $S^1 \times S^2$. We also explore connections to
mirror symmetry and the Donaldson-Scaduto conjecture.

We now explain the main ideas of this paper. In Section~\ref{sec:sl},
we provide an overview of basic results in special Lagrangian
geometry. In particular, in Section~\ref{subsec:deform}, we discuss in
detail the quadratic term that appears in the deformation
problem. One key case is when both the ambient manifold and the
submanifold are cylindrical. This will play a crucial role in
Section~\ref{sec: main construction}.

In Section~\ref{sec:CY3}, we give a refinement of Y. Li's gluing
construction of the Calabi-Yau metrics $\tilde\omega_t$
\cite{Li19}. The result roughly says that away from the singular
fibers, $\tilde\omega_t$ is modeled by a ``semi-Ricci flat'' metric,
while near the singular points of the fibration, $\tilde\omega_t$ is
modeled by a rescaled copy of $\omega_{\mathbb{C}^3}$, which is a
Calabi-Yau metric on $\mathbb{C}^3$ with tangent cone
$\mathbb{C} \times \mathbb{C}^2/\mathbb{Z}_2$ at infinity.

Given an admissible path $\gamma$ connecting
$y_0,y_1 \in \mathcal{S}$, we construct the approximate special
Lagrangians $L_{\gamma,t}$ as follows. We take the union of the
special Lagrangian vanishing $2$-spheres $L_y$ in $X_y$ for
$y \in \gamma$ away from $y_0, y_1$. Topologically, this produces a
cylinder $S^2 \times I$. Near each endpoint $y_i$ of $\gamma$, we glue
in a scaled copy of the special Lagrangian thimble $L_{\mathbb{R}^3}$
inside $(\mathbb{C}^3,\omega_{\mathbb{C}^3})$. See
Proposition~\ref{prop:antiholo} for its existence. The special
Lagrangian thimble consists of the vanishing special Lagrangian
$2$-spheres in the standard Lefschetz fibration
$(z_1, z_2, z_3) \mapsto z_1^2+z_2^2+z_3^2$.

The original result of \cite{Li19} yields large errors in our gluing
construction due to non-optimal estimates for $\tilde\omega_t$. We
refine the result in \cite{Li19} by defining suitable weighted spaces
for better control near the transition region and the region far away
from the singular fibers. This bridges the gap between the gluing
construction in \cite{Li19} and the a priori estimates in the work of
Hein-Tosatti~\cite{HT21}.

The core of the paper lies in the deformation of $L_{\gamma,t}$ to
genuine special Lagrangians, detailed in Sections \ref{sec:thimble}
and \ref{sec: main construction}. One challenge arises from
$L_{\gamma,t}$'s degeneracy near its two ``ends''. In
Section~\ref{sec: main construction}, we define appropriate weighted
spaces to control the inverse of the Hodge-Dirac operator, employing a
parametrix method inspired by \cite{Li19}.

Section~\ref{sec:thimble} addresses the inversion of the Hodge-Dirac
operator on the special Lagrangian thimble $L_{\mathbb{R}^3}$. The
equation
\[\label{eq:dirac}
  \eta \in \Lambda^1 \mapsto (d+d^*)\eta = (f,\omega) \in \Lambda^0\oplus \Lambda^2
\]
involves right-hand terms with polynomial decay, which complicates
applying the usual linear theory due to $L_{\mathbb{R}^3}$'s slow
volume growth. To resolve this, we combine the parametrix method from
Sz\'ekelyhidi~\cite{Sz19} with Hein’s solution \cite{Hein} for the
Poisson equation on complete manifolds, improving the solution’s
growth from exponential to polynomial.

Finally, we address difficulties posed by the ``base direction'' of
the inverse of $d+d^*$ on $L_{\gamma,t}$ as $t\to 0$. The norm of the
inverse of $d+d^*$ along the base direction depends linearly on the
diameter of $L_{\gamma,t}$. Unlike prior gluing constructions such the
twisted connected sum construction of $G_2$ manifolds
\cite{Kov03}\cite{CHNP15}, our initial errors are not small enough to
offset this linear dependence's impact on the quadratic term.

To overcome this, we carefully define the weighted spaces in
Section~\ref{sec: main construction}, ensuring that the base direction
is treated separately in the invertibility result (see
Proposition~\ref{prop:dirac2}). To resolve the issue due to large
quadratic term, we perform the iteration scheme in the range, rather
than the domain, of the linearized operator $d+d^*$. Modulo non-linear
effects, the quadratic term differentiates the base direction and we
obtain the necessary quadratic estimates required for the iteration
scheme. These points are discussed in more detail in
Section~\ref{subsec:quadratic}.  \newline

\noindent{\bf Acknowledgments.}
The authors thanks Mohammed Abouzaid, Gorapada Bera, Simon Donaldson,
Lorenzo Foscolo, Mark Haskins, Hans-Joachim Hein, Dominic Joyce, Conan
Leung, Yang Li, Jason Lotay, Cheuk Yu Mak, and G\'abor Sz\'ekelyhidi
for helpful discussions and comments. YL would like to thank
Shing-Tung Yau for the constant encouragement. SC was supported by a
Simons Collaboration grant (\#724071 Jason Lotay) and SLMath (formerly
MSRI), which was partially supported by NSF grant DMS \#1928930. YL
was supported by Simons Collaboration grant \#635846 and NSF grant DMS
\#2204109. Part of the work was done when SC was a visitor of IMS at
the Chinese University of Hong Kong and TIMS at National Taiwan
University. He thanks Man Chun Lee, Conan Leung, Martin Li, and
Chin-Lung Wang for their warm hospitality.

\section{Special Lagrangian submanifolds}\label{sec:sl}

In this section, we review the notion of special Lagrangian
submanifolds, discuss their basic properties, and introduce existing
constructions that will be useful in later sections. We start by
defining what is meant by a Calabi-Yau manifold in this paper.

\begin{definition}
  Let $X$ be a complex manifold with complex dimension $n$. $X$ is
  Calabi-Yau if there exist a K\"ahler form $\omega$ and a
  nowhere-vanishing section $\Omega \in H^0(K_X)$, called the
  holomorphic volume form, such that $\omega$ and $\Omega$ satisfy the
  following complex Monge-Amp\`ere equation:
  \[\label{eq:normalization}
    \frac{\omega^n}{n!}
    =
    (-1)^{n(n-1)/2}\left(\frac{i}{2}\right)^n\Omega\wedge \overline{\Omega}.
  \]
  Note that it follows from \eqref{eq:normalization} that $\omega$ is
  Ricci-flat.
\end{definition}

Let $L\subseteq X$ be an oriented Lagrangian submanifold. It follows
from \eqref{eq:normalization} that we have
\[
  \Omega|_L=e^{i\theta_L}\operatorname{vol}_L,
\]
where $\operatorname{vol}_L$ is the induced volume form on $L$ and
$\theta_L$ is an $S^1$-valued function. Note $\nabla \theta_L$ is
well-defined. By direct computation, one can show that the mean
curvature $\mathbf{H}$ of $L$ is given by
\[\label{eq:meancurvature}
  \mathbf{H} = J\nabla \theta_L.
\]

We say $L$ is zero-Maslov if there exists a lifting of $\theta_L$ to
$\mathbb{R}$. Any such lifting, which we will still denote by
$\theta_L$, is called a grading of $L$. $L$ is graded if $L$ is
zero-Maslov and a grading $\theta_L$ is fixed.

Harvey and Lawson introduced an important class of calibrated
submanifolds in Calabi-Yau manifolds called special Lagrangians \cite{HL}.

\begin{definition}
  A Lagrangian submanifold $L$ in a Calabi-Yau manifold
  $(X,\omega,\Omega)$ is \emph{special Lagrangian} if
  $\Omega|_L=e^{i\theta} \operatorname{vol}_L$ for some constant
  $\theta \in \mathbb{R}$. Equivalently, this means that
  $\operatorname{Im}(e^{-i\theta}\Omega)|_L = 0$ and $L$ is calibrated
  by the calibration form $\operatorname{Re}(e^{-i\theta}\Omega)$ in
  the sense of Harvey-Lawson.
\end{definition}

Let $L$ be a special Lagrangian submanifold. Being calibrated, $L$ is
an area minimizer within its homology class. Thus $L$ is minimal,
i.e. $\mathbf{H} = 0$. Note that this can also be concluded using
\eqref{eq:meancurvature}.

Very few constructions of compact special Lagrangian submanifolds are
known. In the following we shall review some existing constructions
that will be useful in later sections.

\subsection{Fix loci of anti-holomorphic, anit-symplectic involutions}

It is elementary to show the following. Given $(X,\omega, \Omega)$ a
Calabi-Yau manifold and $\iota: X \to X$ such that
$\iota^2 = \operatorname{Id}$, $\iota^*\omega = -\omega$, and
$\iota^*\Omega = \overline\Omega$, then each connected component of
the fix locus $\operatorname{Fix}(\iota)$ is a special Lagrangian
submanifold.

Using this, Bryant~\cite{R} constructed special Lagrangian tori in
certain quintic $3$-folds. We will later use this to construct a
special Lagrangian thimble (see Proposition~\ref{prop:antiholo}).

\subsection{Hyperk\"ahler rotations}

Let $(X, \omega, \Omega)$ be a Calabi-Yau $2$-fold. Denote $g$ the
corresponding Ricci-flat metric and write
\[
  \omega_1=\operatorname{Re}\Omega,
  \hspace{5mm}\omega_2=\operatorname{Im}\Omega,\hspace{5mm}
  \omega_3=\omega.
\]
Define $J_i$ by
\[
  g(\cdot,\cdot)=\omega_i(\cdot,J_i \cdot), \mbox{ for }i=1,2,3.
\]
Then $J_i$ are integrable complex structures satisfying quaternionic
relations. In particular, the underlying space $\underline{X}$ admits
a family of complex structures parametrized by $\mathbb{P}^1$, called
the twistor line.

Explicitly, these complex structures are given by
\[
  J_{\zeta}=\frac{i(-\zeta+\bar{\zeta})J_1-(\zeta+\bar{\zeta})J_2+(1-|\zeta|^2)J_3}{1+|\zeta|^2},
  \hspace{3mm}
  \zeta\in \mathbb{P}^1.
\]
Let $\omega_\zeta = g(J_\zeta \cdot, \cdot)$ be the corresponding
Ricci-flat K\"ahler form. The holomorphic symplectic $2$-form
$\Omega_{\zeta}$ with respect to the compatible complex structure
$J_{\zeta}$ is given by
\[\label{38}
  \Omega_{\zeta}=-\frac{i}{2\zeta}\Omega+\omega_3+\frac{i}{2}\zeta\overline{\Omega}.
\]
It is straightforward to check that a submanifold $L$ in
$\underline{X}$ is a holomorphic curve with respect to $J_0$ if and
only if $L$ is special Lagrangian with respect to
$(\omega_{e^{i\theta}},\Omega_{e^{i\theta}})$ for each
$e^{i\theta} \in S^1$, the equator of the twistor line.

In particular, we have the following well-known lemma via the
Riemann-Roch theorem (see for instance \cite[Lemma 2.1]{LLS}):
\begin{lemma}\label{lem: SLag S2}
	Let $(X, \omega, \Omega)$ be a K3 surface with $[\omega]\in H^2(X,\mathbb{Q})$. If
	$[L]\in H_2(X,\mathbb{Z})$ is a homology class with $[C]^2\geq -2$,
	then there exists a special Lagrangian representative in the
	homology class $[L]$ with respect to $(\omega,\Omega)$. 
	Moreover, we have the following:
	\begin{itemize}
		\item If $[L]^2=-2$ and $[L]$ is represented by a smooth special Lagrangian, then its special Lagrangian representative is unique. 
		\item If $X$ is generic in its deformation family such that $[\omega]$ is K\"ahler and $[L]^2=2g-2$, then $[L]$ can be represented by a smooth special Lagrangian surface of genus $g$. Moreover, the smooth representatives are paramatrized by the complement of a divisor in $\mathbb{P}^g$. 
	\end{itemize}
	
\end{lemma}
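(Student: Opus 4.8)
The key mechanism is the hyperkähler rotation recalled above: a submanifold of the underlying 4-manifold $\underline{X}$ is special Lagrangian for $(\omega,\Omega)$ precisely when it is a holomorphic curve for the rotated complex structure $J_0$ (equivalently, holomorphic in the complex structure $J_{\zeta}$ with the roles of the forms permuted as in \eqref{38}). So the plan is to trade the special Lagrangian problem in the class $[L]$ for an existence-and-counting problem for holomorphic curves in a suitable complex structure on the hyperkähler K3, then invoke Riemann–Roch and standard facts about linear systems on K3 surfaces.

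\textbf{Existence.} Given $[L]\in H_2(X,\mathbb{Z})$ with $[L]^2\geq -2$, I would first arrange, using the Torelli theorem for K3 surfaces together with the fact that $[\omega]\in H^2(X,\mathbb{Q})$, that after a hyperkähler rotation the class $[L]$ becomes of type $(1,1)$ and, in fact, the class of an effective divisor $D$ on the rotated K3 surface $X'$. (This is the standard ``rotate so that the period kills the right two-plane'' argument; one needs the rationality of $[\omega]$ to keep the rotated structure projective, or at least to ensure the relevant class stays algebraic.) Now on a K3 surface, $K_{X'}=\mathcal{O}$, so Riemann–Roch gives $\chi(\mathcal{O}(D)) = 2 + \tfrac12 D^2 = 2 + \tfrac12[L]^2 \geq 1$, hence $h^0(\mathcal{O}(D)) + h^0(\mathcal{O}(-D)) \geq 1$; after possibly replacing $D$ by $-D$ (the sign ambiguity $[L]\leftrightarrow -[L]$ is harmless since it corresponds to conjugating the orientation), $\mathcal{O}(D)$ has a section, so $[L]$ is represented by an effective curve $C$. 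This curve, viewed back in $\underline{X}$, is special Lagrangian with respect to $(\omega,\Omega)$ (up to the twistor-equator ambiguity, which only rotates $\Omega$ by a phase and can be absorbed). One should remark that $C$ may be singular or reducible in general; smoothness is addressed only under the genericity hypothesis.

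\textbf{Uniqueness when $[L]^2=-2$.} If $[L]^2=-2$ and $[L]$ has a smooth special Lagrangian representative, then the corresponding holomorphic curve $C$ is a smooth rational curve (arithmetic genus $p_a = 1 + \tfrac12 C^2 = 0$) with $C^2=-2$, i.e. a $(-2)$-curve. Such a class is rigid: $h^0(\mathcal{O}(C)) = 1$ by Riemann–Roch and the vanishing $h^1(\mathcal{O}(C))=h^1(\mathcal{O}(-C))=0$ for a $(-2)$-curve (Kawamata–Viehweg, or a direct cohomology computation from $0\to\mathcal{O}(-C)\to\mathcal{O}\to\mathcal{O}_C\to 0$), so the linear system $|C|$ is a single reduced point. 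Hence the holomorphic representative is unique, and therefore so is the special Lagrangian representative in $[L]$.

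\textbf{The genus case under genericity.} If $[L]^2 = 2g-2$ and $X$ is generic in its deformation family (so that $\operatorname{Pic}(X')$ after rotation is generated by a single ample class proportional to $[L]$, forcing every effective divisor in $|L|$ to be reduced and irreducible), then by Saint-Donat's theorem on linear systems on K3 surfaces, a general member of $|L|$ is a smooth irreducible curve, which by adjunction has genus $g$. Riemann–Roch gives $h^0(\mathcal{O}(C)) = g+1$, so $|L| \cong \mathbb{P}^g$, and Bertini plus the Picard-rank-one hypothesis confine the non-smooth (or non-reduced) members to a proper closed subset, i.e. a divisor in $\mathbb{P}^g$; translating back through the hyperkähler rotation gives the claimed parametrization of smooth special Lagrangian representatives.

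\textbf{Main obstacle.} The analytic heart — the equivalence between special Lagrangians and $J_0$-holomorphic curves — is already in hand from the hyperkähler discussion, so the real work is algebro-geometric bookkeeping: making precise the hyperkähler rotation that puts $[L]$ into the algebraic $(1,1)$-part (this is where rationality of $[\omega]$ enters and must be used carefully, since a generic rotation destroys projectivity), and controlling reducibility/non-reducedness of members of $|L|$ in the non-generic cases. The subtlest point is that for the \emph{mere} existence statement one gets only a possibly singular calibrated cycle, and one must be honest that ``special Lagrangian representative'' there means a calibrated (hence possibly singular) current or immersed cycle rather than a smooth submanifold; the smoothness assertions are exactly what the two bulleted refinements, with their extra hypotheses, are there to supply.
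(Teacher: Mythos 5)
Your proposal is correct and follows the paper's main route: rotate within the twistor equator so that $PD[L]$ becomes an integral $(1,1)$-class on the rotated K3, apply Riemann--Roch and Serre duality to produce an effective curve, translate back to a special Lagrangian of the appropriate phase, and use adjunction together with $|L|\cong\mathbb{P}^g$ for the refinements. Two local differences from the paper's proof are worth recording. For uniqueness when $[L]^2=-2$ you argue via rigidity of the $(-2)$-curve, $h^0(\mathcal{O}(C))=1$, so that any effective divisor homologous to $C$ (hence linearly equivalent, since $H^1(\mathcal{O})=0$ makes $\operatorname{Pic}\to H^2$ injective) must equal $C$; the paper instead observes that a second, possibly singular, representative $L'$ has the same phase, becomes holomorphic after rotation, must contain the smooth rational curve as a component because distinct curves meet non-negatively while $L\cdot L'=-2$, and the residual effective class $[L'-L]=0$ is excluded by positivity of the K\"ahler form. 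Both work; yours is shorter, the paper's avoids any cohomological vanishing. For the generic case, the paper's genericity is the phase condition that no decomposition $[L]=[L_1]+[L_2]$ satisfies $\operatorname{Arg}\int_{[L_1]}\Omega=\operatorname{Arg}\int_{[L_2]}\Omega$, which directly forces irreducibility of the holomorphic representative, whereas you impose Picard rank one on the rotated surface; this is a stronger hypothesis, and your intermediate claim that it makes every member of $|L|$ reduced and irreducible fails when $[L]$ is not primitive (say $[L]=kA$, $k\ge 2$), although the conclusions about the general member and the $\mathbb{P}^g$ parametrization still go through. Finally, in the existence step no Torelli theorem is needed and projectivity of the rotated surface plays no role: one simply takes the phase $\theta$ with $[\operatorname{Im}(e^{-i\theta}\Omega)]\cdot[L]=0$, and the Lefschetz $(1,1)$ theorem plus Riemann--Roch already apply on the compact K\"ahler surface $X_\theta$.
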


\begin{proof}
	Let $e^{i\theta} \in S^1$ be given by
	$[\operatorname{Im}(e^{-i\theta}\Omega)].[L] = 0$. Consider
	the hyperk\"ahler rotation $(X_\theta,\omega_\theta,\Omega_\theta)$
	where
	\[
	\omega_\theta = \operatorname{Re}(e^{-i\theta}\Omega), \:\:\:\:
	\Omega_\theta = \omega + i\operatorname{Im}(e^{-i\theta}\Omega).
	\]
	Since $[\Omega_\theta].[L] = 0$, we have
	$PD[L] \in H^{1,1}(X_\theta,\mathbb{R}) \cap H^2(X_\theta,
	\mathbb{Z})$. It follows that $PD[L]$ is represented by a line
	bundle, also denoted by $L$, on $X_\theta$. By the Riemann-Roch
	theorem for surfaces and Serre duality, we have
	\[
	h^0(L) + h^0(L^*) \ge \chi(\mathcal{O}) + \frac{1}{2}[L].([L] - K_{X_{\theta}})
	\ge 2+\frac{1}{2}\cdot (-2) = 1.
	\]
	So either $L$ or $L^*$ has a nontrivial section $s$, whose zero set
	is represented by a holomorphic curve $L$ in $X_\theta$ with
	$[L]^2 \ge -2$, which is special Lagrangian with respect to the
	original hyperk\"ahler structure $(X,\omega,\Omega)$. 
	
    Suppose $[L]$ is represented by a smooth special Lagrangian $L$
    and another (possibly singular) special Lagrangian $L'$. Then they
    must have the same phase
    $\theta=\operatorname{Arg}\int_{[L]}\Omega$. After hyperk\"ahler
    rotation, $L,L'$ are holomorphic curves in the same K3 surface
    $X_{\theta}$. By adjunction formula, $L$ is a smooth rational
    curve in $X_{\theta}$. Recall that the intersections of distinct
    holomorphic cures are non-negative. Since $L.L'=[L]^2=-2$ and $L$
    is smooth, $L'$ must contain $L$ as a component. Then $L'-L$ is an
    effective divisor in $X_{\theta}$ with $[L'-L]=0$. If
    $L'-L\neq 0$, then $\int_{L'-L}\omega_{\theta}>0$ and we reach a
    contradiction. This shows the uniqueness part of the lemma.
	
	Finally, for the last part of the lemma, we may assume
    $[\omega]\in H^2(X,\mathbb{Z})$ after a scaling. Assume that $X$
    is generic in the sense that $[L]=[L_1]+[L_2]$ implies
    $\operatorname{Arg}\int_{[L_1]}\Omega\neq
    \operatorname{Arg}\int_{[L_2]}\Omega$, then $[L]$ is represented
    by an irreducible special Lagrangian from the earlier
    discussion. In particular, it is represented by a base point free
    divisor if $[L]^2\geq 0$ and thus $[L]$ can be represented by a
    smooth special Lagrangian by the Bertini theorem.  If
    $[L]^2=2g-2\geq 0$ and $[L]$ is represented by a smooth special
    Lagrangian surface or $[L]^2=-2$, then by hyperk\"ahler rotation
    and adjunction formula, $L$ has genus $g$. By the Riemann-Roch
    theorem, the linear system $|L|$ in $X_{\theta}$ is
    $\mathbb{P}^g$.
\end{proof}

\subsection{Deformations of special Lagrangian submanifolds}
\label{subsec:deform}

McLean \cite{McLean} showed that first-order deformations of a compact
special Lagrangian submanifold $L$ are unobstructed, and the local
moduli space is parametrized by $H^1(L,\mathbb{R})$. A good reference
for a precise treatment of McLean's theorem, as well as its
generalizations, can be found in Marshall's
thesis~\cite{Marshall}. The main focus in this subsection is the
nonlinear counterpart of McLean's theorem, that is to deform
approximate special Lagrangian submanifolds to genuine ones. This is
done through a contraction mapping argument, and a key step involves
estimates for the quadratic term. See,
e.g., \cite{Butscher}\cite{J2}\cite{Lee03}\cite{Lee04}. See also
\cite[Section~5]{CGPY} for a good exposition.

Let $(X,\omega,\Omega)$ be a compact Calabi-Yau $n$-fold. Let
$f: L \to X$ be a real $n$-dimensional compact submanifold. Assume
that $\omega|_L$ is exact and $\int_L \operatorname{Im}\Omega =
0$. Furthermore, we assume that $|\omega|_L|_{g_L} < 1$, so that for
any local orthonormal frame $e_1, e_2, \ldots, e_n$ of $L$, the vector
fields $e_1,Je_1,e_2,Je_2,\ldots,e_n,Je_n$ form a local frame of
$TX|_L$. For a $1$-form $\eta$ on $L$, taking the symplectic dual
produces a vector field $V$ along $L$ with values in $TX$, via
$\omega(V,\cdot) = \eta$. Note that if $L$ is Lagrangian, then $V$ is
a normal vector field. Using the Riemannian exponential map, we define
the deformation map $f_\eta: L \to X$ by
$f_\eta(x) = \exp_x V_x^\perp$. Here $V^\perp$ is the orthogonal
projection to the normal bundle of $L$. It then induces the map
\[
  F: \Lambda^1(L) \to \Lambda^0(L) \oplus \Lambda^2(L)
\]
defined by the formula
\[
  F(\eta) = *f_\eta^*\operatorname{Im}\Omega + f_\eta^*\omega.
\]
From this we see that $f_\eta: L \to X$ is special Lagrangian if and
only if $F(\eta) = 0$. The linearization of $F$ at the origin is given by
\[
  DF_0(\eta) = \left.\frac{d}{dt}\right|_{t=0}F(t\eta)
  = L_{V^\perp}\omega+ * (L_{V^\perp}\operatorname{Im}\Omega)|_L
\]
where $L_{V^\perp}$ denotes the Lie derivative along the vector field
$V^\perp$.

\begin{prop}\label{prop:dirac_error}
  Let $\eta \in \Lambda^1(L)$. The linearization $DF_0$ of $F$ at
  the origin satisfies
  \[
    DF_0(\eta) = (d+d^*)\eta + E(\eta).
  \] The error term
  $E(\eta) =
  d\iota(V^\perp-V)\omega+*(L_{V^\perp}\operatorname{Im}\Omega)|_L-d^*\eta$
  satisfies
  \[
    \|E(\eta)\|_{C^{0,\alpha}}
    \le C
    (\|\omega|_L\|_{C^{1,\alpha}}+\|\operatorname{Im}\Omega|_L\|_{C^{1,\alpha}})\|\eta\|_{C^{1,\alpha}},
  \]
  with the constant $C>0$ depending on the Riemann curvature tensor of
  $(X,\omega)$ and the second fundamental form of $L$.
\end{prop}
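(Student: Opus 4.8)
The plan is to retrace the proof of McLean's theorem, isolating exactly the error terms that vanish when $L$ is genuinely special Lagrangian, and to show that each of them carries a factor of $\omega|_L$ or $\operatorname{Im}\Omega|_L$.

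First, the formula for $E(\eta)$ in the statement is essentially a bookkeeping identity. Since $d\omega=0$ and $d\operatorname{Im}\Omega=0$ (as $\Omega$ is holomorphic), Cartan's formula gives $L_{V^\perp}\omega|_L=d(\iota_{V^\perp}\omega|_L)$ and $L_{V^\perp}\operatorname{Im}\Omega|_L=d(\iota_{V^\perp}\operatorname{Im}\Omega|_L)$, where $\iota_{V^\perp}(\,\cdot\,)|_L$ denotes contraction with the (extended) normal field followed by pullback to $L$. By the definition of the symplectic dual, $\iota_V\omega|_L=\eta$, so $d(\iota_{V^\perp}\omega|_L)=d\eta+d\iota_{V^\perp-V}\omega$; subtracting $(d+d^*)\eta$ from $DF_0(\eta)$ leaves precisely the stated $E(\eta)$. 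Writing $d^*=\pm{*}d{*}$ on $1$-forms, the last two terms of $E(\eta)$ combine into ${*}d\bigl(\iota_{V^\perp}\operatorname{Im}\Omega|_L-\sigma\bigr)$, where $\sigma$ is the $(n-1)$-form characterized by ${*}d\sigma=d^*\eta$ (concretely $\sigma=\pm{*}\eta$, the sign dictated by the orientation conventions in force). Since $d$ costs one derivative and $*$ is a pointwise isometry, it suffices to prove the two $C^{1,\alpha}$ estimates
\[
  \|\iota_{V^\perp-V}\omega|_L\|_{C^{1,\alpha}}\le C\,\|\omega|_L\|_{C^{1,\alpha}}\|\eta\|_{C^{1,\alpha}},
  \qquad
  \|\iota_{V^\perp}\operatorname{Im}\Omega|_L-\sigma\|_{C^{1,\alpha}}\le C\bigl(\|\omega|_L\|_{C^{1,\alpha}}+\|\operatorname{Im}\Omega|_L\|_{C^{1,\alpha}}\bigr)\|\eta\|_{C^{1,\alpha}}.
\]

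The first estimate records that the failure of $V$ to be normal is governed by $\omega|_L$. Decomposing $V=V^\top+V^\perp$ with $V^\top\in TL$, one checks that $g_L(V^\top,W)=-(\omega|_L)(\eta^\sharp,W)$ for $W\in TL$, hence $V^\top=-(\omega|_L)^\sharp\eta^\sharp$ where $(\omega|_L)^\sharp$ is the skew endomorphism of $TL$ associated to $\omega|_L$; in particular $|V^\top|_{g_L}\le|\omega|_L|_{g_L}|\eta|$ pointwise. Differentiating this identity along $L$ — the derivatives of the tangential projection and of the induced metric produce factors of the second fundamental form of $L$, and their Hölder moduli bring in the ambient curvature through Codazzi, while $J$ is parallel — gives $\|V^\top\|_{C^{1,\alpha}}\le C\|\omega|_L\|_{C^{1,\alpha}}\|\eta\|_{C^{1,\alpha}}$; since $\iota_{V^\perp-V}\omega|_L=-\iota_{V^\top}(\omega|_L)$ is contracted against $\omega|_L$ itself, the first estimate follows (with, in fact, an extra power of $\|\omega|_L\|_{C^{1,\alpha}}$).

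The second estimate is the heart of the matter. The model computation is McLean's pointwise linear algebra: at $p\in L$ pick linear Darboux coordinates $(x_j,y_j)$ with $T_pL=\mathbb{R}^n\subset\mathbb{C}^n$, $\omega=\sum dx_j\wedge dy_j$, $\Omega=dz_1\wedge\cdots\wedge dz_n$; then $V=\mp J\eta^\sharp=\mp\sum a_j\,\partial_{y_j}$ for $\eta=\sum a_j\,dx_j$, and a direct contraction yields $\iota_V\operatorname{Im}\Omega|_L=\sigma$ on the nose. For our $L$ this normal form holds only up to corrections of size $|\omega|_L|$ (the tilt of $T_pL$ away from a Lagrangian subspace) and of size $|\operatorname{Im}\Omega|_L|$ (the deviation of the phase of $\Omega|_L$ from $0$), and replacing $V$ by $V^\perp$ costs the already-estimated $V^\top$. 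Each correction is linear in $\eta$ with coefficient a smooth function of the $1$-jets of $\omega|_L$, $\operatorname{Im}\Omega|_L$ and of the embedding; assembling them gives the pointwise bound for $\iota_{V^\perp}\operatorname{Im}\Omega|_L-\sigma$, and one further covariant differentiation — again producing only second-fundamental-form and (in the Hölder part) ambient-curvature factors, since $\omega$ and $\Omega$ are $\nabla^X$-parallel — upgrades it to the $C^{1,\alpha}$ bound. Combining the two estimates completes the proof.

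I expect the principal obstacle to be making the stability of the calibration identity in the last paragraph genuinely quantitative: one must verify that \emph{every} deviation from $\iota_V\operatorname{Im}\Omega|_L=\sigma$ carries an explicit factor of $\omega|_L$ or $\operatorname{Im}\Omega|_L$ — never a bare factor of $\eta$ — and that this structure persists under one covariant derivative, which is where the ambient curvature of $(X,\omega)$ and the second fundamental form of $L$ enter exactly as recorded. The closedness of $\omega$ and $\operatorname{Im}\Omega$ is what confines the whole error to a single $d$ applied to a quantity bounded in $C^{1,\alpha}$, which is what makes the $C^{0,\alpha}$ estimate possible in the first place.
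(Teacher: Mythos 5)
Your skeleton is the same as the paper's: the bookkeeping identity for $E(\eta)$, the reduction to two estimates, and the tangential-part bound (your $g_L(V^\top,\cdot)=-(\omega|_L)(\eta^\sharp,\cdot)$ is exactly the paper's computation $g(V^T,e_i)=\eta((Je_i)^T)$ with $\|(Je_i)^T\|_{C^{1,\alpha}}\le C\|\omega|_L\|_{C^{1,\alpha}}$) all match. The genuine gap is precisely the step you flag as the ``principal obstacle'' and then leave as an assertion: that every deviation from McLean's pointwise identity $\iota_{V}\operatorname{Im}\Omega|_L=\pm*\eta$ carries a factor of $\omega|_L$ or $\operatorname{Im}\Omega|_L$. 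Your dichotomy ``tilt of $T_pL$'' versus ``phase of $\Omega|_L$'' misses a third a priori source of error: the \emph{modulus} of $\Omega$ relative to the adapted model. Writing $\Omega=u\,\Omega_0$ with $\Omega_0=\prod_j(e^j+\sqrt{-1}f^j)$ built from an orthonormal coframe of $L$, the difference $\operatorname{Im}\Omega-\operatorname{Im}\Omega_0$ contains the term $(|u|-1)\cos\theta\,\operatorname{Im}\Omega$, and nothing in your argument rules out $|u|-1$ being an $O(1)$ coefficient, i.e.\ a ``bare factor of $\eta$'' after contraction with $V$; linearity in $\eta$ with smooth coefficients is not enough, since the coefficients must vanish with $\omega|_L$ and $\operatorname{Im}\Omega|_L$.

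The paper closes exactly this point with the Calabi--Yau normalization \eqref{eq:normalization}: from $\Omega\wedge\overline\Omega=|u|^2\Omega_0\wedge\overline\Omega_0$ one gets $\omega^n=|u|^2\omega_0^n$, hence $|u|^2=1+O(|\omega-\omega_0|)$, while along $L$ the difference $\omega-\omega_0$ is measured by $\omega|_L$ and the phase factor $\sin\theta$ by $\operatorname{Im}\Omega|_L$ (since $\Omega_0$ restricts to $\operatorname{vol}_L$ up to corrections already controlled by $\omega|_L$). Together with $\|V-V_0\|_{C^k}\le C\|\omega-\omega_0\|_{C^k}\|\eta\|_{C^k}$ and the decomposition $\iota(V)\operatorname{Im}\Omega=\iota(V)(\operatorname{Im}\Omega-\operatorname{Im}\Omega_0)+\iota(V-V_0)\operatorname{Im}\Omega_0+\iota(V_0)\operatorname{Im}\Omega_0$, whose last term gives exactly $d^*\eta$ after applying $*d$, this yields the stated bound. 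So your outline becomes a proof only after adding this Monge--Amp\`ere input (or an equivalent mechanism slaving $|\Omega|_L|$ to $\operatorname{vol}_L$ via $\omega|_L$); as written, the second estimate --- which you correctly call the heart of the matter --- is not established.
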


\begin{proof}
  This is essentially a pointwise calculation. Fix a local orthonormal
  frame $e^1,e^2, \ldots, e^n$ on $L$ and let $f^j = -Je^j$. Define
  $\omega_0 =\sum_{j=1}^n e^j\wedge f^j$ and
  $\Omega_0 = \Pi_{j=1}^n(e^j+\sqrt{-1}f^j)$. Define the dual vector
  field $V_0$ of $\eta$ with respect to $\omega_0$ by
  $\omega_0(V_0,\cdot) = \eta$. Since $\Omega$ is closed,
  $L_V\operatorname{Im}\Omega = d\iota(V)\operatorname{Im}\Omega$. We
  then compute
  \[
    \iota(V)\operatorname{Im}\Omega
    &= \iota(V)(\operatorname{Im}\Omega-\operatorname{Im}\Omega_0) \\
    &\phantom{aa}+ \iota(V-V_0)\operatorname{Im}\Omega_0 \\
    &\phantom{aa}+ \iota(V_0)\operatorname{Im}\Omega_0.
  \]
  Note that $\iota(V_0)\operatorname{Im}\Omega_0 = -*\eta$ by a
  standard pointwise computation, so
  $d^*\eta = -*d*\alpha = *d\iota(V_0)\operatorname{Im}\Omega_0$. It
  remains to estimate $V-V_0$ and $\Omega-\Omega_0$. Write
  $\Omega = u(z)\Omega_0$. Then we have
  $\Omega\wedge\overline\Omega =
  |u(z)|^2\Omega_0\wedge\overline\Omega_0$, so
  $\omega^n = |u(z)|^2\omega_0^n$ by the normalization
  \eqref{eq:normalization} of the Calabi-Yau condition. Expanding
  around $\omega_0$, we see that
  \[
    |u|^2 = 1 + O(|\omega-\omega_0|)
  \]
  in arbitrary orders. Locally we can write $u = e^{i\theta}|u|$. Then
  \[
    \operatorname{Im}\Omega-\operatorname{Im}\Omega_0
    &= |u|(\cos\theta\operatorname{Im}\Omega_0 + \sin\theta\operatorname{Re}\Omega_0)
    -\operatorname{Im}\Omega_0 \\
    &= ((|u|-1)\cos\theta + (\cos\theta-1))\operatorname{Im}\Omega
    +\sin\theta\operatorname{Re}\Omega.
  \]

  It follows that
  \[
    \|\operatorname{Im}\Omega-\operatorname{Im}\Omega_0\|_{C^k}
    \le C_k(\|\omega-\omega_0\|_{C^k}+\|\sin\theta\|_{C^k}),
  \]
  where the constants $C_k>0$ depend on the $(X,\omega,\Omega)$ and
  the second fundamental form of $L$. A direct computation then shows
  that
  \[
    \|V-V_0\|_{C^k} \le C_k\|\omega-\omega_0\|_{C^k}\|\eta\|_{C^k}.
  \]
  Finally, for the difference $V^T = V - V^\perp$, we have
  \[
    g(V^T,e_i) = g(V,e_i) = g(JV, Je_i) = \omega(V, Je_i) = \eta((Je_i)^T).
  \]
  Since
  $\|(Je_i)^T\|_{C^{1,\alpha}} \le C\|\omega|_L\|_{C^{1,\alpha}}$, the
  result follows.
\end{proof}

\begin{remark}
  From now on, we will write $V$ for its normal part $V^\perp$. In
  other words, we define $V = (\omega^{-1}\eta)^\perp$. Recall that
  $V=V^\perp$ if $L$ is Lagrangian.
\end{remark}

Since we know how to invert $d+d^*$ using Hodge theory, it makes sense
to absorb $E$ into the nonlinear part of $F$ and write
\[
  F(\eta) = F(0) + (d+d^*)\eta + Q(\eta).
\]
Let $P$ be a bounded right inverse of $d+d^*$ from its image. To solve
$F(\eta) = 0$, it is sufficient to find
$\psi \in \operatorname{Im}(d+d^*) \cap (\Lambda^0\oplus\Lambda^2)$ such that
\[
  \psi = - F(0) - Q(P\psi).
\]
In other words, it suffices to show that the map
$N(\psi) = -F(0)-Q(P\psi)$ is a contraction mapping from a small ball
$B \subset \operatorname{Im}(d+d^*)\cap
C^{0,\alpha}(\Lambda^0\oplus\Lambda^2)$ into itself. To do so, for
$\eta_0,\eta_1 \in \Lambda^1(L)$, we write
$\eta_\ell = \eta_0 + \ell(\eta_1-\eta_0)$ for $\ell \in [0,1]$ and
compute
\[
  Q(\eta_1) - Q(\eta_0) &= \int_0^1 \partial_\ell Q(\eta_\ell) \,d\ell \\
  &= \int_0^1 \partial_\ell (F(\eta_\ell) - (d+d^*)\eta_\ell) \,d\ell \\
  &= \int_0^1 \partial_\ell (F(\eta_\ell) - DF_0(\eta_\ell) \,d\ell
  + \int_0^1 \partial_\ell(DF_0(\eta_\ell) -(d+d^*)\eta_\ell) \,d\ell \\
  &= \mathrm{(I)} + \mathrm{(II)}.
\]

If we focus on the $0$-form part of Term (I) first, we get
\[\label{eq:I}
  \mathrm{(I)_0} 
  &= \int_0^1 \partial_\ell (F(\eta_\ell) - DF_0(\eta_\ell)) \,d\ell \\
  &= -* \int_0^1 [\partial_\ell f^*_{\eta_\ell}\operatorname{Im}\Omega
  -
  f_0^*d\iota(V_1-V_0)\operatorname{Im}\Omega] \,d\ell \\
  &= -* \int_0^1 [f_{\eta_\ell}^*
  d\iota(\widehat{V_1-V_0})\operatorname{Im}\Omega -
  f_0^*d\iota(V_1-V_0)\operatorname{Im}\Omega] \,d\ell \\
  &= -* \int_0^1\int_0^1 \partial_s
  (f_{s\eta_\ell}^*d\iota(\widehat{V_1-V_0})\operatorname{Im}\Omega)
  \,ds\,d\ell \\
  &= -* \int_0^1 \int_0^1
  f_{s\eta_\ell}^*d\iota(\widehat{V_\ell})d\iota(\widehat{V_1-V_0})\operatorname{Im}\Omega\,ds\,d\ell.
\]
Here $\widehat V$ means the lift of $V$ using the exponential map. The
$2$-form part follows a similar calculation. From these, we get the
pointwise estimate
\[\label{eq:TermI}
  |\mathrm{(I)}|_{C^{0,\alpha}} &\le
  C(1+|\eta_1|_{C^{1,\alpha}}+|\eta_0|_{C^{1,\alpha}})(|\eta_1|_{C^{1,\alpha}}+|\eta_0|_{C^{1,\alpha}})
  |\eta_1-\eta_0|_{C^{1,\alpha}}.
\]
Here the constant $C>0$ depends on $(X,\omega,\Omega)$ and $L$.

For Term (II), note that
\[
  \mathrm{(II)}
  &= \int_0^1 \partial_\ell(DF_0(\eta_\ell) -(d+d^*)\eta_\ell) \,d\ell \\
  &= (DF_0 - (d+d^*))(\eta_1-\eta_0) \\
  &= E(\eta_1-\eta_0),
\]
where $E$ is the error term given in Proposition~\ref{prop:dirac_error}.

The above calculations also show that the $0$-form part $Q_0(\eta)$ of
$Q(\eta)$ satisfies $\int_L Q_0(\eta) = 0$ and that the $2$-form part
$Q_2(\eta)$ is exact. It follows that one can iterate the mapping
$N=-F(0)-QP$. The estimates above show that $N$ is a contraction
mapping for $\|\psi\|_{C^{0,\alpha}}$ sufficiently small. Note that
how small $\|\psi\|_{C^{0,\alpha}}$ should be depends on the geometry.

While we do not deal with concrete examples in this subsection, we
will end this subsection by proving the quadratic estimates in various
settings that will appear in Section~\ref{sec: main construction}.

The first variant is scaling. Let
$(\tilde X, \tilde\omega, \tilde\Omega)$ be a Calabi-Yau $n$-fold such
that $\tilde X = X, \tilde\omega = c^2\omega$ and
$\tilde\Omega = c^n\Omega$. Here, we should think of
$(X, \omega, \Omega)$ as having bounded geometry. Given a special
Lagrangian submanifold $L \subset X$, we can consider the special
Lagrangian $\tilde L = L$ in $\tilde X$ and the deformation map
$\tilde f_\eta: \tilde L \to \tilde X$ for
$\eta \in \Lambda^1(\tilde L)$. Let
$\tilde F(\eta) = (\tilde f_\eta^*\tilde\omega, \tilde * \tilde
f_\eta^*\operatorname{Im}\tilde\Omega)$ and consider the corresponding
quadratic term $\tilde Q$. We have the relations
\[
  \tilde f_\eta^*\tilde\omega
  &= c^2f^*_{c^{-2}\eta}\omega, \\
  \tilde * \tilde f_\eta^*\operatorname{Im}\tilde\Omega
  &= *f_{c^{-2}\eta}^*\operatorname{Im}\Omega.
\]
It follows that
$\tilde F(\eta) = (c^2F_2(c^{-2}\eta), F_0(c^{-2}\eta))$. The purpose
is to reduce the quadratic estimate on $\tilde L$ to the one on
$L$. \eqref{eq:TermI} and Proposition~\ref{prop:dirac_error} imply that
\[\label{eq:tildeI}
  |\mathrm{(\tilde I)}|_{C^{0,\alpha}(\omega)}
  &\le  c^{-2}C(1+c^{-2}|\eta_1|_{C^{1,\alpha}(\omega)}+c^{-2}|\eta_0|_{C^{1,\alpha}(\omega)}) \\
  &\phantom{aa}\cdot(|\eta_1|_{C^{1,\alpha}(\omega)}+|\eta_0|_{C^{1,\alpha}(\omega)})
  |\eta_1-\eta_0|_{C^{1,\alpha}(\omega)}
\]
and that
\[\label{eq:tildeII}
  |\mathrm{(\tilde{II})}|_{C^{0,\alpha}(\omega)}
  \le c^{-2}C(c^{-2}\|\tilde \omega|_{\tilde L}\|_{C^{1,\alpha}(\omega)}
  +c^{-n}\|\operatorname{Im}\tilde\Omega|_{\tilde L}\|_{C^{1,\alpha}(\omega)})
  \|\eta_1-\eta_0\|_{C^{1,\alpha}(\omega)}.
\]

Next we move on to the case when the Calabi-Yau manifold and the
submanifold both have an Euclidean factor. Define
$X_{cyl} = \mathbb{C} \times X$ and equip it with the product
Calabi-Yau structure $\Omega_{cyl} = dy\wedge \Omega$ and
$\omega_{cyl} = \frac{\sqrt{-1}}{2} dy \wedge d\bar y + \omega$. Here
$y=u+iv$ is the holomorphic coordinate of the Euclidean factor
$\mathbb{C}$. Consider the submanifold $L_{cyl} = \mathbb{R} \times L$
in $X_{cyl}$, where the factor $\mathbb{R}$ is identified with
$\{ v=0 \}$. Any $\eta \in \Lambda^1(L_{cyl})$ can be decomposed into
base and fiber directions as $\eta = fdu + \eta_L$. Let $V$ be the
normal vector field on $L_{cyl}$ given by $\eta = \omega_{cyl}(V,
\cdot)$. Then we can also write $V = -f\partial_v + V_L$, where
$\eta_L = \omega(V_L, \cdot)$.

The quadratic estimate in this case is the following:

\begin{lemma}
  \label{lem:cyl}
  Let $\eta_1,\eta_0 \in (\Lambda^1(L_{cyl}))$. We have
  \[
    |\mathrm{(I)}|_{C^{0,\alpha}}
    &\le C[(1+|\eta_{1,L}|_{C^{1,\alpha}}+|\eta_{0,L}|_{C^{1,\alpha}}) \\
    &\phantom{aaaa}\cdot(|\eta_{1,L}|_{C^{1,\alpha}}+|\eta_{0,L}|_{C^{1,\alpha}})|d^*(\eta_1-\eta_0)|_{C^{0,\alpha}}
    \\
    &\phantom{aaaa}+(1+|d^*\eta_1|_{C^{0,\alpha}}+|d^*\eta_0|_{C^{0,\alpha}}) \\
    &\phantom{aaaa}\cdot(|\eta_{1,L}|_{C^{1,\alpha}}+|\eta_{0,L}|_{C^{1,\alpha}})|\eta_{1,L}-\eta_{0,L}|_{C^{1,\alpha}})]
  \]
  and
  \[
    |\mathrm{(II)}|_{C^{0,\alpha}}
    &\le C(|\operatorname{Im}\Omega|_L|_{C^{1,\alpha}}+|\omega|_L|_{C^{1,\alpha}})
    (|d^*(\eta_1-\eta_0)|_{C^{0,\alpha}} + |\eta_{1,L}-\eta_{0,L}|_{C^{1,\alpha}}).
  \]
  Here the constant $C>0$ depends on $(X,\omega,\Omega)$ and $L$.
\end{lemma}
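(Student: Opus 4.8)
The plan is to rerun the contraction-mapping bookkeeping leading to \eqref{eq:TermI}, now carrying the base/fiber splitting $\eta = f\,du + \eta_L$ throughout and exploiting the product structure of $(X_{cyl},\omega_{cyl},\Omega_{cyl})$. Writing $\operatorname{Im}\Omega_{cyl} = du\wedge\operatorname{Im}\Omega + dv\wedge\operatorname{Re}\Omega$ and $\omega_{cyl} = du\wedge dv + \omega$, the first step is the pointwise identity
\[
  \iota(V)\operatorname{Im}\Omega_{cyl}
  = -f\operatorname{Re}\Omega - du\wedge\iota(V_L)\operatorname{Im}\Omega - dv\wedge\iota(V_L)\operatorname{Re}\Omega
\]
for the normal field $V = -f\partial_v + V_L$, together with its analogue $\iota(V)\omega_{cyl} = f\,du + \iota(V_L)\omega$. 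The decisive point is what survives after applying $d$ and pulling back to (a deformation of) $L_{cyl} = \mathbb{R}\times L$: since $\operatorname{Re}\Omega, \operatorname{Im}\Omega, \omega$ are pulled back from the $X$-factor, the restriction of $\operatorname{Re}\Omega$ to the fiber $L$ is top-degree, so in $d(-f\operatorname{Re}\Omega) = -df\wedge\operatorname{Re}\Omega$ the fiber-differential $d_Lf$ annihilates the fiber-top part of $\operatorname{Re}\Omega|_L$ and only $\partial_u f$ survives, up to a correction $d_Lf\wedge R_1$ whose coefficient $R_1$ is $O(|\eta_L|_{C^{1,\alpha}})$. Because on the product $\partial_u f = -d^*\eta + d^*_L\eta_L$ (with $d^*_L$ the fiber codifferential), this is exactly the mechanism that replaces the naive $\eta$-dependence of the base component by a $d^*\eta$-dependence, while every other term above depends only on $V_L$, i.e.\ on $\eta_L$. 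This is the global counterpart of the pointwise identity $\iota(V_0)\operatorname{Im}\Omega_0 = -*\eta$ from the proof of Proposition~\ref{prop:dirac_error}.

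Next I would substitute these identities into the double-integral formula \eqref{eq:I} for the $0$-form part of Term (I) and into its $2$-form analogue, and repeat the $\partial_s$ and $\partial_\ell$ differentiations verbatim. Each factor $d\iota(\widehat{V_\ell})$ or $d\iota(\widehat{V_1-V_0})$ then produces either a purely fiber quantity, controlled by $|\eta_{\ell,L}|_{C^{1,\alpha}}$ (resp.\ $|\eta_{1,L}-\eta_{0,L}|_{C^{1,\alpha}}$), or a base quantity entering only through $\partial_u(\cdot)$, hence through $d^*(\cdot)$ modulo fiber terms; keeping track of which of the two factors in each product carries the ``difference'' yields precisely the two groups of the asserted estimate, with $|d^*(\eta_1-\eta_0)|_{C^{0,\alpha}}$ paired against the fiber-only products and $|d^*\eta_i|_{C^{0,\alpha}}$ entering the second group. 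One subtlety is that the induced metric $\tilde g$ on the deformed $L_{cyl}$ is not the product metric: writing $h$ for the metric induced by the $V_L$-only deformation, one has the rank-one update $\tilde g = h + df\otimes df$, hence $\operatorname{vol}_{\tilde g} = \sqrt{1+|df|_h^2}\,\operatorname{vol}_h$, so the Hodge star in $F_0$ differs from the product one by a factor that is $1$ to first order and whose higher-order discrepancy is absorbed into the quadratic remainder. Term (II) I would handle directly via Proposition~\ref{prop:dirac_error}: $\mathrm{(II)} = E(\eta_1-\eta_0)$, and the same base/fiber reduction shows that in the cylindrical case every term of $E(\eta_1-\eta_0)$ carries a factor $|\operatorname{Im}\Omega|_L|_{C^{1,\alpha}}$ or $|\omega|_L|_{C^{1,\alpha}}$ together with a factor $|d^*(\eta_1-\eta_0)|_{C^{0,\alpha}} + |\eta_{1,L}-\eta_{0,L}|_{C^{1,\alpha}}$, which is the claimed bound for (II).

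I expect the main obstacle to be making the principle ``the base component enters only through $d^*\eta$'' genuinely uniform. One must verify, at each place where an undifferentiated $f$ or a fiber-derivative $d_Lf$ could appear — inside $\iota(\widehat V)$ of the various pulled-back forms, in the iterated Lie-derivative terms of \eqref{eq:I}, and in the volume factor of the induced-metric Hodge star — that it either vanishes by the top-degree property of $\operatorname{Re}\Omega|_L$ and $\operatorname{vol}_L$ on the fiber, cancels against the linearization $(d+d^*)\eta$ that was subtracted off, or survives only multiplied by an $\eta_L$-derivative or by another copy of $d^*\eta$, so that it lands in the quadratic part. Maintaining this reduction while simultaneously pushing a derivative onto the ``difference'' slot along the chain of integrations in \eqref{eq:I} is where the asymmetric treatment of the base and fiber directions recorded in the lemma has to be pinned down carefully.
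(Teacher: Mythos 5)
Your overall route is the paper's: the same splitting $\operatorname{Im}\Omega_{cyl}=du\wedge\operatorname{Im}\Omega+dv\wedge\operatorname{Re}\Omega$, the same dual field $V=-f\partial_v+V_L$, substitution into \eqref{eq:I} and its $2$-form analogue, the identity $d^*\eta=-\partial_u f+d^*_L\eta_L$ to convert base data into $d^*$-data, and the reduction of $\mathrm{(II)}$ to $E(\eta_1-\eta_0)$ via Proposition~\ref{prop:dirac_error}. One remark before the main point: the ``induced metric on the deformed $L_{cyl}$'' subtlety you raise does not arise, because in $F(\eta)=*f_\eta^*\operatorname{Im}\Omega+f_\eta^*\omega$ the forms are pulled back to the \emph{fixed} $L_{cyl}$ and $*$ is the Hodge star of the fixed product metric; there is no $\sqrt{1+|df|_h^2}$ volume factor to absorb. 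It matters that there is not, because such a factor would bring in $d_Lf$, and no norm on the right-hand side of the lemma controls the fiber derivative of the base coefficient $f$.

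The genuine soft spot is your acceptance criterion. You allow a surviving correction $d_Lf\wedge R_1$ with $R_1=O(|\eta_L|_{C^{1,\alpha}})$, and later allow terms that ``survive only multiplied by an $\eta_L$-derivative.'' But the asserted bound involves only $|d^*(\eta_1-\eta_0)|_{C^{0,\alpha}}$, $|d^*\eta_i|_{C^{0,\alpha}}$, $|\eta_{i,L}|_{C^{1,\alpha}}$ and $|\eta_{1,L}-\eta_{0,L}|_{C^{1,\alpha}}$; a term like $|d_L(f_1-f_0)|\cdot O(|\eta_{\ell,L}|_{C^{1,\alpha}})$ or $|d_Lf_\ell|\cdot O(|\eta_{1,L}-\eta_{0,L}|_{C^{1,\alpha}})$ is not dominated by these, since $d^*\eta$ sees only $\partial_u f$, not $d_Lf$. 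So an argument executed with your tolerance proves a strictly weaker inequality than the lemma, whose entire content is the asymmetry that the base component enters \emph{only} through $d^*$. What must be verified, and what the paper's final display in its proof records (only $\partial_u(f_1-f_0)$ and $\partial_u f_\ell$ appear there), is that every surviving occurrence of a base coefficient in the integrand of \eqref{eq:I} carries a $\partial_u$: the $d_Lf$-contributions have to be shown to vanish or cancel — by the degree argument ($d_Lf$ wedged with the fiber-top part of $f_{s\eta_\ell}^*\operatorname{Re}\Omega$ or $\operatorname{vol}_L$ is zero), by the vanishing of all $\partial_v$-contractions of forms pulled back from $X$, and by tracking the $du$-fiber components of the pulled-back fiber forms — rather than merely being declared quadratic. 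Pinning down that elimination term by term is the step your plan leaves open, and it is exactly the step on which the stated estimate hinges.
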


\begin{proof}
  By \eqref{eq:I}, the $0$-form part of $\mathrm{(I)}$ is given by
  \[
    \mathrm{(I)_0}
    &=
    * \int_0^1 \int_0^1
    f_{s\eta_\ell}^*d\iota(\widehat{V_\ell})d\iota(\widehat{V_1-V_0})\operatorname{Im}\Omega_{cyl}
    \,ds\,d\ell.
  \]  

  To specialize to the cylindrical case, note that
  \[
    \operatorname{Im}\Omega
    &=
    \operatorname{Im}(dy \wedge \Omega) \\
    &=
    \operatorname{Im}\left((du+i\,dv)
      \wedge (\operatorname{Re}\Omega +i\operatorname{Im}\Omega)\right) \\
    &=
    dv\wedge \operatorname{Re}\Omega + du \wedge \operatorname{Im}\Omega.
  \]
  Also note that
  \[
    \widehat V = -f\partial_v + \widehat{V_{S^2}},
  \]
  since the Calabi-Yau metric is a product, with one factor being Euclidean. Thus we have
  \[
    \iota(\widehat{V_1-V_0})\operatorname{Im}\Omega_{cyl}
    &=
    \iota(\widehat{V_1-V_0})(dv\wedge \operatorname{Re}\Omega
    + du\wedge\operatorname{Im}\Omega) \\
    &=
    -(f_1-f_0)\operatorname{Re}\Omega \\
    &\phantom{aa}-dv\wedge\iota(\widehat{V_{1,L}-V_{0,L}})\operatorname{Re}\Omega \\
    &\phantom{aa}-du\wedge\iota(\widehat{V_{1,L}-V_{0,L}})\operatorname{Im}\Omega.
  \]
  So
  \[
    d\iota(\widehat{V_1-V_0})\operatorname{Im}\Omega_{cyl}
    &=
    -d(f_1-f_0)\wedge\operatorname{Re}\Omega \\
    &\phantom{aa}+dv\wedge d\iota(\widehat{V_{1,L}-V_{0,L}})\operatorname{Re}\Omega \\
    &\phantom{aa}+du\wedge d\iota(\widehat{V_{1,L}-V_{0,L}})\operatorname{Im}\Omega.
  \]
  And so
  \[
    \iota(\widehat{V_\ell})d\iota(\widehat{V_1-V_0})\operatorname{Im}\Omega_{cyl}
    &=
    -d(f_1-f_0)\wedge \iota(\widehat{V_{\ell,L}}) \operatorname{Re}\Omega \\
    &\phantom{aa}-(f_1-f_0)d\iota(\widehat{V_{\ell,L}}) \operatorname{Re}\Omega \\
    &\phantom{aa}-dv\wedge
    \iota(\widehat{V_{\ell,L}})d\iota(\widehat{V_{1,L}-V_{0,L}})\operatorname{Re}\Omega \\
    &\phantom{aa}-du
    \wedge \iota(\widehat{V_{\ell,S^2}})d\iota(\widehat{V_{1,L}-V_{0,L}})\operatorname{Im}\Omega.
  \]
  Finally,
  \[
    df_{s\eta_\ell}^*\iota(\widehat{V_l})d\iota(\widehat{V_1-V_0})\operatorname{Im}\Omega_{cyl}
    &=
    \partial_u(f_1-f_0)du\wedge f_{s\eta_\ell}^*
    d\iota(\widehat{V_{\ell,L}}) \operatorname{Re}\Omega \\
    &\phantom{aa}-\partial_u(f_1-f_0)du
    \wedge f_{s\eta_\ell}^*d\iota(\widehat{V_{1,L}-V_{0,L}}) \operatorname{Re}\Omega  \\
    &\phantom{aa}-s\,\partial_uf_\ell du\wedge f_{s\eta_\ell}^*
    d\iota(\widehat{V_{\ell,L}})d\iota(\widehat{V_{1,L}-V_{0,L}})
    \operatorname{Re}\Omega \\
    &\phantom{aa}du\wedge f_{s\eta_\ell}^*
    d\iota(\widehat{V_{\ell,L}})d\iota(\widehat{V_{1,L}-V_{0,L}})
    \operatorname{Im}\Omega.
  \]
  From the above and the fact that $d^*(fdu)=-\partial_u f$, we
  conclude that
  \[
    |\mathrm{(I)_0}|_{C^{0,\alpha}}
    &\le C(1+|\eta_{1,L}|_{C^{1,\alpha}}+|\eta_{0,L}|_{C^{1,\alpha}}) \\
    &\phantom{aa}\cdot\left((|\eta_{1,L}|_{C^{1,\alpha}}+|\eta_{0,L}|_{C^{1,\alpha}})|d^*\eta_1-d^*\eta_0|_{C^{0,\alpha}}
    \right. \\
    &\phantom{aaa}+\left.(1+|d^*\eta_1|_{C^{0,\alpha}}+|d^*\eta_0|_{C^{0,\alpha}})(|\eta_{1,L}|_{C^{1,\alpha}}+|\eta_{0,L}|_{C^{1,\alpha}})|\eta_{1,L}-\eta_{0,L}|_{C^{1,\alpha}}\right).
  \]

  Similarly, for the $2$-form part of $\mathrm{(I)}$, we have
  \[
    \mathrm{(I)}_2 =
    d\int_0^1 \int_0^1
    &\left[
      f_{s\eta_\ell}^*\iota(\widehat{V_{\ell,L}})d\iota(\widehat{V_{1,L}-V_{0,L}})\omega
    \right] \,ds\,d\ell.
  \]
  From this we get the pointwise estimate
  \[
    |\mathrm{(I)}_2|_{C^{0,\alpha}} &\le
    C(1+|\eta_{1,L}|_{C^{1,\alpha}}+|\eta_{0,L}|_{C^{1,\alpha}}) \\
    &\phantom{aa}\cdot(|\eta_{1,L}|_{C^{1,\alpha}}
    +|\eta_{0,L}|_{C^{1,\alpha}})|\eta_{1,L}-\eta_{0,L}|_{C^{1,\alpha}}.
  \]

  To calculate $\mathrm{(II)}$, set $\eta = \eta_1-\eta_0$, write
  $\eta = fdu+\eta_L$, and so the dual vector is given by
  $V = -f\partial_v+V_L$. We see that
  \[
    *(L_V\operatorname{Im}\Omega_{cyl})|_{L_{cyl}}
    &= *(L_V(dv\wedge\operatorname{Re}\Omega + du\wedge\operatorname{Im}\Omega))|_{L_{cyl}} \\
    &= -\frac{\partial f}{\partial u} \cdot
    \left(\frac{\operatorname{Re}\Omega|_L}{\operatorname{vol}_L}\right) + *_LL_{V_L}\operatorname{Im}\Omega
  \]
  and that
  \[
    d^*\eta &= -\frac{\partial f}{\partial u}+d^*_L\eta_L.
  \]
  Combining the above two identities and applying
  Proposition~\ref{prop:dirac_error}, we obtain the estimate for
  $\mathrm{(II)}$.
\end{proof}

We now turn to the case that will be crucial for our construction in
Section~\ref{sec: main construction}. Suppose now that
$(X_{acyl},\omega_{acyl},\Omega_{acyl})$ is a Calabi-Yau manifold with
a holomorphic submersion $X_{acyl} \to \mathbb{C}$, whose fibers are
diffeomorphic to $X$. Thus topologically, $X_{acyl}$ is diffeomorphic
to $X_{cyl}$ defined before. Using the holomorphic coordinate $y$ on
$\mathbb{C}$, we can write $\Omega_{acyl} = dy \wedge \Omega_y$. The
difference between the Calabi-Yau metric $\omega_{acyl}$ and the
cylindrical one $\omega_{cyl}$ has three components:
\[
  \omega_{acyl}-\omega_{cyl} = (f,f) + (b,b) + (f,b),
\]
where $(f,f)$ stands for the fiber directions (along $X$), $(b,b)$
stands for the base directions (along $\mathbb{C}$), and $(f,b)$
stands for the mixed directions. We will abuse the notation and write
the components of $(f,f)$ and $(b,b)$ as $\epsilon_1$, those of
$(f,b)$ as $\epsilon_2$, and those of $\Omega_{acyl}-\Omega_{cyl}$ as
$\epsilon_3$.  Here we should think of
$\epsilon_1,\epsilon_2,\epsilon_3$ as functions with small norms.

Now we turn to the submanifold $L_{acyl} = \mathbb{R} \times L$, where
as before $L$ is a special Lagrangian submanifold in $X$. Let $V_{acyl}$
be the symplectic dual of the $1$-form
$\eta = fdu + \eta_L \in \Lambda^1(L_{acyl})$ on $X_{acyl}$, that is,
$\omega_{acyl}(V_{acyl},\cdot) = \eta$. Write
\[
  V_{acyl} = a\partial_u + b\partial_v + \bar{V}_L.
\]
Then the coefficients must satisfy
\[
  a(1+(b,b)(\partial_u,\partial_v)) + (f,b)(\bar V_L,\partial_v) &= 0 \\
  -b(1+(b,b)(\partial_u,\partial_v))+ (f,b)(\bar V_L,\partial_u) &= f \\
  \omega(\bar V_L,\cdot) + (f,b)(a\partial_u+b\partial_v, \cdot) &= \eta_L.
\]
These equations then imply that
\[
  a &= O(\epsilon_2|\eta_L|_{C^{1,\alpha}}), \\
  b &= (1+O(\epsilon_1))(-f) + O(\epsilon_2|\eta_L|_{C^1}), \\
  \bar{V}_L &= V_L +O(\epsilon_2|f|)+ O(\epsilon_2^2|\eta_L|).
\]
Taking orthogonal projection $V^{\perp}_{acyl}$, we may assume that
$a=0$.

We now make the following assumptions:
\begin{itemize}
\item $\|\eta_{i,L}\|_{C^{1,\alpha}}+\|d^*\eta_i\|_{C^{0,\alpha}} = \epsilon_4$,
\item $\|\epsilon_2\eta_i\|_{C^{1,\alpha}} \ll \epsilon_4$,
\item $\|\epsilon_3\eta_i\|_{C^{1,\alpha}}\|\eta_1-\eta_0\|_{C^{1,\alpha}}
  \le C\epsilon_4(\|d^*(\eta_1-\eta_0)\|_{C^{0,\alpha}}
    + \|\eta_{1,L}-\eta_{0,L}\|_{C^{1,\alpha}})$.
\end{itemize}
Here $0< \epsilon_4 \ll 1$. The following is the main result of this
section:

\begin{prop}\label{prop:acyl}
  If $(X_{acyl},\omega_{acyl},\Omega_{acyl})$ and $L_{acyl}$ satisfy
  the above assumptions involving
  $\epsilon_1,\epsilon_2,\epsilon_3,\epsilon_4$, then we have
  \[
    \mathrm{(I)} &\le
    C\epsilon_4(\|d^*(\eta_1-\eta_0)\|_{C^{0,\alpha}}
    + \|\eta_{1,L}-\eta_{0,L}\|_{C^{1,\alpha}}), \text{ and}\\
    \mathrm{(II)} &\le
    C(\|\epsilon_1\|_{C^{1,\alpha}}+\|\epsilon_3\|_{C^{1,\alpha}})(\|d^*(\eta_1-\eta_0)\|_{C^{0,\alpha}}
    + \|\eta_{1,L}-\eta_{0,L}\|_{C^{1,\alpha}}),
  \]
  where the constant $C>0$ depends on $(X,\omega,\Omega)$ and $L$.
\end{prop}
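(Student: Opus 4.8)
The plan is to reduce Proposition~\ref{prop:acyl} to the already-established cylindrical estimate, Lemma~\ref{lem:cyl}, by viewing $(X_{acyl},\omega_{acyl},\Omega_{acyl})$ as a perturbation of the product $(X_{cyl},\omega_{cyl},\Omega_{cyl})$ by the error terms $\epsilon_1,\epsilon_2,\epsilon_3$. The starting point is the integral formula for $\mathrm{(I)}$ from~\eqref{eq:I}, namely $\mathrm{(I)}_0 = -*\int_0^1\int_0^1 f_{s\eta_\ell}^* d\iota(\widehat{V_\ell})d\iota(\widehat{V_1-V_0})\operatorname{Im}\Omega_{acyl}\,ds\,d\ell$ and its $2$-form analogue with $\omega_{acyl}$, together with the identity $\mathrm{(II)} = E(\eta_1-\eta_0)$ where $E$ is as in Proposition~\ref{prop:dirac_error}. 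The idea is to carry out the same fiber/base splitting performed in the proof of Lemma~\ref{lem:cyl}, but now tracking the extra terms coming from $\omega_{acyl}-\omega_{cyl}$ and $\Omega_{acyl}-\Omega_{cyl}$.

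The key steps, in order, are as follows. First I would substitute the expansions for $a$, $b$, $\bar V_L$ in terms of $\epsilon_1,\epsilon_2$ and the components $f,\eta_L$ of $\eta$ (already derived just before the statement, using the linear algebra of the three displayed equations for the symplectic dual), and after the orthogonal projection set $a=0$. Second, I would write $\operatorname{Im}\Omega_{acyl} = \operatorname{Im}\Omega_{cyl} + \operatorname{Im}\epsilon_3$ and $\omega_{acyl}=\omega_{cyl}+\epsilon_1+\epsilon_2$, and expand the integrand of $\mathrm{(I)}$ accordingly: the ``main'' piece is exactly the cylindrical integrand of Lemma~\ref{lem:cyl} (with $V_L$ replaced by $\bar V_L = V_L + O(\epsilon_2|f|)+O(\epsilon_2^2|\eta_L|)$), and the remaining pieces each carry a factor of $\epsilon_1$, $\epsilon_2$, or $\epsilon_3$. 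Third, for the main piece I invoke the already-proven pointwise bound from Lemma~\ref{lem:cyl}, which controls it by $C(1+|\eta_{1,L}|_{C^{1,\alpha}}+|\eta_{0,L}|_{C^{1,\alpha}})$ times $(|\eta_{1,L}|_{C^{1,\alpha}}+|\eta_{0,L}|_{C^{1,\alpha}})|d^*(\eta_1-\eta_0)|_{C^{0,\alpha}}$ plus the analogous $\eta_{1,L}-\eta_{0,L}$ term; the three assumptions then say precisely that $\|\eta_{i,L}\|_{C^{1,\alpha}}+\|d^*\eta_i\|_{C^{0,\alpha}}=\epsilon_4\ll 1$, so the quadratic expression is $\le C\epsilon_4(\|d^*(\eta_1-\eta_0)\|_{C^{0,\alpha}} + \|\eta_{1,L}-\eta_{0,L}\|_{C^{1,\alpha}})$. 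Fourth, for the error pieces: those with an $\epsilon_2$ factor are $\ll \epsilon_4$ by the second assumption; those with an $\epsilon_3$ factor are absorbed by the third assumption; and those with an $\epsilon_1$ factor (which, being a fiber/base-block perturbation of the metric, behaves like a lower-order version of the $\omega|_L$ term) are likewise $O(\epsilon_4)$ since $\|\epsilon_1\|$ is small and each such term still carries one copy of $\eta_1-\eta_0$. For $\mathrm{(II)} = E(\eta_1-\eta_0)$, I apply the bound of Proposition~\ref{prop:dirac_error} directly; since $L_{acyl}$ is built from a special Lagrangian $L$ in a product, the only contributions to $\operatorname{Im}\Omega_{acyl}|_{L_{acyl}}$ and $\omega_{acyl}|_{L_{acyl}}$ come from the perturbations, giving $\|\operatorname{Im}\Omega_{acyl}|_{L_{acyl}}\|_{C^{1,\alpha}} + \|\omega_{acyl}|_{L_{acyl}}\|_{C^{1,\alpha}} \le C(\|\epsilon_1\|_{C^{1,\alpha}}+\|\epsilon_3\|_{C^{1,\alpha}})$, and the $C^{1,\alpha}$-norm of $\eta_1-\eta_0$ is controlled by $\|d^*(\eta_1-\eta_0)\|_{C^{0,\alpha}}+\|\eta_{1,L}-\eta_{0,L}\|_{C^{1,\alpha}}$ via the base/fiber decomposition as in Lemma~\ref{lem:cyl}.

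The main obstacle I anticipate is the careful bookkeeping in the second and fourth steps: the mixed term $\epsilon_2=(f,b)$ couples the base and fiber directions, so when it multiplies a factor of $\partial_u$-differentiation it can convert a $|\eta_L|_{C^{1,\alpha}}$ into something involving $d^*\eta$ or vice versa, and one must check that every such term still carries, after the telescoping in $s$ and $\ell$, at least one factor of $(\eta_1-\eta_0)$ measured in the \emph{correct} norm $\|d^*(\eta_1-\eta_0)\|_{C^{0,\alpha}}+\|\eta_{1,L}-\eta_{0,L}\|_{C^{1,\alpha}}$ rather than a naive $C^{1,\alpha}$-norm of $\eta_1-\eta_0$ (which would be too weak near the ``ends'' where $d^*\eta$ is small but $\partial_v$-derivatives are not). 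The three hypotheses on $\epsilon_2,\epsilon_3,\epsilon_4$ are exactly tailored so that this works, so the proof amounts to verifying that each term produced by the expansion falls into one of the three buckets; I would organize this as a short case check rather than writing out every term.
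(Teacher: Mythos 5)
Your overall strategy---viewing $(\omega_{acyl},\Omega_{acyl})$ as a perturbation of the cylindrical structure, feeding the expansions of the symplectic dual $(a,b,\bar V_L)$ into the integral formula for $\mathrm{(I)}$, invoking Lemma~\ref{lem:cyl} for the main cylindrical piece, and absorbing the $\epsilon_2$- and $\epsilon_3$-errors via the second and third assumptions (the $\epsilon_3$-error being handled exactly by applying \eqref{eq:TermI} with $\operatorname{Im}\Omega$ replaced by $\operatorname{Im}\Omega_{acyl}-\operatorname{Im}\Omega_{cyl}$)---is the same as the paper's argument, and your treatment of $\mathrm{(I)}$ is sound at the paper's level of detail.

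However, your step for $\mathrm{(II)}$ rests on a false inequality: you assert that $\|\eta_1-\eta_0\|_{C^{1,\alpha}}$ is controlled by $\|d^*(\eta_1-\eta_0)\|_{C^{0,\alpha}}+\|\eta_{1,L}-\eta_{0,L}\|_{C^{1,\alpha}}$, which already fails for $\eta_1-\eta_0=c\,du$ with $c$ a nonzero constant (the right-hand side vanishes while the left does not). Hence you cannot obtain the stated bound for $\mathrm{(II)}$ by quoting Proposition~\ref{prop:dirac_error} wholesale, since its right-hand side carries the full $C^{1,\alpha}$ norm of the $1$-form. The fix is the one your own ``obstacle'' paragraph points to, and the one implicit in the paper's ``similar and much easier'' remark: redo the pointwise computation of $\mathrm{(II)}$ as in the proof of Lemma~\ref{lem:cyl}, where the identity $*(L_V\operatorname{Im}\Omega_{cyl})|_{L_{cyl}}=-\partial_u f\,\bigl(\operatorname{Re}\Omega|_L/\operatorname{vol}_L\bigr)+*_LL_{V_L}\operatorname{Im}\Omega$, combined with $d^*\eta=-\partial_uf+d^*_L\eta_L$, shows that only $d^*(\eta_1-\eta_0)$ and $\eta_{1,L}-\eta_{0,L}$---never $f_1-f_0$ itself---enter the cylindrical part; one must then check, rather than assert, that the perturbation terms (for instance contributions of the schematic form $f\,d\iota(\partial_v)\epsilon_3$ restricted to $L_{acyl}$, or the $(1+O(\epsilon_1))$-correction to the base component $b$) either retain this structure or are absorbed by the stated assumptions. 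As written, that verification is missing, and it is precisely where the factor $\|\epsilon_1\|_{C^{1,\alpha}}+\|\epsilon_3\|_{C^{1,\alpha}}$ in the bound for $\mathrm{(II)}$ has to come from.
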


\begin{proof}
  First we remark that the lift $\widehat{V_{acyl}}$ using the
  exponential map on $X_{acyl}$ is roughly the same as that of using
  the exponential map on $X_{cyl}$, and the errors are negligible
  comparing to the errors introduced by taking the symplectic duals of
  $1$-forms with respect to two symplectic forms $\omega_{acyl}$ and
  $\omega_{cyl}$. Let us first look at Term
  $\mathrm{(I)_0}$. Replacing $\operatorname{Im}\Omega_{acyl}$ by
  $\operatorname{Im}\Omega_{cyl}$ in $\mathrm{(I)_0}$ and using
  Lemma~\ref{lem:cyl} and the assumptions above, we see that
  $\mathrm{(I)_0}$ satisfies the required estimate. The error of doing
  so comes from
  $\operatorname{Im}\Omega_{acyl}-\operatorname{Im}\Omega_{cyl} =
  \epsilon_3$. Applying \eqref{eq:TermI} to the form with
  $\operatorname{Im}\Omega$ replaced by
  $\operatorname{Im}\Omega_{acyl}-\operatorname{Im}\Omega_{cyl}$, we
  see that it is bounded in $C^{0,\alpha}$ by
  $\|\epsilon_3(|f|^2+|\eta_L^2|)|\|_{C^{1,\alpha}}$, which in turn is
  much smaller than $\epsilon_4^2$ by assumption. We can then conclude
  the estimate for $\mathrm{(I)_0}$. The estimate for $\mathrm{(I)_2}$
  follows from a similar calculation and the assumptions, and is
  easier. Finally, the estimate for $\mathrm{(II)}$ follows from a
  similar calculation and the assumptions, and is much easier than the
  estimate for $\mathrm{(I)}$. Therefore we omit the details.
\end{proof}

\section{Calabi-Yau metrics on K3-fibered Calabi-Yau 3-folds}
\label{sec:CY3}

let $X$ be a compact Calabi-Yau 3-fold with a Lefschetz K3 fibration
$\pi: X \to Y$, where the base $Y = \mathbb{P}^1$. For notational
simplicity, without loss of generality we assume that each singular
fiber contains exactly one singular point. Let $\Omega$ be a
holomorphic volume form on $X$, and fix reference K\"ahler metrics
$\omega_X$ on $X$ and $\omega_Y$ on $Y$. Without loss of generality,
we normalize $\Omega$, $\omega_X$ and $\omega_Y$ so that
\[
  \int_X \sqrt{-1}\Omega \wedge \overline \Omega = 1, \:\:\:\:
  \int_Y \omega_Y = 1, \:\:\:\:  \int_{X_y} \omega_X^2 = 1
\]
for all $y \in Y$. For sufficiently $t>0$, consider the K\"ahler class
\[
  [\omega_X]+\frac{1}{t}\pi^*[\omega_Y] \in H^2(X, \mathbb{R}),
\]
and denote $\tilde\omega_t$ the unique Calabi-Yau metric in it.

In this section, we refine the construction of Calabi-Yau metrics
$\tilde\omega_t$ on $X$, building upon Y. Li’s work in \cite{Li19}. The
original result, \cite[Theorem~4.1]{Li19}, introduces substantial
initial errors in our gluing construction, as the weighted spaces used
there do not offer precise enough control over the metric behavior in
the transition from singular fibers to the semi-Ricci flat region.

On the other hand, Hein-Tosatti~\cite{HT21} obtained smooth
asymptotics of the Calabi-Yau metrics $\tilde\omega_t$ at a fixed
distance from the singular fibers with respect to the background
metric $\omega_X$, which seems to provide better control in these
regions compared to \cite[Theorem~4.1]{Li19}.

Even when combining the results of \cite{Li19} and \cite{HT21},
additional refined estimates are still required in the transition
region. Our approach introduces a new weight away from the singular
fibers that is compatible with the weighted norms from \cite{Li19} in
the transition region. This method bridges the gap between the two
previous results.

The main result of this section is Theorem~\ref{thm:improved}
below. See also Propositions \ref{prop:error1} and \ref{prop:error2}
for applications of the improved estimates. We note that the necessary
estimates, prior to being reformulated in terms of weighted norms,
were already obtained in \cite[Section~2]{Li19}. Therefore, we will
focus on explaining the modifications necessary to adapt Li’s results
to our new weighted norms.

\subsection{Calabi-Yau metrics on $\mathbb{C}^3$}
\label{subsec:c3}

A main ingredient of Li's gluing construction is a Calabi-Yau metric
$\omega_{\mathbb{C}^3}$ on $\mathbb{C}^3$, whose tangent cone at
infinity is given by the Calabi-Yau cone
$\mathbb{C} \times \mathbb{C}^2/\mathbb{Z}_2$. This Calabi-Yau metric
$\omega_{\mathbb{C}^3}$ was constructed independently by
Y. Li~\cite{Li19'}, Sz\'ekelyhidi~\cite{Sz19} and
Conlon-Rochon~\cite{CR21} using similar but different
techniques. Li~\cite[Remark~9]{Li19} observed that this metric is
unique within its ``asymptotic class''.  Sz\'ekelyhidi~\cite{Sz20}
later proved that such metric is unique up to automorphism of
$\mathbb{C}^3$ and scaling.

Following the exposition in \cite{Li19}, let $z_1, z_2, z_3$ be the
coordinates of $\mathbb{C}^3$. Define the functions
\[
  R &= (|z_1|^2 + |z_2|^2 + |z_3|^2)^{\frac{1}{4}}, \\
  \tilde y &= z_1^2 + z_2^2 + z_3^2, \\
  \rho &= \sqrt{|\tilde y|^2+\sqrt{R^4+1}}.
\]
Then the map $\tilde y: \mathbb{C}^3 \to \mathbb{C}$ defines the
standard Lefschetz fibration. Alternatively, we can think of
$\mathbb{C}^3$ as a hypersurface in $\mathbb{C}^4$. Note that the
central fiber $\{ z_1^2 + z_2^2 + z_3^2 = 0\}$ is biholomorphic to
$\mathbb{C} \times \mathbb{C}^2/\mathbb{Z}_2$ equipped with the flat
cone metric $\sqrt{-1}\partial\bar\partial R^2$. Any smooth fiber is
biholomorphic to
$\{ 1 = z_1^2 + z_2^2 + z_3^3 \} \subset \mathbb{C}^3$, which admits
the Eguchi-Hanson metric
$\omega_{EH} = \sqrt{-1}\partial\bar\partial \sqrt{R^4+1}$.

The Calabi-Yau metric
$\omega_{\mathbb{C}^3} = \sqrt{-1}\partial\bar\partial
\phi_{\mathbb{C}^3}$ has normalization
\[
  \omega_{\mathbb{C}^3}^3 = \frac{3}{2}\prod_{i=1}^3 \sqrt{-1} dz_i \wedge d\bar z_i,
\]
whose asymptotics at infinity is given by
\[
  \phi_\infty = \frac{1}{2}|\tilde y|^2 + \sqrt{R^4+\rho}, \:\:\:
  \phi_{\mathbb{C}^3} = \phi_\infty + \phi'_{\mathbb{C}^3}.
\]

For $K \gg 0$, the function $\rho$ is uniformly equivalent to the
distance function to the origin on $\{\rho > K\}$. Thus the metric
$\omega_{\mathbb{C}^3}$ admits three distinct behaviors, together with
transition behaviors in the overlapping regions. In the large compact
set $\{\rho < K\}$, the metric is uniformly equivalent to a Euclidean
metric. When $\rho > K$ but $R \lesssim \rho^{\frac{1}{4}}$, we are
near the vanishing cycles $\{R = |\tilde y|^{\frac{1}{4}}\}$.  Note
that each vanishing cycle is represented by a special Lagrangian $S^2$
with respect to the rescaled Eguchi-Hanson metric in the fiber. In
this region, the Calabi-Yau metric $\omega_{\mathbb{C}^3}$ is modeled
on the warped product metric
$\sqrt{-1}\partial\bar\partial |\tilde y|^2 + |\tilde
y|^{\frac{1}{2}}\omega_{EH}$. When $\rho > K$ but $R \gtrsim \rho$,
$\omega_{\mathbb{C}^3}$ is modeled on
$\sqrt{-1}\partial\bar\partial \left(|\tilde y|^2 + R^2\right)$, the
product cone metric on $\mathbb{C} \times \mathbb{C}^2/\mathbb{Z}_2$.

The actual analysis for estimating the error term
$\phi'_{\mathbb{C}^3}$ thus requires the doubly weighted spaces
$C^{k,\alpha}_{\delta,\tau}(\mathbb{C}^3,\omega_{\mathbb{C}^3})$ first
introduced in \cite{Sz19}. Let $\kappa > 0$ be a fixed small
number. Define a weight function $w$ by
\[
  w =
  \begin{cases}
    1 & \text{if $R \ge 2\kappa \rho$}, \\
    R/(\kappa\rho) & \text{if $R \in (\kappa^{-1}\rho^{\frac{1}{4}}, \kappa\rho)$}, \\
    \kappa^{-2}\rho^{-\frac{3}{4}} & \text{if $R < \frac{1}{2}\kappa^{-1}\rho^{\frac{1}{4}}$}.
  \end{cases}
\]
We now define the H\"older seminorm of a tensor $T$ by
\[
  [T]_{0,\alpha} =
  \sup_{\rho(z) > K} \rho(z)^\alpha w(z)^\alpha
  \sup_{z \ne z', z' \in B(z, c)} \frac{|T(z)-T(z')|}{d(z,z')^\alpha}.
\]
Here $c>0$ is such that the ball $B(z,c)$ has bounded geometry and is
geodesically convex. The expression $|T(z)-T(z')|$ is defined by
parallel transporting $T(z')$ along a geodesic connecting $z$ and
$z'$.

The weighted norm of a function $f$ is then defined by
\[
  \|f\|_{C^{k,\alpha}_{\delta,\tau}}
  &=
  \|f\|_{C^{k,\alpha}(\rho < 2K)}
  +
  \sum_{j=0}^k \sup_{\rho > K} \rho^{-\delta+j}w^{-\tau+j}|\nabla^j f| \\
  &\phantom{aa}+
  [\rho^{-\delta+k}w^{-\tau+k}\nabla^kf]_{0,\alpha}.
\]

Equivalently, the weighted norms can also be defined as
\[
  \|f\|_{C^{k,\alpha}_{\delta,\tau}}
  =
  \|\rho^{-\delta}w^{-\tau}f\|_{C^{k,\alpha}_{\rho^{-2}w^{-2}\omega_{\mathbb{C}^3}}}.
\]

Similar weighted norms can also be defined for tensors. For example,
for a $2$-form $\omega$, one can define
\[
  \|\omega\|_{C^{k,\alpha}_{\delta-2,\tau-2}}
  =
  \|\rho^{-\delta}w^{-\tau}\omega\|_{C^{k,\alpha}_{\rho^{-2}w^{-2}\omega_{\mathbb{C}^3}}}.
\]

The error term $\phi'_{\mathbb{C}^3}$ can then be estimated as
\[\label{eq:c3error}
  \|\phi'_{\mathbb{C}^3}\|_{C^{k,\alpha}_{\delta,0}}\leq C(k,\alpha,\delta)
\]
for all $\delta' > -1$ (see, e.g., \cite[Eq. (15)]{Li19}). By unwinding
the definition of the weighted norms, we see that
\[\label{eq:c3error}
  |\nabla^k\phi'_{\mathbb{C}^3}|_{\omega_{\mathbb{C}^3}} \le  C(k,\alpha,\delta')\rho^{\delta'-\frac{k}{4}}
\]
for any $-1 < \delta' < 0$. In particular, $\phi'_{\mathbb{C}^3}$ has
at most subquadratic polynomial growth. This enables us to prove the
following:

\begin{prop}
  \label{prop:antiholo}
  Let $\iota: \mathbb{C}^3 \to \mathbb{C}^3$ be the anti-holomorphic
  involution $\iota(z_1, z_2, z_3) = (\bar z_1, \bar z_2, \bar
  z_3)$. Then $\iota^*\omega_{\mathbb{C}^3} =
  -\omega_{\mathbb{C}^3}$. In particular, the real locus
  \[
  L_{\mathbb{R}^3} = \{ (\tilde y, z_1, z_2, z_3) \in \mathbb{C}^4
  \mid \tilde y, z_i \in \mathbb{R}, \: \tilde y = z_1^2 + z_2^2 +
  z_3^2\}
  \]
  is a special Lagrangian submanifold in
  $(\mathbb{C}^3,\omega_{\mathbb{C}^3})$.
\end{prop}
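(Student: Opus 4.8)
The plan is to verify the three hypotheses of the elementary criterion for fixed loci of anti-holomorphic anti-symplectic involutions recalled above, namely $\iota^2=\operatorname{Id}$, $\iota^*\omega_{\mathbb{C}^3}=-\omega_{\mathbb{C}^3}$, and $\iota^*\Omega=\overline\Omega$, and then read off the conclusion. Here $\Omega$ is the holomorphic volume form compatible with $\omega_{\mathbb{C}^3}$ via \eqref{eq:normalization}; since \eqref{eq:normalization} with the normalization $\omega_{\mathbb{C}^3}^3=\tfrac32\prod_i\sqrt{-1}\,dz_i\wedge d\bar z_i$ forces $\Omega$ to be a constant multiple of $dz_1\wedge dz_2\wedge dz_3$, after fixing its phase to be real (which we may, as this only shifts the special Lagrangian phase) we get $\iota^*\Omega=\overline{\Omega}$ at once, and $\iota^2=\operatorname{Id}$ is obvious. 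Granting $\iota^*\omega_{\mathbb{C}^3}=-\omega_{\mathbb{C}^3}$, the fixed locus $\operatorname{Fix}(\iota)=\{z\in\mathbb{C}^3: z_1,z_2,z_3\in\mathbb{R}\}=\mathbb{R}^3$ is connected and totally real, and in the hypersurface model $\{\tilde y=z_1^2+z_2^2+z_3^2\}\subset\mathbb{C}^4$ it is exactly $L_{\mathbb{R}^3}$; so the criterion makes it a single smooth special Lagrangian submanifold. This reduces the whole statement to proving $\iota^*\omega_{\mathbb{C}^3}=-\omega_{\mathbb{C}^3}$.

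For this I would argue by uniqueness. For any smooth function $f$, anti-holomorphy of $\iota$ gives $\iota^*(\ddb f)=-\ddb(f\circ\iota)$, so $\omega':=-\iota^*\omega_{\mathbb{C}^3}=\ddb(\phi_{\mathbb{C}^3}\circ\iota)$ is a closed real $(1,1)$-form. It is positive: writing $J$ for the standard complex structure of $\mathbb{C}^3$, anti-holomorphy gives $\iota_*J=-J\iota_*$, hence the Riemannian metric attached to $(\omega',J)$ equals the pullback $\iota^*g_{\mathbb{C}^3}$, so $\iota$ is a Riemannian isometry of $(\mathbb{C}^3,\omega')$ onto $(\mathbb{C}^3,\omega_{\mathbb{C}^3})$. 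Consequently $\omega'$ is a complete Ricci-flat K\"ahler metric on $\mathbb{C}^3$ with the same tangent cone $\mathbb{C}\times\mathbb{C}^2/\mathbb{Z}_2$ at infinity, and pulling the Monge--Amp\`ere normalization back through $\iota$ (using $\iota^*(\prod_i\sqrt{-1}\,dz_i\wedge d\bar z_i)=-\prod_i\sqrt{-1}\,dz_i\wedge d\bar z_i$) shows $(\omega')^3=\tfrac32\prod_i\sqrt{-1}\,dz_i\wedge d\bar z_i$, the same normalization.

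It then remains to show that $\omega'$ lies in the same asymptotic class as $\omega_{\mathbb{C}^3}$, so that uniqueness applies. The point is that the building blocks of the model are $\iota$-invariant: $R$ and $\rho$ are invariant, and $\tilde y\circ\iota=\overline{\tilde y}$, so $|\tilde y|$ and the weight $w$ are invariant too; hence $\phi_\infty\circ\iota=\phi_\infty$, the model metric $\ddb\phi_\infty$ satisfies $-\iota^*(\ddb\phi_\infty)=\ddb\phi_\infty$, and its associated Riemannian metric $g_\infty:=\ddb\phi_\infty(\cdot,J\cdot)$ is $\iota$-invariant. Thus $\omega'=\ddb\phi_\infty+\ddb(\phi'_{\mathbb{C}^3}\circ\iota)$ has the same leading term; and since $\ddb\phi'_{\mathbb{C}^3}$ decays relative to $g_{\mathbb{C}^3}\sim g_\infty$ with the polynomial rate recorded in \eqref{eq:c3error} (expressed in the $\iota$-invariant function $\rho$), its $\iota$-pullback $\ddb(\phi'_{\mathbb{C}^3}\circ\iota)$ decays at the same rate relative to the $\iota$-invariant model metric $g_\infty$, i.e. $\phi'_{\mathbb{C}^3}\circ\iota$ sits in the same weighted H\"older space. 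By the uniqueness of $\omega_{\mathbb{C}^3}$ within its asymptotic class (\cite[Remark~9]{Li19}; alternatively \cite{Sz20}, uniqueness up to automorphism of $\mathbb{C}^3$ and scaling, the scaling being pinned down by the normalization and the automorphism confined to symmetries of $\omega_{\mathbb{C}^3}$ since it must fix the singular fibre $\{\tilde y=0\}=\{\overline{\tilde y}=0\}$ of the Lefschetz fibration), we conclude $\omega'=\omega_{\mathbb{C}^3}$, i.e. $\iota^*\omega_{\mathbb{C}^3}=-\omega_{\mathbb{C}^3}$.

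The step I expect to be the main obstacle is precisely this last one: keeping the weighted-norm bookkeeping honest, since the weighted H\"older norms are defined through the covariant derivative and distance function of the metric under comparison, so one must make sure that the decay of $\phi'_{\mathbb{C}^3}\circ\iota$ --- which transports cleanly through the isometry $\iota$ when measured in the weighted norm of $\omega'$ --- genuinely translates into decay in the weighted norm of $\omega_{\mathbb{C}^3}$. That $g_\infty$ is $\iota$-invariant and that both $\omega_{\mathbb{C}^3}$ and $\omega'$ are uniformly equivalent to $g_\infty$ away from a compact set (so their Levi--Civita connections differ by a decaying tensor) is what makes this transfer go through, at worst after shrinking the weight within the admissible range. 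If one prefers to bypass this entirely, invoking \cite{Sz20} from the outset avoids it, since ``complete Calabi--Yau with a prescribed tangent cone at infinity'' is an isometry-invariant hypothesis that $\omega'$ manifestly satisfies.
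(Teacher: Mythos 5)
Your proposal is correct, and its skeleton is the same as the paper's: both consider $\omega' := -\iota^*\omega_{\mathbb{C}^3}$, note that it solves the same normalized Monge--Amp\`ere equation, use the $\iota$-invariance of $\phi_\infty$ and $\rho$, and conclude $\omega'=\omega_{\mathbb{C}^3}$ by a uniqueness statement. The difference is which uniqueness statement. The paper never needs membership of $\phi'_{\mathbb{C}^3}\circ\iota$ in a weighted H\"older space: it writes $\omega'-\omega_{\mathbb{C}^3}=\sqrt{-1}\partial\bar\partial\bigl(\phi'_{\mathbb{C}^3}\circ\iota-\phi'_{\mathbb{C}^3}\bigr)$, observes that this potential difference has subquadratic (indeed decaying) growth --- which uses only the $C^0$ content of \eqref{eq:c3error} together with $\rho\circ\iota=\rho$, so none of the derivative/norm-transfer bookkeeping you flag as the main obstacle --- and then quotes the Liouville-type theorem \cite[Theorem~1.3]{CSz} to get that the difference vanishes. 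Your route instead places $\omega'$ in the same asymptotic class and invokes uniqueness within that class (\cite[Remark~9]{Li19}); this works, and your verification (invariance of $R,\rho,w,\phi_\infty$, uniform equivalence to the model at infinity) is the right way to make it honest, but it is precisely what forces the careful transfer of weighted norms that the paper's choice of uniqueness input sidesteps. One caveat: your suggested ``cleaner'' alternative via \cite{Sz20} is not actually a free bypass, since uniqueness there holds only up to biholomorphism and scaling; the scaling is pinned by $(\omega')^3=\omega_{\mathbb{C}^3}^3$, but your one-line reduction of the biholomorphism (``it must fix the singular fibre $\{\tilde y=0\}$'') is unjustified --- a biholomorphism intertwining the two metrics has no a priori reason to respect the fibration $\tilde y$ --- so as stated that variant has a gap, whereas your primary argument does not. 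Your handling of the ``in particular'' clause ($\iota^2=\operatorname{Id}$, $\iota^*\Omega=\overline\Omega$ for the constant-coefficient volume form, and $\operatorname{Fix}(\iota)=L_{\mathbb{R}^3}$, fed into the criterion of Section~2.1) is exactly what the paper leaves implicit.
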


\begin{proof}
  First we note that the leading asymptotic $\phi_\infty$ and the
  distance function $\rho$ are both invariant under $\iota$. Consider
  the Calabi-Yau metric $-\iota^*\omega_{\mathbb{C}^3}$ which
  satisfies
  $(-\iota^*\omega_{\mathbb{C}^3})^3 = \omega_{\mathbb{C}^3}^3$. Note
  that
  \[
    -\iota^*\omega_{\mathbb{C}^3} - \omega_{\mathbb{C}^3}
    =
    \sqrt{-1}\partial\bar\partial (\phi_{\mathbb{C}^3}\circ \iota - \phi_{\mathbb{C}^3})
    =
    \sqrt{-1}\partial\bar\partial (\phi'_{\mathbb{C}^3}\circ \iota - \phi'_{\mathbb{C}^3}).
  \]
  Since $\phi'_{\mathbb{C}^3}\circ \iota - \phi'_{\mathbb{C}^3}$ has
  subquadratic polynomial growth, by \cite[Theorem~1.3]{CSz}, we have
  $\phi'_{\mathbb{C}^3}\circ \iota - \phi'_{\mathbb{C}^3} = 0$. So
  $\iota^*\omega_{\mathbb{C}^3} = -\omega_{\mathbb{C}^3}$.
\end{proof}

Proposition~\ref{prop:antiholo} can also be proved using elementary
methods such as integration by parts, given that the error
$\phi'_{\mathbb{C}^3}$ actually decays at infinity. The same method
can also be applied to show that for each Calabi-Yau metric
constructed in \cite{Sz19} and \cite{C22}, there exists a special
Lagrangian submanifold.

Before moving on, we take the opportunity to discuss horizontal
differentiation. For details, see \cite[p.1021]{Li19}. Near the
thimble $L_{\mathbb{R}^3}$, the Calabi-Yau metric
$\omega_{\mathbb{C}^3}$ is roughly the warped product metric
$\frac{\sqrt{-1}}{2}\partial\bar\partial |\tilde y|^2 + |\tilde
y|^{1/2}\omega_{EH}$. Thus fiberwise differentiation
introduces a factor of $|\tilde y|^{-1/4}$. However, the horizontal
differentiation introduces a factor of $|\tilde y|^{-1}$, due to the
fact that a lift of $\partial_{\tilde y}$ with respect to the
Euclidean metric is given by
\[
  \sum_i \frac{\bar z_i}{2|z|^2}\partial_{z_i}.
\]
An application of this principle which is crucial to this paper is
the following:

\begin{prop}\label{prop:error0}
  Fix $-1 < \delta' < 0$. For $\rho > 0$ sufficiently large and
  $R < \kappa^{-1}\rho^{1/4}$ (i.e., we are in vicinity of the
  vanishing cycles or equivalently close to $L_{\mathbb{R}^3}$), write
  \[
    \omega_{\mathbb{C}^3}-(\omega_{\mathbb{C}}+R^{2}\omega_{EH}) =
    (f,f) + (b,b) + (f,b),
  \]
  where $\omega_{\mathbb{C}}$ is the Euclidean metric on
  $\mathbb{C}_{\tilde y}$, $(f,f)$ stands for the fiber directions,
  $(b,b)$ stands for the base directions, and $(f,b)$ stands for the
  mixed directions. Then we have
  \[
    |(f,f)|_{C^{k,\alpha}(\omega_{\mathbb{C}^3})}, |(b,b)|_{C^{k,\alpha}(\omega_{\mathbb{C}^3})}
    &\le
    C(k) \rho^{\delta'-\frac{1}{2}-\frac{k}{4}}, \\
    |(f,b)|_{C^{k,\alpha}(\omega_{\mathbb{C}^3})}
    &\le
    C(k)\rho^{\delta'-\frac{5}{4}-\frac{k}{4}}.
  \]
\end{prop}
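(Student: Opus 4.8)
The plan is to reduce everything to the already-known estimate \eqref{eq:c3error} for the error potential $\phi'_{\mathbb{C}^3}$, namely $|\nabla^k\phi'_{\mathbb{C}^3}|_{\omega_{\mathbb{C}^3}} \le C(k,\alpha,\delta')\rho^{\delta'-k/4}$, together with the dichotomy between fiberwise and horizontal differentiation recalled just before the statement. The key observation is that near the vanishing cycles (where $R < \kappa^{-1}\rho^{1/4}$), the model metric $\omega_{\mathbb{C}} + R^2\omega_{EH}$ is exactly the leading term of the warped product $\frac{\sqrt{-1}}{2}\partial\bar\partial|\tilde y|^2 + |\tilde y|^{1/2}\omega_{EH}$ up to errors that are themselves controlled (here $R^2 \sim |\tilde y|^{1/2}$ in this region), so that $\omega_{\mathbb{C}^3} - (\omega_{\mathbb{C}} + R^2\omega_{EH}) = \sqrt{-1}\partial\bar\partial\phi'_{\mathbb{C}^3}$ up to lower-order terms coming from the difference between $\phi_\infty$ and the warped potential. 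I would first make this identification precise: in the region $\rho > K$, $R < \kappa^{-1}\rho^{1/4}$, one has $\rho \sim |\tilde y|$ (since the $\sqrt{R^4+1}$ contribution to $\rho$ is dominated by $|\tilde y|^2$ there), and $\phi_\infty = \frac{1}{2}|\tilde y|^2 + \sqrt{R^4+\rho}$ differs from the warped potential by a term whose Hessian is of the same order or smaller than that of $\phi'_{\mathbb{C}^3}$.

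Next I would extract the size of the Hessian blocks. Since $\phi'_{\mathbb{C}^3}$ has $|\nabla^k\phi'_{\mathbb{C}^3}|_{\omega_{\mathbb{C}^3}} \le C\rho^{\delta'-k/4}$, taking $k$ derivatives of the Hessian $\sqrt{-1}\partial\bar\partial\phi'_{\mathbb{C}^3}$ (a $2$-tensor) gives $|\nabla^k(\sqrt{-1}\partial\bar\partial\phi'_{\mathbb{C}^3})|_{\omega_{\mathbb{C}^3}} \le C\rho^{\delta'-(k+2)/4} = C\rho^{\delta'-1/2-k/4}$. This already produces the stated bound for the $(f,f)$ and $(b,b)$ blocks. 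For the $(f,b)$ mixed block I would invoke the horizontal-differentiation principle: one mixed derivative pairs a fiber direction against the horizontal lift $\partial_{\tilde y}$, and the recalled computation shows that horizontal differentiation carries a factor of $|\tilde y|^{-1} \sim \rho^{-1}$ rather than the fiberwise $|\tilde y|^{-1/4} \sim \rho^{-1/4}$. Thus the mixed components of any Hessian carry an extra $\rho^{-1}\cdot\rho^{1/4} = \rho^{-3/4}$ relative gain over a naive fiber-fiber estimate; applied to $\sqrt{-1}\partial\bar\partial\phi'_{\mathbb{C}^3}$ this turns $\rho^{\delta'-1/2-k/4}$ into $\rho^{\delta'-1/2-3/4-k/4} = \rho^{\delta'-5/4-k/4}$, matching the claim. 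The same weight-counting must also be checked for the auxiliary error terms ($\phi_\infty$ versus the warped potential, and the replacement $R^2\omega_{EH}$ versus $|\tilde y|^{1/2}\omega_{EH}$), but these only contribute at the same or better order, so they do not affect the final exponents.

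The main obstacle I expect is bookkeeping the horizontal-differentiation gain carefully enough to be sure the $(f,b)$ block genuinely decays one full power of $\rho^{-3/4}$ faster than the diagonal blocks, rather than some intermediate rate. Concretely, one must verify that every term in $\omega_{\mathbb{C}^3} - (\omega_{\mathbb{C}} + R^2\omega_{EH})$ whose mixed component could be large — in particular terms arising from differentiating the warped-product structure itself, not just from $\phi'_{\mathbb{C}^3}$ — also obeys the sharper mixed bound. This is where the precise form of the horizontal lift $\sum_i \frac{\bar z_i}{2|z|^2}\partial_{z_i}$ matters: its pairing with a fiber vector field (which is tangent to the level set of $\tilde y$, hence roughly orthogonal to $\sum_i \bar z_i \partial_{z_i}$) produces an additional cancellation that is responsible for the extra decay. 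I would make this explicit by choosing adapted coordinates (the fiber Eguchi–Hanson coordinates together with $\tilde y$) and writing the metric blocks in those coordinates, where the estimates of \cite[Section~2]{Li19} are essentially stated, and then simply converting to the weighted norms. Once the coordinate picture is set up, the remaining steps are routine power counting in $\rho$.
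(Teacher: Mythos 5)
Your proposal is correct and follows essentially the same route as the paper: identify $\omega_{\mathbb{C}^3}-(\omega_{\mathbb{C}}+R^2\omega_{EH})$ with $\sqrt{-1}\partial\bar\partial\phi'_{\mathbb{C}^3}$ up to controlled terms, read off the $(f,f)$ and $(b,b)$ bounds from the weighted estimate \eqref{eq:c3error} (two derivatives at cost $\rho^{-1/4}$ each), and gain the extra $\rho^{-3/4}$ for the $(f,b)$ block by replacing one fiber derivative with a horizontal one at cost $\rho^{-1}$. Your additional verification that the discrepancy between $\phi_\infty$, the warped potential, and the model $\omega_{\mathbb{C}}+R^2\omega_{EH}$ only contributes at the same or better order is a point the paper's proof leaves implicit, and your power counting for it is consistent with the estimate $\nabla^k(\phi_\infty-\phi_{EH\times\mathbb{C}})=O(\rho^{-1-k})$ used later in the paper.
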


\begin{proof}
  By \eqref{eq:c3error}, we have
  $\omega_{\mathbb{C}^3}-(\omega_{\mathbb{C}}+R^{2}\omega_{EH})
  = \sqrt{-1}\partial\bar\partial \phi'_{\mathbb{C}^3}$ with
  $\|\phi'_{\mathbb{C}^3}\|_{C^{k,\alpha}_{\delta',0}} \le
  C(k,\alpha,\delta')$ for any $\delta' > -1$. The estimates for
  $(f,f), (b,b)$ follow from the definition of the weighted norms,
  while for $(f,b)$, we additionally apply the principle of horizontal
  differentiation.
\end{proof}

\subsection{The approximate solutions}

We now describe Li's construction of the approximate solutions
$\omega_t$. In the following, we will pretend that the Lefschetz
fibration $\pi: X \to Y$ only has one singular fiber over $0 \in Y$,
whose singular point is given by $P \in X$. This assumption is purely
for notational simplicity. We can choose local coordinates
$\mathfrak{z}_1, \mathfrak{z}_2, \mathfrak{z}_3$ in a neighborhood
$U_1$ around $P$ and a local coordinate $y$ on $Y$ such that
$y = \mathfrak{z}_1^2 + \mathfrak{z}_2^2 + \mathfrak{z}_3^2$. We
define
$r = (|\mathfrak{z}_1|^2 + |\mathfrak{z}_2|^2 +
|\mathfrak{z}_3|^2)^{\frac{1}{4}}$. We extend $r$ to a function on $X$
by requiring $r=O(1)$ outside the coordinate neighborhood.

The generalized K\"ahler-Einstein metric $\tilde\omega_Y$ on $Y$ can
be described as follows. Given the local coordinate $y$ on $Y$, we
write $\Omega = dy \wedge \Omega_y$. Then
\[
  \tilde\omega_Y
  =
  \pi_*(\sqrt{-1}\Omega \wedge \overline\Omega)
  =
  \sqrt{-1}A_y dy\wedge d\bar y,
\]
where $A_y = \int_{X_y}\Omega_y \wedge \overline\Omega_y$ is a
Lipschitz function \cite[Lemma~2.1]{Li19}. It follows that in these
local coordinates, the holomorphic volume form is given by
\[
  \Omega =
  \sqrt{A_0}d\mathfrak{z}_1\wedge d\mathfrak{z}_2 \wedge d\mathfrak{z}_3 (1+O(\mathfrak{z})).
\]

Each fiber $X_y$ admits a Calabi-Yau metric, denoted as
$\omega_{SRF}|_{X_y}$, satisfying
\[\label{eq:SRF}
  \omega_{SRF}|_{X_y} = \omega_X|_{X_y} + \sqrt{-1}\partial\bar\partial \psi_y, \:\:\:\:
  (\omega_{SRF}|_{X_y})^2 = \frac{1}{A_y}\Omega_y \wedge \overline\Omega_y.
\]
Here the potential function $\psi_y$ satisfies
$\int_{X_y} \psi_y\omega_X^2 = 0$. For $0 < |y| < \epsilon_1 \ll 1$,
$\omega_{SRF}|_{X_y}$ is also obtained by a gluing construction by
replacing a neighborhood of $P$ in $X_0$ by a scaled copy of the
Eguchi-Hanson metric
$\sqrt{-1}\partial\bar\partial(\sqrt{r^2+|y|})$. See
Proposition~\ref{prop:k3metric} below.

One can thus consider the semi-Ricci flat metric
\[
  \omega_{SRF} = \omega_X + \sqrt{-1}\partial\bar\partial \psi + \frac{1}{t}\pi^*\tilde\omega_Y,
\]
where $\psi = \psi_y$ is a global function on $X$. Note that
$\omega_{SRF}$ is neither smooth nor necessarily positive-definite. It
is however a good description of the collapsing Calabi-Yau metric
$\tilde\omega_t$ away from the singular fibers.

The approximate solutions $\omega_t$ are obtained by gluing scaled
copies of the metric $\omega_{\mathbb{C}^3}$ and the semi-Ricci flat
metric $\omega_{SRF}$ together. It is instructive to follow
\cite[p.1014]{Li19} to identify the ``quantization scale''. To do so,
note that near $y=0$, the semi-Ricci flat metric has the following
asymptotic:
\[
  \omega_{SRF} \sim \sqrt{-1}\partial\bar\partial(\sqrt{r^2+|y|}+\frac{1}{t}A_0|y|^2).
\]
Let $F_t: U_1 \to \mathbb{C}^3$ denote the coordinate change
\[\label{eq:coor_change}
  \mathfrak{z}_i = \left(\frac{t}{2A_0}\right)^{\frac{1}{3}}z_i, \:\:\:\:
  r = \left(\frac{t}{2A_0}\right)^{\frac{1}{6}}R, \:\:\:\:
  y = \left(\frac{t}{2A_0}\right)^{\frac{2}{3}}\tilde y.
\]
It follows that
\[
  \sqrt{r^4+|y|}+\frac{1}{t}A_0|y|^2
  =
  \left(\frac{t}{2A_0}\right)^{\frac{1}{3}}
  \left(\sqrt{R^4+|\tilde y|}+|\tilde y|^2\right)
  \sim
  \left(\frac{t}{2A_0}\right)^{\frac{1}{3}}\phi_\infty,
\]
which is the scaled asymptotic K\"ahler potential of $\mathbb{C}^3$.

We now turn to describing the Calabi-Yau metrics on the K3 fibers near
$y=0$, following \cite[Section~2.2]{Li19}. See also \cite{Spotti}. As
indicated before, near the vanishing cycle in $X_y$, the Calabi-Yau
metric is modeled on the scaled Eguchi-Hanson metric
$EH_y=\sqrt{-1}\partial\bar\partial\sqrt{r^2 + |y|}$. Thus $r$ can be
thought of as a smoothing of the distance function to the vanishing
cycle. Given this, we define the weighted norm of a function $f$ on
$X_y$ as
\[
  \|f\|_{C^{k,\alpha}_\beta(X_y)} &=
  \|f\|_{C^{k,\alpha}(X_y\setminus\{r > c\},\omega_X)}
  + \sum_{j\le k} \sup_{X_y\cap U_1} r^{-\beta+j}|\nabla^j_{EH_y}| \\
  &\phantom{aa}+\sup_{d_{EH_y}(x,x') \ll r(x),x,x' \in X_y\cap U_1}
  r(x)^{-\beta+\alpha+k}\frac{|\nabla_{EH_y}^kf(x)-\nabla_{EH_y}^kf(x')|}{d(_{EH_y}(x,x')^\alpha},
\]
where we use parallel transport along the unique minimal geodesic
connecting $x,x'$ to measure the difference of two tensors. The
constant $c>0$ is chosen small enough so that $\{r < c\}$ is contained
in the coordinate neighborhood $U_1$.

Equivalently, we can also define the weighted norm as
\[
  \|f\|_{C^{k,\alpha}_\beta(X_y)} &=
  \|f\|_{C^{k,\alpha}(X_y\setminus\{r > c\},\omega_X)}
  + \|r^{-\beta}f\|_{C^k,\alpha(X_y\cap U_1, r^{-2}EH_y)},
\]
and we can define similar norms for tensors. For example, if $\omega$
is a $2$-form on $X_y$, then we define
\[
  \|\omega\|_{C^{k,\alpha}_{\beta-2}(X_y)} &=
  \|\omega\|_{C^{k,\alpha}(X_y\setminus\{r > c\},\omega_X)}
  + \|r^{-\beta}f\|_{C^k,\alpha(X_y\cap U_1, r^{-2}EH_y)}.
\]

Let $1 \ll \Lambda_1 \ll \epsilon_1^{-1/4}$ be a large number. By a
flow along the vector field orthogonal to the fibers with respect to
the Riemannian metric induced by $\omega_X$, we have a fiberwise
preserving diffeomorphism
\[
  G_0:\{x\in X| |y|<\epsilon\}\setminus \{r<\Lambda_1|y|^{1/4}\}\cong
  \big(X_0\setminus \{r<\frac{1}{2}\Lambda_1|y|^{1/4}\}\big)\times
  \{|y|<\epsilon_1\},
\]
and write $G_{0,y} = G_0|_{X_y}$. Using this, we can write down the
approximate Calabi-Yau metric $\omega_y'$ on $X_y$ as follows. Let
$\gamma_1(s)$ be a cutoff function such that
\[\label{eq:cutoff}
  \gamma_1(s)=\begin{cases} 1 & \mbox{ if }s>2, \\ 0 & \mbox{ if }s<1, \end{cases}
\]
and $\gamma_2=1-\gamma_1$. Define
\[\label{prop:k3metric}
  \omega_y' = \omega_X|_{X_y}
  + \sqrt{-1}\left\{
    \gamma_1\left(\frac{r}{|y|^{1/6}}\right)G_{0,y}^*(\psi_0-c_0)
    + \gamma_2\left(\frac{r}{|y|^{1/6}}\right)\sqrt{r^2+|y|}
  \right\}.
\]
This glues the Eguchi-Hanson metric to the orbifold Calabi-Yau metric
modeled on $\mathbb{C}^2/\mathbb{Z}_2$. It can be shown that for
$\epsilon_1$ sufficiently small, $\omega_y'$ is a genuine K\"ahler
metric for $|y| < \epsilon_1$. Furthermore, the Ricci potential of
$\omega'_y$ with respect to
$\frac{1}{A_y}\Omega_y\wedge\overline\Omega_y$ is sufficiently small
for perturbing $\omega'_y$ to the Calabi-Yau metric
$\omega_{SRF}|_{X_y}$. In sum, we have
\begin{prop}[{\cite[Proposition~2.4]{Li19}}]
  Let $-2 < \beta < 0$ and $0< \epsilon_1 \ll 0$. There is a unique
  potential function $\psi_y'$ with $\int_X \psi_y'\omega_y'^2 = 0$
  such that
  \[
    \omega_{SRF}|_{X_y} = \omega_y' + \sqrt{-1}\partial\bar\partial \psi_y',
  \]
  with the estimate
  \[
    \|\psi_y'\|_{C^{k+2,\alpha}_\beta(X_y)}
    \le C(k,\alpha,\beta)|y|^{-\frac{1}{6}\beta + \frac{2}{3}}.
  \]
\end{prop}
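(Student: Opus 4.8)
The plan is to solve the fiberwise complex Monge--Amp\`ere equation
\[
  (\omega_y' + \ddb\psi_y')^2 = \frac{1}{A_y}\,\Omega_y \wedge \overline{\Omega}_y, \qquad \int_{X_y}\psi_y'\,\omega_y'^2 = 0,
\]
by a fixed-point argument in the weighted H\"older spaces $C^{k,\alpha}_\beta(X_y)$; since $X_y$ is compact the only real issue is that all constants must be uniform as $|y|\to 0$, i.e.\ as the Eguchi--Hanson neck of $X_y$ degenerates. Writing $\omega_y'^2 = e^{F_y}\frac{1}{A_y}\Omega_y\wedge\overline{\Omega}_y$ for the Ricci potential of the approximate metric and expanding the two-dimensional Monge--Amp\`ere operator, the equation becomes $\Delta_{\omega_y'}\psi_y' + Q_y(\psi_y') = e^{-F_y}-1$, where $Q_y(\psi) = (\ddb\psi)^2/\omega_y'^2$ is genuinely quadratic and $e^{-F_y}-1$ obeys the same bound as $F_y$. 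The proof then reduces to three ingredients: (i) an estimate $\|F_y\|_{C^{k,\alpha}_{\beta-2}(X_y)} \le C|y|^{-\beta/6 + 2/3}$ for the initial error; (ii) a bound on the inverse of $\Delta_{\omega_y'}$ on these weighted spaces that is uniform in $y$; and (iii) a weighted quadratic estimate for $Q_y$.

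For (i) I would estimate $F_y$ along the three scales built into $\omega_y'$. Where $r \ll |y|^{1/6}$ one has $\omega_y' \approx EH_y$, which is Ricci-flat, so $F_y$ is controlled by the discrepancy between $\omega_X|_{X_y}$ and the flat model together with the $O(\mathfrak z)$-correction in $\Omega = \sqrt{A_0}\,d\mathfrak z_1\wedge d\mathfrak z_2 \wedge d\mathfrak z_3(1+O(\mathfrak z))$; where $r \gtrsim |y|^{1/6}$ one has $\omega_y' \approx G_{0,y}^*\omega_{SRF}|_{X_0}$, which is Calabi--Yau on $X_0$, so $F_y$ only measures how $X_y$ differs from $X_0$; and in the transition annulus $r \sim |y|^{1/6}$ the cutoff derivatives fall on $G_{0,y}^*(\psi_0-c_0) - \sqrt{r^2+|y|}$, which vanishes to high order there by matched asymptotics. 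Collecting these and passing to the weighted norm gives the stated bound. These are essentially the computations of \cite[Section~2]{Li19}, repackaged into the present weighted norms.

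The heart of the matter is (ii): $\Delta_{\omega_y'}\colon C^{k+2,\alpha}_\beta(X_y) \to C^{k,\alpha}_{\beta-2}(X_y)$ should be an isomorphism onto the functions of $\omega_y'^2$-mean zero, with the norm of its inverse bounded independently of $y$. For fixed $y$ this is clear since $X_y$ is compact, so the content is uniformity, which I would prove by contradiction and blow-up: if it failed there would be $y_j\to 0$ and $\psi_j$ with $\|\psi_j\|_{C^{k+2,\alpha}_\beta}=1$, $\int\psi_j\omega_{y_j}'^2 = 0$, and $\|\Delta_{\omega_{y_j}'}\psi_j\|_{C^{k,\alpha}_{\beta-2}}\to 0$; choosing points $x_j$ realizing a definite fraction of the weighted norm and rescaling by $r(x_j)$, the limit would be a nonzero harmonic function bounded by $\rho^\beta$ at both ends on one of three model spaces --- the orbifold K3 $X_0$ (when $r(x_j)\gtrsim 1$), the flat cone $\mathbb{C}^2/\mathbb{Z}_2$ (when $|y_j|^{1/4}\ll r(x_j)\ll 1$), or the Eguchi--Hanson space (when $r(x_j)\sim |y_j|^{1/4}$). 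Each model obeys a Liouville theorem --- for the cone because $\beta$ lies in the \emph{open} interval between the consecutive indicial roots $-2$ and $0$, for the K3 orbifold because bounded harmonic functions are constant and are then killed by the mean-zero normalization (the neck has negligible volume), and for the Eguchi--Hanson space because a harmonic function decaying like $\rho^\beta$ with $\beta<0$ vanishes --- in each case contradicting that the limit is nontrivial at the basepoint. A rescaled weighted Schauder estimate then upgrades this $C^0$ control to the full $C^{k+2,\alpha}_\beta$ bound. I expect this to be the main obstacle, since it is the point at which the collapsing geometry genuinely enters and where the precise range $-2<\beta<0$ is used.

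Granting (i) and (ii), I would close the argument with the Picard iteration $\Delta_{\omega_y'}\psi_{n+1} = (e^{-F_y}-1) - Q_y(\psi_n)$, $\psi_0=0$, seeking a contraction on the ball of radius $\sim |y|^{-\beta/6+2/3}$ in $C^{k+2,\alpha}_\beta(X_y)$. The only delicate point in (iii) is that $(\ddb\psi)^2$ scales as a term of weight $2\beta-4$, which is \emph{more} singular along the neck than the target weight $\beta-2$; but $r\gtrsim |y|^{1/4}$ on $X_y$, so this loss costs only the fixed power $|y|^{(\beta-2)/4}$, and the inequality $-\beta/6+2/3 \ge (2-\beta)/4$ --- which holds precisely because $\beta>-2$ --- shows the quadratic gain from the smallness of $\psi_n$ more than absorbs it. Tracking these powers of $|y|$ makes $\psi\mapsto (e^{-F_y}-1)-Q_y(\psi)$, post-composed with the inverse of $\Delta_{\omega_y'}$ from (ii), a contraction of the ball into itself; its fixed point is the desired $\psi_y'$, and the contraction property yields uniqueness within the normalization.
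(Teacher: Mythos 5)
This proposition is not proved in the paper at all --- it is quoted verbatim from \cite[Proposition~2.4]{Li19} --- and your sketch reconstructs essentially Li's original argument: estimate the Ricci potential of the glued metric $\omega_y'$ in the weighted norms, prove invertibility of $\Delta_{\omega_y'}$ on $C^{k+2,\alpha}_\beta\to C^{k,\alpha}_{\beta-2}$ uniformly in $y$ by blow-up to the three models (orbifold $X_0$, flat cone $\mathbb{C}^2/\mathbb{Z}_2$, Eguchi--Hanson), and close by contraction, with your verification that $-\beta/6+2/3\ge(2-\beta)/4\iff\beta\ge-2$ being exactly the bookkeeping that lets the quadratic term, which a priori lives at weight $2\beta-4$, be absorbed using $r\gtrsim|y|^{1/4}$. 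The outline is correct; the only minor points are that in the orbifold blow-up regime the limit is a priori only bounded by $r^\beta$ near the singular point, so you need the removable-singularity step (valid since $\beta>-2>$ the Green's-function rate) before invoking ``bounded harmonic $\Rightarrow$ constant,'' and that uniqueness of $\psi_y'$ is immediate from compactness of $X_y$ and the normalization (two potentials differ by a constant), not merely uniqueness within the contraction ball.
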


We remark that this estimate is crucial for showing the existence of
the special Lagrangian vanishing sphere in $X_y$. In particular, it
provides an estimate for the vanishing cycle's deviation from being
special Lagrangian in $X_y$. This will be useful later in Section~5.2.

When considering deformation of $\omega_{SRF}|_{X_y}$ as the complex
structure varies in $y$, especially when $|y| < \epsilon_1$, it is
helpful to use the reference metric $\omega_X|_{X_y}$ instead.

Given $|y'| < \epsilon_1$, consider the trivialization
\[
  G_{y'}: \{x \in X : |\pi(x)-y'| < \epsilon_2|y'|\}
  \to X_{y'} \times  \{y \in Y : |\pi(x)-y'| < \epsilon_2|y'|\}
\]
defined in a similar manner as before. For $|y-y'| < \epsilon_2|y'|$,
this induces the diffeomorphism $G_{y',y}: X_{y} \to X_{y'}$, which
depend smoothly on $y$. We have the following:

\begin{lemma}[{\cite[Lemma~2.5]{Li19}}]\label{lemma:Li2.5}
  The potential functions $\psi_y$ in \eqref{eq:SRF} satisfy the
  following. For $-2<\beta<0$,
  \[
    \|G_{y',y}^*\psi_{y'}-\psi_y\|_{C^{k,\alpha}_\beta(X_y)}
    \le C(k,\alpha,\beta)|y-y'||y'|^{{-\frac{1}{4}(\beta+2)}}
  \]
  when $|y'| < \epsilon_1$ and $|y-y'| < \epsilon_2|y'|$. When
  $|y'| \ge \epsilon_1$ and $|y-y'| < \epsilon_2|y'|$, we have a better estimate
  \[
    \|G_{y',y}^*\psi_{y'}-\psi_y\|_{C^{k,\alpha}_\beta(X_y)} \le C(k,\alpha,\beta)|y-y'|.
  \]
  Here $|y-y'|$ means the distance $d_{\omega_Y}(y,y')$ when one of $y,y'$ is
  outside the coordinate neighborhood.
\end{lemma}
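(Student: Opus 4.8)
The plan is to extract this estimate from the fiberwise complex Monge--Amp\`ere equation \eqref{eq:SRF} by differentiating in the base parameter and then inverting the resulting linear operator with weighted Schauder theory on the (nearly‑)collapsing K3 fibers. I will treat the case $|y'| < \epsilon_1$; when $|y'| \ge \epsilon_1$ the same argument applies with uniformly bounded geometry and no rescaling, which is exactly why the factor $|y'|^{-\frac14(\beta+2)}$ is absent there. For $|y-y'| < \epsilon_2|y'|$, I would first use the trivialization $G_{y',y}\colon X_y \to X_{y'}$ to transplant the data on $X_{y'}$ onto the fixed smooth manifold underlying $X_y$: set $\widetilde J = G_{y',y}^* J_{y'}$, $\widetilde\omega_X = G_{y',y}^*(\omega_X|_{X_{y'}})$, and $\widetilde\mu = G_{y',y}^*\bigl(\tfrac{1}{A_{y'}}\Omega_{y'}\wedge\overline\Omega_{y'}\bigr)$. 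Then $v := G_{y',y}^*\psi_{y'}$ solves $(\widetilde\omega_X + \sqrt{-1}\,\partial_{\widetilde J}\bar\partial_{\widetilde J} v)^2 = \widetilde\mu$ for the complex structure $\widetilde J$, while $\psi_y$ solves $(\omega_X|_{X_y} + \ddb\psi_y)^2 = \tfrac{1}{A_y}\Omega_y\wedge\overline\Omega_y$ for $J_y$; both functions have vanishing average against $\omega_X^2$, the first up to subtracting an $O(|y-y'|)$ constant coming from the pullback of the volume form.

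Next I would subtract the two Monge--Amp\`ere equations. With $u = v - \psi_y$, $\omega'_{SRF} = \widetilde\omega_X + \sqrt{-1}\,\partial_{\widetilde J}\bar\partial_{\widetilde J} v$ and $\omega_{SRF} = \omega_X|_{X_y} + \ddb\psi_y$, the identity $(\omega'_{SRF})^2 - \omega_{SRF}^2 = (\omega'_{SRF} + \omega_{SRF}) \wedge (\omega'_{SRF} - \omega_{SRF})$, together with $\omega'_{SRF} - \omega_{SRF} = \ddb u + (\widetilde\omega_X - \omega_X|_{X_y}) + (\sqrt{-1}\,\partial_{\widetilde J}\bar\partial_{\widetilde J} - \ddb)v$, rewrites the problem as a linear elliptic equation of the schematic form $\Delta_{g_y} u = h$, where $g_y$ is uniformly equivalent to the approximate fiber metric $\omega_y'$ of \eqref{prop:k3metric} --- modeled on the scaled Eguchi--Hanson metric near the vanishing cycle and on the orbifold Calabi--Yau metric away from it --- and $h$ collects the three discrepancies $\widetilde\mu - \tfrac{1}{A_y}\Omega_y\wedge\overline\Omega_y$, $\widetilde\omega_X - \omega_X|_{X_y}$, and $(\sqrt{-1}\,\partial_{\widetilde J}\bar\partial_{\widetilde J} - \ddb)v$. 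One controls the first by the Lipschitz dependence of $A_y$ and the smooth dependence of $\Omega_y$, the second by the smooth dependence of the trivialization and the reference metric, and the third by $\widetilde J - J_y = O(|y-y'|)$ combined with the control on $\ddb\psi_{y'} = \ddb v$ from \cite[Proposition~2.4]{Li19}. Since every building block --- including $G_{y',y}$ --- depends smoothly on $y$, each discrepancy is $O(|y-y'|)$ pointwise; the only remaining point is the translation into the weighted norm. Away from the vanishing cycle the geometry is fixed, so $h = O(|y-y'|)$ in the ordinary $C^{k,\alpha}$ norm; near the vanishing cycle, which sits at scale $r \sim |y'|^{1/4}$, rescaling to unit size and tracking the weight $r^{-(\beta-2)}$ through the $y$-derivatives of the Eguchi--Hanson building blocks produces the factor $|y'|^{-\frac14(\beta+2)}$ --- this is precisely the bookkeeping already carried out in \cite[Section~2]{Li19} --- and yields $\|h\|_{C^{k,\alpha}_{\beta-2}(X_y)} \le C(k,\alpha,\beta)\,|y-y'|\,|y'|^{-\frac14(\beta+2)}$.

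Finally I would invert $\Delta_{g_y}$ on the weighted H\"older spaces. Since $X_y$ is compact, $\Delta_{g_y}$ kills constants, so I would restrict to mean-zero functions, against which both $u$ and $h$ pair trivially (the mean-value condition on $h$ being automatic since $u = v-\psi_y$ is itself a solution). For $-2 < \beta < 0$ the indicial roots $0$ and $-2$ of the Laplacian on the flat cone $\mathbb{C}^2/\mathbb{Z}_2$ --- equivalently on the ALE Eguchi--Hanson end --- are excluded from the weight window, which is exactly what makes $\Delta_{g_y}\colon C^{k,\alpha}_\beta(X_y) \to C^{k-2,\alpha}_{\beta-2}(X_y)$ an isomorphism onto the mean-zero subspace, with inverse bounded \emph{uniformly} in $y$. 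I expect this uniformity to be the main obstacle: it has to be proved by a rescaling-and-contradiction argument in which the rescaled fibers subconverge to Eguchi--Hanson and to $\mathbb{C}^2/\mathbb{Z}_2$ --- where there is no mean-zero solution in the weight window --- glued to interior Schauder estimates on the orbifold part, and one must simultaneously keep the weight accounting precise enough to land on exactly the exponent $-\tfrac14(\beta+2)$. Both are essentially the content of \cite[Section~2]{Li19}; combining the uniform bound $\|u\|_{C^{k,\alpha}_\beta(X_y)} \le C\,\|h\|_{C^{k-2,\alpha}_{\beta-2}(X_y)}$ with the estimate for $h$ then gives the claimed inequality, and the improved estimate when $|y'| \ge \epsilon_1$.
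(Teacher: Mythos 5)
The paper itself contains no proof of this statement: it is imported verbatim from Li's work and cited as \cite[Lemma~2.5]{Li19}, the surrounding discussion only explaining how Li's Section~2 estimates are later repackaged into the new weighted norms. So there is no in-paper argument to compare against line by line; what one can say is that your outline follows the same route as the cited source: pull back the data on $X_{y'}$ through $G_{y',y}$, difference the two fiberwise Monge--Amp\`ere equations to get a linear equation $\Delta_{g_y}u=h$ for $u=G_{y',y}^*\psi_{y'}-\psi_y$ (after an $O(|y-y'|)$ adjustment of constants), estimate $h$ in the weighted H\"older norms modeled on the scaled Eguchi--Hanson metric, and invert the fiberwise Laplacian on a weight window avoiding the indicial roots, uniformly as the fibers degenerate. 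That is indeed the architecture of Li's proof.

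As a standalone proof, however, the two steps that carry all of the quantitative content are asserted rather than carried out, and one of your intermediate claims is not correct as stated. The assertion that ``each discrepancy is $O(|y-y'|)$ pointwise'' because every building block depends smoothly on $y$ fails to be uniform in $y'$: near the vanishing cycle, at scale $r\sim|y'|^{1/4}$, the $y$-derivatives of the trivialization $G_{y',y}$ and of the Eguchi--Hanson model potential $\sqrt{r^2+|y|}$ blow up as $|y'|\to 0$, and it is exactly this blow-up, filtered through the weight $r^{-\beta+j}$, that produces the exponent $-\tfrac14(\beta+2)$; you acknowledge the bookkeeping but defer it to \cite[Section~2]{Li19} rather than doing it, and without it the stated exponent is not derived. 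Likewise the $y$-uniform invertibility of $\Delta_{g_y}$ on $C^{k,\alpha}_\beta(X_y)$ along the degenerating family, which you correctly identify as the main obstacle, is only sketched as a compactness/contradiction scheme and then attributed to the same reference. Since the paper itself quotes the whole lemma from \cite{Li19}, deferring to that reference is consistent with how the result is used here, but you should present your text as a faithful outline of Li's argument rather than an independent proof: the precise factor $|y-y'|\,|y'|^{-\frac14(\beta+2)}$ is exactly the output of the two computations you skip.
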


 We choose a partition of unity $\{\chi_i\}_{i=0}^N$ on the base $Y$ such that 
\begin{itemize}
	\item $0\leq \chi_i\leq 1$ and $\chi_i$ has uniform $C^k$-bound with respect to the metric $\frac{1}{t}\omega_Y$ for any $k$;
	\item $\chi_0=1$ on $\{|y|\leq t^{\frac{6}{14+\tau}} \}$ for some
      $-2<\tau<0$, and the support of $\chi_0$ is contained in
      $\{|y|\leq 2t^{\frac{6}{14+\tau}}\}$;
	\item for $\geq 1$, $\chi_{i}$ has support centered at $y_i\in Y$
      with diameter of scale $t^{\frac{1}{2}}$ and is contained in
      $\{|y| > t^{\frac{6}{14+\tau}}\}$.
	\item The number of non-vanishing $\chi_i$ at a point $y$ is bounded independent of $t$. 
\end{itemize}
With all the above notations, for $t>0$ sufficiently small, the
approximate solution $\omega_t$ is given by
\[
  \omega_t&=\omega_X+\frac{1}{t}\tilde{\omega}_Y \\
  &\phantom{al}+\sqrt{-1}\partial \bar{\partial}\left(
  \sum_{i=1}^N\chi_iG^*_{y_i}\psi_{y_i}+\chi_0\left(c_0+
  \gamma_1\left(\frac{r}{t^{1/10}+t^{1/12}\rho'^{1/6}}\right)G^*_0(\psi_0-c_0)\right.\right. \\
  &\phantom{al}+\left.\left.\gamma_2\left(\frac{r}{t^{1/10}+t^{1/12}\rho'^{1/6}}\right)\left(\frac{t}{2A_0}\right)^{\frac{1}{3}}\left(\phi'_{\mathbb{C}^3}+\sqrt{R^4+\rho}\right)
  \right)\right).
\]

We refer the reader to \cite[Remark~12]{Li19} for an excellent summary
of the metric behavior of $\omega_t$ in different regions of $X$. One
crucial observation we make here is that the transition region is
identified by the scale $|y| \sim t^{6/(14+\tau)}$. This crucial
feature is reflected in the definition of weighted spaces in the next
subsection.

\subsection{Weighted spaces on $X$}

We now define weighted spaces similar to \cite[Section~2.4]{Li19}. We
first define weighted spaces on $X_0 \times \mathbb{C}$ equipped with
the product metric
$\omega_{SRF}|_{X_0}+\frac{1}{t}A_0\sqrt{-1}dy\wedge d\bar y$. Let
$\zeta = \left(\frac{t}{2A_0}\right)^{-\frac{1}{2}}y$, so that we can
write the product metric as
$\omega_{SRF}|_{X_0} + \frac{\sqrt{-1}}{2}d\zeta\wedge
d\bar\zeta$. Define $\rho' = \sqrt{r^2+|\zeta|^2}$, which is the
distance function to the nodal point $P \times \{0\}$. Let
\[
  w' =
  \begin{cases}
    1 & \text{if $r > \kappa \rho'$,} \\
    \frac{r}{\kappa \rho'} & \text{if $r \le \kappa \rho'$.}
  \end{cases}
\]
Using this weight function, for functions $f$ on
$X_0\times \mathbb{C}$ we can define a weighted norm
\[
  \|f\|_{C^{k,\alpha}_{\delta,\tau}(X_0 \times \mathbb{C})}
  =
  \sum_{j=0}^k \sup \rho'^{-\delta+j}w'^{-\tau+j}|\nabla^j f|
  +
  [\rho'^{-\delta+k}w'^{-\tau+k}\nabla^k f]_{0,\alpha},
\]
where, for any tensor $T$, we define
\[
  [T]_{0,\alpha}
  =
  \sup_{d(x,x') \ll r(x)} \rho'(x)^\alpha w'^\alpha\frac{|T(x)-T'(x)|}{d(x,x')^\alpha}.
\]

Let
$U_2 = \{ |y| < 2t^{\frac{6}{14+\tau}}, r >
\Lambda_1|y|^{\frac{1}{4}}, r > t^{\frac{1}{6}}\} \subset X$. It is a
region near the singular fiber, but away from the singular point
$P$. Using the diffeomorphism $G_0$, we can identify $U_2$ with an open
subset in $X_0 \times \mathbb{C}$. Similarly, $U_1$ is identified with
$F_t^{-1}(U_1)$ in
$\mathbb{C}^3$. Define $U_3 = \{|y| > t^{\frac{6}{14+\tau}}\}$. For
functions on $U_3$, define the weighted norm
\[
  \|f\|_{C^{k,\alpha}(|y| > t^{\frac{6}{14+\tau}})}
  = \sum_{j=0}^k \sup r^j|\nabla^j f|_{\omega_t}
  + [r^k\nabla^kf]_{0,\alpha}
\]

We are now ready to define weighted spaces on $X$ by decomposing $X$
into the above three regions. For functions $f$ on $X$, we define the weighted norm
\[
  \|f\|_{\mathcal{B}}
  &=
  t^{-\delta'}t^{-\frac{\delta}{6}+\frac{1}{3}}\|f\|_{C^{0,\alpha}_{\delta-2,\tau-2}(U_1)}
  +
  t^{-\delta'}\|f\|_{C^{0,\alpha}_{\delta-2,\tau-2}(U_2)} \\
  &\phantom{al}+
  \|t^{-\frac{1}{2}}|y|^{\frac{1}{4}(\tau+2)}r^{2-\tau}f\|_{C^{0,\alpha}(U_3)},
\]
where
\[
  \delta' = \delta'(\delta,\tau)
  =
  -\frac{1}{2}\tau + \frac{1}{2}\delta
  + \frac{6}{14+\tau}\left(\frac{2}{3}+\frac{5}{6}\tau-\delta\right).
\] To see that the first part and the second part are compatible, note
that on $U_1\cap U_2$, the metric
$t^{1/3}\omega_{\mathbb{C}^3} \sim
t^{1/3}\sqrt{-1}\partial\bar\partial\phi_\infty$ is uniformly
equivalent to the metric
$G_0^*(\omega_{SRF}|_{X_0}+\frac{1}{t}A_0\sqrt{-1}dy\wedge d\bar
y)$. The weight functions have the relations
\[
  \rho' \sim t^{\frac{1}{6}}\rho, \:\: r \sim t^{\frac{1}{6}}R, \:\: w' \sim w.
\] A simple calculation then shows that
$t^{-\delta/6+1/3}\|\cdot\|_{C^{0,\alpha}_{\delta-2,\tau-2}(U_1)}$ is
equivalent to $\|\cdot\|_{C^{0,\alpha}_{\delta-2,\tau-2}(U_2)}$. To
see why the third part of the norm $\|\cdot\|_{\mathcal{B}}$ is
compatible with the first two parts, we refer the reader to the proof of Proposition~\ref{prop:CY3ricci} below.

We think of the weighted space $\mathcal{B}$ as the codomain for
the Laplacian. The natural domain $\mathcal{C}$ of the Laplacian is
defined using the weighted norm
\[
  \|f\|_{\mathcal{C}}
  =
  t^{-\delta'}t^{-\frac{\delta}{6}}\|f\|_{C^{2,\alpha}_{\delta,\tau}(U_1)}
  +
  t^{-\delta'}\|f\|_{C^{2,\alpha}_{\delta,\tau}(U_2)} 
  +
  \|t^{-\frac{1}{2}}|y|^{\frac{1}{4}(\tau+2)}r^{-\tau}f\|_{C^{2,\alpha}(U_3)}.
\]

Finally, we define a weighted norm for a $2$-form $\omega$ as follows:
\[
  \|\omega\|_{\mathcal{B}}
  &=
  t^{-\delta'}t^{-\frac{\delta}{6}}\|\omega\|_{C^{0,\alpha}_{\delta-2,\tau-2}(U_1)}
  +
  t^{-\delta'}\|\omega\|_{C^{0,\alpha}_{\delta-2,\tau-2}(U_2)} \\
  &\phantom{al}+
  \|t^{-\frac{1}{2}}|y|^{\frac{1}{4}(\tau+2)}r^{2-\tau}\omega\|_{C^{k,\alpha}(U_3)}.
\]
Note that for $f \in \mathcal{C}$, we have
$\|\Delta f\|_{\mathcal{B}}, \|\partial\bar\partial f\| \lesssim \|f\|_{\mathcal{C}}$.

To perturb $\omega_t$ to a genuine Calabi-Yau metric, it is necessary
to estimate the Ricci potential, which provides the initial error for
the iteration scheme. Recall that the Calabi-Yau metric
$\tilde\omega_t \in [\omega_X+\frac{1}{t}\pi^*\omega_Y]$ satisfies
\[
  \tilde\omega_t^3 = a_t \sqrt{-1}\Omega\wedge\overline\Omega,
\]
where the normalizing constant $a_t>0$ satisfies
\[
  a_t = \int_X \left(\frac{1}{t}[\omega_Y]+[\omega_X]\right)^3 = 1+\frac{3}{t}\int_X\omega_X^3.
\]
Write
\[
  \omega_t^3 = a_t(1+f_t)\sqrt{-1}\Omega\wedge\overline\Omega.
\] A significant portion of \cite[Section~2]{Li19}
is devoted to proving the following:

\begin{prop}\label{prop:CY3ricci}
  For $t>0$ sufficiently small, $\omega_t$ is a K\"ahler metric, and
  the Ricci potential $f_t$ satisfies
  \[
    \|f_t\|_{\mathcal{B}} \le C
  \]
  for some constant $C>0$ depending on $(X,\omega_X,\Omega)$ and
  $(Y,\tilde\omega_Y)$.
\end{prop}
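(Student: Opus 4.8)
The plan is to verify positivity of $\omega_t$ and the bound $\|f_t\|_{\mathcal{B}}\le C$ region by region, following the decomposition $X=U_1\cup U_2\cup U_3$ built into the definition of $\|\cdot\|_{\mathcal{B}}$. As the excerpt explains, the analytic heart — the pointwise and H\"older control of $f_t$ in each region — is already contained in \cite[Section~2]{Li19}; the task here is to recast those estimates, together with the refined bound \eqref{eq:c3error} on $\phi'_{\mathbb{C}^3}$, the control of $\psi'_y$ from \cite[Proposition~2.4]{Li19}, and Lemma~\ref{lemma:Li2.5}, into the new weighted norms, and to check that the three pieces of $\|\cdot\|_{\mathcal{B}}$ are mutually compatible on the overlaps.

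For positivity I would argue as follows. On $U_1$, after the coordinate change $F_t$ of \eqref{eq:coor_change}, $\omega_t$ is $(t/2A_0)^{1/3}\omega_{\mathbb{C}^3}$ modified only by the $\gamma_1,\gamma_2$-cutoff gluing to $G_0^*(\psi_0-c_0)$ and the $O(\mathfrak{z})$-correction in $\Omega$; since $\omega_{\mathbb{C}^3}$ is a genuine K\"ahler metric and these modifications are $C^2$-small in the rescaled metric near the cutoff scale $r\sim t^{1/10}+t^{1/12}\rho'^{1/6}$ (Li's matching estimate that $\phi'_{\mathbb{C}^3}$ and $G_0^*(\psi_0-c_0)$ are $C^2$-close there), $\omega_t>0$ for small $t$. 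On $U_3$, $\omega_t$ differs from $\omega_{SRF}=\omega_X+\tfrac1t\pi^*\tilde\omega_Y+\sqrt{-1}\partial\bar\partial\psi$ only by $\sqrt{-1}\partial\bar\partial\bigl(\sum_i\chi_i(G_{y_i}^*\psi_{y_i}-\psi)\bigr)$; since $\chi_i$ has uniformly bounded $C^2$-norm with respect to $\tfrac1t\omega_Y$ and, by Lemma~\ref{lemma:Li2.5}, $G_{y_i}^*\psi_{y_i}-\psi$ is small on the overlaps, the term $\tfrac1t\pi^*\tilde\omega_Y$ controls the base and mixed directions while $\omega_{SRF}|_{X_y}$ controls the fibers, so $\omega_t>0$ for small $t$. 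On $U_2$ one interpolates between the two models, again using the $\gamma_1,\gamma_2$-matching.

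For the bound on $f_t$: on $U_1$, using $\omega_{\mathbb{C}^3}^3=\tfrac32\prod_i\sqrt{-1}dz_i\wedge d\bar z_i$ and $\Omega=\sqrt{A_0}\,d\mathfrak{z}_1\wedge d\mathfrak{z}_2\wedge d\mathfrak{z}_3(1+O(\mathfrak{z}))$, the leading part of $\omega_t^3/(\sqrt{-1}\Omega\wedge\overline\Omega)$ is exactly $a_t$, and the remainder is governed by $\partial\bar\partial\phi'_{\mathbb{C}^3}$, the $O(\mathfrak{z})$-term, and the cutoff gluing; \eqref{eq:c3error} bounds the first in $C^{0,\alpha}_{\delta-2,\tau-2}(U_1)$, and a power count through \eqref{eq:coor_change} shows the prefactor $t^{-\delta'}t^{-\delta/6+1/3}$ makes its contribution $O(1)$. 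On $U_2$, the model is the product $G_0^*\bigl(\omega_{SRF}|_{X_0}+\tfrac1t A_0\sqrt{-1}dy\wedge d\bar y\bigr)$, and the errors — from \cite[Proposition~2.4]{Li19}, Lemma~\ref{lemma:Li2.5}, the mixed terms in $\tfrac1t\partial\bar\partial(\chi_i G_{y_i}^*\psi_{y_i})$, and the cutoffs — were estimated pointwise in \cite[Section~2]{Li19}; these estimates say precisely $\|f_t\|_{C^{0,\alpha}_{\delta-2,\tau-2}(U_2)}\lesssim t^{\delta'}$. On $U_3$, the relations $(\omega_{SRF}|_{X_y})^2=\tfrac1{A_y}\Omega_y\wedge\overline\Omega_y$ and $\tilde\omega_Y=\sqrt{-1}A_y\,dy\wedge d\bar y$ force the leading term of $\omega_{SRF}^3$ to match that of $a_t\sqrt{-1}\Omega\wedge\overline\Omega$, the remainder being controlled by the fiberwise gluing error near $y=0$ and the partition-of-unity corrections, which is exactly the third part of $\|\cdot\|_{\mathcal{B}}$.

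The step I expect to be the main obstacle is the compatibility of the three norms on the transition region $U_2\cap U_3$, i.e.\ at the scale $|y|\sim t^{6/(14+\tau)}$. Using $\rho'\sim t^{1/6}\rho$, $r\sim t^{1/6}R$, $w'\sim w$ and the relation between $|y|$, $\zeta=(t/2A_0)^{-1/2}y$, and $t$, one must check that $t^{-\delta'}\|\cdot\|_{C^{0,\alpha}_{\delta-2,\tau-2}(U_2)}$ and $\|t^{-1/2}|y|^{\frac14(\tau+2)}r^{2-\tau}\,\cdot\,\|_{C^{0,\alpha}(U_3)}$ agree up to uniform constants there — which is precisely what the formula $\delta'=\delta'(\delta,\tau)$ is designed to encode. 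All three descriptions carry their own powers of $t$, $|y|$, $r$ and the weights $\rho',w'$, and lining up the prefactors across the overlaps is where essentially all the work lies; the per-region error estimates themselves are, as noted, already available from \cite[Section~2]{Li19} and \eqref{eq:c3error}.
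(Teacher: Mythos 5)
Your proposal takes essentially the same route as the paper: the per-region estimates (and the K\"ahlerity of $\omega_t$) are imported from \cite[Section~2]{Li19}, and the genuinely new step is checking that the three pieces of $\|\cdot\|_{\mathcal{B}}$ match on the overlaps, which the paper does by extracting the $C^0$ bound $|f_t|\lesssim t^{1/2}|y|^{-\frac14(\tau+2)}r^{\tau-2}$ for $|y|\ge t^{6/(14+\tau)}$ and converting it with $w'\sim r/\rho'$, $\rho'\sim t^{-1/2}|y|$ into $t^{\delta'}w'^{\tau-2}\rho'^{\delta-2}$ --- confirming your observation that $\delta'(\delta,\tau)$ is exactly the exponent designed to make the pieces agree. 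One small correction: on $U_2\cap U_3$ the relevant weight relations are $w'\sim r/\rho'$ and $\rho'\sim t^{-1/2}|y|$ (coming from $\zeta=(t/2A_0)^{-1/2}y$), not the $U_1\cap U_2$ relations $\rho'\sim t^{1/6}\rho$, $r\sim t^{1/6}R$, $w'\sim w$ that you quote, which are used earlier for the first-versus-second piece compatibility.
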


\begin{proof}
  This combines Lemmas 2.7 - 2.8 in \cite{Li19}. To deal with the
  larger overlap between $U_3$ and $U_1,U_2$ in our case, we note that
  the key estimate in the proof of \cite[Lemma~2.7]{Li19}, before
  passing into weighted norms, is that when
  $|y| \ge t^{\frac{6}{14+\tau}}$, we can use Lemma~\ref{lemma:Li2.5}
  together with estimates of cutoff errors to deduce that
  \[
    |f_t| \lesssim
    t^{\frac{1}{2}}|y|^{-\frac{1}{4}(\tau+2)}r^{\tau-2}.
  \] Thus, in $U_3$, using the relations $w' \sim r/\rho'$ and
  $\rho' \sim t^{-1/2}|y|$, the right hand side transforms as
  \[
    t^{\frac{1}{2}}|y|^{-\frac{1}{4}(\tau+2)}r^{\tau-2}
    &\sim t^{\frac{1}{2}}|y|^{-\frac{1}{4}(\tau+2)}w'^{\tau-2}\rho'^{\tau-2}\rho'^{\delta-\tau}\rho'^{\tau-\delta} \\
    &\sim t^{\frac{1}{2}}|y|^{-\frac{1}{4}(\tau+2)}w'^{\tau-2}\rho'^{\delta-2}(t^{-\frac{1}{2}}|y|)^{\tau-\delta} \\
    &\sim t^{\frac{1}{2}(1+\delta-\tau)}|y|^{\frac{3}{4}\tau-\delta-\frac{1}{2}}w'^{\tau-2}\rho'^{\delta-2} \\
    &\sim t^{\frac{1}{2}(1+\delta-\tau)+\frac{6}{14+\tau}(\frac{3}{4}\tau-\delta-\frac{1}{2})}w'^{\tau-2}\rho'^{\delta-2} \\
    &= t^{\delta'}w'^{\tau-2}\rho'^{\delta-2}.
  \]
  This shows that the second and third parts of the weighted norm
  $\|\cdot\|_{\mathcal{B}}$ are compatible on the overlap
  $U_2 \cap U_3$. A similar calculation shows that on the overlap
  $U_1 \cap U_3$, the first and third parts of the norm
  $\|\cdot\|_{\mathcal{B}}$ are also compatible. Once this $C^0$
  estimate is established, $C^{0,\alpha}$ estimates follow from the
  proof of \cite[Lemma~2.7]{Li19}.
\end{proof}

A crucial step in executing the iteration scheme to perturb $\omega_t$
to $\tilde\omega_t$ involves establishing the following mapping
properties of the Laplacian $\Delta: \mathcal{C} \to \mathcal{B}$:

\begin{prop}\label{prop:CY3laplacian}
  Let $-2+\alpha < \delta < 0, -2+\alpha+\tau<0$, and assume $\delta$
  avoids a discrete set of values. Then there exists a right inverse
  $\Delta^{-1}$ to the Laplacian $\Delta$ on the subspace of average
  zero functions,
  \[
    \Delta^{-1}: \left\{ f \in \mathcal{B} \mid \int_X f\omega_t^3 = 0\right\}
    \to
    \mathcal{C} \oplus C^{2,\alpha}(Y,\frac{1}{t}\tilde\omega_Y).
  \]
  Write
  $\Delta^{-1}f = u_1 + u_2 \in \mathcal{C} \oplus
  C^{2,\alpha}(Y)$. Then we have
  \[
    \|u_1\|_{\mathcal{C}} &\le C\|f\|_{\mathcal{B}}, \\
    \|u_2\|_{C^{2,\alpha}(Y, \frac{1}{t}\tilde\omega_Y)} &\le C\|f\|_{\mathcal{B}}, \\
    \|\partial\bar\partial u_2\|_{\mathcal{B}} &\le C\|f\|_{\mathcal{B}}.
  \]
\end{prop}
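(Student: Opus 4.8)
The plan is to construct $\Delta^{-1}$ by a parametrix argument, gluing together local inverses adapted to the three regions $U_1, U_2, U_3$, following the strategy of \cite[Section~2.4]{Li19}. The starting point is that the weighted spaces $\mathcal{C}$ and $\mathcal{B}$ are built precisely so that, in each piece, $\Delta$ is modeled on a standard operator whose mapping properties are known: on $U_1$ a rescaled copy of $\Delta_{\omega_{\mathbb{C}^3}}$ on $(\mathbb{C}^3,\omega_{\mathbb{C}^3})$, for which invertibility between the doubly-weighted spaces $C^{k,\alpha}_{\delta,\tau}$ is established in \cite{Sz19} under the stated constraints $-2+\alpha < \delta < 0$ and $-2+\alpha+\tau<0$ (these are the indicial conditions at the two scales $R\sim 1$ and $R\sim\rho$); on $U_2$, after the identification with an open subset of $X_0\times\mathbb{C}$, the same model operator with weight functions $\rho',w'$; and on $U_3$, where the geometry is that of the semi-Ricci-flat metric, $\Delta$ splits into a fiberwise Laplacian on the K3 fibers plus the base Laplacian $\Delta_Y$ on $(Y,\frac{1}{t}\tilde\omega_Y)$. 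The decomposition $\Delta^{-1}f = u_1+u_2$, with $u_2$ a function pulled back from the base, reflects this: $u_2$ absorbs the slowly-varying base-direction part, while $u_1$ lives in $\mathcal{C}$ and captures the fiberwise and nodal behavior.

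First I would set up the local solution operators. On $U_1$ and $U_2$, the scale-matching computation recorded in the definition of $\|\cdot\|_{\mathcal{C}}$ and $\|\cdot\|_{\mathcal{B}}$ (the relations $\rho'\sim t^{1/6}\rho$, $r\sim t^{1/6}R$, $w'\sim w$, and the power $t^{-\delta/6}$ versus $t^{-\delta/6+1/3}$) shows that the two local operators agree up to uniformly controlled factors on the overlap, so one gets a single local right inverse $P_{12}$ bounded independently of $t$ between the $U_1\cup U_2$ pieces of $\mathcal{B}$ and $\mathcal{C}$ — this uses \cite{Sz19} for the existence of the weighted inverse on $\mathbb{C}^3$ and the requirement that $\delta$ avoid the discrete set of indicial roots. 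On $U_3$ I would first solve the base equation: averaging $f$ over the fibers with respect to $\tilde\omega_t$ produces a function $\bar f$ on $Y$, and since $f$ has average zero so does $\bar f$; then $u_2 := \Delta_Y^{-1}\bar f$ on $(Y,\frac{1}{t}\tilde\omega_Y)$ satisfies the stated $C^{2,\alpha}$ estimate by standard elliptic theory (the metric $\frac1t\tilde\omega_Y$ has bounded geometry after rescaling, so the constant is $t$-uniform in the rescaled norm), and the bound $\|\partial\bar\partial u_2\|_{\mathcal{B}}\le C\|f\|_{\mathcal{B}}$ follows from the compatibility computation already carried out in the proof of Proposition~\ref{prop:CY3ricci} translating the $U_3$-weight $t^{-1/2}|y|^{(\tau+2)/4}r^{2-\tau}$ into the $\rho',w'$ weights. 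After subtracting $\Delta u_2$, the remaining right-hand side has zero fiber average and I invert the fiberwise Laplacian on the K3 fibers, which has a spectral gap, to get the $U_3$-part of $u_1$.

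Next I would patch: choose cutoffs subordinate to $\{U_1,U_2,U_3\}$ with derivatives controlled in the ambient metric $\omega_t$, form the approximate inverse $P_0 := \sum \chi_i P_i$, and estimate the error $\Delta P_0 - \mathrm{Id}$. The commutator $[\Delta,\chi_i]$ is supported in the transition annuli, which are located at the scale $|y|\sim t^{6/(14+\tau)}$ — this is exactly the feature the exponent $\frac{6}{14+\tau}$ was engineered to produce — and there, because the two model operators being glued are uniformly equivalent, the error has small operator norm on $\mathcal{B}$, say $\le \frac12$, once $t$ is small. A Neumann series then upgrades $P_0$ to a genuine right inverse $\Delta^{-1} = P_0(\mathrm{Id} + (\Delta P_0 - \mathrm{Id}))^{-1}$, and reading off the two components of its output against the splitting $\mathcal{C}\oplus C^{2,\alpha}(Y)$ gives the three claimed estimates with a $t$-uniform constant.

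\textbf{The main obstacle} will be the transition-region error estimate: one must show that $[\Delta,\chi_i]P_i$ is genuinely a small perturbation, and this is delicate precisely because the overlap between $U_3$ and $U_1\cup U_2$ is much larger in our setup than in the original construction of \cite{Li19} — the cutoff $\chi_0$ lives at scale $t^{6/(14+\tau)}$ rather than at a fixed scale. The key points are (i) that the weight powers on the two sides of each overlap match, which is the content of the identity $\delta' = -\frac12\tau+\frac12\delta+\frac{6}{14+\tau}(\frac23+\frac56\tau-\delta)$ defining $\delta'$, verified in the proof of Proposition~\ref{prop:CY3ricci}, and (ii) that differentiating $\chi_i$ costs a factor $\rho'^{-1}w'^{-1}$ (equivalently a positive power of $t$ on the transition scale), so that the commutator term gains rather than loses in the weighted norm. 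The indicial-root avoidance hypothesis on $\delta$ enters here too: it is what guarantees that the model inverse on $\mathbb{C}^3$ does not pick up logarithmic growth that would spoil the weighted estimates. Modulo these checks — all of which parallel, with bookkeeping adjustments, the arguments of \cite[Section~2.4]{Li19} and \cite{Sz19} — the proposition follows.
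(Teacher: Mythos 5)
Your proposal is correct and follows essentially the same route as the paper: the paper's proof is exactly a parametrix construction gluing inverses on the model spaces $(\mathbb{C}^3,\omega_{\mathbb{C}^3})$, $X_0\times\mathbb{C}$, and $K3\times\mathbb{C}$ (the latter splitting into the fiberwise part and the base Laplacian that produces $u_2$), with cutoffs adjusted to the enlarged $U_3$ at scale $|y|\sim t^{6/(14+\tau)}$ and a Neumann series, exactly as you outline, following \cite[Proposition~3.1]{Li19}. Your identification of the transition-region commutator estimate and the weight-compatibility identity for $\delta'$ as the points requiring care matches the paper's stated modifications to Li's argument.
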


\begin{proof}
  This is very similar to the proof of
  \cite[Proposition~3.1]{Li19}. We construct a parametrix for
  $\Delta$, which relies on inverting the Laplacian on the model
  spaces $(\mathbb{C}^3,\omega_{\mathbb{C}^3})$,
  $X_0 \times \mathbb{C}$, and $K3 \times \mathbb{C}$. The main
  difference in our case is that the size of $U_1$ and $U_2$ are
  reduced to $|y|< 2t^{6/(14+\tau)}$, while $U_3$ is expanded to
  $|y| > t^{6/(14+\tau)}$. The supports of the corresponding cutoff
  functions are adjusted accordingly, and within $U_3$, the pieces
  modeled on $K3 \times \mathbb{C}$ are rescaled according to the
  definition of the weighted norm. Compare with the proof of
  Proposition~\ref{prop:dirac2} below.
\end{proof}

\subsection{Deforming to genuine Calabi-Yau metrics}

We are now ready to state our main result in this section:

\begin{thm}\label{thm:improved}
  Let $\tau=-\frac{2}{3}$ and $-\frac{3}{13}<\delta<0$, so
  $\delta' = \frac{23}{60}+\frac{1}{20}\delta$. Assume that $\delta$
  avoid a certain discrete set of values and $t>0$ is small
  enough. Then the unique Calabi-Yau metric $\tilde\omega_t$ in
  $[\omega_X+\frac{1}{t}\pi^*\omega_Y]$ satisfies
  $\tilde\omega_t - \omega_t = \sqrt{-1}\partial\bar\partial u$, where
  \begin{enumerate}
  \item $u = u_1 + u_2$, where $u_2$ is a function on $Y$,
  \item $\|u_1\|_{\mathcal{C}} \le C(\delta)$,
  \item $\|u_2\|_{C^{2,\alpha}(Y,\frac{1}{t}\tilde\omega_Y)} \le C(\delta)$, and
  \item $\|\partial\bar\partial u_2\|_{\mathcal{B}}\leq C(\delta)$.
  \end{enumerate}
\end{thm}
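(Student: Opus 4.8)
The plan is to run the usual fixed-point argument for the complex Monge--Amp\`ere equation in the weighted spaces $\mathcal{C}$ and $\mathcal{B}$, following \cite[Section~4]{Li19}, with every estimate re-derived for the modified exponents $\tau=-\tfrac23$, $-\tfrac{3}{13}<\delta<0$ and the enlarged region $U_3=\{|y|>t^{6/(14+\tau)}\}$. Since $\int_Y\tilde\omega_Y=\int_Y\omega_Y=1$ and $H^2(\mathbb{P}^1)=\mathbb{R}$, we have $[\omega_t]=[\omega_X]+\tfrac1t\pi^*[\omega_Y]=[\tilde\omega_t]$, so the $\partial\bar\partial$-lemma gives $\tilde\omega_t=\omega_t+\ddb u$ for a function $u$ unique up to a constant, and the content of the theorem is the weighted estimates on $u$. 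Writing $\omega_t^3=a_t(1+f_t)\sqrt{-1}\Omega\wedge\overline\Omega$ and $\tilde\omega_t^3=a_t\sqrt{-1}\Omega\wedge\overline\Omega$, and normalizing $\Delta_{\omega_t}$ by $(\Delta_{\omega_t}u)\,\omega_t^3=3\,\ddb u\wedge\omega_t^2$, the equation $\tilde\omega_t=\omega_t+\ddb u$ becomes
\[
  \Delta_{\omega_t}u+Q_t(\ddb u)=h_t,\qquad h_t:=-\frac{f_t}{1+f_t},
\]
where $Q_t$ is the nonlinear Monge--Amp\`ere remainder, $(Q_t(\ddb u))\,\omega_t^3=3(\ddb u)^2\wedge\omega_t+(\ddb u)^3$. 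By Proposition~\ref{prop:CY3ricci}, $\|h_t\|_{\mathcal B}\le C$ for $t$ small.

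Next I would set up the iteration. Let $\Delta^{-1}$ be the right inverse of Proposition~\ref{prop:CY3laplacian}, write $\Delta^{-1}g=g_1+g_2\in\mathcal C\oplus C^{2,\alpha}(Y,\tfrac1t\tilde\omega_Y)$, and put $\|u\|_\star:=\|u_1\|_{\mathcal C}+\|u_2\|_{C^{2,\alpha}(Y,\frac1t\tilde\omega_Y)}+\|\ddb u_2\|_{\mathcal B}$ for $u=u_1+u_2$. Consider
\[
  \mathcal T(u)=\Delta^{-1}\bigl(h_t-Q_t(\ddb u)+c(u)\bigr),
\]
where $c(u)$ is the unique constant making the argument of $\Delta^{-1}$ have $\omega_t^3$-average zero; since $\int_X h_t\,\omega_t^3=0$ (which follows from $[\omega_t]^3=[\tilde\omega_t]^3$) and at a fixed point the cohomological identity $[\omega_t+\ddb u]^3=[\omega_t]^3$ forces $\int_X(h_t+c(u))\,\omega_t^3=0$, one gets $c(u^\ast)=0$, so a fixed point of $\mathcal T$ solves $\tilde\omega_t=\omega_t+\ddb u$. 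Proposition~\ref{prop:CY3laplacian} together with $\|\ddb f\|_{\mathcal B}\lesssim\|f\|_{\mathcal C}$ gives $\|\mathcal T(u)-\mathcal T(u')\|_\star\le C_1\|Q_t(\ddb u)-Q_t(\ddb u')\|_{\mathcal B}$ and $\|\mathcal T(0)\|_\star\le C_1\|h_t\|_{\mathcal B}\le C_1C$. The argument then reduces entirely to the \emph{quadratic estimate}
\[
  \|Q_t(\ddb u)-Q_t(\ddb u')\|_{\mathcal B}\le\epsilon(t)\,(\|u\|_\star+\|u'\|_\star)\,\|u-u'\|_\star,\qquad\epsilon(t)\to0\text{ as }t\to0,
\]
on the ball $B_K=\{\|u\|_\star\le K\}$. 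Granting it, for $t$ small $\mathcal T$ maps $B_K$ into itself with $K=2C_1C$ and is a contraction, hence has a unique fixed point $u=u_1+u_2$ satisfying conclusions (1)--(4); uniqueness of the Calabi--Yau metric in $[\omega_X+\tfrac1t\pi^*\omega_Y]$ then forces $\omega_t+\ddb u=\tilde\omega_t$.

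The main obstacle is precisely this quadratic estimate, and it is where the numerology ($\tau=-\tfrac23$, $-\tfrac3{13}<\delta<0$, hence $\delta'=\tfrac{23}{60}+\tfrac1{20}\delta>0$, and the overlap scale $|y|\sim t^{6/(14+\tau)}$) does its work. One argues region by region on $X=U_1\cup U_2\cup U_3$. On $U_1$ the coordinate change \eqref{eq:coor_change} turns $\omega_t$ into the fixed model $t^{1/3}\omega_{\mathbb{C}^3}$ and turns $\mathcal B,\mathcal C$ into the doubly weighted spaces $C^{k,\alpha}_{\delta-2,\tau-2}$ and $C^{k,\alpha}_{\delta,\tau}$ of \cite{Sz19}, which are stable under the products appearing in $Q_t$ because $\delta<0$ and $\tau<0$; carrying the powers of $t$ from \eqref{eq:coor_change} and the normalizing factors $t^{-\delta'}t^{-\delta/6}$ through a two-fold product yields a net positive power of $t$. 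On $U_3$ one uses the semi-Ricci-flat model and the Hein--Tosatti asymptotics \cite{HT21}: since $\omega_t\ge\tfrac1t\pi^*\tilde\omega_Y$, the form $\ddb u_2$ is pointwise $O(t)$ relative to $\omega_t$, so every term of $Q_t$ involving $u_2$ gains a power of $t$, while the purely fiberwise terms are controlled by Lemma~\ref{lemma:Li2.5} and the weight $t^{-1/2}|y|^{\frac14(\tau+2)}r^{2-\tau}$ as in \cite[Lemma~2.8]{Li19}. On the transition region $U_2$ one checks that the two descriptions glue --- the same compatibility computation as in the proof of Proposition~\ref{prop:CY3ricci}, using $\rho'\sim t^{1/6}\rho$, $r\sim t^{1/6}R$, $w'\sim w$ --- which is exactly what pins down $\delta'(\delta,\tau)$. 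The delicate point is obtaining a genuine gain $\epsilon(t)\to0$, not merely boundedness, simultaneously in all three regions and across the now-wider overlaps $U_1\cap U_3$ and $U_2\cap U_3$; once $\tau,\delta,\delta'$ are fixed and verified to satisfy the constraints $-2+\alpha<\delta<0$ and $-2+\alpha+\tau<0$ of Proposition~\ref{prop:CY3laplacian}, the individual product estimates are routine, so the bulk of the remaining work is the bookkeeping that makes all these weighted inequalities consistent.
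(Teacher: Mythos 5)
Your overall skeleton matches the paper's: write $\tilde\omega_t=\omega_t+\ddb u$, linearize the Monge--Amp\`ere equation, feed the initial error from Proposition~\ref{prop:CY3ricci} and the right inverse from Proposition~\ref{prop:CY3laplacian} into a contraction argument (the paper iterates in the range $\mathcal B$, setting $u=P(f)$ with $f$ a fixed point of $N(f)=-F(0)-Q(P(f))$, while you iterate on $u$ itself with the composite norm $\|\cdot\|_\star$; these are interchangeable here, and your auxiliary constant $c(u)$ is in fact unnecessary, since $\int_X F(u)\,\omega_t^3=0$ for every $u$ by a cohomological computation, so $Q_t(\ddb u)$ automatically has $\omega_t^3$-average zero).

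The genuine gap is at the decisive step, the contraction estimate, which you explicitly defer to a region-by-region weighted product analysis on $U_1,U_2,U_3$ and call ``routine bookkeeping.'' That analysis is never carried out, and more importantly it misses the mechanism that makes the estimate essentially one line and that explains the hypotheses of the theorem. The point is that the quadratic term satisfies the pointwise bound
\[
  |Q_t(\ddb P f)-Q_t(\ddb P g)|_{C^{0,\alpha}(\omega_t)}
  \le C\bigl(|\ddb P f|_{C^{0,\alpha}(\omega_t)}+|\ddb P g|_{C^{0,\alpha}(\omega_t)}\bigr)\,
  |\ddb P(f-g)|_{C^{0,\alpha}(\omega_t)},
\]
and that unwinding the weighted norms in Proposition~\ref{prop:CY3laplacian} (with $\tau=-\tfrac23$, $\delta'=\tfrac{23}{60}+\tfrac{\delta}{20}$) gives the \emph{unweighted, uniform} smallness $|\ddb P(f)|_{C^{0,\alpha}(\omega_t)}=O\bigl(t^{\frac{1}{20}+\frac{13\delta}{60}}\bigr)\|f\|_{\mathcal B}$. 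Since one factor is uniformly $O(t^{\frac{1}{20}+\frac{13\delta}{60}})$ and the other is measured in $\mathcal B$ via $\|\ddb P(f-g)\|_{\mathcal B}\le C\|f-g\|_{\mathcal B}$, the contraction $\|Q_t(\ddb Pf)-Q_t(\ddb Pg)\|_{\mathcal B}\le Ct^{\frac{1}{20}+\frac{13\delta}{60}}\|f-g\|_{\mathcal B}$ follows at once, with no new product estimates in the weighted spaces and no separate matching across $U_1\cap U_3$, $U_2\cap U_3$. This is also exactly where the restriction $-\tfrac{3}{13}<\delta<0$ enters (it is equivalent to $\tfrac{1}{20}+\tfrac{13\delta}{60}>0$), a condition your proposal never uses or explains --- a sign that the numerology you appeal to has not actually been verified. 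Without identifying this gain (or genuinely carrying out the weighted product bookkeeping you sketch, including justifying claims such as $\ddb u_2=O(t)$ relative to $\omega_t$ on $U_3$), the proof is incomplete at its central point.
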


\begin{proof}
  For functions $u$ on $X$, define
  \[
    F(u) = \frac{(\omega_t+\sqrt{-1}\partial\bar\partial
      u)^3}{\omega_t^3}- \frac{1}{1+f_t}.
  \]
  Then $\tilde\omega_t = \omega_t+\sqrt{-1}\partial\bar\partial u$ if
  and only if $F(u) = 0$. Note that $\int_XF(0)\omega_t^3=0$ and that
  $\|F(0)\|_{\mathcal{B}} \le C$. We write
  \[
    F(u) = F(0)+\Delta_{\omega_t} u + Q(u).
  \]
  To solve $F(u)=0$, it is enough to find a fixed point
  $f \in \mathcal{B}$ of the map
  \[
    N(f) = -F(0) - Q(P(f)),
  \]
  and then take $u = P(f)$. Here $P$ denotes the right inverse of
  $\Delta$ in Proposition~\ref{prop:CY3laplacian}. We have the pointwise quadratic estimate
  \[
    &|Q(P(f))-Q(P(g))|_{C^{0,\alpha}(\omega_t)} \\
    &\le C(|\partial\bar\partial P(f)|_{C^{0,\alpha}(\omega_t)}+|\partial\bar\partial P(g)|_{C^{0,\alpha}(\omega_t)})
    |\partial\bar\partial(P(f)-P(g))|_{C^{0,\alpha}(\omega_t)}.
  \]
  Note that $|\partial\bar\partial(P(f))|_{C^{0,\alpha}(\omega_t)} = O(t^{1/20+13\delta/60})$ by
  Proposition~\ref{prop:CY3laplacian}. Thus, by having $t>0$
  sufficiently small, we have
  \[
    \|Q(P(f))-Q(P(g))\|_{\mathcal{B}} \le
    \frac{1}{2}\|f-g\|_{\mathcal{B}},
  \]
  i.e., $N(f)$ is a contraction mapping.

  Define the Cauchy sequence $f_i$ by setting $f_0 = 0$ and
  $f_i = N(f_{i-1})$ for $i>0$, with limit
  $f = \lim_{i\to\infty} f_i$. Then we have
  \[
    F(Pf) = \lim_{i\to\infty} F(Pf_i) = \lim_{i\to\infty}
    (f_i-f_{i+1}) = 0.
  \]
  We conclude the proof by setting $u = Pf$.
\end{proof}

Note that due to the non-smoothness of $\omega_t$, the potential
function $u$ is only in $C^{2,\alpha}$. However, thanks to elliptic
regularity, we have the following two results, which are essential to
the construction of the special Lagrangian spheres in Section~5. We
work in the region close to the vanishing cycles in $X$.

\begin{prop}
  \label{prop:error1}
  In the region $\{t^{\frac{2}{3}} < |y| < 2t^{\frac{9}{20}}\} \cap \{ r < \Lambda_1|y|^{1/4}\}$,
  write
  \[
    \tilde\omega_t - \left(\frac{t}{2A_0}\right)^{\frac{1}{3}}\omega_{\mathbb{C}^3}
    = (f,f) + (f,b) + (b,b),
  \]
  where $(f,f)$ stands for the fiber directions, $(b,b)$ stands for
  the base directions, and $(f,b)$ stands for the mixed
  directions. Then we have the
  \[
    |(f,f)|_{C^{k,\alpha}(\omega_{\mathbb{C}^3})}, |(b,b)|_{C^{k,\alpha}(\omega_{\mathbb{C}^3})}
    &\le
    C(k) t^{\frac{23}{60}+\frac{13\delta}{60}}\rho^{\delta-\frac{k}{4}}, \\
    |(f,b)|_{C^{k,\alpha}(\omega_{\mathbb{C}^3})}
    &\le
    C(k)t^{\frac{23}{60}+\frac{13\delta}{60}}\rho^{\delta-\frac{3}{4}-\frac{k}{4}}.
  \]
\end{prop}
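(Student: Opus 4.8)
The plan is to combine Theorem~\ref{thm:improved} with the $\mathbb{C}^3$-side estimate \eqref{eq:c3error}, after rescaling everything to the fixed model $(\mathbb{C}^3,\omega_{\mathbb{C}^3})$ via the coordinate change $F_t$ of \eqref{eq:coor_change}. First I would note that in the region $\{t^{2/3} < |y| < 2t^{9/20}\}$ we have, under $F_t$, the comparison $|y| \sim (t/2A_0)^{2/3}|\tilde y|$, $r \sim (t/2A_0)^{1/6}R$, and the constraint $t^{2/3} < |y| < 2t^{9/20}$ translates into the $\rho$-range where, roughly, $K < \rho \lesssim t^{-\text{something}}$ and $R < \kappa^{-1}\rho^{1/4}$, i.e. precisely the ``near the vanishing cycles'' regime in which \eqref{eq:c3error} is valid and the weight $w$ satisfies $w \sim \kappa^{-2}\rho^{-3/4}$. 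In this regime, recall from Theorem~\ref{thm:improved} that $\tilde\omega_t - \omega_t = \sqrt{-1}\partial\bar\partial u$ with $u = u_1 + u_2$, $u_2$ pulled back from $Y$. Since we are in $U_1$ (the region $|y| < 2t^{6/(14+\tau)} = 2t^{9/20}$ when $\tau = -2/3$), the $\mathcal{C}$-norm bound $\|u_1\|_{\mathcal{C}} \le C(\delta)$ gives, by unwinding the definition of $\|\cdot\|_{C^{2,\alpha}_{\delta,\tau}(U_1)}$ and the prefactor $t^{-\delta'}t^{-\delta/6}$,
\[
  \|\sqrt{-1}\partial\bar\partial u_1\|_{C^{0,\alpha}_{\delta-2,\tau-2}(U_1)} \le C(\delta)\, t^{\delta' + \delta/6 - 1/3},
\]
and one computes $\delta' + \delta/6 - 1/3 = \frac{23}{60} + \frac{13\delta}{60}$ using $\delta' = \frac{23}{60} + \frac{\delta}{20}$ from the statement of Theorem~\ref{thm:improved} (with $\tau = -2/3$), so that after translating the weighted $C^{0,\alpha}_{\delta-2,\tau-2}$-norm on $U_1$ into the pointwise bound on $(\mathbb{C}^3,\omega_{\mathbb{C}^3})$ — here the rescaling factor $(t/2A_0)^{1/3}$ relating $\omega_t|_{U_1}$ to $t^{1/3}\omega_{\mathbb{C}^3}$ enters — one gets $|\nabla^k\sqrt{-1}\partial\bar\partial u_1|_{\omega_{\mathbb{C}^3}} \le C(k)\, t^{23/60 + 13\delta/60}\, w^{-2}\rho^{\delta-2-k/4}$, and with $w \sim \rho^{-3/4}$ near the vanishing cycles this is exactly $C(k)\, t^{23/60+13\delta/60}\rho^{\delta - k/4}$ — the claimed bound, with no splitting into $(f,f),(b,b),(f,b)$ needed for $u_1$, since $\sqrt{-1}\partial\bar\partial u_1$ is a genuine smooth form whose full norm already obeys the stronger estimate.

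The refinement comes from the $u_2$ and the $\omega_t$-vs-$(t/2A_0)^{1/3}\omega_{\mathbb{C}^3}$ comparison. For $u_2$, since it is pulled back from the base $Y$, the form $\sqrt{-1}\partial\bar\partial u_2$ has \emph{no} fiber component: it contributes only to $(b,b)$, and possibly $(f,b)$ through the non-holomorphic splitting of $T\mathbb{C}^3$ near $L_{\mathbb{R}^3}$, but the $(f,b)$-part picks up the horizontal-differentiation gain of $|\tilde y|^{-1} \sim \rho^{-1}$ (as explained on \cite[p.1021]{Li19} and used in Proposition~\ref{prop:error0}), improving the exponent from $\rho^{\delta - k/4}$ to $\rho^{\delta - 1 - k/4}$, which is even better than the required $\rho^{\delta - 3/4 - k/4}$. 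The bound $\|\partial\bar\partial u_2\|_{\mathcal{B}} \le C(\delta)$ from Theorem~\ref{thm:improved}(4), restricted to $U_1$, supplies precisely the $t^{23/60+13\delta/60}$-weighted $C^{0,\alpha}_{\delta-2,\tau-2}$ control needed. Finally, the difference $\omega_t - (t/2A_0)^{1/3}\omega_{\mathbb{C}^3}$ on this region: by the definition of the approximate metric $\omega_t$ and the coordinate change \eqref{eq:coor_change}, in $U_1$ the glued piece $\chi_0\gamma_2(\cdots)(t/2A_0)^{1/3}(\phi'_{\mathbb{C}^3} + \sqrt{R^4+\rho})$ reproduces $(t/2A_0)^{1/3}\phi_{\mathbb{C}^3}$ up to: (a) cutoff errors from $\gamma_1,\gamma_2$ — which are supported where $r \sim t^{1/10} + t^{1/12}\rho'^{1/6}$, i.e. away from the vanishing cycles $\{r < \Lambda_1|y|^{1/4}\}$ in our region, hence vanish identically here — and (b) the difference $\sqrt{-1}\partial\bar\partial((t/2A_0)^{1/3}\sqrt{R^4+\rho}) - (t/2A_0)^{1/3}\omega_\infty$ together with lower-order terms from $\frac{1}{t}\tilde\omega_Y$ versus $(t/2A_0)^{1/3}\sqrt{-1}\partial\bar\partial|\tilde y|^2$, all of which are controlled by the same $\phi'_{\mathbb{C}^3}$-type estimate \eqref{eq:c3error} and the Lipschitz behavior of $A_y$ near $y=0$, giving contributions of the same or smaller order; these do split into $(f,f),(b,b),(f,b)$ components exactly as in Proposition~\ref{prop:error0}, producing the $\rho^{\delta'-1/2-k/4}$-type bounds there, which are absorbed into $t^{23/60+13\delta/60}\rho^{\delta-k/4}$ once we check $\delta' - 1/2 \le \delta$ holds on the given range $-3/13 < \delta < 0$ (it does, since $\delta' - 1/2 = \frac{\delta}{20} - \frac{7}{60} < \delta$ for $\delta > -\frac{7}{57}$, wait — one needs to be slightly careful and I would verify this inequality explicitly, possibly shrinking to the stated range $-3/13 < \delta < 0$ where $t^{23/60+13\delta/60}$ already dominates the $\phi'$-contributions uniformly).

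The main obstacle I expect is the bookkeeping in step (b): precisely matching the power of $t$ coming out of the weighted-norm prefactors in $\mathcal{C}$ and $\mathcal{B}$ (namely $t^{-\delta'}t^{-\delta/6+1/3}$ for the $U_1$-piece of $\|\cdot\|_{\mathcal{B}}$) against the rescaling factor $(t/2A_0)^{1/3}$ relating $\omega_t$ to $\omega_{\mathbb{C}^3}$, and verifying that the algebraic identity $\delta' + \frac{\delta}{6} - \frac{1}{3} = \frac{23}{60} + \frac{13\delta}{60}$ holds with $\tau = -\frac{2}{3}$ — this is the identity that makes the clean exponent $\frac{23}{60} + \frac{13\delta}{60}$ appear, and it is the only genuinely load-bearing computation. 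Everything else is a matter of invoking \eqref{eq:c3error}, Proposition~\ref{prop:error0}, and the structure of $\omega_t$, and noting that the cutoff regions of $\gamma_1,\gamma_2$ are disjoint from $\{r < \Lambda_1|y|^{1/4}\}$ so that no cutoff errors contaminate the estimate in the stated region. I would also remark, following the sentence preceding the proposition, that elliptic regularity (away from the singular point $P$, where $\omega_t$ is smooth once $u$ solves the smooth Monge–Ampère equation there) upgrades the $C^{2,\alpha}$-regularity of $u$ from Theorem~\ref{thm:improved} to the $C^{k,\alpha}$-bounds asserted, with the weighted estimates propagating to all derivatives by the standard Schauder-on-balls-of-bounded-geometry argument built into the definitions of the weighted norms.
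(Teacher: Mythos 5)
Your overall framework (rescale to the $\mathbb{C}^3$ model via \eqref{eq:coor_change}, feed in Theorem~\ref{thm:improved}, note that the cutoffs in $\omega_t$ are constant on $\{r<\Lambda_1|y|^{1/4}\}$ so $\omega_t-(t/2A_0)^{1/3}\omega_{\mathbb{C}^3}$ is harmless) is consistent with the paper, which dismisses the $\omega_t$-comparison in one sentence. But there is a genuine gap at the heart of the statement: the second inequality, the improved decay $\rho^{\delta-\frac34-\frac k4}$ of the mixed component, for the dominant contribution $\sqrt{-1}\partial\bar\partial u_1$. You assert that ``no splitting into $(f,f),(b,b),(f,b)$ is needed for $u_1$, since its full norm already obeys the stronger estimate'' --- this is backwards. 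The weighted space $C^{2,\alpha}_{\delta,\tau}$ is isotropic with respect to $\rho^{-2}w^{-2}\omega_{\mathbb{C}^3}$, so unwinding $\|u_1\|_{\mathcal C}\le C$ only yields the \emph{weaker} bound $\rho^{\delta-\frac k4}$ on all components, including the $(f,b)$ part; nothing in Theorem~\ref{thm:improved} gives the extra $\rho^{-3/4}$ for the mixed second derivatives of $u_1$, and your horizontal-differentiation argument is only applied to $u_2$ (where, $u_2$ being a function of $y$ alone, $\partial\bar\partial u_2$ is in fact purely $(b,b)$ and the point is moot). The paper obtains the mixed improvement by a PDE argument: it rewrites the equation as $\bigl((t/2A_0)^{1/3}\omega_{\mathbb{C}^3}+\sqrt{-1}\partial\bar\partial u\bigr)^3=e^F\,\tfrac{t}{2A_0}\,\omega_{\mathbb{C}^3}^3$ with $\|F\|$ small in the doubly weighted spaces, and then differentiates this equation twice in mixed directions (followed by higher derivatives) and invokes elliptic regularity, so that the improved decay of the coefficients and of $F$ in the mixed directions is transferred to the mixed Hessian of $u$. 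That bootstrap is also what supplies the first inequality for $k\ge 1$: Theorem~\ref{thm:improved} only controls $u_1$ in $C^{2,\alpha}$, so your claimed bound $|\nabla^k\partial\bar\partial u_1|$ for all $k$ cannot be read off from the weighted norms; elliptic regularity on the rescaled Monge--Amp\`ere equation is the core of the proof, not the closing remark you make it.

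Two smaller issues in your bookkeeping. First, the identity you flag as the only load-bearing computation, $\delta'+\tfrac{\delta}{6}-\tfrac13=\tfrac{23}{60}+\tfrac{13\delta}{60}$, is false: with $\delta'=\tfrac{23}{60}+\tfrac{\delta}{20}$ one has $\delta'+\tfrac{\delta}{6}=\tfrac{23}{60}+\tfrac{13\delta}{60}$ and $\delta'+\tfrac{\delta}{6}-\tfrac13=\tfrac{1}{20}+\tfrac{13\delta}{60}$; the latter is the size of the \emph{relative} deviation (measured against $t^{1/3}\omega_{\mathbb{C}^3}\sim\omega_t$, and it is the exponent appearing in the bound on $F$ in the paper's proof), while the extra $t^{1/3}$ converting to norms against the unscaled $\omega_{\mathbb{C}^3}$ is what produces $\tfrac{23}{60}$. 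As written, your derivation double-counts or omits this factor depending on where the $(t/2A_0)^{1/3}$ ``enters''; the final exponent happens to be right, but the chain of identities is not. Second, your digression comparing $\delta'-\tfrac12$ (the exponent of Proposition~\ref{prop:error0}, where $\delta'$ is a different parameter in $(-1,0)$) with $\delta$ conflates the two uses of the symbol $\delta'$; since Proposition~\ref{prop:error0} holds for any $\delta'\in(-1,0)$, one simply chooses it close enough to $0$ (or notes, as the paper does, that this term is dominated by $\tilde\omega_t-\omega_t$), so no inequality between $\delta'-\tfrac12$ and $\delta$ needs to hold on the whole range.
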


\begin{proof}
  We first note that the metric deviation
  $\omega_t - (t/(2A_0))^{1/3}\omega_{\mathbb{C}^3}$ is dominated by $\tilde\omega_t-\omega_t$.
  Thus we have
  \[\label{eq:c3cmae}
    \tilde\omega_t^3 =
    \left(\left(\frac{t}{2A_0}\right)^{\frac{1}{3}}\omega_{\mathbb{C}^3}    
      +\sqrt{-1}\partial\bar\partial u\right)^3
    =
    e^F\frac{t}{2A_0}\omega_{\mathbb{C}^3}^3
  \]
  where
  $\|F\|_{C^{k,\alpha}_{\delta-2-k,-2/3-k}(\mathbb{C}^3)} \le
  Ct^{\frac{1}{20}+\frac{13\delta}{60}}$, $u = u_1 + u_2$,
  $u_1 \in C^{2,\alpha}_{\delta,-2/3}(\mathbb{C}^3)$, and
  $u_2 \in C^{2,\alpha}_{\delta, -2/3-2}(\mathbb{C}^3)$ only depends
  on the base. Since the statement holds for $k=0$, differentiating
  \eqref{eq:c3cmae} together with elliptic regularity then implies the
  first inequality in the statement. The second inequality also
  follows from elliptic regularity by taking twice derivatives of the
  \eqref{eq:c3cmae} in mixed directions, followed by arbitrary higher
  derivatives.
\end{proof}

\begin{prop}\label{prop:error2}
  In the region $\{|y| > t^{\frac{9}{20}}\} \cap \{ r < \Lambda_1|y|^{1/4}\}$, write
  \[
    \tilde\omega_t - \omega_t
    = (f,f) + (f,b) + (b,b)
  \]
  as in the above proposition. Then we have
  \[
    |(f,f)|_{C^{k,\alpha}(\omega_t)}, |(b,b)|_{C^{k,\alpha}(\omega_t)}
    &\le
    C(k) t^{\frac{1}{2}}|y|^{-1-\frac{k}{4}}, \\
    |(f,b)|_{C^{k,\alpha}(\omega_t)}
    &\le
    C(k) t|y|^{-\frac{7}{4}-\frac{k}{4}}.
  \]
\end{prop}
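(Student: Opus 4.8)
The plan is to mimic the proof of Proposition~\ref{prop:error1}, but now working in the region $\{|y| > t^{9/20}\}$, where the relevant model is no longer the rescaled $\omega_{\mathbb{C}^3}$ but rather the semi-Ricci flat metric on $K3 \times \mathbb{C}$ (or more precisely, the approximate metric $\omega_t$ built from $\omega_{SRF}|_{X_y}$ together with the base factor $\frac{1}{t}\tilde\omega_Y$). The key input is Theorem~\ref{thm:improved}, which gives $\tilde\omega_t - \omega_t = \sqrt{-1}\partial\bar\partial u$ with $u = u_1 + u_2$, $u_2$ depending only on the base, together with the weighted estimates $\|u_1\|_{\mathcal{C}} \le C(\delta)$ and $\|\partial\bar\partial u_2\|_{\mathcal{B}} \le C(\delta)$. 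First I would unwind the definition of the weighted norms $\mathcal{C}$ and $\mathcal{B}$ restricted to $U_3 = \{|y| > t^{6/(14+\tau)}\}$ with $\tau = -2/3$ (so that the transition scale is $t^{6/(14+\tau)} = t^{9/20}$), to obtain the pointwise bound
\[
  |\sqrt{-1}\partial\bar\partial u|_{\omega_t} \lesssim t^{1/2}|y|^{-1}
\]
in the region in question; this is precisely what the third part of the norm $\|\cdot\|_{\mathcal{B}}$ encodes once one accounts for the weight $t^{-1/2}|y|^{(\tau+2)/4}r^{2-\tau}$ with $\tau = -2/3$ and uses that $r \lesssim |y|^{1/4}$ near the vanishing cycle. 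This handles $k=0$ for both $(f,f)$ and $(b,b)$ simultaneously, since the $C^{0,\alpha}_{\delta-2,\tau-2}$-type norms on $U_3$ do not distinguish fiber and base directions.

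Next I would promote this to all $k$ by differentiating the complex Monge–Amp\`ere equation, exactly as in the proof of Proposition~\ref{prop:error1}. Concretely, $\tilde\omega_t^3 = e^F \omega_t^3$ for a function $F$ whose weighted norm is controlled by Proposition~\ref{prop:CY3ricci} and Theorem~\ref{thm:improved}; since $\omega_t$ is smooth and uniformly equivalent to a known model metric in $U_3$ (a rescaled product of an Eguchi–Hanson-type metric in the fiber and the flat metric $\frac{1}{t}\tilde\omega_Y$ in the base), elliptic regularity for the linearized Monge–Amp\`ere operator bootstraps the $C^0$-bound on $\sqrt{-1}\partial\bar\partial u$ to $C^{k,\alpha}$-bounds, each fiber derivative costing a factor $|y|^{-1/4}$ (since the fiber metric scales like $|y|^{1/2}\omega_{EH}$, cf. the horizontal-differentiation discussion after Proposition~\ref{prop:antiholo}). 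This yields the stated $C(k)t^{1/2}|y|^{-1-k/4}$ bound for $(f,f)$ and $(b,b)$.

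The improved bound for the mixed term $(f,b)$ is where the real work lies, and I expect it to be the main obstacle. The point is that a single base derivative of a fiberwise quantity does not simply cost $|y|^{-1/4}$ but rather an additional full power: horizontally lifting $\partial_y$ with respect to the metric (as opposed to differentiating within a fixed fiber) introduces a factor $|y|^{-1}$ instead of $|y|^{-1/4}$, because of the explicit form of the horizontal lift recalled after Proposition~\ref{prop:antiholo} and the analogous computation in Proposition~\ref{prop:error0}. So to see the gain $t^{1/2}|y|^{-1} \rightsquigarrow t|y|^{-7/4}$ one must observe that the mixed component of $\sqrt{-1}\partial\bar\partial u$ involves exactly one base derivative and one fiber derivative of $u_1$ (the base-only piece $u_2$ contributes nothing to the $(f,b)$-component), and that the $u_1$-part of Theorem~\ref{thm:improved} already lives in a weighted space adapted to the model geometry on $K3 \times \mathbb{C}$. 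Tracking the base derivative carefully — using Lemma~\ref{lemma:Li2.5} to control how $\psi_y$ varies with $y$, which is what governs the genuine $y$-dependence of the fiberwise Calabi–Yau data — one picks up an extra factor of $t^{1/2}|y|^{-3/4}$ relative to the $(f,f)$ estimate, yielding $t|y|^{-7/4-k/4}$. I would organize this by differentiating \eqref{eq:c3cmae}'s analogue on $U_3$ twice, once in a base and once in a fiber direction, and then applying elliptic regularity together with the sharper weight on mixed components recorded in the definition of $\|\cdot\|_{\mathcal{B}}$, exactly parallel to the second inequality in the proof of Proposition~\ref{prop:error1}. Since the statement for $k=0$ in the mixed case follows from the weighted-norm bookkeeping and the remaining derivatives are handled by the same bootstrap, I would omit the routine details.
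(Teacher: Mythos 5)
Your proposal is essentially the paper's own argument: the paper omits the proof of Proposition~\ref{prop:error2} as being "similar to the previous proposition," and your plan — read off the $k=0$ bound from the $U_3$ part of the weighted estimates in Theorem~\ref{thm:improved} (with $\tau=-2/3$, $r\sim|y|^{1/4}$, and $u_2$ contributing only to $(b,b)$), then bootstrap to all $k$ by differentiating the Monge–Ampère equation with elliptic regularity, obtaining the improved $(f,b)$ bound by taking one base and one fiber derivative and invoking the horizontal-differentiation principle — is exactly the adaptation of the proof of Proposition~\ref{prop:error1} to the region $\{|y|>t^{9/20}\}$. The only cosmetic slip is attributing the mixed-term gain partly to a "sharper weight on mixed components" in $\|\cdot\|_{\mathcal{B}}$ (that norm is isotropic; the gain comes from the structure of the equation and the horizontal lift, as you correctly state elsewhere), which does not affect the argument.
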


The proof is similar to the proof of the previous proposition, so we omit it.

A property in the above two propositions as well as
Proposition~\ref{prop:error0} is that the cross term $(f,b)$ decays
faster than the fiber directions $(f,f)$ and base directions
$(b,b)$. This is convenient for dealing with the quadratic term in the
iteration scheme in Section~\ref{sec: main construction}.

\section{Analysis on the special Lagrangian thimble}\label{sec:thimble}

In this section, we discuss the analysis on the local model for the
``caps'' in our gluing construction, Theorem~\ref{thm:main}. This
local model is given by the special Lagrangian thimble
$L_{\mathbb{R}^3}$ inside $(\mathbb{C}^3, \omega_{\mathbb{C}^3})$, as
discussed in Proposition~\ref{prop:antiholo}. The main result of this
section is Proposition~\ref{prop:dirac2}, where we show that the
Hodge-Dirac operator acting on $1$-forms has a bounded right inverse
from its image with respect to suitable weighted spaces.  We begin by
examining the geometry of $L_{\mathbb{R}^3}$.

\subsection{Geometry of the special Lagrangian thimble}

We will use the notations introduced in
Section~\ref{subsec:c3}. Recall that we embed $\mathbb{C}^3$ into
$\mathbb{C}^4$ as the hypersurface given by the total space of the
trivial Lefschetz fibration
\[
\tilde{y} = \pi(z_1,z_2,z_3)  = z_1^2 + z_2^2 + z_3^2,
\]
and $L_{\mathbb{R}^3}$ is the real locus of the above hypersurface. To
understand the asymptotic geometry of $L_{\mathbb{R}^3}$, it is enough
to understand the Calabi-Yau metric $\omega_{\mathbb{C}^3}$ near
$L$. In the region when $|\tilde y| \gg 0$ and near
$L_{\mathbb{R}^3}$, the radial distance $\rho$ is equivalent to
$|\tilde y|$. So we will write $\tilde y$ and $\rho$ interchangeably
when as functions on $L_{\mathbb{R}^3}$ when their values are
large. Similarly, we will identify $R$ with $\rho^{1/4}$ as functions
on $L_{\mathbb{R}^3}$. By making these function equal to $1$ on a
compact set containing the origin, we may assume that $R,\rho$ are
smooth.

Define the semi-flat metric
\[
  \omega_{EH\times \mathbb{C}} = i\partial\bar\partial \phi_{EH\times\mathbb{C}},
\]
where
\[
  \phi_{EH\times\mathbb{C}} = \frac{1}{2}|\tilde y|^2 + \sqrt{R^4+|\tilde y|}.
\]
Thus on each fiber $\pi^{-1}(\tilde y)$, $\omega_{EH\times\mathbb{C}}$
restricts to the Eguchi-Hanson metric whose distance is scaled up by
the factor $|\tilde y|^{1/4}$.

We would like to know the error going from
$\omega_{EH\times\mathbb{C}}$ to $\omega_{\mathbb{C}^3}$. By direct
differentiation, we see that
\[
  \nabla^k(\phi_\infty - \phi_{EH\times\mathbb{C}})
  = \nabla^k\left(\frac{|\tilde y| - \rho}{\sqrt{R^4+\rho}+\sqrt{R^4+|\tilde y|}}\right)
  = O(\rho^{-1-k})
\]
for $\rho \gg 0$ near $L_{\mathbb{R}^3}$. On the other hand, we have
\eqref{eq:c3error}. It follows that
$i\partial\bar\partial\phi_{\mathbb{C}^3}'$ gives the dominant error
going from $\omega_{EH\times\mathbb{C}}$ to
$\omega_{\mathbb{C}^3}$. We now restrict to $L_{\mathbb{R}^3}$.  write
$g_{L_{\mathbb{R}^3}}$ for the induced metric of $\tilde\omega_t$ on
$L_{\mathbb{R}^3}$, and write
$g_0 = g_{EH\times\mathbb{C}}|_{L_{\mathbb{R}^3}}$. Note that up to
negligible errors, the metric $g_0$ is exactly the warped product
metric $d\tilde y^2 + \tilde y^{1/2} g_{S^2}$, where $g_{S^2}$ is the
round metric on $S^2$. We have the following:

\begin{prop}\label{prop:warped}
  Let $g$ be the induced metric on $L_{\mathbb{R}^3}$. Then for any
  $\delta' > -1$, we have
  \[
    |g-g_0|_{C^k(L_{\mathbb{R}^3},g)} \le C(k,\delta')\rho^{\delta'-1/2-k/4}
  \]
  for $\rho \gg 0$, where $g_0 = d\rho^2 + \rho^{1/2}g_{S^2}$ is the
  warped product metric.
\end{prop}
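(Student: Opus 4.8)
The plan is to compare the induced metric $g$ (of $\omega_{\mathbb{C}^3}$) on $L_{\mathbb{R}^3}$ with the restriction $g_0' := g_{EH\times\mathbb{C}}|_{L_{\mathbb{R}^3}}$ of the semi-flat model, and then to invoke the fact recorded just before the proposition that $g_0'$ agrees with the warped product $d\rho^2+\rho^{1/2}g_{S^2}$ up to negligible errors, using also that $\tilde y$ and $\rho$ agree to arbitrary order along $L_{\mathbb{R}^3}$ for $\rho\gg0$ and that $R\sim\rho^{1/4}$ near $L_{\mathbb{R}^3}$, so that we are in the vicinity of the vanishing cycles. Thus the real content is to bound the restriction to $L_{\mathbb{R}^3}$ of $\omega_{\mathbb{C}^3}-\omega_{EH\times\mathbb{C}}$. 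First I would write
\[
  \omega_{\mathbb{C}^3}-\omega_{EH\times\mathbb{C}}
  =\sqrt{-1}\partial\bar\partial(\phi_\infty-\phi_{EH\times\mathbb{C}})
  +\sqrt{-1}\partial\bar\partial\phi'_{\mathbb{C}^3}.
\]
The first term is of strictly lower order: by the estimate $\nabla^k(\phi_\infty-\phi_{EH\times\mathbb{C}})=O(\rho^{-1-k})$ recorded above, its Hessian is $O(\rho^{-2})$ relative to $\omega_{\mathbb{C}^3}$ (with the obvious gain under further differentiation), which is $o(\rho^{\delta'-1/2})$ since $\delta'>-1$, so it is absorbed into the asserted bound. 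For the second term I would use the weighted estimate for $\phi'_{\mathbb{C}^3}$ recorded in \eqref{eq:c3error}, i.e. $|\nabla^k\phi'_{\mathbb{C}^3}|_{\omega_{\mathbb{C}^3}}\le C\rho^{\delta'-k/4}$: since $\omega_{\mathbb{C}^3}$ is Kähler, $\sqrt{-1}\partial\bar\partial\phi'_{\mathbb{C}^3}$ is, up to a universal constant, a component of the Riemannian Hessian $\nabla^2\phi'_{\mathbb{C}^3}$, so applying the estimate with $k$ replaced by $k+2$ gives $|\sqrt{-1}\partial\bar\partial\phi'_{\mathbb{C}^3}|_{C^k(\omega_{\mathbb{C}^3})}\le C(k)\rho^{\delta'-1/2-k/4}$. (Equivalently one may read this off the fiber/base/mixed decomposition of Proposition~\ref{prop:error0}, which even gives the sharper decay for the mixed component, though that refinement is not needed here.)

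Next I would restrict to $L_{\mathbb{R}^3}$. The essential point is that the complex structure $J$ of $\mathbb{C}^3$ is fixed and both $\omega_{\mathbb{C}^3}$ and $\omega_{EH\times\mathbb{C}}$ are $J$-Kähler, so that $g=\omega_{\mathbb{C}^3}(\cdot,J\cdot)$ and $g_0'=\omega_{EH\times\mathbb{C}}(\cdot,J\cdot)$ with the \emph{same} $J$; hence for $v,w\in TL_{\mathbb{R}^3}$,
\[
  (g-g_0')(v,w)=(\omega_{\mathbb{C}^3}-\omega_{EH\times\mathbb{C}})(v,Jw).
\]
Since $L_{\mathbb{R}^3}$ is Lagrangian by Proposition~\ref{prop:antiholo}, $Jw$ is normal and has the same $\omega_{\mathbb{C}^3}$-length as $w$, so the pointwise bound of the previous paragraph yields $|g-g_0'|_{g}\le|\omega_{\mathbb{C}^3}-\omega_{EH\times\mathbb{C}}|_{\omega_{\mathbb{C}^3}}\le C\rho^{\delta'-1/2}$ near $L_{\mathbb{R}^3}$. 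To upgrade this to the $C^k$ statement I would compare intrinsic and ambient covariant derivatives along $L_{\mathbb{R}^3}$: their difference is governed by the second fundamental form $A$ of $L_{\mathbb{R}^3}$ in $(\mathbb{C}^3,\omega_{\mathbb{C}^3})$, which I would control at the weighted scale set by the size $\rho^{1/4}$ of the $S^2$-factor, using that $L_{\mathbb{R}^3}$ is special Lagrangian, hence minimal, and that its geometry is a small perturbation of the explicit warped-product model, whose sphere fibers are minimal of radius $\sim\rho^{1/4}$. Combining this with the cited model computation for $g_0'$, and noting that $\rho^{\delta'-1/2-k/4}$ is monotone in $\delta'$ for $\rho$ large (so that the case of all $\delta'>-1$ reduces to $\delta'$ near $-1$), completes the argument.

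The step I expect to be the main obstacle is the last one: making precise the comparison of intrinsic with ambient derivatives, equivalently controlling $A$ and its derivatives in the relevant weighted norms. This is a priori mildly circular, since the bound on $A$ is clearest once one already knows $g$ is close to the warped product. The cleanest organization is therefore to establish the $C^0$ and $C^1$ estimates first, directly from \eqref{eq:c3error} and the explicit model, then deduce from them the needed control on $A$, and only then run the higher-order estimates; alternatively one carries out the whole argument on the model $\omega_{EH\times\mathbb{C}}$ — for which $L_{\mathbb{R}^3}$ is literally a warped product over the $\tilde y$-axis with round $S^2$ fibers — treating $\sqrt{-1}\partial\bar\partial\phi'_{\mathbb{C}^3}$ purely perturbatively. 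A secondary, purely bookkeeping, matter is reconciling the several nearly-identical models in play, namely $\omega_{EH\times\mathbb{C}}=\sqrt{-1}\partial\bar\partial\phi_{EH\times\mathbb{C}}$, the metric $\omega_{\mathbb{C}}+R^2\omega_{EH}$ of Proposition~\ref{prop:error0}, and the literal warped product $d\rho^2+\rho^{1/2}g_{S^2}$, and checking that their pairwise differences are all $o(\rho^{\delta'-1/2-k/4})$, which again follows from the $O(\rho^{-1-k})$ estimate together with $\delta'>-1$.
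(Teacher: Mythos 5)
Your proposal follows essentially the same route as the paper, which in fact gives no separate proof of Proposition~\ref{prop:warped} but derives it directly from the preceding discussion: split $\omega_{\mathbb{C}^3}-\omega_{EH\times\mathbb{C}}$ into $\sqrt{-1}\partial\bar\partial(\phi_\infty-\phi_{EH\times\mathbb{C}})=O(\rho^{-1-k})$ (absorbed since $\delta'>-1$) plus the dominant term $\sqrt{-1}\partial\bar\partial\phi'_{\mathbb{C}^3}$ controlled by \eqref{eq:c3error}, restrict to $L_{\mathbb{R}^3}$, and identify $g_{EH\times\mathbb{C}}|_{L_{\mathbb{R}^3}}$ with the warped product up to negligible errors. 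Your added discussion of the intrinsic-versus-ambient derivative comparison (via the second fundamental form, bootstrapped from the $C^0$--$C^1$ estimates) is a sensible way to make the $C^k(L_{\mathbb{R}^3},g)$ statement fully precise and is, if anything, more careful than the paper's treatment.
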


A few things can be deduced from the asymptotic geometry. First, the
volume growth of the special Lagrangian thimble is $\rho^{3/2}$, so we
are in the situation strictly between asymptotically cylindrical and
asymptotically conical. Second, note that we can perform the following
conformal change of coordinates:
\[
  g_0 = d\rho^2 + \rho^{1/2}g_{S^2} = \rho^{1/2} (d\rho^2/\rho^{1/2} + g_{S^2}) = R^2(ds^2 + g_{S^2}),
\]
where $R^4 = \rho = (3s/4)^{4/3}$. It follows that the rescaled metric
$R^{-2}g$ is asymptotically translation invariant in the sense of
Lockhart-McOwen~\cite{LM}. However, this metric is not asymptotically
cylindrical in the strong sense, since the rate of convergence of
$R^{-2}g$ to $R^{-2}g_0 = ds^2+g_{S^2}$ as $R \to \infty$ is only
polynomial instead of exponential. We will see that the standard
Fredholm theory for manifolds with ends is insufficient for
establishing the required mapping properties for the Hodge-Dirac
operator $d+d^*$.

\subsection{Weighted spaces}

Lockhart~\cite{Lockhart} considered the Fredholm theory on manifolds
with ends, where the metrics, after conformal change, are
asymptotically translation invariant. One can easily check that the
metric $g_{\mathbb{R}^3}$ on the special Lagrangian thimble
$L_{\mathbb{R}^3}$ is admissible in the sense of \cite{Lockhart} by
taking derivatives of the conformal factor $R^2$. Lockhart then
defined weighted Sobolev spaces by introducing the exponential weight
function $e^s$, and showed that the Hodge-Laplace operator and
Hodge-Dirac operator acting on forms are Fredholm if the exponential
weight avoids a discrete set in $\mathbb{R}$. In our case the discrete
set is $\{\pm\sqrt{\lambda_i}\}$, where $\lambda_i$ are the
eigenvalues of the Hodge-Laplace operator acting on functions on
$S^2$. One can similarly define weighted H\"older spaces and show
analogue results. For an $r$-form $u$, define
\[
  \|u\|_{C^{k,\alpha}_{\beta,\delta}(L_{\mathbb{R}^3})}
  = \|e^{-\beta s}\rho^{-\delta}R^{-r}u\|_{C^{k,\alpha}_{R^{-2}g}},
\]
and define $C^{k,\alpha}_{\beta,\delta}(\Lambda^r(L_{\mathbb{R}^3}))$
to be the space of $r$-forms such that the above norm is finite. Note
that we use $\rho$ for the polynomial weight function instead of $R$,
which is used in Lockhart~\cite{Lockhart}. Since $\rho = R^4$, our
weights differ from theirs by a constant multiple.

We will write
$C^{k,\alpha}_{\delta}(L_{\mathbb{R}^3}) =
C^{k,\alpha}_{0,\delta}(L_{\mathbb{R}^3})$ when the exponential weight
is trivial. Following \cite{Lockhart}, we have

\begin{prop}\label{prop:fredholm}
  The map
  $\Delta: C^{k,\alpha}_{\beta,\delta}(L_{\mathbb{R}^3}) \to
  C^{k-2,\alpha}_{\beta,\delta-1/2}(L_{\mathbb{R}^3})$ is Fredholm if
  and only if $\beta \ne \pm \sqrt{\lambda}$ for all eigenvalues
  $\lambda$ of the Laplacian acting on functions on $S^2$. For
  $\beta < 0$, the map is injective. For $\beta > 0$ with
  $\beta < \sqrt{\lambda_1}$, where $\lambda_1$ is the first
  eigenvalue, the map has a one-dimensional kernel, is surjective, and
  has a bounded right inverse.

  Similarly, the map
  $d+d^*: C^{k,\alpha}_{\beta,\delta}(\Lambda^1(L_{\mathbb{R}^3})) \to
  C^{k-1,\alpha}_{\beta,\delta-1/4}(\Lambda^0(L_{\mathbb{R}^3})\oplus\Lambda^2(L_{\mathbb{R}^3}))$
  is Fredholm if and only if $\beta \ne \sqrt{\lambda}$ for all
  eigenvalues $\lambda$ of $S^2$.
\end{prop}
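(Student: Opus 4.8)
The plan is to reduce the statement to Lockhart's Fredholm theory for asymptotically translation-invariant operators~\cite{Lockhart} by means of the conformal change already set up above. Write $g = R^{2}\hat{g}$ with $\hat{g} = R^{-2}g$; by Proposition~\ref{prop:warped} the rescaled metric $\hat g$ converges on the end to the product cylindrical metric $g_{\mathrm{cyl}} = ds^{2} + g_{S^{2}}$, where $s = \tfrac{4}{3}\rho^{3/4}$, and the convergence is polynomial in $s$ (since $\rho^{\delta'-1/2-k/4}$ is a negative power of $s$). The factor $R^{-r}$ in the definition of $\|\cdot\|_{C^{k,\alpha}_{\beta,\delta}}$ is precisely the conformal weight of an $r$-form, so after rescaling $C^{k,\alpha}_{\beta,\delta}(\Lambda^{r}(L_{\mathbb{R}^{3}}))$ becomes the weighted H\"older space $e^{\beta s}\rho^{\delta}C^{k,\alpha}_{\hat g}$ on the cylindrical end. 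Under the rescaling one has $\Delta_{g} = R^{-2}(\Delta_{\hat g} + E_{1})$ and $(d+d^{*})_{g} = R^{-1}((d+d^{*})_{\hat g} + E_{2})$, where $E_{1}, E_{2}$ are lower-order operators whose coefficients come from $d\log R$ and from $\hat g - g_{\mathrm{cyl}}$, hence decay polynomially in $s$; the prefactors $R^{-2} = \rho^{-1/2}$ and $R^{-1} = \rho^{-1/4}$ account for the shifts $\delta \mapsto \delta - \tfrac12$ and $\delta \mapsto \delta - \tfrac14$ in the codomains. A polynomially decaying perturbation of a translation-invariant operator is relatively compact on exponentially weighted spaces (Arzel\`a--Ascoli on compact pieces, uniformly small tails), so the Fredholm alternative for our operators is governed by the translation-invariant models $\Delta_{g_{\mathrm{cyl}}}$ and $(d+d^{*})_{g_{\mathrm{cyl}}}$ on $\mathbb{R}_{+}\times S^{2}$, exactly as in~\cite{Lockhart}; moreover conjugation by the polynomial weight $\rho^{\delta}\sim s^{4\delta/3}$ adds only $O(s^{-1})$ first-order terms and leaves the leading translation-invariant part unchanged, which is why the Fredholm locus depends on $\beta$ alone.

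The indicial roots are then computed by separating variables on $\mathbb{R}_{+}\times S^{2}$. For the Laplacian, $e^{-\beta s}\Delta_{g_{\mathrm{cyl}}}e^{\beta s}$ acting on the $\lambda$-eigenspace of $\Delta_{S^{2}}$ is $-(\partial_{s}+\beta)^{2} + \lambda$, which degenerates exactly when $\beta^{2} = \lambda$; hence $\Delta$ is Fredholm iff $\beta \ne \pm\sqrt{\lambda}$ for every eigenvalue $\lambda$ of the scalar Laplacian on $S^{2}$. For $d+d^{*}$ on $1$-forms, writing forms on the cylinder as $\alpha + ds\wedge\beta$ with $\alpha,\beta$ forms on $S^{2}$ turns $(d+d^{*})_{g_{\mathrm{cyl}}}$ into $\partial_{s}$ with constant coefficients plus the Hodge--Dirac operator of $S^{2}$, whose spectrum is $\{\pm\sqrt{\lambda}\}$ as $\lambda$ ranges over the eigenvalues of the scalar Laplacian on $S^{2}$; tracking the conformal normalization $R^{-r}$ of $r$-forms and the degree shift in $\Lambda^{1}\to\Lambda^{0}\oplus\Lambda^{2}$, a computation as in~\cite{Lockhart} shows that the critical exponents for this map are precisely $\beta = \sqrt{\lambda}$. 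This gives the stated Fredholm criterion.

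It remains to identify the kernel and cokernel of $\Delta$ in the two weight regimes. If $\beta < 0$ and $\Delta u = 0$ with $u \in C^{k,\alpha}_{\beta,\delta}(L_{\mathbb{R}^{3}})$, then $u$ and its derivatives decay exponentially along the end; integrating by parts on $\{\rho < T\}$ gives $\int_{\{\rho<T\}}|\nabla u|^{2} = \int_{\{\rho=T\}}u\,\partial_{\nu}u \to 0$ as $T\to\infty$, so $u$ is constant, hence $u\equiv 0$: the map is injective. For $0 < \beta < \sqrt{\lambda_{1}}$, the injectivity just proved at negative weights, together with the formal self-adjointness of $\Delta$ under the $L^{2}(dV_{g})$-pairing (which pairs $C^{k,\alpha}_{\beta,\delta}$ with a space of exponential weight $-\beta < 0$), shows the cokernel vanishes, so $\Delta$ is surjective; being Fredholm, it then admits a bounded right inverse onto a closed complement of its kernel. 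Finally, if $\Delta u = 0$ with $u$ of growth $O(e^{\beta s}\rho^{\delta})$ and $\beta < \sqrt{\lambda_{1}}$, the asymptotic expansion of $u$ on the end contains no mode $e^{+\sqrt{\lambda_{i}}s}\phi_{i}$ with $i\ge 1$; in the $\lambda_{0}=0$ sector the two solutions are the constant and a slowly growing mode (of order $s^{2/3}$ after the conformal correction), but the latter carries a nonzero conserved flux $\int_{\{\rho=T\}}\star_{g}du$, which is $\pm\int_{\{\rho<T\}}(\Delta u)\,dV_{g} = 0$; hence its coefficient vanishes, $u$ equals a constant plus an exponentially decaying harmonic function, and the latter is zero by the injectivity statement. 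Therefore $\ker\Delta = \mathbb{R}$ is one-dimensional, as claimed.

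The main obstacle is bookkeeping rather than hard analysis: one must check that the three weights in play --- the form-degree weight $R^{-r}$, the exponential weight $e^{\beta s}$, and the polynomial weight $\rho^{\delta}$ --- combine under the conformal rescaling to yield exactly the claimed codomain shifts and the claimed critical set, and that the merely polynomial (rather than exponential) decay of $\hat g - g_{\mathrm{cyl}}$ supplied by Proposition~\ref{prop:warped} still produces a relatively compact error, so that Lockhart's package applies. As remarked after the statement, it is precisely this slow decay that makes these spaces inadequate for the sharper Hodge--Dirac estimates needed later, motivating the parametrix construction of the following subsections.
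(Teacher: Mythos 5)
Most of your argument coincides with the paper's proof: the Fredholm criterion is obtained by checking that the conformally rescaled metric is admissible in Lockhart's sense and quoting the weighted theory (the paper cites \cite[Theorem~5.11]{Lockhart} together with Marshall's thesis \cite{Marshall} for the H\"older version), injectivity for $\beta<0$ is integration by parts, surjectivity for $0<\beta<\sqrt{\lambda_1}$ is the duality $\dim\operatorname{coker}\Delta_{\beta,\delta}=\dim\ker\Delta_{-\beta,-\delta-3}=0$, and the bounded right inverse comes from closed range plus the open mapping theorem. (A small imprecision: the contribution of $\hat g-g_{\mathrm{cyl}}$ to $E_1,E_2$ is a \emph{top-order} perturbation with decaying coefficients, not a lower-order one; this is harmless because Lockhart's admissibility hypothesis allows exactly such perturbations, but it is not a relatively compact error in the naive sense you invoke.)

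The genuine gap is in your argument that $\dim\ker\Delta_{\beta,\delta}=1$ for $0<\beta<\sqrt{\lambda_1}$. You assert that a kernel element $u$ with allowed growth $e^{\beta s}\rho^{\delta}$ has an asymptotic expansion consisting of a constant, a multiple of the $\rho^{1/2}$-mode, and an exponentially decaying remainder, and then kill the $\rho^{1/2}$-coefficient by the flux identity. The flux step is fine once such an expansion is known, but the expansion itself does not follow from what is available here: by Proposition~\ref{prop:warped} the metric converges to the cylinder only at a polynomial rate, so projecting onto $S^2$-eigenmodes gives $u_i''-\lambda_i u_i=f_i$ with $f_i=O(s^{-c}e^{\beta s})$, and each bootstrap step only improves the bound by a factor $s^{-c}$; after finitely many iterations the remainder is still $O(s^{-kc}e^{\beta s})$, so you never conclude that $u$ is a constant plus an exponentially decaying function, and the injectivity statement at negative weight cannot be applied to the remainder. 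Making this route rigorous requires a structure/decay theorem for kernel elements of asymptotically translation-invariant operators, which is essentially the same machinery you are trying to avoid. The paper sidesteps the issue: having shown $\operatorname{coker}=0$ by duality, it invokes the Lockhart--McOwen index jump formula \cite{LM} across the indicial root at $\beta=0$ (whose root space, spanned by $1$ and the $\rho^{1/2}$-mode, is two-dimensional), which gives $2\dim\ker\Delta_{\beta,\delta}=2$ directly. You should either quote that index computation or supply the missing asymptotics theorem; as written, the one-dimensionality of the kernel is not proved.
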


\begin{proof}
  Lockhart proved the weighted Sobolev version of the Fredholm
  property. See \cite[Theorem~5.11]{Lockhart}. To get the weighted
  H\"older version, one can argue similarly as in Marshall's
  thesis~\cite[Theorem~6.9]{Marshall}. Since the kernel is finite
  dimensional, there exists a complement closed subspace
  $\mathcal{A} \subset C^{k,\alpha}_{\beta,\delta}(L_{\mathbb{R}^3})$ so that
  $C^{k,\alpha}_{\beta,\delta}(L_{\mathbb{R}^3}) = \mathcal{A} \oplus
  \operatorname{ker}\Delta$. Since $\Delta$ has closed range, it
  follows from the open mapping theorem that there exists a bounded
  right inverse from its range. That the map is injective when
  $\beta < 0$ follows from integration by parts. Now, suppose that
  $0 < \beta < \sqrt{\lambda_1}$. By self-adjointness, we have
  $\operatorname{dim}\operatorname{coker}\Delta_{\beta,\delta} =
  \operatorname{dim}\ker\Delta_{-\beta,-\delta-3} = 0$.
  By the index jump formula \cite{LM}, we have
  $2\operatorname{dim} \ker \Delta_{\beta, \delta} = 2$.

  The statement about $d+d^*$ is entirely similar.
\end{proof}

Proposition~\ref{prop:fredholm} immediately implies the following:

\begin{prop}\label{prop:injective}
  For $0 < \beta < \sqrt{\lambda_1}$, the Hodge-Dirac operator
  \[d+d^*: C^{k,\alpha}_{\beta,\delta+1/4}(\Lambda^1(L_{\mathbb{R}^3})) \to
    C^{k-1,\alpha}_{\beta,\delta}(\Lambda^0(L_{\mathbb{R}^3})\oplus\Lambda^2(L_{\mathbb{R}^3}))
  \] is injective. In particular, there is a constant
  $C(\beta,\delta) > 0$ such that
  \[
    \|\eta\|_{C^{k,\alpha}_{\beta,\delta+1/4}(L_{\mathbb{R}^3})} \le C
    \|(d+d^*)\eta\|_{C^{k-1,\alpha}_{\beta,\delta}(L_{\mathbb{R}^3})}
  \]
  for all $\eta \in C^{k,\alpha}_{\beta,\delta}(\Lambda^1(L_{\mathbb{R}^3}))$.
\end{prop}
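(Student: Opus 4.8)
The plan is to deduce everything from Proposition~\ref{prop:fredholm}. After relabelling the polynomial weight by $\delta \mapsto \delta + \tfrac14$, the second part of Proposition~\ref{prop:fredholm} asserts precisely that $d+d^* \colon C^{k,\alpha}_{\beta,\delta+1/4}(\Lambda^1(L_{\mathbb{R}^3})) \to C^{k-1,\alpha}_{\beta,\delta}(\Lambda^0(L_{\mathbb{R}^3})\oplus\Lambda^2(L_{\mathbb{R}^3}))$ is Fredholm whenever $\beta$ is not of the form $\sqrt{\lambda}$ with $\lambda$ an eigenvalue of the Laplacian on $S^2$; in particular this holds for $0 < \beta < \sqrt{\lambda_1}$, so for such $\beta$ the operator has closed range and finite-dimensional kernel. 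It then remains to check that the kernel is trivial and to invoke the open mapping theorem.

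For the kernel: if $(d+d^*)\eta = 0$ then, since $d\eta \in \Lambda^2$ and $d^*\eta \in \Lambda^0$ lie in complementary summands, we get $d\eta = 0$ and $d^*\eta = 0$ separately, and in particular $\Delta\eta = 0$. Because $L_{\mathbb{R}^3}$ is diffeomorphic to $\mathbb{R}^3$ it is contractible, so $H^1(L_{\mathbb{R}^3},\mathbb{R}) = 0$ and $\eta = df$ for a harmonic function $f$ whose differential lies in the given weighted space with exponential weight $\beta < \sqrt{\lambda_1}$. Integrating $df$ shows $f$ has at most exponential growth $e^{\beta s}$ on the cylindrical end; separating variables there — the same indicial-root bookkeeping as in the scalar case of Proposition~\ref{prop:fredholm} — the modes of angular degree $\ell \ge 1$ are forced into their decaying branch (the growing branch grows like $e^{\sqrt{\lambda_1}s}$ or faster), and the growing $\ell = 0$ mode $\sim s^{2/3}$ is excluded because the flux $\int_{\{\rho = s\}} \partial_\nu f$ of a globally defined harmonic function must vanish. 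Hence $f$ is asymptotically constant, and then Green's identity forces it to be constant outright, so $\eta = df = 0$. In short: the one-dimensional kernel that the scalar Laplacian picks up on $C^{k,\alpha}_{\beta,\delta}$ for $0 < \beta < \sqrt{\lambda_1}$ is spanned by the constants, which $d$ annihilates, so $d+d^*$ on $1$-forms has no kernel at all.

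With $d+d^*$ injective and of closed range, it is a continuous linear bijection from $C^{k,\alpha}_{\beta,\delta+1/4}(\Lambda^1(L_{\mathbb{R}^3}))$ onto the closed subspace $\operatorname{Im}(d+d^*) \subseteq C^{k-1,\alpha}_{\beta,\delta}(\Lambda^0 \oplus \Lambda^2)$, so by the open mapping theorem its inverse is bounded; unwinding, this is exactly the asserted estimate $\|\eta\|_{C^{k,\alpha}_{\beta,\delta+1/4}} \le C\,\|(d+d^*)\eta\|_{C^{k-1,\alpha}_{\beta,\delta}}$. The only non-soft point is the triviality of the kernel: it combines the contractibility of $L_{\mathbb{R}^3}$ (to make a closed $1$-form exact) with the indicial analysis from Lockhart's theory already used in Proposition~\ref{prop:fredholm} (to prevent the potential $f$ from growing), and this is the step I would write out in detail.
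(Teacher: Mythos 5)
Your argument is correct and its skeleton is the paper's: split $(d+d^*)\eta=0$ into $d\eta=0$, $d^*\eta=0$, use $H^1(L_{\mathbb{R}^3})=0$ to write $\eta=df$ with $f$ harmonic in the corresponding weighted space, show $f$ is constant, and then get the estimate from the Fredholm property (closed range) plus injectivity via the open mapping theorem. Where you diverge is the Liouville step. The paper simply cites the scalar statement of Proposition~\ref{prop:fredholm}: for $0<\beta<\sqrt{\lambda_1}$ the kernel of $\Delta$ on functions is one-dimensional, and since it contains the constants it equals the constants, so $f$ is constant and $\eta=df=0$ — two lines. You instead reprove that Liouville theorem by hand: exclude the $\ell\ge 1$ modes by comparing $e^{\beta s}$ growth with the indicial rates $e^{\sqrt{\lambda_\ell}s}$, kill the growing $\ell=0$ mode $\rho^{1/2}\sim s^{2/3}$ by the vanishing-flux argument, and finish with Green's identity. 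That route is sound, but the step you defer (a valid asymptotic mode expansion, with $C^1$ control of the remainder so that the flux and boundary terms really vanish) is exactly the delicate point on this manifold, because the end is only \emph{polynomially} asymptotically cylindrical — the same issue the paper flags as the reason the naive Lockhart theory needs supplementing. Note also that your phrase ``the same indicial-root bookkeeping as in the scalar case of Proposition~\ref{prop:fredholm}'' is a slight mismatch: the paper's scalar kernel count is obtained abstractly, by injectivity for $\beta<0$ (integration by parts), duality/self-adjointness, and the Lockhart--McOwen index jump formula, not by expanding kernel elements in modes. So the cleanest completion of your write-up is to replace your hand analysis by a direct citation of that scalar kernel statement; what your more explicit route buys is a self-contained picture of \emph{which} harmonic function ($\rho^{1/2}$, the Green-function mode) is being excluded and why, at the cost of redoing asymptotics that the paper's formulation lets you avoid.
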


\begin{proof}
  Let
  $\eta \in C^{k,\alpha}_{\beta,\delta}(\Lambda^1(L_{\mathbb{R}^3}))$
  such that $(d+d^*)\eta=0$. By elliptic regularity, $\eta$ is
  smooth. Since $H^1(L_{\mathbb{R}^3}) = 0$, by the Poincar\'e lemma
  there exists
  $u \in C^{k+1,\alpha}_{\beta, \delta+1}(L_{\mathbb{R}^3})$ such that
  $\eta = du$. It follows that $u$ is harmonic. By
  Proposition~\ref{prop:fredholm}, $u$ is constant. So $\eta=0$.
\end{proof}

In our application, the right hand side lies in
$C^{0,\alpha}_{\delta}$ for some $\delta < 0$. But by
Proposition~\ref{prop:fredholm} the map
$d+d^*: C^{1,\alpha}_{\delta+1/4}(\Lambda^1(L_{\mathbb{R}^3})) \to
C^{0,\alpha}_{\delta}(L_{\mathbb{R}^3}) \oplus
C^{0,\alpha}_{\delta}(\Lambda^2(L_{\mathbb{R}^3}))$ is not
Fredholm. To remedy this, one could enlarge the space on the left hand
side by defining
$\tilde C^{1,\alpha}_{0,\delta+1}(\Lambda^1(L_{\mathbb{R}^3}))$ to be
the closure of
$C^{1,\alpha}_{0,\delta+1/4}(\Lambda^1(L_{\mathbb{R}^3}))$ in
$C^{1,\alpha}_{\beta,\delta+1/4}(\Lambda^1(L_{\mathbb{R}^3}))$ with
respect to the norm
\[
  \|\eta\|_\beta = \|\eta\|_{C^{1,\alpha}_{\beta,\delta+1/4}(L_{\mathbb{R}^3})} +
  \|(d+d^*) \eta\|_{C^{0,\alpha}_{0,\delta}(L_{\mathbb{R}^3})}.
\]
Here $\beta > 0$. One can then argue as in
\cite[Theorem~5.10]{Lockhart} to show that the definition of the space
$\tilde C^{1,\alpha}_{0,\delta+1/4}(\Lambda^1(L_{\mathbb{R}^3}))$ does not depend on
the choice of $\beta>0$, and all the norms $\|\eta\|_{\beta}$ are
equivalent. Then the map
\[
  d+d^*: \tilde C^{2,\alpha}_{0,\delta+1/4}(L_{\mathbb{R}^3}) \to
  C^{0,\alpha}_{0,\delta}(\Lambda^0(L_{\mathbb{R}^3})\oplus\Lambda^2(L_{\mathbb{R}^3}))
\]
is Fredholm and in particular has a bounded right inverse from its
image. The control of exponential growth for the solutions is quite
coarse, which prevents us from performing the iteration scheme in
Section~\ref{sec: main construction}.

The remainder of this section is dedicated to improving the above
mapping properties of the Hodge-Dirac operator, ensuring that the
solutions actually have polynomial growth.

\subsection{Analysis on cylinders}

Let $M$ denote a closed Riemannian manifold. The goal of this
subsection is show that the Hodge-Dirac operator $d+d^*$, acting on
$1$-forms on $\mathbb{R} \times M$, is invertible with respect to
suitably defined subspaces. This result builds on the works of
Walpuski~\cite[Appendeix~A]{Walpuski} and
Sz\'ekelyhidi~\cite[Section~5]{Sz19}, both of which, in turn, build on
Brendle~\cite{Brendle}, we begin by formally inverting the Hodge-Dirac
operator.

Let $t \in \mathbb{R}$ denote the $\mathbb{R}$ coordinate, and let
$x \in M$. Any $1$-form on $\mathbb{R} \times M$ is of the form
\[
  f(t,x)\,dt + \eta(t,x),
\]
where $\eta$ has values in $\Lambda^1(M)$. It is clear that $dt$ lies
in the kernel of $d+d^*$. Suppose that $(d+d^*)(f\,dt + \eta) = 0$. By
direct computation, we have
\[
  \tilde d f-\eta' &=0, \\
  \tilde d\eta &= 0, \\
  -f' + \tilde d^* \eta &= 0.
\]
Here tilde denotes the operators on $M$ and prime is the derivative
with respect to $t$. We will adopt these notations in the rest of this
subsection. For any forms $F(t,x) \in \Lambda^*(\mathbb{R}\times M)$,
we define the Fourier transform
\[
  \hat F(\xi, x) = \int_{\mathbb{R}} e^{-i\xi t}F(t,x) dt.
\]
Thus, formally taking the Fourier Transform of the above equations, we get
\[
  \tilde d\hat f -i\xi\hat\eta &=0, \\
  \tilde d\hat\eta & =0, \\
  -i\xi f + \tilde d^* \eta &=0.
\]
From the first and third equations we deduce
\[
  (\tilde\Delta + \xi^2) \hat f = 0.
\]
Here $\Delta_H=\tilde d^*\tilde d$ is the Hodge-Laplace operator. It
follows that $\hat f$ must be a linear combination of derivatives of
the Dirac delta function. Taking the inverse Fourier transform, we
conclude that $f$ is constant. It follows that $\eta$ is a harmonic
$1$-form on $M$ independent of $t$.

Given the above heuristics, we now define the relevant subspaces. Let
$C^{k,\alpha,ave}(\Lambda^*(\mathbb{R} \times M))$ be the set of
elements in $C^{k,\alpha}(\Lambda^*(\mathbb{R} \times M))$ that are
``fiberwise $L^2$-orthogonal to harmonic forms on $M$''. More
precisely, let
$u \in C^{k,\alpha,ave}(\Lambda^*(\mathbb{R} \times M))$:
\begin{itemize}
\item If $u=u(t,x)$ is a function, then $\int_M u(t,\cdot) = 0$.
\item If $u = f(t,x)\,dt$, then $\int_M f(t,\cdot) = 0$.
\item If $u = \eta(t,x)$ is a $1$-form tangent to $M$, then
  $\eta(t,\cdot)$ is $L^2$-orthogonal to harmonic $1$-forms on $M$.
\item If $u = \mu(t,x)\wedge dt$ is a $2$-form with $\mu$ tangent to
  $M$, then $\mu$ is $L^2$-orthogonal to harmonic $1$-forms on $M$.
\item If $u = \omega(t,x)$ is a $2$-form tangent to $M$, then
  $\int_M\omega(t,\cdot) = 0$.
\end{itemize}

The main result of the subsection is the following.

\begin{prop}\label{prop:dirac_cylinder}
  Let $K^2$ be the space of closed $2$-forms. The Hodge-Dirac operator
  \[
    d + d^*: C^{k,\alpha, ave}(\Lambda^1(\mathbb{R} \times M)) \to
    C^{k-1,\alpha, ave} (\Lambda^0(\mathbb{R} \times M) \oplus K^2(\mathbb{R}\times M))
  \]
  is an isomorphism, with the inverse $P = (d+d^*)^{-1}$ having
  operator norm $\|P\| \le C$, where $C$ depends on $M$. Moreover, if
  \[
    (d+d^*) \varphi = \psi
  \]
  where the support of $\psi$ lies in $(-R, R) \times M$ for some
  $R > 0$, then $\varphi$ has exponential decay.
\end{prop}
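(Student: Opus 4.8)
The plan is to make rigorous the Fourier-transform heuristic recorded just before the statement, in three steps: a Liouville-type uniqueness result for bounded solutions; a global a priori estimate deduced from it by a translation-and-compactness argument; and finally surjectivity onto $\Lambda^0 \oplus K^2$ together with the exponential decay.

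\textbf{Step 1 (Liouville theorem).} I would first show that any bounded $\varphi = f\,dt + \eta \in C^{k,\alpha,ave}(\Lambda^1(\mathbb{R}\times M))$ with $(d+d^*)\varphi = 0$ vanishes. Elliptic regularity makes $\varphi$ smooth, and the computation preceding the statement gives $\eta' = \tilde d f$, $f' = \tilde d^*\eta$, $\tilde d\eta = 0$, hence $f'' = \tilde d^*\eta' = \tilde\Delta f$. Expanding $f(t,\cdot)$ in an $L^2(M)$-eigenbasis of the Hodge--Laplacian $\tilde\Delta$, the average condition $\int_M f(t,\cdot) = 0$ removes the constants, so every eigenvalue appearing is $\geq \lambda_1 > 0$ and the corresponding coefficient $f_j$ solves $f_j'' = \lambda_j f_j$; boundedness forces $f_j \equiv 0$, so $f \equiv 0$. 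Then $\eta$ is $t$-independent with $\tilde d\eta = \tilde d^*\eta = 0$, i.e.\ fiberwise harmonic, and since $\eta(t,\cdot)$ is $L^2$-orthogonal to the harmonic $1$-forms on $M$ we get $\eta = 0$.

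\textbf{Step 2 (a priori estimate and injectivity).} I would then prove $\|\varphi\|_{C^{k,\alpha}(\mathbb{R}\times M)} \le C(M,k,\alpha)\,\|(d+d^*)\varphi\|_{C^{k-1,\alpha}(\mathbb{R}\times M)}$ for all $\varphi \in C^{k,\alpha,ave}(\Lambda^1)$ by the standard contradiction argument: assuming it fails, take $\varphi_i$ with $\|\varphi_i\|_{C^{k,\alpha}} = 1$ and $\|(d+d^*)\varphi_i\|_{C^{k-1,\alpha}} \to 0$, translate in the $\mathbb{R}$-factor to place a near-maximum of the $C^{k,\alpha}$-density near $\{t=0\}$, and use the uniform interior Schauder estimate (valid since the cylinder has bounded geometry and $d+d^*$ is translation invariant) together with Arzel\`a--Ascoli and compactness of $M$ to extract a nonzero bounded limit $\varphi_\infty \in C^{k,\alpha,ave}$ with $(d+d^*)\varphi_\infty = 0$ --- contradicting Step 1. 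This gives injectivity and closed range, and yields $\|P\| \le C(M)$ once surjectivity is established.

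\textbf{Step 3 (surjectivity and exponential decay).} Since $\operatorname{im}(d+d^*)\subseteq \Lambda^0 \oplus d\Lambda^1 \subseteq \Lambda^0 \oplus K^2$ automatically, I only need to hit every $(\psi_0,\psi_2)\in C^{k-1,\alpha,ave}(\Lambda^0\oplus K^2)$. I would first invert $\Delta = -\partial_t^2 + \tilde\Delta$ on the average subspaces of $\Lambda^0\oplus\Lambda^2$ and of $\Lambda^3$, which by Fourier transform in $t$ and eigenmode expansion on $M$ reduces to the scalar ODEs $(-\partial_t^2+\lambda_j)u_j = g_j$ with $\lambda_j \geq \lambda_1 > 0$, solved by convolution with $\tfrac{1}{2\sqrt{\lambda_j}}e^{-\sqrt{\lambda_j}|t|}$; Plancherel plus the Schauder/contradiction argument of Step 2 (now using the Liouville statement for $\Delta$) produces a bounded $\Delta^{-1}$. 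Given $(\psi_0,\psi_2)$, set $\beta = \Delta^{-1}(\psi_0,\psi_2)$ and split $(d+d^*)\beta = \varphi + \gamma$ with $\varphi\in\Lambda^1$, $\gamma = d(\beta|_{\Lambda^2})\in\Lambda^3$; then $\Delta\gamma = d(\Delta\beta|_{\Lambda^2}) = d\psi_2 = 0$ because $\psi_2$ is closed, so $\gamma = 0$ by invertibility of $\Delta$ on the average $\Lambda^3$, and therefore $(d+d^*)\varphi = (d+d^*)^2\beta = \Delta\beta = (\psi_0,\psi_2)$, with $\varphi \in C^{k,\alpha,ave}$ since all constructions preserve the average condition. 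Combined with Step 2 this makes $d+d^*$ an isomorphism with $\|P\|\le C(M)$. For the decay statement, if $\operatorname{supp}\psi\subset(-R,R)\times M$ then $(d+d^*)(P\psi) = 0$ on $\{|t|>R\}$; running the eigenmode analysis of Step 1 there, each $\lambda_j$-mode is a combination of $e^{\pm\sqrt{\lambda_j}t}$, and boundedness of $P\psi$ kills the growing modes, giving $|\nabla^j(P\psi)(t,\cdot)|\le C e^{-\sqrt{\lambda_1}(|t|-R)}$ for $|t|>R$ --- equivalently, $\widehat{P\psi}(\xi)$ continues holomorphically to the strip $|\operatorname{Im}\xi|<\sqrt{\lambda_1}$ and one shifts the contour in the inverse Fourier transform.

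\textbf{Main obstacle.} The delicate point is Step 2: the eigenmode/Fourier picture lives naturally in $L^2$, but the statement is in H\"older spaces, so one genuinely needs the translation-and-compactness scheme, which rests on the uniform interior Schauder estimates and on the Liouville theorem of Step 1. A secondary point one must not overlook is that the target is the closed $2$-forms $K^2$ rather than all of $\Lambda^2$ --- this closedness is precisely what forces the obstruction $3$-form $\gamma$ to vanish in Step 3.
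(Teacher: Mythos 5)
Your proposal is correct in substance, but it takes a genuinely different route from the paper. For injectivity, the paper does not argue mode by mode: it first proves a solvability lemma for data whose Fourier transform in $t$ is compactly supported away from $0$ (there the transformed equations are solved explicitly and the solutions decay), and then deduces $\ker(d+d^*)=0$ by an integration-by-parts/duality argument against such band-limited test forms; your Step 1 runs the separation-of-variables ODE argument directly, which is more elementary and also powers your decay claim. For surjectivity, the paper approximates general $(f,\omega)$ by band-limited data (after writing $\omega=d\zeta$ with $d^*\zeta=0$), solves each approximation by the Fourier lemma, and passes to the limit using the a priori estimate, which it proves by the same compactness scheme as your Step 2 (your explicit translation step is in fact needed there to keep the limit nonzero, since the sup on the noncompact cylinder need not be attained); you instead invert the Hodge Laplacian $\Delta$ on the fiberwise mean-free subspace and take the $\Lambda^1$-part of $(d+d^*)\Delta^{-1}\psi$, using closedness of $\psi_2$ together with the degree-$3$ Liouville theorem to kill the obstruction $\gamma=d\bigl((\Delta^{-1}\psi)_2\bigr)$; this second-order reduction also makes the ``Moreover'' exponential-decay claim immediate from the homogeneous equation on $\{|t|>R\}$, a point the paper's proof leaves essentially implicit. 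Two spots to tighten: Plancherel does not literally apply to data that is merely bounded in $t$, so construct $\Delta^{-1}$ either as convolution with the operator kernel $\tfrac12 A^{-1}e^{-|t-s|A}$, where $A=\tilde\Delta^{1/2}$ restricted to the orthogonal complement of the fiberwise harmonic forms (the spectral gap gives convergence, uniform bounds, and off-diagonal decay, after which Schauder upgrades to H\"older), or by exhaustion over finite cylinders combined with your uniform estimate; and you should record the short check that $\varphi=d\beta_0+d^*\beta_2$ satisfies the fiberwise average/orthogonality conditions (it does, since each term is fiberwise exact, co-exact, or a $t$-derivative of a form already orthogonal to the fixed harmonic space).
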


We prove this proposition by establishing the following lemmas.

\begin{lemma}\label{lem:surjective_cylinder}
  Let
  $\chi \in C^{\infty,ave}(\Lambda^0(\mathbb{R}\times M)\oplus
  K^2(\mathbb{R}\times M))$ such that the Fourier transform $\hat\chi$
  has compact support away from $0 \in \mathbb{R}$. Then we can find
  $f\,dt +\eta \in C^{\infty,ave}(\Lambda^1(\mathbb{R} \times M)$ such
  that
  \[
    (d+d^*) (f\,dt + \eta) = \chi.
  \]
  Moreover, $f\,dt + \eta$ has exponential decay in $t$, i.e., for any
  $a > 0$, there exist a constant $C$ such that
  $\|f\,dt + \eta\|_{C^{k,\alpha}(|t| > A)} \le C(1+A)^{-a}$. A
  similar statement for the Hodge-Laplacian
  $\Delta = (d+d^*)^2: C^{1,\alpha,ave}(\Lambda^1(\mathbb{R}^3)) \to
  C^{0,\alpha,ave}(\Lambda^1(\mathbb{R}^3))$ also holds.
\end{lemma}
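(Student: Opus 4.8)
The plan is to solve $(d+d^*)(f\,dt+\eta)=\chi$ by taking the Fourier transform in the $\mathbb{R}$-variable $t$, which converts the equation into a family, indexed by the frequency $\xi$, of linear problems on the closed manifold $M$. First I would fix notation: write $\chi=\chi_0\oplus\chi_2$ and decompose the $t$-dependent closed $2$-form along $\mathbb{R}\times M$ as $\chi_2=dt\wedge\mu+\omega$ with $\mu$ a $1$-form and $\omega$ a $2$-form on $M$; then $d\chi_2=0$ is equivalent to $\tilde d\omega=0$ and $\partial_t\omega=\tilde d\mu$. The same computation as in the derivation preceding Proposition~\ref{prop:dirac_cylinder} shows that $(d+d^*)(f\,dt+\eta)=\chi$ is the system
\[
  -\partial_t f+\tilde d^*\eta=\chi_0,\qquad \partial_t\eta-\tilde d f=\mu,\qquad \tilde d\eta=\omega,
\]
and applying $\widehat{(\cdot)}(\xi,x)=\int_{\mathbb{R}}e^{-i\xi t}(\cdot)\,dt$ gives, for each $\xi$,
\[
  -i\xi\hat f+\tilde d^*\hat\eta=\hat\chi_0,\qquad i\xi\hat\eta-\tilde d\hat f=\hat\mu,\qquad \tilde d\hat\eta=\hat\omega.
\]
Since $\hat\chi$ is supported in $\{\,a\le|\xi|\le b\,\}$ for some $0<a<b$, only nonzero frequencies occur, and that is precisely what makes the system invertible.

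Next I would solve this $\xi$-family. The middle equation yields $\hat\eta=(i\xi)^{-1}(\hat\mu+\tilde d\hat f)$; substituting into the first equation gives $(\Delta_M+\xi^2)\hat f=i\xi\hat\chi_0-\tilde d^*\hat\mu$, where $\Delta_M=\tilde d^*\tilde d$ is the Laplacian on functions on $M$. Its right side has zero mean over $M$ --- the $\hat\chi_0$ term because $\chi$ lies in the ``$\mathrm{ave}$'' subspace, the $\tilde d^*\hat\mu$ term because it is a divergence --- so for each $\xi\ne0$ there is a unique mean-zero solution $\hat f(\xi,\cdot)$, smooth in $x$ since $\Delta_M+\xi^2$ is invertible on mean-zero functions. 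I then define $\hat\eta$ by the formula above and verify two points: $\hat\eta$ is $L^2$-orthogonal to the harmonic $1$-forms of $M$ (since $\tilde d\hat f$ is exact and $\hat\mu$ is orthogonal to them by hypothesis), and the third equation is automatic, $\tilde d\hat\eta=(i\xi)^{-1}\tilde d\hat\mu=(i\xi)^{-1}\widehat{\partial_t\omega}=\hat\omega$, using $\tilde d\mu=\partial_t\omega$. Hence the inverse transform $f\,dt+\eta$ lies in $C^{\infty,\mathrm{ave}}(\Lambda^1(\mathbb{R}\times M))$.

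For regularity and decay, $\xi\mapsto(\Delta_M+\xi^2)^{-1}$ is smooth on $\{\,a\le|\xi|\le b\,\}$ with values in smoothing operators, and $\hat\chi$ is smooth and compactly supported in $\xi$, so $\hat f$ and $\hat\eta$ are smooth in $(\xi,x)$ and compactly supported in $\xi$ away from the origin; their inverse Fourier transforms $f,\eta$ are therefore smooth on $\mathbb{R}\times M$, and $N$-fold integration by parts in $\xi$ yields $\|f\,dt+\eta\|_{C^{k,\alpha}(\{|t|>A\})}\le C_N(1+A)^{-N}$ for every $N$, which is the asserted decay. For the Hodge-Laplace version, one uses that on the product metric $\Delta=(d+d^*)^2$ preserves the splitting of $1$-forms into the $dt$-component and the $M$-tangent component, acting there as $-\partial_t^2+\Delta_M$ and $-\partial_t^2+\Delta_M^{(1)}$, where $\Delta_M^{(1)}$ is the Hodge-Laplace operator on $1$-forms of $M$; after Fourier transform one inverts $\Delta_M+\xi^2$ on mean-zero functions and $\Delta_M^{(1)}+\xi^2$ on the complement of the harmonic $1$-forms, again only for $\xi$ in the support of $\hat\chi$, and concludes identically.

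The step I expect to be the main point --- more a matter of care than of real difficulty --- is that the system for $(\hat f,\hat\eta)$ is \emph{apparently overdetermined}: three equations for two unknowns. The resolution is that the hypothesis that $\chi_2$ is closed (equivalently $\chi\in\Lambda^0\oplus K^2$ rather than $\Lambda^0\oplus\Lambda^2$) is exactly what forces the equation $\tilde d\hat\eta=\hat\omega$ to follow from the other two. The secondary point is tracking the ``$\mathrm{ave}$'' constraints throughout, so that $\Delta_M+\xi^2$ is inverted on the correct invariant subspace and the output again lands in $C^{\infty,\mathrm{ave}}$; this is routine Hodge theory on $M$.
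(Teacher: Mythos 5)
Your argument is correct and is essentially the paper's own proof: Fourier transform in $t$, reduce to $(\tilde\Delta+\xi^2)\hat f = i\xi\hat g-\tilde d^*\hat\mu$ (solvable since $\xi\neq 0$ on the support of $\hat\chi$), recover $\hat\eta$ from the first equation, observe that $\tilde d\hat\eta=\hat\nu$ is automatic from closedness, and deduce the decay from the compact support of the transforms. Your extra checks (mean-zero of $\hat f$, orthogonality of $\hat\eta$ to harmonic $1$-forms, smooth dependence of the resolvent on $\xi$, and the splitting argument for the Hodge-Laplacian) only flesh out details the paper leaves implicit.
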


\begin{proof}
  Write $\chi = g + dt\wedge \mu + \nu$, where
  $g=g(t,x) \in C^{\infty,\alpha,ave}(\Lambda^0(\mathbb{R} \times M))$,
  $\mu=\mu(t,x) \in C^{\infty,\alpha,ave}(\Lambda^1(M))$, and
  $\nu=\nu(t,x) \in C^{\infty,\alpha,ave}(K^2(M))$. The equations we want
  to solve are
  \[
    \eta' - \tilde d f &= \mu, \\
    \tilde d \eta &= \nu, \\
    -f' + \tilde d^*\eta  &= g.
  \]
  Taking Fourier transform, the equations become
  \[
    i\xi\hat\eta - \tilde d\hat f &= \hat\mu, \\
    \tilde d \hat\eta &= \hat\nu, \\
    -i\xi\hat f + \tilde d^*\hat\eta  &= \hat g.
  \]
  These imply that
  \[
    (\tilde\Delta + \xi^2) \hat f = i\xi\hat g - \tilde d^* \hat\mu,
  \]
  which we can solve since $\tilde\Delta + \xi^2$ is
  invertible. Taking the inverse transform, we can solve for $f$. To
  solve for $\eta$, we simply take the inverse Fourier transform of
  the first equation. The second equation is then automatically
  satisfied using the closed condition. That $f$ and $\eta$ have
  exponential decay in $t$ is follows from the fact that $\hat f$ and
  $\hat \eta$ have compact support.
\end{proof}

\begin{lemma}\label{lem:injective_cylinder}
  Suppose that $(d+d^*)\varphi = 0$ for some
  $\varphi \in C^{k,\alpha,ave}(\Lambda^1(\mathbb{R}\times M))$. Then $\varphi = 0$.
\end{lemma}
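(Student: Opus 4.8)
The plan is to combine elliptic regularity with a Liouville-type convexity argument on fibrewise $L^2$-norms. First, since $d+d^*$ maps $\Lambda^1$ into $\Lambda^0\oplus\Lambda^2$, the hypothesis $(d+d^*)\varphi=0$ is equivalent to $d^*\varphi=0$ and $d\varphi=0$; by elliptic regularity $\varphi$ is smooth, and it is a harmonic $1$-form, $\Delta\varphi=0$. On the product $\mathbb{R}\times M$ the Hodge Laplacian splits as $\Delta=-\partial_t^2+\tilde\Delta$, where $\tilde\Delta$ is the Hodge Laplacian on $M$, so writing $\varphi=f\,dt+\eta$ with $f=f(t,x)$ and $\eta=\eta(t,x)$ tangent to $M$ yields
\[
  \partial_t^2 f = \tilde\Delta f, \qquad \partial_t^2 \eta = \tilde\Delta\eta,
\]
which can also be read off from the first-order equations $\tilde d f=\eta'$, $\tilde d\eta=0$, $f'=\tilde d^*\eta$ derived above. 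The essential point is that $\varphi$ is assumed only bounded (it lies in $C^{k,\alpha,ave}$, with no decay in $t$), so the naive integration-by-parts argument over $\mathbb{R}\times M$ is unavailable; this is the one genuine obstacle, and the convexity step below is designed precisely to circumvent it.

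Next I would set $h_1(t)=\int_M |f(t,\cdot)|^2\,dV_M$ and $h_2(t)=\int_M |\eta(t,\cdot)|^2\,dV_M$. Differentiating twice under the integral sign, using the equations above and integrating by parts on the closed manifold $M$, one obtains
\[
  h_1'' = 2\|\partial_t f\|_{L^2(M)}^2 + 2\|\tilde d f\|_{L^2(M)}^2 \ge 0, \qquad h_2'' = 2\|\partial_t\eta\|_{L^2(M)}^2 + 2\|\tilde d\eta\|_{L^2(M)}^2 + 2\|\tilde d^*\eta\|_{L^2(M)}^2 \ge 0.
\]
Hence $h_1$ and $h_2$ are convex on $\mathbb{R}$, and since $\varphi$ is bounded and $M$ is compact they are bounded above; a convex function on $\mathbb{R}$ that is bounded above is constant, so $h_1''\equiv 0$ and $h_2''\equiv 0$. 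This forces $\partial_t f\equiv 0$, $\tilde d f\equiv 0$, and $\partial_t\eta\equiv 0$, $\tilde d\eta\equiv 0$, $\tilde d^*\eta\equiv 0$; that is, $f$ and $\eta$ are independent of $t$, with $f$ harmonic on $M$ and $\eta$ a harmonic $1$-form on $M$.

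Finally I would invoke the defining conditions of $C^{k,\alpha,ave}$. Since $f$ is a harmonic function on the closed manifold $M$ it is constant, and the constraint $\int_M f(t,\cdot)=0$ then gives $f\equiv 0$; likewise $\eta$ is a harmonic $1$-form on $M$ that is $L^2$-orthogonal to harmonic $1$-forms on $M$, hence $\eta\equiv 0$. Therefore $\varphi=f\,dt+\eta=0$. As an alternative to the convexity step one could expand $f$ and $\eta$ fibrewise in eigenforms of $\tilde\Delta$ on $M$ and solve the scalar ODEs $u''=\lambda u$ — with $\lambda>0$ throughout, since the zero modes are excluded by the average and orthogonality conditions — discarding the $e^{\pm\sqrt{\lambda}\,t}$ solutions by boundedness; I would favour the $L^2$-convexity packaging above, as it avoids the attendant convergence bookkeeping.
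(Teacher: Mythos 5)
Your proof is correct, but it takes a genuinely different route from the paper. The paper argues by duality: assuming $\varphi\neq 0$, it picks a test form $\chi\in C^{\infty,ave}$ whose Fourier transform is compactly supported away from $0$ and pairs nontrivially with $\varphi$, solves $\Delta h=\chi$ with exponentially decaying $h$ via Lemma~\ref{lem:surjective_cylinder}, and derives a contradiction from $\int\langle\varphi,\Delta h\rangle=\int\langle\Delta\varphi,h\rangle=0$; the decay of $h$ is what licenses the integration by parts against the merely bounded $\varphi$. You instead give a direct Liouville-type argument: from $d\varphi=d^*\varphi=0$ and the product splitting $\Delta=-\partial_t^2+\tilde\Delta$ you get $\partial_t^2 f=\tilde\Delta f$ and $\partial_t^2\eta=\tilde\Delta\eta$, then show the fibrewise energies $h_1(t)=\|f(t)\|_{L^2(M)}^2$ and $h_2(t)=\|\eta(t)\|_{L^2(M)}^2$ are convex and bounded, hence constant, forcing $f$ and $\eta$ to be $t$-independent harmonic data on $M$, which the ``ave'' conditions kill. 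Your computations check out (the second-order equations also follow from the first-order system $\eta'=\tilde d f$, $\tilde d\eta=0$, $f'=\tilde d^*\eta$, exactly as you say), and the argument is self-contained: it never invokes Lemma~\ref{lem:surjective_cylinder}, needs no Fourier analysis or decay of an auxiliary solution, and makes completely explicit where boundedness and the average/orthogonality conditions enter --- which is a genuine virtue, since in the paper's proof the existence of a detecting $\chi$ with $\hat\chi$ supported away from $\xi=0$ is itself a slightly delicate point (such $\chi$ cannot detect $t$-independent data, so the ave conditions are silently doing the work there too). What the paper's route buys is brevity on the page and reuse of machinery already established for the surjectivity half of Proposition~\ref{prop:dirac_cylinder}. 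One shared caveat: your step ``harmonic function on closed $M$ is constant, hence zero by $\int_M f=0$'' uses connectedness of $M$, but so does the lemma itself (it fails for disconnected $M$), and in the application $M$ is $S^2$ or a connected fiber, so this is harmless.
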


\begin{proof}
  Suppose that $\varphi$ is nonzero. By elliptic regularity, $\varphi$
  is smooth. Then there exists
  $\chi \in C^{\infty,ave}(\Lambda^1(\mathbb{R}\times M))$ with
  $\hat\chi$ having compact support such that
  $\int \langle \varphi, \chi \rangle \neq 0$. Applying
  Lemma~\ref{lem:surjective_cylinder}, we can find $h$ such that
  $\Delta h = \chi$. We then compute
  \[
    \int \langle\varphi,\chi\rangle &= \int \langle\varphi, \Delta h\rangle \\
    &= \int \langle\Delta\varphi, h\rangle = 0,
  \]
  which is a contradiction.
\end{proof}

\begin{lemma}\label{lemma:cylinder_injectivity}
  For $\varphi \in C^{k,\alpha,ave}(\Lambda^1(\mathbb{R}\times M))$, we have
  \[
    \|\varphi\|_{C^{k,\alpha}} \le C\|(d+d^*)\varphi\|_{C^{k-1,\alpha}}.
  \]
  In particular, $d+d^*$ has closed range.
\end{lemma}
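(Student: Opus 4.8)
The plan is the standard contradiction-and-compactness argument in the style of Agmon--Douglis--Nirenberg, exploiting the translation invariance of the cylinder $\mathbb{R}\times M$ in the $\mathbb{R}$-direction and using Lemma~\ref{lem:injective_cylinder} as the rigidity input. The key preliminary is a \emph{semi-local} elliptic estimate: since $d+d^*$ is a first-order elliptic operator and $\mathbb{R}\times M$ has bounded geometry (indeed it is homogeneous along $\mathbb{R}$), interior Schauder estimates on a fixed system of coordinate balls, with a uniform constant, give
$\|\varphi\|_{C^{k,\alpha}} \le C\big(\|(d+d^*)\varphi\|_{C^{k-1,\alpha}} + \|\varphi\|_{C^0}\big)$
for every $\varphi \in C^{k,\alpha}(\Lambda^1(\mathbb{R}\times M))$ (the top H\"older seminorm over a pair of far-apart points being controlled by the sup of the $k$-th derivative, which the ball estimates already bound).

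Now suppose the assertion fails for every constant. Then there is a sequence $\varphi_i \in C^{k,\alpha,ave}(\Lambda^1(\mathbb{R}\times M))$ with $\|\varphi_i\|_{C^{k,\alpha}} = 1$ and $\|(d+d^*)\varphi_i\|_{C^{k-1,\alpha}} \to 0$. Combined with the semi-local estimate this forces $\|\varphi_i\|_{C^0} \ge \tfrac{1}{2C}$ for $i$ large, so we may pick $p_i = (t_i, x_i)$ with $|\varphi_i(p_i)| \ge \tfrac{1}{4C}$. Since $d+d^*$ commutes with translation in the $\mathbb{R}$-factor, replacing $\varphi_i$ by $\tilde\varphi_i(t,x) = \varphi_i(t+t_i, x)$ preserves $\|\tilde\varphi_i\|_{C^{k,\alpha}} = 1$, the condition $\|(d+d^*)\tilde\varphi_i\|_{C^{k-1,\alpha}}\to 0$, and membership in $C^{k,\alpha,ave}$, while relocating the distinguished point to $(0,x_i)$. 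Passing to a subsequence, $x_i \to x_\infty \in M$ by compactness of $M$.

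Since $\|\tilde\varphi_i\|_{C^{k,\alpha}} = 1$ uniformly, Arzel\`a--Ascoli yields a further subsequence converging in $C^{k,\alpha'}_{\mathrm{loc}}$ (any $\alpha'<\alpha$) to a limit $\varphi_\infty$ with $\|\varphi_\infty\|_{C^{k,\alpha}} \le 1$; passing to the limit in $(d+d^*)\tilde\varphi_i \to 0$ gives $(d+d^*)\varphi_\infty = 0$. The conditions defining $C^{k,\alpha,ave}$ — fiberwise vanishing of the relevant $M$-averages, respectively $L^2$-orthogonality of the fiber components to harmonic forms on $M$ — are integral identities over the compact fiber $M$ and hence survive local uniform convergence, so $\varphi_\infty \in C^{k,\alpha,ave}$. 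Finally $|\varphi_\infty(0,x_\infty)| = \lim_i |\tilde\varphi_i(0,x_i)| \ge \tfrac{1}{4C} > 0$, so $\varphi_\infty \ne 0$, contradicting Lemma~\ref{lem:injective_cylinder}. This proves the estimate.

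Closed range of $d+d^*$ is then formal: if $(d+d^*)\varphi_i \to \psi$ in $C^{k-1,\alpha}$, the sequence $(d+d^*)\varphi_i$ is Cauchy, so by the estimate $\varphi_i$ is Cauchy in $C^{k,\alpha}$; as $C^{k,\alpha,ave}(\Lambda^1(\mathbb{R}\times M))$ is a closed subspace of the Banach space $C^{k,\alpha}$, we get $\varphi_i \to \varphi$ there and $(d+d^*)\varphi = \psi$ by continuity. The only step requiring genuine care is the recentering: it prevents the ``mass'' of $\varphi_i$ from escaping to infinity along $\mathbb{R}$ and is precisely what makes the compactness argument work on the noncompact cylinder; everything else is routine elliptic bootstrapping and bookkeeping of the average conditions.
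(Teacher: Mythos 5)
Your proof is correct and follows essentially the same route as the paper: a Schauder estimate reducing the claim to a $C^0$ bound, a compactness-and-contradiction argument, and the injectivity statement of Lemma~\ref{lem:injective_cylinder} as the rigidity input, with the closed-range claim following formally. The one place you go beyond the paper's (terse) argument is the recentering by $t$-translations, which rules out the sup escaping to infinity along the noncompact $\mathbb{R}$-factor and keeps the limit nonzero while preserving the fiberwise average conditions; this is a legitimate and indeed welcome tightening of the compactness step rather than a different method.
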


\begin{proof}
  By the Schauder estimates, we have
  \[
    \|\varphi\|_{C^{k,\alpha}} \le C\|(d+d^*)\varphi\|_{C^{k-1,\alpha}} + C\|\varphi\|_{C^0}.
  \]
  To obtain the bound for the inverse, it is enough to show that
  \[
    \|\varphi\|_{C^0} \le C\|(d+d^*)\varphi\|_{C^{k-1,\alpha}}.
  \]
  Suppose for contradiction that there exist
  $\varphi_i \in C^{k,\alpha,ave}(\Lambda^1(\mathbb{R}\times M))$ with
  $\|\varphi_i\|_{C^0} = 1$ and
  $\|(d+d^*) \varphi_i\|_{C^{k-1,\alpha}} < 1/i$. By the Schauder
  estimates above, after passing to a subsequence, we have
  $\varphi_i \to \varphi$ in $C^{k,\alpha'}$ for some
  $\varphi \in C^{k,\alpha', ave}(\Lambda^1(\mathbb{R}\times
  M))$. Here $\alpha' < \alpha$. It follows that $(d+d^*)\varphi =
  0$. By Lemma~\ref{lem:injective_cylinder}, we see that $\varphi=0$,
  a contradiction.
\end{proof}

\begin{proof}[Proof of Proposition~\ref{prop:dirac_cylinder}]
  It is enough to show that the map
  \[
    d+d^*: C^{k,\alpha,ave}(\Lambda^1(\mathbb{R}\times M))
    \to C^{k-1,\alpha,ave}(\Lambda^0(\mathbb{R}\times M)\oplus K^2(\mathbb{R}\times M))
  \]
  is surjective, since Lemma~\ref{lem:injective_cylinder} implies that
  this map is injective. Let
  $\psi = (f, \omega) \in
  C^{k-1,\alpha,ave}(\Lambda^0(\mathbb{R}\times M)\oplus
  K^2(\mathbb{R}\times M))$. We cannot apply
  Lemma~\ref{lem:surjective_cylinder} directly, since the forms in
  Lemma~\ref{lem:surjective_cylinder} do not form a dense set.

  Instead, we proceed as follows. By Hodge theory, we can find
  $\zeta \in C^{k,\alpha}(\Lambda^1(\mathbb{R}\times M))$ such that
  $d\zeta = \omega$, $d^*\zeta = 0$ and
  $\|\zeta\|_{C^{k,\alpha}} \le C$, with the constant $C>0$ depending
  on $\omega$. We can then find a sequence of smooth $f_i$'s and a
  sequence of smooth $\zeta_i$'s such that their Fourier transforms
  have compact support, $f_i \to f$ and $\zeta_i \to \zeta$ locally
  uniformly, and
  $\|f_i\|_{C^{k-1,\alpha}} + \|\zeta_i\|_{C^{k,\alpha}} < C_1$ for
  some constant $C_1 > 0$ depending on $\psi$. By
  Lemma~\ref{lem:surjective_cylinder}, there exist
  $\eta_i \in C^{k,\alpha,ave}(\Lambda^1(\mathbb{R}\times M))$ such
  that $(d+d^*)\eta_i = \psi_i=(f_i,d\zeta_i)$, while by
  Proposition~\ref{lemma:cylinder_injectivity},
  $\|\eta_i\|_{C^{k,\alpha}} \le C\|\psi_i\|_{C^{k-1,\alpha}} \le
  CC_1$. By passing to a subsequence, we can take a limit
  $\eta_i \to \eta$ in
  $C^{k,\alpha,ave}(\Lambda^1(\mathbb{R}\times M))$ so that
  $(d+d^*)\eta = \psi$ with
  $\|\eta\|_{C^{k,\alpha}} \le C\|\psi\|_{C^{k-1,\alpha}}$. This
  completes the proof.
\end{proof}

\subsection{Approximating the Green function}

The goal in this subsection is to construct the following Green type
functions on $L_{\mathbb{R}^3}$:

\begin{prop}\label{prop:green}
  Let $N > 10$. Then there exists a smooth function
  $G: L_{\mathbb{R}^3} \to \mathbb{R}$ such that (1) $G = O(\rho^{1/2})$, (2)
  $\Delta G = O(\rho^{-N})$, and (3) $\int_{L_{\mathbb{R}^3}} \Delta G = 1$.  Moreover,
  $\|dG\|_{C^{1,\alpha}_{\delta'+1/4}} < \infty $.
\end{prop}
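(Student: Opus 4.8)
The plan is to build $G$ explicitly as a model term plus a correction, exploiting the warped-product asymptotics from Proposition~\ref{prop:warped}. On the model $g_0 = d\rho^2 + \rho^{1/2}g_{S^2}$, the volume element along the end is comparable to $\rho^{1/2}\,d\rho\,d\mathrm{vol}_{S^2}$, and an $SO(3)$-invariant function $G_0 = G_0(\rho)$ satisfies a one-dimensional ODE: $\Delta_{g_0}G_0 = -\rho^{-1/2}\partial_\rho(\rho^{1/2}\partial_\rho G_0)$ up to the already-negligible errors of Proposition~\ref{prop:warped}. The two homogeneous solutions are the constant and a function growing like $\rho^{1/2}$ (matching the $\rho^{3/2}$ volume growth and the indicial root structure: $\sqrt{\lambda_1}$ translates to the polynomial rate $1/2$ in the $\rho$-variable, consistent with the injectivity range $0<\beta<\sqrt{\lambda_1}$ in Proposition~\ref{prop:fredholm}). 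So I would first fix a smooth $G_0$ on $L_{\mathbb{R}^3}$ that equals such a $\rho^{1/2}$-type solution for $\rho \gg 0$ and is bounded on the compact core, normalized so that, using the divergence theorem on $\{\rho < T\}$, $\int_{\{\rho<T\}}\Delta G_0 \to 1$ as $T\to\infty$; concretely $\partial_\rho G_0 \sim c\rho^{-1/2}$ with $c$ chosen against the area of the $S^2$ cross-section. By construction $\Delta G_0$ is compactly supported modulo the warped-product error, i.e. $\Delta G_0 = h_0$ with $|h_0| = O(\rho^{\delta'-3/2-\epsilon})$ decaying faster than any fixed polynomial is \emph{not} automatic — this is the crux, see below.

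Next I would correct $G_0$ to kill the tail of $\Delta G_0$ down to $O(\rho^{-N})$. Write $\Delta G_0 = h_0$; by the explicit form of $h_0$ coming from Proposition~\ref{prop:warped} (each metric-error term contributes a definite negative power of $\rho$), $h_0$ has an asymptotic expansion in powers $\rho^{-a}$ with $a$ increasing. Since $\int_{L_{\mathbb{R}^3}} h_0$ need not vanish but we only want to \emph{preserve} property (3), I would instead work iteratively: given that $h_0$ is a sum of radial-type terms $\rho^{-a}$ (times bounded $S^2$-tensors), solve $\Delta G_1 = -(\text{leading term of }h_0) + (\text{lower order})$ by the same ODE/separation-of-variables on the model, choosing at each stage the \emph{decaying} homogeneous solution so that the correction itself is $O(\rho^{1/2-a+1})$, hence lower order than $G_0$, and contributes $0$ to $\int\Delta(\cdot)$ (its Laplacian decays and integrates to zero by the divergence theorem since the correction decays). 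After finitely many steps — stopping once the remaining term is $O(\rho^{-N})$ — set $G = G_0 + \sum G_j$. Then $G = O(\rho^{1/2})$, $\Delta G = O(\rho^{-N})$, and $\int_{L_{\mathbb{R}^3}}\Delta G = \int \Delta G_0 = 1$, giving (1)–(3). Finally, for the gradient estimate: $dG_0$ is, for $\rho\gg0$, of the form $c\rho^{-1/2}d\rho$ plus the Proposition~\ref{prop:warped} error, and in the weighted norm of Section~4.2 one checks $\|c\rho^{-1/2}d\rho\|_{C^{1,\alpha}_{\delta'+1/4}}<\infty$ directly: with $R^2 = \rho^{1/2}$ the conformally rescaled $R^{-1}dG_0$ is $O(\rho^{-3/4})$ while the weight $\rho^{-\delta'-1/4}$ with $-1<\delta'<0$ allows this; the corrections $dG_j$ decay strictly faster and hence also lie in the space. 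This gives $\|dG\|_{C^{1,\alpha}_{\delta'+1/4}}<\infty$.

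The main obstacle is \textbf{controlling the error $h_0 = \Delta G_0$ to arbitrarily high polynomial order}. Proposition~\ref{prop:warped} only gives $|g - g_0|_{C^k} \le C(k,\delta')\rho^{\delta'-1/2-k/4}$ for $\delta' > -1$, so a single radial ansatz $G_0 = c\rho^{1/2}$ produces $\Delta G_0$ that a priori decays only like $\rho^{\delta'-1/2-1/4}$ (from differentiating the metric error once and pairing with $\partial_\rho G_0 \sim \rho^{-1/2}$), i.e. like $\rho^{-3/4-\epsilon}$ — far from $O(\rho^{-N})$. The resolution is the iterative correction scheme above: one must verify that $h_0$ genuinely admits an asymptotic expansion in a discrete set of powers of $\rho$ with the non-indicial powers solvable by the model Green operator (so the corrections exist and are subleading), and that none of the correction exponents collides with the indicial root $1/2$ in a way that would force a logarithm or a non-decaying term — which is exactly why the hypothesis $N > 10$ and the freedom in $\delta'$ are there. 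A clean way to organize this is to run the correction not term-by-term in powers but in a single step using the bounded right inverse from Proposition~\ref{prop:fredholm} on $\{\rho > K\}$ (with a cutoff), applied to $-h_0 + (\text{small coboundary fixing the integral})$, and then to bootstrap the decay of the resulting correction using the mapping property $\Delta: C^{k,\alpha}_{\delta} \to C^{k-2,\alpha}_{\delta-1/2}$ repeatedly; either way, the technical heart is the same polynomial-decay bookkeeping.
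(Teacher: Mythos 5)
Your overall skeleton (take the model solution $\rho^{1/2}$, correct away the tail of $\Delta\rho^{1/2}$, get (3) by integrating by parts, and check $dG$ in the weighted norm) is the same as the paper's, but the step you yourself identify as the crux is where the proposal has a genuine gap. Your term-by-term scheme needs $h_0=\Delta(\rho^{1/2})$ to admit an asymptotic expansion in a discrete set of powers of $\rho$ (times $S^2$-tensors), but nothing available gives this: Proposition~\ref{prop:warped} (and the underlying bound \eqref{eq:c3error} on $\phi'_{\mathbb{C}^3}$) provides only weighted $C^k$ \emph{bounds} on $g-g_0$ for each $\delta'>-1$, not a polyhomogeneous expansion of the Calabi--Yau metric at infinity, so there is no ``leading term of $h_0$'' to peel off. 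Your fallback -- apply the bounded right inverse from Proposition~\ref{prop:fredholm} on the end and then ``bootstrap'' decay -- also does not close: for the purely polynomial weight ($\beta=0$) the operator is \emph{not} Fredholm (since $\lambda=0$ is an eigenvalue on $S^2$; this is exactly the failure the paper points out for $d+d^*$ before introducing $\tilde C^{1,\alpha}_{0,\delta+1/4}$), and for $\beta>0$ the right inverse only controls the solution up to the exponential weight $e^{\beta s}$, $s\sim\rho^{3/4}$. There is no mapping-property bootstrap that upgrades a solution known only to grow at most exponentially to one with polynomial decay; producing that polynomial control is precisely the analytic content of this section, and it is not supplied by Propositions~\ref{prop:warped} and \ref{prop:fredholm} alone.

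The paper resolves this with Lemma~\ref{lem:ODE}: on $\{\rho>A\}$ it solves $\Delta(u_1+u_2)=f$ \emph{exactly}, with explicit polynomial weighted bounds, by a parametrix that splits $f$ (via cutoffs on annuli of size $\sim\rho^{1/4}$) into fiberwise-average-zero pieces, inverted uniformly on the model cylinder $\mathbb{R}\times S^2$, plus a purely radial part solved by the explicit ODE $u_0''+\tfrac{1}{2\rho}u_0'=f_0$ via variation of parameters, and then sums a Neumann series. With this, $G=\mathrm{const}\cdot(\rho^{1/2}-(u_1+u_2))$ has $\Delta G$ supported in a compact set, so (2) holds for every $N$ without any expansion of the metric error, and (3) follows from the divergence theorem exactly as you outline (the corrections' radial derivatives decay fast enough that only $\rho^{1/2}$ contributes to the boundary term). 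Incidentally, the hypothesis $N>10$ plays no role in avoiding indicial resonances as you speculate; it is simply a convenient ``sufficiently fast decay'' threshold used later (e.g.\ in Lemma~\ref{lem:holefilling}). If you replace your expansion/Fredholm step by an argument of the type of Lemma~\ref{lem:ODE} (or prove such a lemma yourself), the rest of your outline goes through.
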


Note that $\rho^{1/2}$ is exactly the Green function for the warped
product metric $g_0 = d\rho^2 + \rho^{1/2}g_{S^2}$, since the Green
function is given by $\rho^{2-\nu}$, where $\nu=3/2$ is the volume
growth. This can also be seen directly from computing the Laplace
operator
\[
  \Delta_{g_0} = \partial_\rho^2 + \frac{1}{2\rho}\partial_\rho + \frac{1}{\rho^{1/2}}\Delta_{S^2}.
\]
By Proposition~\ref{prop:warped}, we have
\[
  \Delta_{g} (\rho^{1/2}) = \Delta_{g_0} (\rho^{1/2}) + (\Delta_{g}-\Delta_{g_0}) (\rho^{1/2})
  = O(\rho^{-2+\delta'})
\]
for $\rho \gg 0$, where $\delta'$ is in
Proposition~\ref{prop:warped}. To improve the decay on the right hand
side, we need to subtract suitable functions from the left hand
side. This is the content of the following lemma:

\begin{lemma}\label{lem:ODE}
  Let $f \in C^{0,\alpha}_{\delta}(L_{\mathbb{R}^3})$. Then for $A > 0$ sufficiently
  large, there exist $u_1 \in C^{2,\alpha}_{\delta+1/2}(L_{\mathbb{R}^3})$ and
  $u_1 \in C^{2,\alpha}_{\delta+2}(\pi(L_{\mathbb{R}^3}))$ such that
  $\Delta (u_1 + u_2) = f$ for $\rho > A$. Moreover,
  \[
    \|u_1\|_{C^{2,\alpha}_{\delta+1/2}(L_{\mathbb{R}^3})}
    &\le
    C\|f\|_{C^{0,\alpha}_{\delta}(L_{\mathbb{R}^3})}, \\
    \|u_2\|_{C^{2,\alpha}_{\delta+2}(L_{\mathbb{R}^3})}
    &\le
    C\|f\|_{C^{0,\alpha}_{\delta}(\pi(L_{\mathbb{R}^3}))},
  \]
  where $C$ does not depend on $A$.
\end{lemma}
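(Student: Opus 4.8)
The plan is to invert the model Laplacian $\Delta_{g_0}$ explicitly on the end $\{\rho>A\}$ and then absorb the difference $\Delta-\Delta_{g_0}$ by a Neumann series, working throughout in the conformal picture in which, with $s=\tfrac43\rho^{3/4}$, the rescaled metric $R^{-2}g_0=ds^2+g_{S^2}$ is an exact half-cylinder and $\Delta_{g_0}=R^{-2}\big(\partial_s^2+\tfrac1{3s}\partial_s+\Delta_{S^2}\big)$. First I would decompose $f=f_0+f^{\perp}$, where $f_0(\rho)$ is the average of $f(\rho,\cdot)$ over the round $S^2$-fiber and $f^{\perp}$ is fiberwise mean-zero; both satisfy $\|f_0\|_{C^{0,\alpha}_{\delta}}+\|f^{\perp}\|_{C^{0,\alpha}_{\delta}}\le C\|f\|_{C^{0,\alpha}_{\delta}}$ with $C$ absolute.

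For $f^{\perp}$, the key point is that on the $L^2$-orthogonal complement of the fiberwise constants the indicial roots of $\partial_s^2+\Delta_{S^2}$, and of its lower-order perturbation $\partial_s^2+\tfrac1{3s}\partial_s+\Delta_{S^2}$, are $\pm\sqrt{\lambda_k}$ with $\lambda_k\ge\lambda_1>0$, so $0$ is never an indicial root. Hence, by the Lockhart--McOwen/Lockhart analysis already invoked in Proposition~\ref{prop:fredholm} — applied on a half-cylinder $\{\rho>A'\}$ with $A'<A$ fixed, and with trivial exponential weight — $\Delta_{g_0}$ restricted to fiberwise mean-zero functions is an isomorphism onto the corresponding weighted space whose norm is independent of where the cylinder is truncated, and it gains a factor $R^2=\rho^{1/2}$ from the conformal rescaling. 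Concretely one solves mode by mode using the Green's function of the ordinary differential equation $\partial_\rho^2+\tfrac1{2\rho}\partial_\rho-\lambda_k\rho^{-1/2}$, whose two solutions decay and grow exponentially in $s$, obtaining a bound $\lambda_k^{-1}$ uniform in $k$ and in $A'$, then sums in a weighted $L^2$ norm and bootstraps to $C^{2,\alpha}$ by interior Schauder estimates on balls of bounded geometry in $R^{-2}g$. This yields $u_1^{0}\in C^{2,\alpha}_{\delta+1/2}(L_{\mathbb{R}^3})$ with $\|u_1^{0}\|_{C^{2,\alpha}_{\delta+1/2}}\le C\|f^{\perp}\|_{C^{0,\alpha}_{\delta}}$ and $\Delta_{g_0}u_1^{0}=f^{\perp}$ on $\{\rho>A\}$; away from the end we extend $u_1^{0}$ by a cutoff.

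For $f_0$, on the mode-zero part the equation $\Delta_{g_0}u=f_0$ is the Euler-type ordinary differential equation $\big(\partial_\rho^2+\tfrac1{2\rho}\partial_\rho\big)u_2^{0}=f_0$ in the single variable $\rho$ (equivalently $\tilde y$), whose indicial roots are $0$ and $\tfrac12$; integrating twice by variation of parameters and choosing the particular solution of slowest growth gives, provided $\delta+2\notin\{0,\tfrac12\}$, a function $u_2^{0}\in C^{2,\alpha}_{\delta+2}(\pi(L_{\mathbb{R}^3}))$ of the base variable alone, with $\|u_2^{0}\|_{C^{2,\alpha}_{\delta+2}}\le C\|f_0\|_{C^{0,\alpha}_{\delta}}$ and $C$ independent of $A$. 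Setting $P_0f=u_1^{0}+u_2^{0}$ we obtain a linear operator with $\|P_0\|$ bounded uniformly in $A$, respecting the splitting into a $C^{2,\alpha}_{\delta+2}$ base function plus a $C^{2,\alpha}_{\delta+1/2}$ remainder, and with $\Delta_{g_0}(P_0f)=f$ on $\{\rho>A\}$.

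It remains to replace $\Delta_{g_0}$ by $\Delta=\Delta_g$. Writing $\Delta_g=R^{-2}(\Delta_{\hat g}+\hat\nabla\log R\cdot\hat\nabla)$ with $\hat g=R^{-2}g$, Proposition~\ref{prop:warped} shows that $E:=\Delta_g-\Delta_{g_0}$ is a first- and second-order operator whose coefficients, measured in the cylindrical metric $R^{-2}g$, are $O(\rho^{\delta'-1/2})$ together with all their derivatives, the extra factor $R^{-2}=\rho^{-1/2}$ relating $\Delta_g$ to the $\hat g$-connection being exactly what makes $E$ strictly decaying. Consequently $E\circ P_0$ is a bounded operator on $C^{0,\alpha}_{\delta}$ whose norm, once all functions are restricted to $\{\rho>A\}$, is $O(A^{\delta'-1/2})\to 0$ as $A\to\infty$. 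Hence for $A$ large $\mathrm{Id}+E\circ P_0$ is invertible by a Neumann series, and $u:=P_0(\mathrm{Id}+E P_0)^{-1}f$ solves $\Delta u=f$ on $\{\rho>A\}$; writing out the series, $u=u_1+u_2$, where $u_2$ collects all base-only summands (each in $C^{2,\alpha}_{\delta+2}$, with geometrically decreasing norms) and $u_1$ the remainders (each in $C^{2,\alpha}_{\delta+1/2}$), so the stated bounds follow with $C$ independent of $A$. I expect the main obstacle to be the $A$-independent invertibility of $\Delta_{g_0}$ on the fiberwise mean-zero subspace of $\{\rho>A\}$: within a fixed spherical mode the term $\partial_\rho^2+\tfrac1{2\rho}\partial_\rho$ is not of lower order in the $\rho$-weight, so one cannot invert perturbatively off $\rho^{-1/2}\Delta_{S^2}$, and one must instead exploit the genuine exponential behaviour of the mode-$k$ solutions and then control the sum over modes. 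A secondary nuisance is that $E$ mixes spherical modes, so the $u_1/u_2$ splitting is reconstructed only after the iteration rather than preserved step by step.
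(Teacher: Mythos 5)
Your proposal is correct in outline, but it treats the key step differently from the paper. The paper follows Sz\'ekelyhidi's parametrix scheme \cite{Sz19}: it covers the end $\{\rho>A\}$ by cutoff regions of $s$-width a large parameter $B$ (i.e.\ of size $\sim B\tilde y_i^{1/4}$ in $\tilde y$), on each piece rescales so that the geometry is an \emph{exact} product cylinder $\mathbb{R}\times S^2$, solves $\Delta_{\mathbb{R}\times S^2}u_i=\tilde y_i^{1/2}f_i$ for the fiberwise mean-zero pieces there, and transplants back with cutoffs $\tilde\chi_i$; the cutoff errors are $O(B^{-2})$ and, together with the $O(A^{\delta'-1/2})$ error from the base ODE, are absorbed in a single Neumann series against the true Laplacian $\Delta$. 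You instead invert the global warped model $\Delta_{g_0}=R^{-2}\bigl(\partial_s^2+\tfrac{1}{3s}\partial_s+\Delta_{S^2}\bigr)$ on the whole end at once, using that on the fiberwise mean-zero subspace the indicial roots $\pm\sqrt{\lambda_k}$ avoid zero (mode-by-mode Green's functions with exponential kernels, uniform $\lambda_k^{-1}$ gain, $L^2$ summation plus Schauder), and then run the Neumann series only against the geometric error $E=\Delta-\Delta_{g_0}$ controlled by Proposition~\ref{prop:warped}; the base-direction ODE (fundamental solutions $1$, $\rho^{1/2}$, variation of parameters) is identical in both arguments. What the paper's localization buys is that the non-translation-invariant term $\tfrac{1}{3s}\partial_s$ and the varying warping never have to be inverted globally -- each piece is an exact cylinder, where invertibility on the mean-zero subspace is elementary (cf.\ Proposition~\ref{prop:dirac_cylinder}) -- at the cost of the extra parameter $B$ and cutoff bookkeeping; your route avoids the cutoffs at the model stage but puts the burden on justifying the $A$-independent inversion of the warped model (mode summation, bootstrap from weighted $L^2$ to weighted $C^{2,\alpha}$), which you correctly identify as the main technical point and which does go through by the exponential off-diagonal decay of the mode kernels against the sub-exponential weight $\rho^{\delta}$. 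Two minor remarks: your non-resonance caveat $\delta+2\notin\{0,\tfrac12\}$ is exactly the paper's standing assumption that $\delta$ avoids a discrete set (and is satisfied in the application to Proposition~\ref{prop:green} for generic $\delta'$), and your observation that $E$ mixes spherical modes, so the $u_1/u_2$ splitting is only recovered after the iteration, matches how the paper's own iteration $P\sum_k(I-\Delta P)^k$ behaves.
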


\begin{proof}
  We follow closely the strategy in \cite[Proof of
  Proposition~22]{Sz19}. First construct a collection of cutoff
  functions $\chi_i$ on $\mathbb{R}_{+}$ such that $\sum_i\chi_i = 1$
  on $\{\rho > A\}$. Choose a large $B>0$. Equip $\mathbb{R}_+$ with
  the metric
  $\tilde g = B^{-2}\tilde y^{-1/2}d\tilde y^2 = B^{-2}ds^2$. We then
  cover $\{ \rho > A \}$ by disks $B_{\tilde g}(\tilde y_i, 2)$ of
  radius $2$ with respect to $\tilde g$ with center $\tilde y_i$ such
  that the disks $B_{\tilde g}(\tilde y_i, 1)$ are disjoint. We can
  then define cutoff functions $\chi_i$ supported on
  $B_{\tilde g}(\tilde y_i, 2)$ and equal to $1$ on
  $B_{\tilde g}(\tilde y_i, 1)$. Scaling back, we get cutoff functions
  $\chi_i$ with support in $B(\tilde y_i, 2B\tilde y_i^{1/4})$ and are
  equal to $1$ on $B(\tilde y_i, B\tilde y_i^{1/4})$. Here the metric
  is $d\tilde y^2$. Note that we have
  $|\nabla^l\chi_i|_{d\tilde y^2} = O(B^{-l}\tilde y_i^{-l/4})$. These
  cutoff functions will be used to decompose the right hand side $f$
  to functions defined on the cylinder $\mathbb{R}\times
  S^2$. Similarly, we construct cutoff functions $\tilde\chi_i$ which
  are equal to $1$ on the support of $\chi_i$, and are supported in
  $B(\tilde y_i, 3B\tilde y_i^{1/4})$. The cutoff functions
  $\tilde\chi_i$ are used to transfer the solution of the Poisson
  equation on the cylinder back to $L_{\mathbb{R}^3}$.

  For each $i$, we have the following decomposition:
  \[
    \chi_if = f_i + f_i',
  \]
  where
  $f_i'(\tilde y) = \operatorname{Vol}(S^2_{\tilde
    y})^{-1}\int_{S^2_{\tilde y}} \chi_if(\tilde y, z)$ is the
  fiberwise average of $\chi_if$. Here the metric on $S^2_{\tilde y}$
  is the pullback of round metric on $S^2$ by the scaling map. It
  follows that $f_i = \chi_if - f_i'$ can be viewed as a function
  defined on $\mathbb{R} \times S^2$, with zero fiberwise average with
  respect to the product structure. We define the base direction of
  $f$ to be $f_0 = \sum_i f_i'$.

  Since the metric $g_L$ is equivalent to
  $\tilde y_i^{1/2}(ds^2 + g_{S^2})$ on the support of $f_i$,
  $\Delta \sim \tilde y_i^{-1/2}\Delta_{\mathbb{R}\times S^2}$. There
  exists a unique $u_i$ on $\mathbb{R} \times S^2$ such that
  $\Delta_{\mathbb{R}\times S^2}u_i = \tilde y_i^{1/2}f_i$, with
  $\|u_i\|_{C^{2,\alpha}(\mathbb{R}\times S^2)}\le C\tilde
  y_i^{1/2}\|f_i\|_{C^{0,\alpha}(\mathbb{R}\times S^2)}$.

  To see that $\tilde\chi_iu_i$ defines an approximate right inverse,
  we estimate
  \[
    \|\Delta_{\mathbb{R}\times S^2} (\tilde\chi_iu_i) - \tilde
    y_i^{1/2}f_i\|_{C^{0,\alpha}(\mathbb{R}\times S^2)}
    &\le C\|\tilde\chi_i\|_{C^{2,\alpha}(\mathbb{R}\times S^2)}\|u_i\|_{C^{2,\alpha}(\mathbb{R}\times S^2)} \\
    &\le CB^{-2}\|f_i\|_{C^{0,\alpha}(\mathbb{R}\times S^2)}.
  \]
  By letting $B \gg 0$, we can make the constant on the right hand
  side less than $1/100$. Rescaling back to the metric $g_{L_{\mathbb{R}^3}}$ and
  multiplying by $\tilde y_i^{-\delta-1/2}$, we get
  \[
    \|\Delta (\tilde\chi_iu_i) - f_i\|_{C^{0,\alpha}_\delta} \le
    \frac{1}{100}\|f\|_{C^{0,\alpha}_\delta}.
  \]

  We now turn to the base direction $f_0$. The PDE at infinity is given by
  \[
    \Delta_{g_0} u_0 = f_0.
  \]
  Since $f_0$ only depends on the base direction, the above PDE reduces to the ODE
  \[
    u_0'' + \frac{1}{2\rho}u_0' = f_0.
  \]
  Recall that the fundamental solutions are $h_1 = 1$ and
  $h_2 = \rho^{1/2}$. From these we can calculate the Wronskian
  $W = h_1h_2' - h_2h_1' = -(1/2)\rho^{-1/2}$. A particular solution
  is then given by
  \[
    u_p(\rho) &= -h_1(\rho)\int \frac{h_2(\rho)f_0(\rho)}{W(\rho)} d\rho +
    h_2(\rho) \int \frac{h_1(\rho)f_0(\rho)}{W(\rho)} d\rho \\
    &= \int 2\rho f_0(\rho)d\rho -2\rho^{1/2}\int \rho^{1/2}f_0(\rho)d\rho.
  \]
  Thus a general solution is given by $u_0=c_1h_1+c_2h_2+u_p$.  We may
  choose the initial conditions $u_0(\rho_0) = u_0'(\rho_0) = 0$ for
  some $\rho_0 < A$. Then from the above formula we have
  \[
    \|u_0\|_{C^{2,\alpha}_{\delta+2}} \le C\|f\|_{C^{0,\alpha}_{\delta}},
  \]
  with
  \[
    \|\Delta u_0 - f_0\|_{C^{0,\alpha}_{\delta}}
    \le \|(\Delta - \Delta_0)u_0\|_{C^{0,\alpha}_{\delta}}
    \le CA^{-1/2+\delta'}\|f\|_{C^{0,\alpha}_{\delta}}.
  \]
  By letting $A \gg 0$ we can make the constant on the right hand side less than $1/100$.

  We now define the approximate right inverse
  \[
    Pf = u_0 + \sum_i \tilde\chi_i u_i.
  \]
  From the above estimates we have
  \[
    \|\Delta(Pf) - f\|_{C^{0,\alpha}_\delta} \le \frac{1}{50}\|f\|_{C^{0,\alpha}_\delta}.
  \]
  Thus we can write down a genuine right inverse
  \[
    \Delta^{-1}f = P\sum_{k=0}^\infty (I-\Delta P)^k f.
  \]
  This completes the proof.
\end{proof}

\begin{proof}[Proof of Proposition~\ref{prop:green}]
  Let $f = \Delta (\rho^{1/2})$. Then we have
  $\|f\|_{C^{0,\alpha}_{-3/2+\delta'}(L_{\mathbb{R}^3})} < \infty$. By
  Lemma~\ref{lem:ODE}, there exists $u_1$ and $u_2$ with
  $\|u_1\|_{C^{2,\alpha}_{\delta'-1}(L_{\mathbb{R}^3})} \le
  C\|f\|_{C^{0,\alpha}_{\delta'}(L_{\mathbb{R}^3})}$ and
  $\|u_2\|_{C^{2,\alpha}_{\delta'+1/2}(L_{\mathbb{R}^3})} \le
  C\|f\|_{C^{0,\alpha}_{\delta'}(L_{\mathbb{R}^3})}$ such that $\Delta (u_1+u_2) = f$
  for $\rho > A$. Set $G = 2/\pi(\rho^{1/2}-(u_1+u_2))$.  So (1) and
  (2) are satisfied. To prove (3), we integrate by parts and see that
  \[
    \int_{L_{\mathbb{R}^3}} \Delta G
    &= \lim_{\rho \to \infty} \int_{B(0, \rho)} \Delta G \\
    &= \lim_{\rho\to\infty}\int_{\partial B(0, \rho)} \frac{\partial
      G}{\partial \rho} \\
    &= \lim_{\rho\to\infty} \frac{\rho^{1/2}}{2\pi} \int_{S^2} \frac{\partial
      \rho^{1/2}}{\partial \rho} \\
    &= 1.
  \]
  The third equality holds because $\partial_\rho u_i$'s all have fast enough
  decay. This completes the proof.
\end{proof}

\subsection{Inverting the Laplacian on $L_{\mathbb{R}^3}$}

We are now ready to state the main result of this section.

\begin{prop}\label{prop:laplace}
  For $\delta < 0$ sufficiently close to $0$, there exists a bounded
  map
  \[
    P: C^{0,\alpha}_{\delta}(L_{\mathbb{R}^3}) \to
    C^{2,\alpha}_{\delta+1/2}(L_{\mathbb{R}^3}) \oplus
    C^{2,\alpha}_{\delta+2}(\pi(L_{\mathbb{R}^3}))
  \]
  such that $\Delta P = \operatorname{Id}$.
\end{prop}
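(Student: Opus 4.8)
The plan is to globalize Lemma~\ref{lem:ODE}, which already inverts $\Delta$ on the end $\{\rho>A\}$, by gluing its exterior solution operator to a solution of $\Delta v=g$ for data $g$ supported in the compact core, and to make the interior solve quantitative using the Green-type function $G$ of Proposition~\ref{prop:green}. Concretely, I would fix $A$ large enough for Lemma~\ref{lem:ODE} to apply and a cutoff $\chi$ depending only on $\rho$ with $\chi\equiv1$ on $\{\rho>2A\}$ and $\operatorname{supp}\chi\subset\{\rho>A\}$. Given $f\in C^{0,\alpha}_\delta(L_{\mathbb{R}^3})$, Lemma~\ref{lem:ODE} produces $u_1\in C^{2,\alpha}_{\delta+1/2}(L_{\mathbb{R}^3})$ and $u_2\in C^{2,\alpha}_{\delta+2}(\pi(L_{\mathbb{R}^3}))$, depending linearly and boundedly on $f$, with $\Delta(u_1+u_2)=f$ on $\{\rho>A\}$. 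Setting $g:=f-\Delta(\chi(u_1+u_2))$, the cutoff identity $\Delta(\chi(u_1+u_2))=\chi\,\Delta(u_1+u_2)+[\Delta,\chi](u_1+u_2)$ shows that $g$ is supported in $\{\rho<2A\}$ and, using the Lemma~\ref{lem:ODE} bounds on the transition annulus, that $\|g\|_{C^{0,\alpha}}\le C(A)\|f\|_{C^{0,\alpha}_\delta}$.

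It then remains to solve $\Delta v=g$ on $L_{\mathbb{R}^3}$ with $v$ landing in the target space and $\|v\|\le C\|f\|_{C^{0,\alpha}_\delta}$; once this is done, $P_0f:=\chi(u_1+u_2)+v$ satisfies $\Delta P_0f=f$ exactly (no Neumann iteration is needed), and decomposing $P_0f$ into its $\rho$-dependent part and the remainder yields the operator $P$. The essential subtlety is that $L_{\mathbb{R}^3}$ has volume growth $\rho^{3/2}$ and is hence parabolic, so $\Delta v=g$ with $g$ compactly supported admits no bounded solution unless $\int_{L_{\mathbb{R}^3}}g=0$; this is precisely what the growing function $G=O(\rho^{1/2})$ is designed to absorb, and is why the statement carries the extra base factor $C^{2,\alpha}_{\delta+2}(\pi(L_{\mathbb{R}^3}))$. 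I would therefore set $m:=\int_{L_{\mathbb{R}^3}}g\,\mathrm{dvol}$ (so $|m|\le C\|f\|_{C^{0,\alpha}_\delta}$ since $g$ is supported in a fixed compact set), put $h:=g-m\,\Delta G$, and observe that by property (3) of Proposition~\ref{prop:green} one has $\int_{L_{\mathbb{R}^3}}h=0$, while $h\in C^{0,\alpha}_{-N}$ with $\|h\|_{C^{0,\alpha}_{-N}}\le C\|f\|_{C^{0,\alpha}_\delta}$ since $\Delta G=O(\rho^{-N})$. For such fast-decaying, mean-zero data the Poisson equation $\Delta w=h$ admits a bounded solution with $\|w\|_{C^{2,\alpha}_0}\le C\|h\|_{C^{0,\alpha}_{-N}}$; this follows either from Hein's solvability result for the Poisson equation on complete manifolds \cite{Hein}, or from the surjectivity in Proposition~\ref{prop:fredholm} for a small positive exponential weight together with the fact that the vanishing of $\int_{L_{\mathbb{R}^3}}h$ forces the $\rho^{1/2}$-term in the far-field asymptotics of $w$ to vanish (a boundary-integral matching argument). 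Finally $v:=mG+w$ satisfies $\Delta v=m\,\Delta G+h=g$; writing $G$ as a radial part of size $O(\rho^{1/2})$ plus a decaying correction, the radial part together with $mG$'s base piece lies in $C^{2,\alpha}_{\delta+2}$ for $\delta$ near $0$, while $w$ and the remaining correction lie in $C^{2,\alpha}_0\subset C^{2,\alpha}_{\delta+1/2}$ for $\delta>-\tfrac12$, so $f\mapsto v$ is bounded into the target space.

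I expect the main obstacle to be this last, interior, Poisson solve: showing that the mean-zero, rapidly decaying data $h$ can be inverted with a quantitatively controlled bounded solution. Everything else (the patching with $\chi$, the commutator estimates, the norm bookkeeping) is routine, but this step is where parabolicity of $L_{\mathbb{R}^3}$ forces the introduction of the growing function $G$ from Proposition~\ref{prop:green} and where one must appeal to \cite{Hein} or to the weighted Fredholm theory of Proposition~\ref{prop:fredholm} rather than to off-the-shelf compact elliptic theory.
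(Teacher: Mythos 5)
Your overall architecture is the same as the paper's: solve on the end with Lemma~\ref{lem:ODE}, reduce to data supported near the compact core, use the Green-type function $G$ of Proposition~\ref{prop:green} to restore the mean-zero condition forced by parabolicity, solve the resulting Poisson equation globally, and then do the norm bookkeeping (your cutoff $\chi$ versus the paper's direct subtraction $f_1=f-\Delta(u_1+u_2)$ is an immaterial variant). The genuine gap is exactly at the step you yourself flag as the main obstacle: the quantitative interior solve. You assert that for mean-zero data $h=O(\rho^{-N})$ one gets $w$ with $\Delta w=h$ and $\|w\|_{C^{2,\alpha}_0}\le C\|h\|_{C^{0,\alpha}_{-N}}$ ``from Hein's solvability result.'' That is an overclaim: Proposition~\ref{prop:hein} (Hein's theorem) gives existence, finite Dirichlet energy, and an $L^\infty$ bound, but not an effective bound with a constant uniform in the data, and without such uniformity the operator $P$ is not bounded. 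The paper explicitly lists this as the reason Hein cannot be applied directly, and the analytic core of its proof is precisely the fix: the hole-filling energy-decay Lemma~\ref{lem:holefilling}, the Step-3 argument of \cite[Theorem~4.1]{HHN} upgrading energy decay to the pointwise decay $|\tilde u|\le C\rho^{-N+3/2}$ with uniform constant, and then weighted Schauder estimates. Your proposal contains no substitute for this chain.

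Your fallback route --- the surjectivity of $\Delta$ in Proposition~\ref{prop:fredholm} at a small positive exponential weight, followed by a boundary-integral (Stokes) argument showing $\int h=0$ kills the $\rho^{1/2}$-coefficient --- is a legitimate alternative idea, but as written it is a one-clause sketch of what is really the heart of the proposition. To make it work you must show that the Fredholm solution, a priori only controlled like $e^{\beta s}$, actually has an asymptotic expansion $c_1+c_2\rho^{1/2}+(\text{decaying})$ with all coefficients bounded by $\|h\|$. Since by Proposition~\ref{prop:warped} the induced metric converges to the warped product $g_0$ only at a polynomial rate, there is no exact separation of variables on the end: excluding the exponentially growing nonzero spherical modes and extracting the zero-mode coefficients requires a perturbative/Liouville-type argument (essentially re-running Lemma~\ref{lem:ODE} on the end and comparing solutions), after which the Stokes identity identifies $c_2$ up to error terms that must also be estimated. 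None of this is routine bookkeeping, and it is comparable in length to the paper's Hein-plus-hole-filling route; until one of the two routes is actually carried out, the claimed bound on $P$ --- and hence the proposition --- is not proved.
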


The strategy for proving Proposition~\ref{prop:laplace} is to apply
Hein's result \cite[Theorem~1.5]{Hein}. We state the case relevant to
our argument.

\begin{prop}[{\cite[Theorem~1.5]{Hein}}]\label{prop:hein}
  Let $(M,g)$ be a complete, noncompact Riemannian manifold of
  dimensions $n > 2$. Suppose that $(M,g)$ satisfies the
  $\operatorname{SOB(\beta)}$ condition with $\beta \le 2$. Let
  $f \in C^{0,\alpha}(M)$ satisfy $|f| \le C(1+r)^{-\mu}$ for some
  $\mu > 2$, with $\int_M f = 0$. Then there exists
  $u \in C^{2,\alpha}(M)$ such that $\Delta u = f$. Moreover,
  $\int_M |\nabla u|^2 < \infty$. Here $r$ is the distance function to
  a fixed point.
\end{prop}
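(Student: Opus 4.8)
The plan is to run the classical exhaustion scheme: solve the Poisson equation with zero Dirichlet data on an increasing family of bounded domains, derive bounds that are uniform in the exhaustion, and pass to a limit. The hypotheses $\int_M f=0$ and $|f|\le C(1+r)^{-\mu}$ with $\mu>2$ are needed precisely because $\beta\le 2$ forces $(M,g)$ to be parabolic, so there is no bounded Green's function to invoke directly; the dimensional hypothesis $n>2$ enters only through the local Sobolev inequality packaged into $\operatorname{SOB}(\beta)$.

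First I would fix a proper smoothing of the distance function $r$ and, by Sard's theorem, choose regular values $R_i\to\infty$, setting $\Omega_i=\{r<R_i\}$. On each precompact domain $\Omega_i$ standard elliptic theory yields a unique $u_i\in C^{2,\alpha}(\overline{\Omega_i})$ with $\Delta u_i=f$ on $\Omega_i$ and $u_i|_{\partial\Omega_i}=0$; the goal is then to show $\{u_i\}$ converges, after passing to a subsequence, to a global solution with finite energy.

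The core step is to bound the $u_i$ and their Dirichlet energies uniformly in $i$. Integrating $\Delta u_i=f$ over $\{r<\rho\}$ gives the flux identity $\int_{\{r=\rho\}}\partial_\nu u_i=\int_{\{r<\rho\}}f$; since $\int_M f=0$ and $|f|\le C(1+r)^{-\mu}$ with $\mu>2\ge\beta$, the $\operatorname{SOB}(\beta)$ volume growth $\operatorname{Vol}(\{r<\rho\})\sim\rho^{\beta}$ forces the tail $\int_{\{r\ge\rho\}}|f|$, hence the flux through $\{r=\rho\}$, to be $O(\rho^{\beta-\mu})$ uniformly in $i$. Comparing with the radial model equation $\bar u''+\tfrac{\beta-1}{\rho}\bar u'=\bar f$ and using the boundary condition at $r=R_i$ to select the decaying branch (this is where $\mu>2$ enters, making $\rho^{1-\mu}$ integrable with a decaying primitive), one gets $|\bar u_i(\rho)|\lesssim(1+\rho)^{2-\mu}$ for the spherical averages, uniformly in $i$. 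Feeding this together with the scale-sharp annular Poincar\'e inequality $\|u_i-\bar u_i\|_{L^2(\{2^k\le r<2^{k+1}\})}\lesssim 2^{k}\|\nabla u_i\|_{L^2(\{2^k\le r<2^{k+1}\})}$ (a consequence of $\operatorname{SOB}(\beta)$) into the identity $\int_{\Omega_i}|\nabla u_i|^2=-\int_{\Omega_i}u_i f$ and estimating the right side annulus by annulus with Cauchy--Schwarz, one absorbs the gradient term and obtains $\int_{\Omega_i}|\nabla u_i|^2\le C$ uniformly in $i$; the same inputs bound $\|u_i\|_{L^2}$ on every fixed compact set.

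Finally I would pass to the limit: on any fixed ball $B\subset\Omega_i$, interior Schauder estimates give $\|u_i\|_{C^{2,\alpha}(\frac12 B)}\le C(\|u_i\|_{L^2(B)}+\|f\|_{C^{0,\alpha}(B)})$, so $\{u_i\}$ is bounded in $C^{2,\alpha}_{\mathrm{loc}}(M)$; a diagonal Arzel\`a--Ascoli argument produces a subsequential limit $u$ with $\Delta u=f$, weak lower semicontinuity of the Dirichlet energy gives $\int_M|\nabla u|^2\le C<\infty$, and interior Schauder (using $f\in C^{0,\alpha}$) upgrades $u$ to $C^{2,\alpha}(M)$. The hard part is the uniform energy/average estimate of the previous paragraph: because the manifold is parabolic one cannot use a Hardy-type bound $\int u^2 r^{-2}\lesssim\int|\nabla u|^2$, and the flux--dyadic argument is the substitute --- it is exactly there that $\int_M f=0$ kills the fluxes, $\mu>2$ keeps the averages bounded, and the full $\operatorname{SOB}(\beta)$ package (volume growth together with scale-correct Poincar\'e inequalities) is used. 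An alternative organization, via surjectivity of $\Delta$ between weighted Sobolev spaces built from the $\operatorname{SOB}(\beta)$ Sobolev inequality, would reach the same conclusion, but the exhaustion route is more self-contained.
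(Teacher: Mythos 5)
First, a point of comparison: the paper does not prove this statement at all. Proposition~\ref{prop:hein} is imported verbatim (in the special case needed) from Hein, \cite[Theorem~1.5]{Hein}, and the surrounding text only checks that $L_{\mathbb{R}^3}$ is $\operatorname{SOB}(3/2)$. So the only proof to measure yours against is Hein's, and in outline your scheme does resemble it: an exhaustion/limiting argument whose heart is a uniform finite-energy estimate in which $\int_M f=0$ and $\mu>2$ compensate for parabolicity ($\beta\le 2$), with the analytic input coming from Poincar\'e--Sobolev inequalities on remote balls and annuli extracted from the $\operatorname{SOB}(\beta)$ hypotheses.

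However, the pivotal step of your argument has a genuine gap. The claim that ``comparing with the radial model equation $\bar u''+\frac{\beta-1}{\rho}\bar u'=\bar f$'' yields $|\bar u_i(\rho)|\lesssim(1+\rho)^{2-\mu}$ uniformly in $i$ is not justified: on a general $\operatorname{SOB}(\beta)$ manifold the spherical averages of $u_i$ satisfy no such ODE, even approximately. The flux identity only controls the total flux $\int_{\{r=\rho\}}\partial_\nu u_i$, and converting that into control of $\frac{d}{d\rho}\bar u_i(\rho)$ requires quantitative control of the areas of the level sets $\{r=\rho\}$ and of their first variation (mean curvature), none of which follows from $\operatorname{SOB}(\beta)$ (which gives ball-volume bounds, relatively connected annuli, and a quadratically decaying Ricci lower bound on remote balls, not level-set geometry); likewise ``selecting the decaying branch via the boundary condition at $r=R_i$'' has no meaning off the rotationally symmetric model. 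Note also that Hein's theorem asserts no pointwise or average decay of $u$ in the regime $\beta\le2$, so one should not expect such decay as an intermediate step. The estimate you need can be reached without it: in the dyadic decomposition write $\int_{A_k}u_if=\int_{A_k}(u_i-\bar u_{i,k})f+\bar u_{i,k}(F_k-F_{k+1})$ with $F_k=\int_{\Omega_i\cap\{r\ge 2^k\}}f$, sum by parts in $k$ so that only increments $\bar u_{i,k}-\bar u_{i,k\pm1}$ appear, bound those increments by the annular Poincar\'e inequality, $|\bar u_{i,k}-\bar u_{i,k+1}|\lesssim 2^{k(1-\beta/2)}\|\nabla u_i\|_{L^2(A_k\cup A_{k+1})}$, and use $|F_k|\lesssim 2^{k(\beta-\mu)}$ --- this is exactly where $\int_Mf=0$ and $\mu>2\ge\beta$ enter --- so that Cauchy--Schwarz and $1+\beta/2-\mu<0$ let you absorb and close the energy bound (modulo anchoring the additive normalization of $u_i$ on a fixed compact set). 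Finally, be aware that the ``scale-correct annular Poincar\'e inequality as a consequence of $\operatorname{SOB}(\beta)$'' is itself the main content of Hein's paper (Buser-type inequalities on remote balls chained along relatively connected annuli); in a genuinely self-contained proof it must be proved, not invoked as a formality.
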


For the definition of the $\operatorname{SOB}(\beta)$ condition, see
\cite[Definition~1.1]{Hein}. Using the asymptotic geometry discussed
in the first subsection, it is easily seen that the special Lagrangian
thimble $L_{\mathbb{R}^3}$ satisfies the $\operatorname{SOB}(\beta)$
condition with $\beta = 3/2$. Thus Proposition~\ref{prop:hein}
applies. However, Proposition~\ref{prop:hein} cannot be applied
directly to prove Proposition~\ref{prop:laplace} for the following
reasons:
\begin{itemize}
\item The right hand side in Proposition~\ref{prop:hein} does not
  decay faster than quadratically.
\item The right hand side $f$ does not necessarily satisfy
  $\int_{L_{\mathbb{R}^3}} f = 0$.
\item Proposition~\ref{prop:hein} does not immediately give the
  effective bounds needed for the right inverse. It does give an
  $L^\infty$ bound, but we need decay in $L^\infty$ to get higher
  regularity.
\end{itemize}

In the remainder of this subsection, we deal with the points above,
proving Proposition~\ref{prop:laplace}. Let
$f \in C^{0,\alpha}_{\delta}(L_{\mathbb{R}^3})$ with $\delta < 0$. By
Lemma~\ref{lem:ODE}, there exist $u_1$ and $u_2$ with
$\Delta (u_1+u_2) = f$ when $\rho > A$, where $A$ is a fixed large
number. Let $f_1 = f -\Delta(u_1+u_2)$. Then $f_1$ has compact
support. To apply Proposition~\ref{prop:hein}, we need to modify $f_1$
so that it has average $0$. For this we set
$f_2 = f_1 - (\int_{L_{\mathbb{R}^3}} f_1)\Delta G$, where $G$ is from
Proposition~\ref{prop:green}. It follows that $\int_L f_2 = 0$.

At this point we can apply Proposition~\ref{prop:hein}: there exists
$\tilde u \in C^{0,\alpha}(L_{\mathbb{R}^3})$ with
$\int_{L_{\mathbb{R}^3}} |\nabla \tilde u|^2 < \infty$ such that
$\Delta \tilde u = f_2$. Define
\[
  u = \left(\int_{L_{\mathbb{R}^3}} f_1\right)G + u_1 +  u_2 + \tilde u.
\]
Then we have $\Delta u = f$. The coefficient in front of $G$ can be estimated as
\[
  \left|\int_{L_{\mathbb{R}^3}} f_{k+1}\right| \le \int_{L_{\mathbb{R}^3}} |f_{k+1}|
  \le \int_{L_{\mathbb{R}^3}} \rho^{\delta+k\delta'}\|f_{k+1}\|_{C^{0,\alpha}_{\delta+\delta'}(L_{\mathbb{R}^3})} \le C(k)\|f\|_{C^{0,\alpha}_{\delta}(L_{\mathbb{R}^3})}.
\]

It remains to show that
$\|\tilde u\|_{C^{2,\alpha}_{\delta+1/2}} \le
C\|f\|_{C^{0,\alpha}_{\delta}}$ for some uniform constant $C$. Given
that Proposition~\ref{prop:hein} already gives an $L^\infty$ bound, we
still need to show that $\tilde u$ has fast decay in order to apply
the (weighted) Schauder estimates. For this, we will follow the
strategy laid out in the proof of \cite[Theorem~4.1]{HHN}. First we
prove the following:

\begin{lemma}\label{lem:holefilling}
  Let $\Delta u = f$ with $\int_{L_{\mathbb{R}^3}} |\nabla u|^2 < \infty$ and
  $f = O(\rho^{-N})$ for $N > 100$. Define
  $Q_\rho = \int_{L_{\mathbb{R}^3} \setminus B(0, \rho)} |\nabla u|^2$. Then for all
  sufficiently large $\rho > 0$, we have $Q_\rho \le C\rho^{-N+3/2}$
  for a constant $C$ depending on the $L^\infty$ bound of $u$.
\end{lemma}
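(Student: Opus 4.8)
The plan is a hole-filling (Caccioppoli) iteration, run \emph{at unit scale in the asymptotically cylindrical coordinate} $s = \tfrac{4}{3}\rho^{3/4}$. The point of passing to $s$ is that, by Proposition~\ref{prop:warped}, the metric $g$ is for $\rho$ large uniformly equivalent to the model $\rho^{1/2}(ds^2 + g_{S^2})$, so every shell $A_\sigma := \{\sigma - 1 \le s \le \sigma\}$ is bi-Lipschitz, with a \emph{fixed} constant, to a Euclidean ball of radius $\sim \rho^{1/4}$; hence the Poincar\'e constant of $A_\sigma$ is $\lesssim \rho^{1/2}$ and any cutoff $\phi$ supported in $A_\sigma$ may be taken with $|\nabla\phi|_g \lesssim \rho^{-1/4}$, the implied constants being independent of $\sigma$. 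Fix $\sigma_0$ large enough that these asymptotics and $|f| \le C\rho^{-N}$ hold on $\{s \ge \sigma_0\}$, and set $q(\sigma) := \int_{\{s \ge \sigma\}} |\nabla u|^2$, so that $Q_\rho = q(\tfrac{4}{3}\rho^{3/4})$, and $g(\sigma) := \rho(\sigma)^{-N+3/2}$.

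For $\sigma \ge \sigma_0 + 1$ I would choose $\phi$ with $\phi \equiv 1$ on $\{s \ge \sigma\}$ and $\phi \equiv 0$ on $\{s \le \sigma - 1\}$, so that $\nabla\phi$ is supported in $A_\sigma$, and test $\Delta u = f$ against $\phi^2(u - c)$, where $c$ is the $g$-average of $u$ over $A_\sigma$ (cutting off also near $s = T$ and letting $T \to \infty$; the flux through $\{s=T\}$ vanishes along a suitable sequence since $\int_{L_{\mathbb{R}^3}}|\nabla u|^2 < \infty$ and $u$ is bounded). Integration by parts and Young's inequality give $\int \phi^2 |\nabla u|^2 \lesssim \int_{A_\sigma}(u-c)^2|\nabla\phi|^2 + \int_{\{s \ge \sigma - 1\}}|u-c|\,|f|$. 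In the first term, $|\nabla\phi|^2 \lesssim \rho^{-1/2}$ together with the Poincar\'e inequality on $A_\sigma$ (constant $\lesssim \rho^{1/2}$) yields $\int_{A_\sigma}(u-c)^2|\nabla\phi|^2 \lesssim \int_{A_\sigma}|\nabla u|^2 = q(\sigma-1)-q(\sigma)$; in the second, $|u - c| \le 2\|u\|_{L^\infty}$ and $\int_{\{\rho \ge \rho(\sigma-1)\}}|f| \lesssim \rho(\sigma)^{-N+3/2}$. Since $q(\sigma) \le \int \phi^2|\nabla u|^2$, this produces
\[
  q(\sigma) \le C\bigl(q(\sigma-1) - q(\sigma)\bigr) + C\|u\|_{L^\infty}\,g(\sigma),
\]
hence $q(\sigma) \le \theta\,q(\sigma-1) + \theta\,\|u\|_{L^\infty}\,g(\sigma)$ with $\theta := C/(C+1) \in (0,1)$ --- the decisive feature being that, whatever the size of $C$, this constant is strictly less than $1$.

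Iterating $\lfloor \sigma - \sigma_0 \rfloor$ times gives $q(\sigma) \le \theta^{\,\sigma-\sigma_0} q(\sigma_0) + \|u\|_{L^\infty}\sum_{j \ge 0} \theta^{\,j+1} g(\sigma - j)$. The first term is exponentially small in $\sigma$, hence $O(\rho^{-N+3/2})$ for $\sigma$ large (as $\sigma \sim \rho^{3/4}$ and $q(\sigma_0) \le \int_{L_{\mathbb{R}^3}}|\nabla u|^2 < \infty$). For the series, $g(\sigma - j)/g(\sigma) = \bigl(\sigma/(\sigma-j)\bigr)^{\frac{4}{3}(N-3/2)}$ is bounded by a constant depending only on $N$ for $j \le \sigma/2$, while for $j > \sigma/2$ the factor $\theta^{j}$ dominates the at-most-polynomial growth of that ratio, so $\sum_{j\ge0}\theta^{\,j+1}g(\sigma-j) \le C(N)\,g(\sigma)$ once $\sigma$ is large. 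Combining, $q(\sigma) \le C(N)\|u\|_{L^\infty}\,\rho(\sigma)^{-N+3/2}$ for all large $\sigma$, which is the claim. The main obstacle --- and the only genuinely geometric input --- is the choice of scale: it is only after passing to the $s$-coordinate that the shells $A_\sigma$ have $\sigma$-independent shape, so that the product of the Poincar\'e constant and $\sup|\nabla\phi|^2$ is scale-invariant and $\theta<1$ uniformly; working with $\rho$-annuli, which are badly elongated, would only produce the much weaker (and useless here) rate $Q_\rho = O(\rho^{-1/2})$. This is exactly where Proposition~\ref{prop:warped} is essential.
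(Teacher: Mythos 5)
Your argument is correct, and it reaches the same estimate by a genuinely different (though closely related) mechanism than the paper. The paper works continuously in $\rho$: it integrates by parts over the exterior region $B_\rho^c$, replaces $u$ by its average $\bar u(\rho)$ on the level set $\partial B_\rho$, and uses Cauchy--Schwarz plus the Poincar\'e inequality on the cross-sectional $2$-sphere of diameter $\sim \rho^{1/4}$ to bound the flux term by $C\rho^{1/4}\int_{\partial B_\rho}|\nabla u|^2 = -C\rho^{1/4}\partial_\rho Q_\rho$; together with the tail bound $\int_{B_\rho^c}|f| \lesssim \rho^{-N+3/2}$ this yields the differential inequality $Q_\rho \le -C\rho^{1/4}\partial_\rho Q_\rho + C\|u\|_{L^\infty}\rho^{-N+3/2}$, which is then integrated. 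You run the discrete analogue instead: a Caccioppoli/hole-filling iteration over unit shells in the cylindrical coordinate $s$, with a Neumann--Poincar\'e inequality on the three-dimensional shells rather than on the level-set spheres, giving $q(\sigma)\le\theta\,q(\sigma-1)+\theta\|u\|_{L^\infty}g(\sigma)$ with a fixed $\theta<1$ and then summing the geometric series against the slowly varying weight $g$. Both proofs rest on exactly the same geometric input from Proposition~\ref{prop:warped} --- cross-sections at height $\rho$ have diameter $\sim\rho^{1/4}$, so that the product of the Poincar\'e constant and $\sup|\nabla\phi|^2$ is scale-invariant --- and in both the constant depends on $\|u\|_{L^\infty}$, with the ``$\rho$ sufficiently large'' threshold absorbing the exponentially small remainder involving the total Dirichlet energy (your $\theta^{\sigma-\sigma_0}q(\sigma_0)$; implicitly present in integrating the paper's ODE). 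What your route buys is robustness: no differentiation of $Q_\rho$ in $\rho$ (hence no level-set/coarea bookkeeping), and an explicit treatment of the contribution at infinity via the cutoff at $s=T$; the paper's route is shorter. Two minor points: the shell $A_\sigma$ is uniformly bi-Lipschitz to a rescaled copy of $[0,1]\times S^2$, not to a Euclidean ball (it has two boundary components), but all you actually use is a uniform Neumann--Poincar\'e constant at scale $\rho^{1/4}$, which this does give; and the vanishing of the outer cutoff error along a sequence $T\to\infty$ deserves one line --- the error is $\lesssim \|u\|_{L^\infty}\rho(T)^{-1/4}\operatorname{Vol}(A_T)^{1/2}\bigl(\int_{A_T}|\nabla u|^2\bigr)^{1/2}\sim \rho(T)^{1/8}\epsilon_T^{1/2}$ with $\sum_T\epsilon_T<\infty$, so a subsequence works since $\rho(T)^{1/8}\sim T^{1/6}$ while $\epsilon_T\le o(1)/T$ along a subsequence.
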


\begin{proof}
  We write $B_\rho = B(0,\rho)$, $B_\rho^c = L_{\mathbb{R}^3} \setminus B_\rho$, and let
  \[
    \bar u (\rho) =
    \frac{1}{\operatorname{Vol}(\partial B_\rho)}\int_{\partial B(\rho)} u
  \]
  be the $L^1$ average of $u$ on $\partial B(0,\rho)$. Integrating by parts gives
  \[
    Q_\rho &= \int_{\partial B_\rho^c} u\frac{\partial u}{\partial \rho}
    - \int_{B_\rho^c} uf \\
    &= \int_{\partial B_\rho^c} (u-\bar u)\frac{\partial u}{\partial \rho}
    + \int_{\partial B_\rho^c} \bar u\frac{\partial u}{\partial \rho}
    - \int_{B_\rho^c} uf \\
    &= \int_{\partial B_\rho^c} (u-\bar u)\frac{\partial u}{\partial \rho}
    + \int_{B_\rho^c} (\bar u -u)f \\
    &\le
    \sqrt{\int_{\partial B_\rho} (u-\bar u)^2}
    \sqrt{\int_{\partial B_\rho} \left(\frac{\partial u}{\partial \rho}\right)^2}
    + \int_{B_\rho^c} (\bar u -u)f \\
    &\le C\rho^{1/4}\int_{\partial B_\rho} |\nabla u|^2
    + \int_{B_\rho^c} (\bar u -u)f \\
    &= -C\rho^{1/4}\partial_\rho Q_\rho + \int_{B_\rho^c} (\bar u -u)f \\
    &\le -C\rho^{1/4}\partial_\rho Q_\rho + C\|u\|_{L^\infty(L_{\mathbb{R}^3})}\rho^{-N+3/2}.
  \]
  Integrating the above inequality from $\rho/2$ to $\rho$
  ($\rho \gg 0$), we see that
  \[
    Q_\rho \le C\rho^{-N+3/2}.
  \]
\end{proof}

Once we have Lemma~\ref{lem:holefilling} at hand, we can argue
similarly as in Step~3 in the proof of \cite[Theorem~4.1]{HHN}. The
conclusion is that we have $|\tilde u| \le C\rho^{-N+3/2}$ for some
uniform constant $C$. This together with the weighted Schauder
estimates implies that that
\[
  \|\tilde u\|_{C^{2,\alpha}_{-N+3/2}(L_{\mathbb{R}^3})}
  \le C\|\tilde f\|_{L^\infty(L_{\mathbb{R}^3})}
  \le C\|f_{k+1}\|_{L^\infty(L_{\mathbb{R}^3})}
  \le C\|f\|_{L^\infty(L_{\mathbb{R}^3})}.  
\]
Here the last constant $C$ depends on $k$. By choosing $N$ large
enough, we can use the basic property of the weighted norms to
conclude that
\[
  \|\tilde u\|_{C^{2,\alpha}_{\delta+1/2}(L_{\mathbb{R}^3})}
  \le
  \|\tilde u\|_{C^{2,\alpha}_{-N+3/2}(L_{\mathbb{R}^3})}\le C\|f\|_{L^\infty(L_{\mathbb{R}^3})}
  \le
  C\|f\|_{C^{0,\alpha}_{\delta}(L_{\mathbb{R}^3})}.
\]
This completes the proof Proposition~\ref{prop:laplace}.

\subsection{Inverting the Hodge-Dirac operator on $L_{\mathbb{R}^3}$}

We now turn to inverting the Hodge-Dirac operator acting on
$1$-forms. The main proposition is the following:

\begin{prop}\label{prop:dirac}
  Fix $\delta < 0$. Let $f \in C^{0,\alpha}_\delta(L_{\mathbb{R}^3})$
  and let $\omega \in C^{\infty}_\delta(\Lambda^2(L_{\mathbb{R}^3}))$.
  Suppose that $d\omega = 0$. Then there exists
  $\eta_1 \in C^{\infty}_{\delta+1/4}(\Lambda^1(L_{\mathbb{R}^3}))$
  and
  $\eta_2 \in C^{\infty}_{\delta+1}(\Lambda^1(\pi(L_{\mathbb{R}^3})))$
  such that $\eta = \eta_1 + \eta_2$ and $(d+d^*)\eta = \omega + f$,
  with
  \[
    \|\eta_1\|_{C^{1,\alpha}_{\delta+1/4}(L_{\mathbb{R}^3})}
    +
    \|\eta_2\|_{C^{1,\alpha}_{\delta+1}(\pi(L_{\mathbb{R}^3}))}\le
    C(\|f\|_{C^{0,\alpha}_{\delta}(L_{\mathbb{R}^3})} +
    \|\omega\|_{C^{0,\alpha}_{\delta}(L_{\mathbb{R}^3})}).
  \]
\end{prop}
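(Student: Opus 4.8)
The plan is to build an approximate right inverse for $d+d^*$ out of two model problems---the Hodge--Dirac operator on the cylinder $\mathbb{R}\times S^2$ (Proposition~\ref{prop:dirac_cylinder}) for the ``fiber direction'', and a first-order ODE on the base $\pi(L_{\mathbb{R}^3})\cong\mathbb{R}_+$ for the ``base direction''---then to upgrade it to an exact solution on $\{\rho>A\}$ by a Neumann series, and finally to dispose of the remaining compactly supported data by Hodge theory on $L_{\mathbb{R}^3}$ together with the scalar Poisson estimate of Proposition~\ref{prop:laplace}. Two structural facts set this up. Since $L_{\mathbb{R}^3}$ is diffeomorphic to $\mathbb{R}^3$ we have $H^1(L_{\mathbb{R}^3})=H^2(L_{\mathbb{R}^3})=0$; in particular the fiber $2$-sphere $S^2_\rho$ bounds, so $\int_{S^2_\rho}\omega=0$ for $\rho$ large by Stokes, and hence in the warped-product model of Proposition~\ref{prop:warped} the form $\omega$ is fiberwise $L^2$-orthogonal to the harmonic forms on $S^2$ (the $1$-form part automatically, as $H^1(S^2)=0$). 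And writing $f=\bar f+f^\circ$ with $\bar f(\rho)$ the fiberwise average and $f^\circ$ of fiberwise average zero, we split $(f,\omega)=(\bar f,0)+(f^\circ,\omega)$ into a base part and a fiber part.

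For the base part I would solve, for $\rho>A$, the ODE $d^*(h\,d\rho)=-h'-\tfrac{1}{2\rho}h=\bar f$; the integrating factor $\rho^{1/2}$ and a suitable choice of constant (killing the homogeneous solution $\rho^{-1/2}$) give $h\in C^{2,\alpha}_{\delta+1}(\pi(L_{\mathbb{R}^3}))$ with $\|h\|_{C^{2,\alpha}_{\delta+1}}\le C\|f\|_{C^{0,\alpha}_\delta}$, and $\eta_2=h\,d\rho$ (so $d\eta_2=0$) accounts for the weight $\delta+1$. For the fiber part I would mimic the proofs of Lemma~\ref{lem:ODE} and Proposition~\ref{prop:laplace}: cover $\{\rho>A\}$ by patches on which $g$ is, up to polynomially small error (Proposition~\ref{prop:warped}), the cylinder metric $ds^2+g_{S^2}$ rescaled by a constant; localize $(f^\circ,\omega)$ by cutoffs $\chi_i$ (replacing $\chi_i\omega$ by $d(\chi_i\beta)$ with a local primitive $\beta$ to keep the $2$-form data closed), note the localized data lands in the ``$\mathrm{ave}$'' subspace, invert by Proposition~\ref{prop:dirac_cylinder}, and transfer back by a second cutoff $\tilde\chi_i$. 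Because a $1$-form of cylinder-size $\sim\tilde y_i^{1/2+\delta}$ has $g$-size $\sim\rho^{\delta+1/4}$, the sum $\eta_1^{(1)}=\sum_i\tilde\chi_i\eta_i$ lies in $C^{1,\alpha}_{\delta+1/4}(\Lambda^1(L_{\mathbb{R}^3}))$ with the desired bound; the exponential-decay clause of Proposition~\ref{prop:dirac_cylinder} (once the patch size is large) controls the cutoff error and Proposition~\ref{prop:warped} controls the metric error, so $(d+d^*)(\eta_1^{(1)}+\eta_2)$ agrees with $(f,\omega)$ up to an error of norm $\le\tfrac1{10}\|(f,\omega)\|_{C^{0,\alpha}_\delta}$ supported in $\{\rho>A\}$. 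A Neumann iteration makes this exact on $\{\rho>A\}$, leaving a compactly supported remainder $(f_1,\omega_1)$.

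For the compact core, $\omega_1$ is closed, compactly supported, and exact (as $H^2(L_{\mathbb{R}^3})=0$), so Hodge theory on the manifold-with-end $L_{\mathbb{R}^3}$ together with Proposition~\ref{prop:fredholm} furnishes a co-closed $\beta_1\in C^{1,\alpha}_{\delta+1/4}(\Lambda^1(L_{\mathbb{R}^3}))$ with $d\beta_1=\omega_1$; moreover $\int_{L_{\mathbb{R}^3}}f_1=0$ by Stokes, hence $\int_{L_{\mathbb{R}^3}}(f_1-d^*\beta_1)=0$ and Proposition~\ref{prop:laplace} yields $v=v_1+v_2\in C^{2,\alpha}_{\delta+1/2}(L_{\mathbb{R}^3})\oplus C^{2,\alpha}_{\delta+2}(\pi(L_{\mathbb{R}^3}))$ with $\Delta v=f_1-d^*\beta_1$. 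Then $\eta=\beta_1+dv$ solves $(d+d^*)\eta=f_1+\omega_1$, with $\beta_1+dv_1\in C^{1,\alpha}_{\delta+1/4}$ and $dv_2\in C^{1,\alpha}_{\delta+1}(\pi(L_{\mathbb{R}^3}))$; adding this to the output of the preceding step, and invoking elliptic regularity for smoothness, gives the proposition.

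The hard part is the fiber-direction estimate: on $L_{\mathbb{R}^3}$ the model metric is the warped product $R^2(ds^2+g_{S^2})$, whose non-constant fiber modes behave like $e^{\pm\sqrt{\lambda}\,s}$ in the cylinder coordinate $s\sim\rho^{3/4}$ and therefore grow or decay faster than any polynomial, whereas the right-hand side decays only polynomially. It is exactly the ``$\mathrm{ave}$''-subspace formulation and the exponential-decay conclusion of Proposition~\ref{prop:dirac_cylinder}, together with the two-scale cutoff bookkeeping (patch size versus the cutoff radius $A$) borrowed from the proof of Proposition~\ref{prop:laplace}, that pins the growth of the solution down to the required rate $\rho^{\delta+1/4}$; a secondary nuisance is restoring the closedness of the localized $2$-form data $\chi_i\omega$ before Proposition~\ref{prop:dirac_cylinder} can be applied.
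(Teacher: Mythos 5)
Your overall strategy is sound and runs parallel to the paper's, but the two arguments are organized differently. You invert the coupled system $(f,\omega)$ directly on the end: a base ODE for the fiberwise average $\bar f$, a patchwise cylinder parametrix via Proposition~\ref{prop:dirac_cylinder} for the fiber part, a Neumann series, and then a compactly supported remainder. The paper instead proceeds in two stages: Lemma~\ref{lemma:d} produces a primitive $\zeta$ of $\omega$ alone (by exactly your cylinder-parametrix argument, with the telescoping correction $\omega_i=d(\chi_i\eta_i)-d\chi_i\wedge(\eta_i-\eta_{i+1})$ that your ``$d(\chi_i\beta)$'' sketch glosses over --- without it the localized pieces do not sum to $\omega$), and then the whole function part is absorbed into the scalar Poisson problem $\Delta u=f-d^*\zeta$, solved by Proposition~\ref{prop:laplace}; your base ODE, your weight bookkeeping $\eta_2\in C^{1,\alpha}_{\delta+1}$, and the average-correction all live inside the proof of that proposition (Lemma~\ref{lem:ODE}, Proposition~\ref{prop:green}, Hein's theorem). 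So your route is a workable repackaging rather than a genuinely new argument, and the weight computations ($\rho^{\delta+1/4}$ for the fiber part, $\rho^{\delta+1}$ for the base part) match the paper's.

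Two specific points as written do not hold up. First, for the compact core you invoke ``Hodge theory on the manifold-with-end together with Proposition~\ref{prop:fredholm}'' to produce a (co-closed) primitive $\beta_1\in C^{1,\alpha}_{\delta+1/4}$ of the compactly supported exact form $\omega_1$. Proposition~\ref{prop:fredholm} is a Fredholm statement in \emph{exponentially} weighted spaces: it asserts neither surjectivity of $d+d^*$ at the relevant weight nor any polynomial control of solutions --- indeed the paper explicitly notes that the exponential-weight theory is too coarse for the iteration, which is why Sections~4.3--4.6 exist. The step is easily repaired, and the paper does so with the optimal-regularity Poincar\'e lemma \cite[Theorem~8.3]{CDK}: a merely bounded primitive suffices, since the target weight $\delta+1/4$ is positive for the relevant $\delta$. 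Second, your claim that $\int_{L_{\mathbb{R}^3}}f_1=0$ ``by Stokes'' is unjustified and in general false: $f_1=f-d^*(\text{partial solution})$ picks up both $\int_{\{\rho<A\}}f$ and a boundary flux through $\{\rho\sim A\}$, and these do not cancel. Fortunately it is also unnecessary: Proposition~\ref{prop:laplace} carries no average-zero hypothesis --- the nonzero average is absorbed internally via the Green-type function of Proposition~\ref{prop:green} --- so your application of it to $f_1-d^*\beta_1$ goes through without that claim. With these two repairs (and the telescoping correction in the localization of $\omega$), your proposal gives a correct proof.
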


The rest of this subsection is devoted to the proof of
Proposition~\ref{prop:dirac}. We begin with the following lemma, which
is essentially extracted from the proof of \cite[Theorem~3.11]{CH}.

\begin{lemma}\label{lemma:d0}
  Let $M$ be a closed Riemannian manifold, and fix $A>0$. Consider the
  finite cylinder $[-A,A] \times M$ equipped with the Riemannian
  metric $ds^2+ g(s)$, where $g(s)$ is a Riemannian metric on $M$
  depending on $s$. Suppose that
  $\omega \in C^{\infty}(\Lambda^2([-A,A]\times M))$ is exact. Write
  $\omega = \omega_1 \wedge ds+ \omega_2$, where $\omega_1$ and
  $\omega_2$ are tangent to $M$. Then if $\omega_1(s)$ is
  $L^2$-orthogonal to the space of harmonic $1$-forms on $(M,g(s))$
  for $s \in [-A,A]$, then there exists
  $\zeta \in C^{\infty}(\Lambda^1([-A,A]\times M))$ such that
  $d\zeta = \omega$, with the following additional properties: if we
  write $\zeta = \zeta_0\,ds + \zeta_1$, where $\zeta_0(s,x)$ is a
  function and $\zeta_1(s,x)$ is a $1$-form on $M$, then $\zeta_1(s)$
  is $L^2$-orthogonal to the space of harmonic $1$-forms on $(M,g(s))$
  for $s\in [-A,A]$, together with the following estimate:
  \[
    \|\zeta_1\|_{C^{k,\alpha}([-A,A]\times M)} &\le
    C\|\omega\|_{C^{k,\alpha}([-A,A]\times M)}.
  \]
  Here the constant $C>0$ depends on $k$ and the Riemannian metrics $g(s)$.
\end{lemma}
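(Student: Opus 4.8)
The plan is to turn $d\zeta=\omega$ into an $s$‑family of equations on $M$, solve the spatial equation slicewise by Hodge theory in a way that builds in the required orthogonality, and then solve for the remaining component, the only delicate point being a cohomological solvability condition for which the hypothesis on $\omega_1$ is exactly what is needed. First I would write $d=ds\wedge\partial_s+d_M$ and, for $\zeta=\zeta_0\,ds+\zeta_1$ with $\zeta_1$ tangent to $M$, expand $d\zeta$ and match with $\omega=\omega_1\wedge ds+\omega_2$; this shows $d\zeta=\omega$ is equivalent to the pair
\[
  d_M\zeta_1(s)=\omega_2(s),\qquad \partial_s\zeta_1(s)-d_M\zeta_0(s)=\pm\,\omega_1(s)
\]
for all $s\in[-A,A]$. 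The hypothesis $d\omega=0$ becomes $d_M\omega_2(s)=0$ and $\partial_s\omega_2(s)=d_M\omega_1(s)$, and since $\omega$ is exact, restricting a primitive to the slice $\{s\}\times M$ shows each $\omega_2(s)$ is exact on $M$.

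Next I would solve the spatial equation with a canonical gauge choice. Let $G_{g(s)}$ be the Green operator of $(M,g(s))$ and set $\zeta_1(s):=d_M^{\ast}G_{g(s)}\omega_2(s)$, the unique co‑exact primitive of $\omega_2(s)$ (well defined because $\omega_2(s)$ is closed and exact, hence $L^2$‑orthogonal to harmonic $2$‑forms). Being co‑exact, $\zeta_1(s)$ is $L^2(g(s))$‑orthogonal to every closed $1$‑form, in particular to $\mathcal H^1(g(s))$ — this is the second conclusion. Since $g(s)$ and $\omega_2(s)$ depend smoothly on $s$ and $[-A,A]$ is compact, the family $\Delta_{g(s)}$ is uniformly elliptic, so $\zeta_1$ is smooth and the family Schauder estimates give $\|\zeta_1\|_{C^{k,\alpha}([-A,A]\times M)}\le C\|\omega_2\|_{C^{k,\alpha}}\le C\|\omega\|_{C^{k,\alpha}}$ with $C$ depending on $k$ and on the $g(s)$, as stated. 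It then remains to find $\zeta_0(s)$ with $d_M\zeta_0(s)=\partial_s\zeta_1(s)\mp\omega_1(s)$. Differentiating $d_M\zeta_1(s)=\omega_2(s)$ and using $\partial_s\omega_2=d_M\omega_1$ shows the right‑hand side is $d_M$‑closed, so by Hodge theory it suffices that it be $L^2(g(s))$‑orthogonal to $\mathcal H^1(g(s))$; then $\zeta_0(s):=d_M^{\ast}\Delta_{g(s)}^{-1}(\partial_s\zeta_1(s)\mp\omega_1(s))$ works and is smooth in $s$, and $\zeta=\zeta_0\,ds+\zeta_1$ is the desired primitive. The $\omega_1$ term is orthogonal to $\mathcal H^1(g(s))$ by hypothesis; for the $\partial_s\zeta_1$ term I would differentiate the identity $\langle\zeta_1(s),h_j(s)\rangle_{L^2(g(s))}\equiv 0$ along a smooth $s$‑frame $h_j(s)$ of $g(s)$‑harmonic $1$‑forms and use the co‑exactness of $\zeta_1(s)$ together with $d_M h_j(s)\equiv 0$ to control the terms in which the $s$‑derivative falls on $h_j$ or on the metric, concluding $\partial_s\zeta_1(s)\perp\mathcal H^1(g(s))$ (equivalently, that the de Rham class of the harmonic part of any slicewise primitive of $\omega_2$ is independent of $s$).

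The main obstacle is precisely this last orthogonality in Step 3: because both the $L^2$ inner product and the subspace $\mathcal H^1$ vary with $s$, the relation $\partial_s\zeta_1(s)\perp\mathcal H^1(g(s))$ does \emph{not} follow formally from $\zeta_1(s)\perp\mathcal H^1(g(s))$ — one genuinely has to track the metric‑variation terms in the $s$‑derivative and see that, once the $\omega_1$ contribution is removed, they cancel. This is the computation isolated in the proof of \cite[Theorem~3.11]{CH}, whose argument I would follow verbatim here. Everything else — the reduction in Step 1, the slicewise Hodge theory, and the Schauder bound — is routine, the compactness of $[-A,A]$ being used only to make all constants uniform in $s$.
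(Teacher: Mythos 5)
Your strategy coincides with the paper's: take $\zeta_1(s)$ to be the unique co-exact primitive of $\omega_2(s)$ on each slice (so that $\zeta_1(s)\perp\mathcal{H}^1(g(s))$ and the Schauder bound follows from the slicewise elliptic theory), observe that $\partial_s\zeta_1+\omega_1$ (with the appropriate sign) is $d_M$-closed, and then solve $d_M\zeta_0=\partial_s\zeta_1\mp\omega_1$, the only issue being whether this closed form is exact, i.e.\ whether its harmonic part vanishes. Up to that point your argument is correct and is essentially the paper's proof, which at this stage simply writes the Hodge decomposition $\partial_s\zeta_1+\omega_1=d_M\zeta_0+h_1$ and asserts $h_1=0$ ``by assumption''.

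The gap is in the step you yourself flagged: the claim that, once the $\omega_1$-contribution is removed by the hypothesis, the metric-variation terms cancel and give $\partial_s\zeta_1(s)\perp\mathcal{H}^1(g(s))$. Differentiating $\langle\zeta_1(s),h(s)\rangle_{L^2(g(s))}\equiv0$ yields $\langle\partial_s\zeta_1,h\rangle=-\langle\zeta_1,\partial_sh\rangle-\int_M\zeta_1\wedge\bigl(\partial_s*_{g(s)}\bigr)h$; the first term vanishes (since $\partial_sh$ is closed and $\zeta_1$ is co-exact), but the second term does not vanish in general, and \cite[Theorem~3.11]{CH} cannot be quoted for it: in that proof the link metric does not depend on the cylindrical variable, so $\mathcal{H}^1$ and the $L^2$ inner product are fixed and this term never appears. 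Worse, the desired orthogonality can genuinely fail under the stated hypotheses. Take $M=T^2$, $g(s)=dx^2+(1+s\cos y)^2\,dy^2$, and $\omega=\sin y\,dx\wedge dy$ (constant in $s$), so $\omega_1=0$ and the hypothesis holds trivially; here $dx$ is $g(s)$-harmonic for every $s$, the co-exact primitive is $\zeta_1(s)=(\cos y-\tfrac{s}{2})\,dx$, and $\partial_s\zeta_1=-\tfrac12\,dx$ is harmonic and non-exact. Moreover any primitive of $\omega_2$ orthogonal to $\mathcal{H}^1(g(s))$ differs from this one by an exact form, so the equation $d_M\zeta_0=\partial_s\zeta_1+\omega_1$ is unsolvable no matter which admissible $\zeta_1$ you pick. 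So the cancellation you invoke is not available from the hypotheses alone once $b_1(M)>0$ and $g(s)$ genuinely varies; the step (and with it the lemma in this generality) needs either $\mathcal{H}^1(M)=0$, an $s$-independent $g(s)$, or an additional hypothesis controlling $\int_M\zeta_1\wedge(\partial_s*_{g(s)})h$. In the paper's main application the fiber is $S^2$, so the issue is invisible there, but as written your proof cannot be completed at exactly the point you identified as the crux.
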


\begin{proof}
  Let $s$ denote the coordinate on $[-A,A]$, and let operators on $M$
  be denoted with a subscript $M$. Write the exact $2$-form $\omega$
  as $\omega = \omega_1\wedge ds + \omega_2$. By definition, there
  exist a $1$-form $\eta = \eta_0\, ds + \eta_1$ such that
  $d\eta = \omega$. In other words, we have
  \[
    d_M \eta_0 - \partial_s\eta_1 &= \omega_1, \\
    d_M\eta_1 &= \omega_2.
  \]
  The goal is to find a solution $\zeta = \zeta_0 \, ds + \zeta_1$ of
  the above equations that satisfies the estimate in the statement.

  We first look at the second equation. By Hodge theory, there exists
  a unique $\zeta_1$ such that $\zeta_1$ is $L^2$-orthogonal to
  harmonic $1$-forms on $M$ and
  \[
    d_M\zeta_1 = \omega_2, \:\:\:\: d^*_M\zeta_1 = 0.
  \]

  Now we look at the first equation. The equation we want to solve is
  $d_M\zeta_0 = \partial_s \zeta_1 + \omega_1$. Since
  $d_M(\eta_1-\zeta_1)=0$, we see that
  $\partial_s\zeta_1+\omega_1 =
  \partial_s\eta_1+\omega_1+\partial_s(\eta_1-\zeta_1)$ is
  $d_M$-closed. By Hodge theory, we have
  $\partial_s\zeta_1+\omega_1 = d_M \zeta_0 + h_1$, where $h_1$ is a
  harmonic $1$-form. By assumption, we have $h_1=0$ and the lemma
  follows.
\end{proof}

\begin{lemma}\label{lemma:d}
  Let $\omega \in C^{\infty}_{\delta}(\Lambda^2(L_{\mathbb{R}^3}))$ be
  the $2$-form in the statement of Proposition~\ref{prop:dirac}. Then
  there exists
  $\zeta \in C^{\infty}_{\delta+1/4}(\Lambda^1(L_{\mathbb{R}^3}))$
  such that $d\zeta = \omega$ with
  $\|\zeta\|_{C^{1,\alpha}_{\delta+1/4}(L_{\mathbb{R}^3})}\le
  C(\delta)\|\omega\|_{C^{0,\alpha}_\delta(L_{\mathbb{R}^3})}$.
\end{lemma}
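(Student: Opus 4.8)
The plan is to imitate the treatment of the scalar Laplacian in Lemma~\ref{lem:ODE} and Proposition~\ref{prop:laplace}: first produce an exact primitive of $\omega$ on the cylindrical end by a partition-of-unity parametrix built from Lemma~\ref{lemma:d0} and a Neumann series, and then correct the remaining compactly supported error by a fixed, scale-independent construction. Throughout one uses that $L_{\mathbb{R}^3}$ is diffeomorphic to $\mathbb{R}^3$, so $H^1(L_{\mathbb{R}^3})=H^2(L_{\mathbb{R}^3})=0$; in particular $\omega$ is exact, and by Stokes' theorem $\int_{S^2_\rho}\omega=\int_{B(0,\rho)}d\omega=0$ for every $\rho$, writing $S^2_\rho=\partial B(0,\rho)$. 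I will also write $\tilde g=R^{-2}g$ for the conformally rescaled metric, which on the end $\{\rho>A\}$ is weakly asymptotically cylindrical, modeled on $\mathbb{R}_+\times S^2$ with cross-section converging polynomially to $g_{S^2}$.

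For the cylindrical end I would fix a large $A$ and, following the cover used in the proof of Lemma~\ref{lem:ODE}, decompose $\{\rho>A\}$ into finite $\tilde g$-cylinders $U_i\cong[s_i-2B,s_i+2B]\times S^2$ with cutoffs $\chi_i$, $\sum_i\chi_i\equiv1$ on $\{\rho>A\}$, and $\tilde\chi_i\equiv1$ on $\operatorname{supp}\chi_i$ with slightly larger support and $|\nabla^\ell\tilde\chi_i|_{\tilde g}=O(B^{-\ell})$, where $B$ is a large parameter to be fixed. On each $U_i$ the form $\omega$ is closed with vanishing $S^2_\rho$-period, hence exact on $U_i$; since $H^1(S^2)=0$ the $L^2$-orthogonality hypothesis of Lemma~\ref{lemma:d0} is vacuous, so (after also normalizing to zero fiberwise average, which makes the slice-wise solution operators on $S^2$, and hence the constant, independent of the cylinder length) that lemma gives $\zeta_i$ with $d\zeta_i=\omega$ on $U_i$ and $\|\zeta_i\|_{C^{1,\alpha}(U_i,\tilde g)}\le C\|\omega\|_{C^{0,\alpha}(U_i,\tilde g)}$, with $C$ uniform in $i$ and $B$. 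Since $g\simeq R_i^2\tilde g$ and $R_i\simeq\rho_i^{1/4}$ on $U_i$, a $p$-form satisfies $|\cdot|_g=R_i^{-p}|\cdot|_{\tilde g}$, so $|\omega|_g\lesssim\rho_i^{\delta}$ on $U_i$ converts into $|\zeta_i|_g\lesssim R_i|\omega|_g\lesssim\rho_i^{\delta+1/4}$, which is exactly the weight shift in the statement. Setting $P\omega=\sum_i\tilde\chi_i\zeta_i$ gives $dP\omega=\omega+E\omega$ with $E\omega=\sum_i d\tilde\chi_i\wedge\zeta_i$; since $\sum_i\tilde\chi_i\equiv1$ on the end, $\sum_i d\tilde\chi_i=0$, so $dE\omega=-(\sum_i d\tilde\chi_i)\wedge\omega=0$ and $E\omega=\omega-dP\omega$ is exact on the end (vanishing $S^2_\rho$-periods), while $|d\tilde\chi_i|_g\lesssim B^{-1}R_i^{-1}$ yields $\|E\omega\|_{C^{0,\alpha}_\delta(\{\rho>A\})}\le CB^{-1}\|\omega\|_{C^{0,\alpha}_\delta}$ (the H\"older seminorm being handled identically). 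Choosing $B$ with $CB^{-1}\le\tfrac12$, the Neumann series $\zeta'=\sum_{k\ge0}P(-E)^k\omega$ converges in $C^{1,\alpha}_{\delta+1/4}$ — each iterate is closed with vanishing periods, so $P$ reapplies — and gives $d\zeta'=\omega$ on $\{\rho>A\}$ with $\|\zeta'\|_{C^{1,\alpha}_{\delta+1/4}(\{\rho>A\})}\le C\|\omega\|_{C^{0,\alpha}_\delta}$.

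To finish I would patch in the compact region. Pick a cutoff $\chi\equiv1$ on $\{\rho>2A\}$, supported in $\{\rho>A\}$, and set $\omega''=\omega-d(\chi\zeta')=(1-\chi)\omega-d\chi\wedge\zeta'$, a closed $2$-form supported in the fixed compact set $\{\rho\le2A\}$. Since $H^2_c(\mathbb{R}^3)=0$ it has a compactly supported primitive, and a bounded linear solution operator $\zeta''=T\omega''$ with $\zeta''$ supported in a fixed compact set and $\|\zeta''\|_{C^{1,\alpha}}\le C\|\omega''\|_{C^{0,\alpha}}$ exists by the open mapping theorem. On this fixed compact set the weighted norm is equivalent to the ordinary one, so $\|\zeta''\|_{C^{1,\alpha}_{\delta+1/4}}\le C\|\omega''\|_{C^{0,\alpha}_\delta}\le C\|\omega\|_{C^{0,\alpha}_\delta}$, using the Step-one bound for $\zeta'$ on $\{A<\rho<2A\}$. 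Then $\zeta=\chi\zeta'+\zeta''$ satisfies $d\zeta=d(\chi\zeta')+\omega''=\omega$ on all of $L_{\mathbb{R}^3}$, lies in $C^{1,\alpha}_{\delta+1/4}$ with $\|\zeta\|_{C^{1,\alpha}_{\delta+1/4}}\le C(\delta)\|\omega\|_{C^{0,\alpha}_\delta}$, and is smooth by elliptic regularity.

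The hard part will be the first step (the cylindrical end): one must arrange the parametrix so that (i) Lemma~\ref{lemma:d0} applies on each piece with a constant independent of the cylinder length $B$, so that the geometric gain $B^{-1}$ from the cutoff derivatives genuinely beats the fixed constant and the Neumann series closes; and (ii) the weight bookkeeping under the rescaling $g=R^2\tilde g$ comes out precisely as the shift $\delta\mapsto\delta+1/4$. The compact correction is by comparison elementary here — only $H^2_c(\mathbb{R}^3)=0$ is needed — in contrast with the scalar case, where the average-zero obstruction forced the Green function of Proposition~\ref{prop:green} together with Hein's theorem.
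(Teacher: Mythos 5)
Your overall architecture (cover the end by finite $\tilde g$-cylinders, solve locally, patch with cutoffs, run a Neumann series, then correct a compactly supported remainder) is the same as the paper's, and your weight bookkeeping $\delta\mapsto\delta+1/4$ under $g=R^2\tilde g$ is correct. But there is a genuine gap in the first step: the estimate you need at the end is $\|\zeta\|_{C^{1,\alpha}_{\delta+1/4}}\le C\|\omega\|_{C^{0,\alpha}_{\delta}}$, i.e.\ a gain of one derivative, and Lemma~\ref{lemma:d0} does not provide it. That lemma's estimate is of the form $\|\zeta_1\|_{C^{k,\alpha}}\le C\|\omega\|_{C^{k,\alpha}}$ --- same regularity on both sides, and only for the fiberwise component $\zeta_1$; the $ds$-component $\zeta_0$ is obtained from $d_M\zeta_0=\partial_s\zeta_1+\omega_1$ and its control involves $s$-derivatives of the data, so your claimed uniform bound $\|\zeta_i\|_{C^{1,\alpha}(U_i,\tilde g)}\le C\|\omega\|_{C^{0,\alpha}(U_i,\tilde g)}$ is not available from that lemma. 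Consequently your Neumann limit $\zeta'$ comes with at best a $C^{0,\alpha}$ bound, and since $d\zeta'=\omega$ is not an elliptic equation and no gauge such as $d^*\zeta'=0$ is imposed, you cannot bootstrap to $C^{1,\alpha}$. This is exactly why the paper's proof of Lemma~\ref{lemma:d} uses Lemma~\ref{lemma:d0} only to build closed, fiberwise-average-zero pieces $\omega_i=d(\chi_i\eta_i)-d\chi_i\wedge(\eta_i-\eta_{i+1})$ supported on the cylinders, and then solves each piece via the elliptic inversion of $d+d^*$ on $\mathbb{R}\times S^2$ (Proposition~\ref{prop:dirac_cylinder}), which is where the $C^{0,\alpha}\to C^{1,\alpha}$ gain and the $B$-independent constants come from, before patching as in Lemma~\ref{lem:ODE}.

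Two smaller points. First, your error identity is off as written: with $\tilde\chi_i\equiv1$ on $\operatorname{supp}\chi_i$ one has neither $\sum_i\tilde\chi_i\equiv1$ nor $\sum_i d\tilde\chi_i=0$, so $dP\omega\ne\omega+E\omega$; you should either patch with the genuine partition of unity $\chi_i$ or, as in the paper, redistribute $\omega$ into the telescoping pieces $\omega_i$ above. (Your observation that the error has a $ds$ factor and hence vanishing $S^2_\rho$-periods, so it stays exact on each cylinder, is correct and is the right reason the iteration can be repeated.) Second, for the compactly supported remainder, the ``open mapping theorem'' step presupposes that $d$ maps compactly supported $C^{1,\alpha}$ $1$-forms \emph{onto} closed compactly supported $C^{0,\alpha}$ $2$-forms; that surjectivity with the one-derivative gain is precisely the optimal-regularity Poincar\'e lemma (the standard homotopy operator gives only a $C^{0,\alpha}$ primitive), which is why the paper invokes \cite[Theorem~8.3]{CDK} there rather than soft functional analysis.
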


\begin{proof}
  Fix sufficiently large $A>0$, and let $\chi_i$ and $\tilde\chi_i$ be
  cutoff functions as in the proof of Lemma~\ref{lem:ODE}. Let $U_i$
  be a subset of $L_{\mathbb{R}^3}$ whose interior contains the
  support of $\chi_i$ and whose size is also of the same order with
  respect to $g_{\mathbb{R}^3}$. By Lemma~\ref{lemma:d0} and the fact
  that $H^1(S^2)=0$, there exists a $1$-form $\eta_i$ on $U_i$ such
  that $d\eta_i = \omega$ together with the estimate
  \[
    \|\eta_i\|_{C^{0,\alpha}_{\delta+1/4}(L_{\mathbb{R}^3})} \le
    C\|\omega\|_{C^{0,\alpha}_{\delta}(L_{\mathbb{R}^3})}.
  \]

  Now we set
  $\omega_i = d(\chi_i\eta_i) -d\chi_i\wedge
  (\eta_i-\eta_{i+1})$. Then we have $d\omega_i = 0$,
  $\omega_i \in C^{0,\alpha,ave}(\Lambda^2(\mathbb{R}\times S^2_{y_i}))$, and
  $\omega = \sum_i \omega_i$ on $\{\rho > A\}$.

  Following the proof of Lemma~\ref{lem:ODE} with the invertibility
  result of $d+d^*$ in Proposition~\ref{prop:dirac_cylinder}, we can
  show that there exists a $1$-form
  $\eta \in C^{1,\alpha}(\Lambda^1(L_{\mathbb{R}^3}))$ such that
  $d\eta = \omega$ on $\{\rho > A\}$, with estimate
  \[
    \|\eta\|_{C^{1,\alpha}_{\delta+1/4}(L_{\mathbb{R}^3})}
    \le C\|\omega\|_{C^{0,\alpha}_\delta(L_{\mathbb{R}^3})}.
  \]

  Now we set $\omega' = \omega - d\eta$. Then $\omega'$ is an exact
  $2$-form with compact support. By the Poincar\'e lemma with optimal
  regularity, \cite[Theorem~8.3]{CDK}, there exists a $1$-form $\eta'$
  on $L_{\mathbb{R}^3}$ with $d\eta'=\omega'$ such that
  \[
    \|\eta'\|_{C^{1,\alpha}(\mathbb{R}^3)}
    \le
    C\|\omega'\|_{C^{0,\alpha}(\mathbb{R}^3)}.
  \]
  It follows that $\zeta = \eta + \eta'$ satisfies the desired
  properties.
\end{proof}

\begin{proof}[Proof of Proposition~\ref{prop:dirac}]
  Let $\zeta$ be in Lemma~\ref{lemma:d}, and let $\Delta^{-1}$ denote
  the right inverse of the Laplacian acting on functions in
  Proposition~\ref{prop:laplace}. Set
  \[
    d\Delta^{-1}(f-d^*\zeta) = \eta_1 + \eta_2
  \]
  with
  \[
    \|\eta_1\|_{C^{1,\alpha}_{\delta+1/4}} &\le
    C\|f-d^*\zeta\|_{C^{0,\alpha}_{\delta}}, \\ 
    \|\eta_2\|_{C^{1,\alpha}_{\delta+1}} &\le
    C\|f-d^*\zeta\|_{C^{0,\alpha}_{\delta}}.
  \]
  It follows that
  \[
    (d+d^*)(\zeta + \eta_1 + \eta_2) = \omega + f.
  \]
  Note that
  \[
    \|\zeta+\eta_1\|_{C^{1,\alpha}_{\delta+1/4}(L_{\mathbb{R}^3})}
    &\le
    \|\zeta\|_{C^{1,\alpha}_{\delta+1/4}(L_{\mathbb{R}^3})} +
    C\|f-d^*\zeta\|_{C^{0,\alpha}_{\delta}(L_{\mathbb{R}^3})} \\
    &\le 
    C\|\zeta\|_{C^{1,\alpha}_{\delta+1/4}(L_{\mathbb{R}^3})}
    + C\|f\|_{C^{0,\alpha}_{\delta}(L_{\mathbb{R}^3})}\\
    &\le C\|f\|_{C^{0,\alpha}_{\delta}(L_{\mathbb{R}^3})}
    + C\|\omega\|_{C^{0,\alpha}_{\delta}(L_{\mathbb{R}^3})}.
  \]
  This completes the proof of Proposition~\ref{prop:dirac}.
\end{proof}

\section{The main construction}
\label{sec: main construction}
In this section, we give the main construction of this paper, proving
Theorems \ref{thm:main} and \ref{thm:loop}. Let
$(X,\tilde\omega_t,\Omega)$ be a Calabi-Yau 3-fold as in
Section~\ref{sec:CY3}. Recall that there exists a Lefschetz map
$\pi: X \to Y=\mathbb{P}^1$, and the discriminant locus is denoted by
$S$. For simplicity of notation we assume that for each $y \in Y$, the
fiber $X_y$ contains at most one singular point. We first discuss the
topological conditions needed for the main construction.

\subsection{Quadratic differentials}\label{sec: quadratic diff}

Let $U\subset Y$ be a connected open set and
$[L_{y_*}]\in H_2(X_y,\mathbb{Z})$ for some fixed
$y_*\in U\setminus \mathcal{S}$ such that $[L_{y_*}]$ is fixed up to
sign by the monodromy along any loop within $U\setminus
\mathcal{S}$. Denote by $[L_y]$ be the parallel transport of
$[L_{y_*}]$ along a path within $U\setminus \mathcal{S}$. For
simplicity, we may omit the subscript $y$ when no confusion
arises. For each $y \in U\setminus \mathcal{S}$, define
$\alpha(y) = \int_{[L_y]} \Omega$. In a local coordinate $y$ on $Y$,
if we write $\Omega = dy\wedge\Omega_y$, then we can write
$\alpha(y) = (\int_{[L_y]}\Omega_y)\,dy$. $\alpha$ is a nowhere
vanishing $1$-form on $U\setminus \mathcal{S}$, well-defined up to
sign. To get rid of the sign ambiguity, we define the quadratic
differential $\phi=\alpha\otimes \alpha$ on $U\setminus\mathcal{S}$.
Locally $\phi=f(y)\,dy\otimes dy$ for some holomorphic function
$f(y)$.

An important class of examples is when $[L_y]$ is the homology class
of the vanishing cycle for some $y_0\in U\cap \mathcal{S}$. If
$|U\cap \mathcal{S}|\geq 2$, we will further assume the vanishing
cycles from different singular fibers are the same up to sign via
parallel transport along paths in $U\setminus \mathcal{S}$. Then we
have $[L_y]^2 = -2$ and the monodromy around $y_0$ sends $[L_y]$ to
$-[L_y]$ by the Picard-Lefschetz formula. In this case, one can
naturally extend the quadratic differential over $U$.

\begin{lemma}
  When $[L]$ is a vanishing cycle, $\phi$ extends to a holomorphic
  quadratic differential over each $y_i \in U\cap\mathcal{S}$ with a
  simple zero at $y_i$.
\end{lemma}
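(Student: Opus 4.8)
The plan is to work in a local holomorphic coordinate $y$ on $Y$ centered at $y_i$, so that $y_i = \{y=0\}$, and to write $\Omega = dy \wedge \Omega_y$ as in Section~\ref{sec:CY3}. Setting $a(y) = \int_{[L_y]}\Omega_y$, we have $\alpha = a(y)\,dy$ and $\phi = a(y)^2\,dy\otimes dy$ on the punctured disk, so it suffices to prove that $f(y) := a(y)^2$ extends holomorphically across $y=0$ with $f(0)=0$ and $f'(0)\ne 0$; simplicity of the zero is then coordinate-independent since it is the order of vanishing of a section of $(T^*Y)^{\otimes 2}$.

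The first step is to observe that $f$ is \emph{single-valued}. As recalled just before the lemma, the fiber over $y_i$ acquires a single ordinary double point, $[L_y]^2=-2$, and by the Picard--Lefschetz formula the monodromy of $[L_y]$ around $y_i$ is $[L_y]\mapsto -[L_y]$. Hence $a(y)$ is a multivalued holomorphic function on the punctured disk whose two branches differ only by a sign, and therefore $f(y)=a(y)^2$ is single-valued and holomorphic there.

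The second step is to pin down the leading behaviour of $a(y)$ near $0$ using the standard local model for the node already set up in Section~\ref{sec:CY3}: there are coordinates $\mathfrak{z}_1,\mathfrak{z}_2,\mathfrak{z}_3$ near the singular point $P$ with $y=\mathfrak{z}_1^2+\mathfrak{z}_2^2+\mathfrak{z}_3^2$ and $\Omega = \sqrt{A_0}\,d\mathfrak{z}_1\wedge d\mathfrak{z}_2\wedge d\mathfrak{z}_3\,(1+O(\mathfrak{z}))$. On the fiber $X_y=\{\sum_j \mathfrak{z}_j^2 = y\}$ one has $dy = 2\sum_j \mathfrak{z}_j\,d\mathfrak{z}_j$, so $\Omega_y = \frac{\sqrt{A_0}}{2\mathfrak{z}_3}\,d\mathfrak{z}_1\wedge d\mathfrak{z}_2\,(1+O(\mathfrak{z}))$ where $\mathfrak{z}_3\ne 0$, and the vanishing cycle is represented, up to sign, by the sphere $L_y = \{\mathfrak{z}_j = \sqrt{y}\,x_j : x\in S^2\subset\mathbb{R}^3\}$. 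Substituting $\mathfrak{z}_j = \sqrt{y}\,x_j$ (and noting that $\frac{dx_1\wedge dx_2}{x_3}$ is, up to a nonzero constant, the round area form on $S^2$, the apparent singularity at $x_3=0$ being a coordinate artifact) yields
\[
  a(y) = \frac{\sqrt{A_0}}{2}\left(\int_{S^2}\frac{dx_1\wedge dx_2}{x_3}\right)\sqrt{y}\,\bigl(1+O(\sqrt{y})\bigr) = c\,\sqrt{y}\,\bigl(1+O(\sqrt{y})\bigr),
\]
with $c\ne 0$. Consequently $f(y)=a(y)^2 = c^2\,y\,(1+O(\sqrt{y}))$ is single-valued, holomorphic on the punctured disk, and bounded (indeed $|f(y)|\le C|y|$), so Riemann's removable singularity theorem gives a holomorphic extension across $y=0$ with $f(0)=0$ and $f'(0)=c^2\ne 0$. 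Thus $\phi = f(y)\,dy\otimes dy$ extends to a holomorphic quadratic differential near $y_i$ with a simple zero at $y_i$.

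The main obstacle is the local period computation, i.e. verifying that the leading coefficient $c$ is nonzero, equivalently that the vanishing-cycle period vanishes to order exactly $\tfrac12$ in $y$. This is classical Picard--Lefschetz theory, but it requires care with the Poincar\'e/Gelfand--Leray residue presentation of $\Omega_y$, with the parametrization and orientation of the vanishing sphere, and with checking that the $(1+O(\mathfrak{z}))$ correction to $\Omega$ only perturbs $a(y)$ at relative order $O(\sqrt{y})$; the remaining ingredients (monodromy, boundedness, removable singularity) are soft.
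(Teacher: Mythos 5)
Your proposal is correct and follows essentially the same route as the paper: Picard--Lefschetz monodromy makes $\phi=\alpha\otimes\alpha$ single-valued, the removable singularity theorem extends it across $y_i$, and a local model computation of the vanishing-cycle period $\sim c\sqrt{y}$ gives the simple zero. The only difference is that you carry out the Gelfand--Leray period computation explicitly, whereas the paper outsources it to a cited local computation (and a scaling heuristic for the vanishing cycle).
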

\begin{proof}
  By a local computation (see e.g. \cite[Proposition 2.2]{LLY}), we
  see that the integral $\alpha = \int_{[L_y]}\Omega_y$ is locally a
  holomorphic $1$-form, and $\alpha(y) \to 0$ as $y\to y_i$. From the
  Picard-Lefschetz formula the monodromy around each $y_i$ sends
  $[L_y]$ to $-[L_y]$ and thus $\phi$ is well-defined holomorphic
  quadratic differential on $U$. It follows from the removable
  singularity theorem of holomorphic functions that $\phi$ is
  well-defined on $U$. That $\phi$ has simple zero again follows from
  a local computation, noting the fact that the diameter of the
  vanishing cycle $L_y$ scales as $|y|^{1/4}$.
\end{proof}	

The quadratic differential $\phi$ defines a flat metric
$g_{\phi}=|f(y)dy^{\otimes 2}|$ on $U$, whose cone angle at each
$y_i \in U\cap\mathcal{S}$ is $3\pi$ . Given two points
$y_0,y_1\in U$, the geodesic connecting them with respect to
$g_{\phi}$ minimize the functional
\[\label{length functional}
  \int_{\gamma} |\sqrt{f(y)}dy|
\]
among the paths $\gamma:[0,1]\rightarrow U$ such that
$\gamma(0)=y_0$ and $\gamma(1)=y_1$. Locally the geodesics
satisfy the constraint
\[ \label{same phase}
  \sqrt{f(y)}\frac{dy}{ds}=e^{i\theta}
\]
after parametrization by arc length $s$, for some
$\theta\in S^1$. This $\theta$ is called the phase of the geodesic. It
is well-known that geodesics of the same phase form a foliation of
$U$. Moreover, for any path $\gamma \subset U$, we always have
\[\label{eq:BPS}
  \int_\gamma |\sqrt{f(y)}dy| \ge \left|\int_{\gamma} \sqrt{f(y)}dy \right|,
\]
and equality holds if and only if \eqref{same phase} holds along
$\gamma$. In the physics literature (see \cite{SV}), the length of
$\gamma$ is known as the mass of $\gamma$, and this inequality is
referred to as the BPS inequality. The solutions to this inequality,
i.e. the geodesics, are called BPS solitons.

Our focus is on geodesics that connect two points in $\mathcal{S}$
when $[L]$ is the vanishing cycle, or closed geodesics when $[L]$ is
any cycle. In either case, the phase of the geodesic is given by the
phase of the central charge
\begin{align} \label{central charge}
	\int_{\gamma}\sqrt{f(y)}dy,
\end{align}
where $\gamma$ is any path homotopic to the geodesic, making the phase
a topological property. 

The following definition is equivalent to the notion of gradient
cycles as discussed in \cite{Don}\cite{DS}, but here it is
reformulated to align with classical results concerning quadratic
differentials.
\begin{definition} \label{def: admissible path} Let $U\subset Y$ be a
   connected open set such that the vanishing cycles $L_y$ of
  each $y_i \in S\cap U$ match up to sign in $X_y$, for $y \in U$. Let
  $\gamma: [0,1]\rightarrow U$ be a path connecting $y_0,y_1 \in S$.
  We say $\gamma$ is an admissible path if
  \begin{enumerate}
  \item $\gamma$ is the geodesic with respect to the quadratic
    differential associated with the vanishing cycle.
  \item For each $y \in \gamma$, The vanishing cycle $L_y$ can be
    represented as a smooth special Lagrangian $S^2$ in $X_y$.
  \end{enumerate}
\end{definition}

\begin{lemma}\label{lemma:special}
  Let $\gamma:[0,1] \to U$ be a geodesic with respect to $\phi$ with
  phase $\theta$. Equivalently
  $\operatorname{Arg}\phi(\gamma',\gamma') = 2\theta$. Let
  $L_{\gamma}=\cup_{y\in \gamma}L_y$. Then
  $\operatorname{Im}\big(e^{-i\theta}\Omega|_{L_{\gamma}}\big)=0$.
\end{lemma}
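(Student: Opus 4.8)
The plan is to reduce the vanishing of $\operatorname{Im}(e^{-i\theta}\Omega)|_{L_\gamma}$ to a pointwise identity and then verify it by a short frame computation adapted to the fibration $\pi$. Since $\operatorname{Im}(e^{-i\theta}\Omega)|_{L_\gamma}$ is a real $3$-form on the $3$-manifold $L_\gamma$, it suffices to show that at each point the complex $3$-form $e^{-i\theta}\Omega|_{L_\gamma}$ evaluates to a real number on one basis of the tangent space. First I would fix a point $p\in L_y$ with $y\in\gamma\setminus S$; there $L_\gamma$ is a smooth submanifold (each $L_y$ is the special Lagrangian $2$-sphere representing its class, unique for a vanishing cycle by Lemma~\ref{lem: SLag S2} since $[L_y]^2=-2$, so the $L_y$ vary smoothly in $y$), and I would split $T_pL_\gamma = T_pL_y\oplus\langle v\rangle$, where $v$ is any tangent vector with $d\pi(v)$ a nonzero multiple of $\gamma'$; the degenerate endpoints on $S$ are then dealt with by continuity.

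Next I would compute $\Omega|_{L_\gamma}(v,e_1,e_2)$ for a basis $e_1,e_2$ of $T_pL_y$. Writing $\Omega = dy\wedge\Omega_y$ in a local coordinate $y$ and using that $e_1,e_2$ are vertical, so $\pi^*dy$ annihilates them, the wedge product collapses to $(\pi^*dy)(v)\cdot\Omega_y(e_1,e_2)$. I would then invoke the special Lagrangian property of $L_y\subset X_y$: $\Omega_y|_{L_y} = e^{i\theta_y}\mu_y$ for a positive volume form $\mu_y$ and a constant phase $\theta_y$, and integrating over $L_y$ identifies $e^{i\theta_y} = a(y)/|a(y)|$, where $a(y) = \int_{L_y}\Omega_y$ is the coefficient of the $1$-form $\alpha$, so that $f = a^2$. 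Combining this with the geodesic equation \eqref{same phase}, namely $\sqrt{f(y)}\,\tfrac{dy}{ds} = e^{i\theta}$ after parametrizing $\gamma$ by arc length (choosing the branch $\sqrt f = a$ consistently along $\gamma$), the quantity $e^{-i\theta}(\pi^*dy)(v)\,\Omega_y(e_1,e_2)$ equals a positive real multiple of $e^{-i\theta}a(y)\tfrac{dy}{ds} = 1$, hence is real; this yields $\operatorname{Im}(e^{-i\theta}\Omega|_{L_\gamma}) = 0$.

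I do not expect a serious obstacle: the content is simply that the factorization $\Omega = dy\wedge\Omega_y$ splits the restricted form into a base factor and a fiberwise factor, and the fiberwise phase $\operatorname{Arg}a(y)$ is exactly cancelled by the geodesic phase condition on $\gamma$. The only points that need care are bookkeeping: fixing a consistent branch of $\alpha$ (equivalently, of the cycle $L_y$) along $\gamma$ so that "$\sqrt f$" is unambiguous, and tracking orientation conventions so that $\theta_y$ and $\theta$ agree rather than differ by $\pi$; and even in the latter case $e^{-i\theta}\Omega|_{L_\gamma}$ would remain real, so the imaginary part vanishes regardless.
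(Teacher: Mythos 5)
Your proposal is correct and follows essentially the same route as the paper: both split $\Omega = dy\wedge\Omega_y$ along the fibration, use the special Lagrangian condition to identify $\operatorname{Arg}\Omega_y|_{L_y}$ with $\operatorname{Arg}\int_{L_y}\Omega_y$, and cancel this against the base phase via the geodesic condition $\operatorname{Arg}\alpha(\gamma')=\theta$. Your version merely spells out the pointwise frame computation and the branch/orientation bookkeeping that the paper's three-line argument leaves implicit.
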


\begin{proof}
  The geodesic condition reads
  \[
    \operatorname{Arg} \alpha(\gamma') = \theta.
  \]
  On the other hand, since $L_y \subset X_y$ is special Lagrangian,
  \[
    \operatorname{Arg}\Omega_y|_{L_y} = \operatorname{Arg}\int_{L_y} \Omega_y.
  \]
  It follows that
  \[
    \operatorname{Arg}\Omega|_{L_\gamma} =
    \operatorname{Arg}\Omega_y|_{L_y}
    + \operatorname{Arg}\gamma'
    = \operatorname{Arg} \alpha(\gamma')
    = \theta.
  \]
\end{proof}

\begin{definition}\label{def: admissible loop}
  A loop $\gamma: S^1\rightarrow Y$ is an admissible loop if there
  exists $[L]\in H_2(X_{\gamma(1)},\mathbb{Z})$ satisfying the
  following properties:
  \begin{enumerate}
  \item $[L]$ is fixed by the parallel transport along $\gamma$, and
    $\gamma$ is a geodesic with respect to the quadratic differential
    defined by $[L]$.
  \item There exists a smooth fibration $L_\gamma$ of special
    Lagrangian submanifolds over $\gamma$, where fibers represent
    $[L]$.
  \item Moreover, there exists a horizontal lifting of $\gamma$ to
    $L_\gamma$ with the following integrability property: for every
    $s \in S^1$, $\tilde\omega_t(\gamma'(s), \cdot)$ is
    $L^2$-orthogonal to the space $\mathcal{H}^1(L_{\gamma(s)})$ of
    harmonic $1$-forms on $L_{\gamma(s)}$. Here we use the Calabi-Yau
    metric on the K3 surface $X_{\gamma(s)}$.
  \end{enumerate}
\end{definition}
  
\begin{remark}
  Notice that the monodromy along an admissible loop can still act
  non-trivially on the second homology of K3 fibers and the first
  homology of the special Lagrangian $2$-cycle representing $[L]$.
\end{remark}

\begin{remark}\label{remark:loop}
  Let $\gamma$ be an admissible loop. If $[L]^2=-2$, then the smooth
  special Lagrangian representative in $X_{\gamma(t)}$, as in
  Definition \ref{def: admissible loop} (3), is unique by Lemma
  \ref{lem: SLag S2}. If $[L]^2=2g-2\geq 0$, the smooth special
  Lagrangian representatives in each K3 fiber over $\gamma$ are
  parameterized by the complement of a divisor in $\mathbb{P}^g$, also
  by Lemma \ref{lem: SLag S2}. In either case, there exists a
  $3$-manifold $L_{\gamma}$ such that $L_{\gamma}\rightarrow \gamma$
  is a fibration with fibers as the smooth special Lagrangian
  representatives of $[L]$. Notice that $L_{\gamma}$ is unique when
  $[L]^2=-2$, but it could have infinitely many topological types when
  $[L]^2\geq 0$.
\end{remark}
\begin{lemma}\label{lemma:loop}
  Let $\gamma:S^1\rightarrow Y$ be an admissible loop, and let
  $L_\gamma$ be a fibration over $\gamma$, whose fibers are smooth
  special Lagrangians in the class $[L]$. Then
  $[\tilde{\omega}_t]|_{L_\gamma}=0$.
\end{lemma}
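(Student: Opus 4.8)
The plan is to establish the stronger statement that $\tilde\omega_t|_{L_\gamma}$ is an exact $2$-form on the $3$-manifold $L_\gamma$, from which $[\tilde\omega_t]|_{L_\gamma}=0$ is immediate. The key structural input is that $p\colon L_\gamma\to\gamma\cong S^1$ is a smooth fiber bundle with fiber $L_{\gamma(s)}$ over $\gamma(s)$. Parametrize $\gamma$ by $\theta\in S^1$, fix any smooth vector field $\xi$ on $L_\gamma$ with $p_*\xi=\partial_\theta$, and let $\sigma=p^*d\theta$; this is a closed $1$-form on $L_\gamma$ with $\sigma(\xi)=1$ that vanishes on every vector tangent to the fibers. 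Since each $L_{\gamma(s)}$ is a special Lagrangian, hence Lagrangian, surface in the K3 fiber $X_{\gamma(s)}$, the Kähler form $\tilde\omega_t$ restricts to zero on $L_{\gamma(s)}$, so $\tilde\omega_t|_{L_\gamma}$ has no purely vertical component. Setting $\tilde\beta=\iota_\xi(\tilde\omega_t|_{L_\gamma})$, a short check (using $\iota_\xi\tilde\beta=0$ and $\sigma(\xi)=1$) shows that $\tilde\omega_t|_{L_\gamma}-\sigma\wedge\tilde\beta$ annihilates $\xi$ and restricts to zero on every fiber, hence vanishes identically; thus $\tilde\omega_t|_{L_\gamma}=\sigma\wedge\tilde\beta$.

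The next step is to analyze the fiberwise $1$-form $\beta_s:=\tilde\beta|_{L_{\gamma(s)}}$. From $d(\tilde\omega_t|_{L_\gamma})=0$ and $d\sigma=0$ one gets $\sigma\wedge d\tilde\beta=0$, which upon restriction to a fiber forces $d_F\beta_s=0$, so each $\beta_s$ is closed on $L_{\gamma(s)}$. Moreover $\beta_s$ is independent of the auxiliary lift $\xi$: replacing $\xi$ by $\xi+w$ with $w$ vertical changes $\tilde\beta$ by $\iota_w(\tilde\omega_t|_{L_\gamma})$, which kills vertical vectors because the fibers are Lagrangian, hence does not affect the restriction to fibers. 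In other words, $\beta_s$ is canonically $\tilde\omega_t(\gamma'(s),\cdot)\big|_{L_{\gamma(s)}}$, precisely the $1$-form occurring in the integrability condition of Definition~\ref{def: admissible loop}(3).

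By that integrability condition, $\beta_s$ is $L^2$-orthogonal to the space $\mathcal{H}^1(L_{\gamma(s)})$ of harmonic $1$-forms for the induced metric. Since $\beta_s$ is also closed, Hodge theory on the closed surface $L_{\gamma(s)}$ gives $\beta_s=d_Fu_s$, where $u_s$ is the unique zero-average function on $L_{\gamma(s)}$, namely $u_s=\Delta_F^{-1}d_F^*\beta_s$. The induced metrics and the forms $\beta_s$ depend smoothly on $s$, hence so does the fiberwise Green operator, so the $u_s$ assemble into a smooth function $u$ on $L_\gamma$. Since $du$ and $\tilde\beta$ have the same restriction to each fiber, $\tilde\omega_t|_{L_\gamma}=\sigma\wedge\tilde\beta=\sigma\wedge du=-d(u\,\sigma)$ by $d\sigma=0$, which is exact, completing the argument.

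The conceptual heart — and the step I would write most carefully — is the identification in the second paragraph: the class $[\tilde\omega_t|_{L_\gamma}]$ is carried entirely by the fiberwise $1$-form $\beta_s$, which must be shown to be intrinsic and to coincide with the object appearing in the integrability hypothesis; this is exactly what makes Definition~\ref{def: admissible loop}(3) equivalent to the vanishing of the obstruction. One may also phrase it via the Wang sequence $H^1(F)\xrightarrow{\varphi^*-1}H^1(F)\xrightarrow{\partial}H^2(L_\gamma)\to H^2(F)$ of the circle bundle $L_{\gamma(s)}\hookrightarrow L_\gamma\to S^1$: the class $[\tilde\omega_t|_{L_\gamma}]$ restricts trivially to $H^2(L_{\gamma(s)})$ since $\int_{L_{\gamma(s)}}\tilde\omega_t=0$, hence lies in the image of $\partial$, and the integrability condition forces its preimage in $H^1(L_{\gamma(s)})$ to vanish. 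A minor subtlety is which Kähler form on $X_{\gamma(s)}$ the fibers are Lagrangian for: if it is the semi-Ricci-flat metric rather than $\tilde\omega_t|_{X_{\gamma(s)}}$ itself, one first replaces $\tilde\omega_t|_{L_\gamma}$ by a cohomologous form vanishing on the fibers — possible since $\int_{L_{\gamma(s)}}\tilde\omega_t$ still vanishes — and then runs the above. The remaining technical point, the smooth $s$-dependence of $u_s$, is routine elliptic theory.
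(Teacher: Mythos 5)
Your overall scheme (write $\tilde\omega_t|_{L_\gamma}$ as $\sigma\wedge\tilde\beta$, show the fiberwise $1$-form is exact, produce a global primitive; equivalently run the Wang sequence) would work \emph{if} the fibers were Lagrangian for $\tilde\omega_t$, but they are not: by Definition~\ref{def: admissible loop}, Remark~\ref{remark:loop} and Lemma~\ref{lem: SLag S2}, the fibers $L_{\gamma(s)}$ are special Lagrangian for the fiberwise Calabi--Yau structure on $X_{\gamma(s)}$ (the semi-Ricci-flat form $\omega_{SRF}|_{X_{\gamma(s)}}$), which is cohomologous to but not equal to $\tilde\omega_t|_{X_{\gamma(s)}}$. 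The point you defer as a ``minor subtlety'' is therefore exactly where the argument breaks. If you patch it as you suggest, replacing $\tilde\omega_t|_{L_\gamma}$ by a cohomologous form $\omega'=\tilde\omega_t|_{L_\gamma}-d\alpha$ vanishing on the fibers, you do recover $\omega'=\sigma\wedge\tilde\beta$ with $\beta_s$ closed, but the identification you call the conceptual heart is lost: taking $\xi$ to be the horizontal lift of the definition, $\beta_s$ now differs from $\tilde\omega_t(\gamma'(s),\cdot)|_{L_{\gamma(s)}}$ by $(L_{\gamma'}\alpha)|_{L_{\gamma(s)}}$ (up to a fiberwise exact term), and Definition~\ref{def: admissible loop}(3) says nothing about the harmonic part of this correction; integrating its periods around the loop produces terms $\int_S\tilde\omega_t|_{L_{\gamma(0)}}$ over $2$-chains $S$ in a fiber, i.e.\ quantities of the same nature as the one you are trying to show vanish. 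Conversely, if you keep $\tilde\omega_t$ itself, the $1$-form $\tilde\omega_t(\gamma'(s),\cdot)|_{L_{\gamma(s)}}$ appearing in (3) is not closed on the fiber, since $d_F\bigl(\iota_{\gamma'}\tilde\omega_t|_{L_{\gamma(s)}}\bigr)=(L_{\gamma'}\tilde\omega_t)|_{L_{\gamma(s)}}\neq 0$ precisely because the fibers are not $\tilde\omega_t$-Lagrangian, so ``closed and $L^2$-orthogonal to $\mathcal{H}^1$ implies exact'' cannot be applied to it. Either way the exactness of $\beta_s$ is not established, so this is a genuine gap rather than a routine adjustment.

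For comparison, the paper's proof is purely topological and never invokes the integrability condition: it computes $H_2(L_\gamma)\cong H_2(L)\oplus\ker(1-f_*)$ from the mapping-torus (Wang) sequence, kills the $H_2(L)$ summand because $\int_{[L]}\tilde\omega_t=0$ (the fiber is Lagrangian for a fiberwise cohomologous form), and kills the $\ker(1-f_*)$ summand by contracting the fiberwise $1$-cycles inside the simply connected K3 fibers ($b_1(K3)=0$), so the corresponding $2$-cycles pair trivially with $[\tilde\omega_t]\in H^2(X)$. Note that your argument never uses that the fibers of $\pi$ are K3 surfaces, which is the decisive input for the genus $\geq 1$ case; in the paper, condition (3) plays an analytic role (it places the initial error $\tilde\omega_t|_{L_{\gamma,t}}$ in the space $E^2$ needed to invert the Hodge--Dirac operator in the proof of Theorem~\ref{thm:loop}, Section~\ref{sec: main construction}), not a cohomological one. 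To salvage a differential-geometric proof along your lines you would need to control the harmonic part of $\iota_{\gamma'}\omega_{SRF}|_{L_{\gamma(s)}}$ around the loop, which is not what (3) provides.
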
	
\begin{proof}
	Let $L$ be the special Lagrangian representative of $[L]$ in $X_{\gamma(1)}$.
	By definition $L_{\gamma}$ is diffeomorphic to a mapping torus of some $f:L\rightarrow L$ and thus its homology can be computed by the long exact sequence 
	\begin{align*}
		H_n(L)\xrightarrow{1-f_*}H_n(L)\rightarrow H_n(L_{\gamma})\rightarrow H_{n-1}(L)\xrightarrow{1-f_*}H_{n-1}.
	\end{align*} In particular, we have  
    \begin{align}\label{eq: homology}
       H_2(L_{\gamma})\cong  H_2(L)\oplus \ker(1-f^*:H_1(L)\rightarrow H_1(L))
     \end{align} as vector spaces.  Because $L_{\gamma}$ is fiberwise
     Lagrangian and thus $[\tilde{\omega}_t]|_{H_2(L)}=0$. For any
     $2$-cycle in the later factor of \eqref{eq: homology}, it can be
     represented as a fibration over $S^1$ with fibers representing a
     $1$-cycle in $\ker(1-f^*:H_1(L)\rightarrow H_1(L))$ up to
     parallel transport. Because the first Betti number of K3 surfaces
     vanish, one can fiberwise homotope these $1$-cycles to a point
     and thus $[\tilde{\omega}_t]$ also vanishes on the second factor
     of \eqref{eq: homology}.
\end{proof}

\subsection{The approximate solutions} \label{sec: ansatz}

Let $y_0, y_1 \in U \cap \mathcal{S}$ be two nodal values, and let
$\gamma: [0,1] \to U$ be an admissible path with phase $\theta$ such
that $\gamma(0) = y_0$ and $\gamma(1) = y_1$. For $y \in \gamma$, let
$L_y$ denote the special Lagrangian vanishing $S^2$ in $X_y$. Then the
union $L_\gamma = \cup_{y \in \gamma}L_y$ is homeomorphic to $S^3$. As
we have seen in Section~\ref{sec:CY3}, this provides an effective
model for special Lagrangians in the region away from the singular
fibers, as in this region the Calabi-Yau metric $\tilde\omega_t$ is
modeled on the semi-Ricci flat metric $\omega_{SRF}$. However, near
the singular fibers, $\tilde\omega_t$ is modeled on
$\omega_{\mathbb{C}^3}$, where we have the special Lagrangian thimble
$L_{\mathbb{R}^3}$ given by Proposition~\ref{prop:antiholo}.

The strategy is to glue a scaled copy of $L_{\mathbb{R}^3}$ onto each
``end'' of $L_\gamma$. In terms of admissible paths, this corresponds
to replacing each end of $\gamma$ by its tangent line at the
endpoint. To avoid increasing the deviation from being special
Lagrangian, the gluing region must coincide with the region where the
metric behavior transitions from $\tilde\omega_t$ to
$\omega_{SRF}$. See Section~\ref{sec:CY3}. In other words, the gluing
region is given by $|y| \sim t^{6/(14+\tau)} = t^{9/20}$, where we fix
$\tau = -\frac{2}{3}$ as in Theorem~\ref{thm:improved}.

For simplicity, let us assume $\gamma$ has phase $\theta = 0$. Thus in
some local coordinate centered at an endpoint, say $y_0$, we have
$\gamma'(0) = 1$. Consider the modified path
\[
  \tilde\gamma(s) = \gamma_1(s/t^{9/20})\gamma(s) + \gamma_2(s/t^{9/20})s,
\]
where $\gamma_1$ and $\gamma_2$ are cutoff functions defined by
\eqref{eq:cutoff}. For $y \in \tilde\gamma$ with $0< |y| < 2t^{9/20}$,
denote $\hat L_y$ the special Lagrangian $2$-sphere in the
Eguchi-Hanson space
$\hat X_y = \{y = \mathfrak{z}_1^2+\mathfrak{z}_2^2+\mathfrak{z}_3^2\}
\subset \mathbb{C}^3$.  By the results in Section~\ref{subsec:deform},
there exists a vector field $V_y$ on $\hat L_y$ with values in the
tangent bundle $TX_y$ of the K3 surface $X_y$ such that the
exponential map $\exp(V_y)$ deforms $\hat L_y$ to the special
Lagrangian vanishing $2$-sphere $L_y$ in $X_y$. Here the exponential
map is with respect to the Calabi-Yau metric $\omega_{SRF}|_{X_y}$ on
$X_y$. Using these vector fields, we can now define the approximate
solutions
\[
  L_{\gamma,t} =
  \left(\bigcup_{y \in \tilde \gamma, 2|y|<t^{9/20}} \exp\left(\gamma_1\left(\frac{|y|}{t^{9/20}}\right)V_y\right)\left(\hat L_y\right)\right)
  \bigcup \left(\bigcup_{y \in \tilde  \gamma, |y|> 2t^{9/20}} L_y\right).
\]

These $L_{\gamma,t}$ are diffeomorphic to $S^3$ and are isotopic to
each other. However, due to gluing in $L_{\mathbb{R}^3}$,
$\operatorname{Im}\Omega|_{L_{\gamma,t}} \ne 0$ and is supported in
$|y| < 2t^{9/20}$.

The deviation of $L_{\gamma,t}$ from being special Lagrangian has two
sources: (1)~deviation of $(X,\tilde\omega_t,\Omega)$ from model
Calabi-Yau structures, e.g., $(\mathbb{C}^3,\omega_{\mathbb{C}^3})$,
and (2) deviation of $L_{\gamma,t}$ from model special Lagrangians,
e.g., $L_{\mathbb{R}^3}$. (1) follows from
Theorem~\ref{thm:improved}. To estimate (2), we identify
$L_{\gamma,t} \cap \{|y| < 2t^{9/20}\}$ with a neighborhood of
$0 \in L_{\mathbb{R}^3}$, and use the weighted spaces defined on
$L_{\mathbb{R}^3}$. More precisely, let
$H_s: \hat X_s \to \hat X_{\tilde\gamma(s)}$ be the biholomorphism
given by homogeneously scaling the coordinates $z_1,z_2,z_3$. Thus
$H_s$ maps $\hat L_s$ onto $\hat L_{\tilde\gamma(s)}$. It follows that
we have the chart $W = \exp(V_{\tilde\gamma(s)})\circ H_s$:
\[
  W: \left(L_{\mathbb{R}^3} \cap \left\{|\tilde
      y|<2\left(\frac{t}{2A_0}\right)^{-\frac{2}{3}}t^{\frac{9}{20}}\right\}\right)
  \to L_{\gamma,t} \cap \{ |y| < 2t^{\frac{9}{20}}\}.
\]
See \eqref{eq:coor_change} for the relevant coordinate change.

The $2$-form part of the deviation can then be estimated using the
following identity:
\[
  \tilde\omega_t|_{L_{\gamma,t}}
  &= \left.\left( \tilde\omega_t -
      \left(\frac{t}{2A_0}\right)^{\frac{1}{3}}\omega_{\mathbb{C}^3}
    \right)\right|_{L_{\gamma,t}} \\
  &\phantom{aa}+
 \left(\frac{t}{2A_0}\right)^{\frac{1}{3}}\left.\left(\omega_{\mathbb{C}^3}-\omega_{\mathbb{C}\times
        EH}\right)\right|_{L_{\gamma,t}} \\
  &\phantom{aa}+\left(\frac{t}{2A_0}\right)^{\frac{1}{3}}
  \left.\omega_{\mathbb{C}\times EH}\right|_{L_{\gamma,t}}.
\]
The first two terms are dominated by the metric deviation provided
that the chart $W$ is sufficiently close to the identity map on
$L_{\mathbb{R}^3}$.

By Taylor's theorem, on $L_{\mathbb{R}^3} \cap \left\{|\tilde y|<2\left(\frac{t}{2A_0}\right)^{-\frac{2}{3}}t^{\frac{9}{20}}\right\}$ we have
\[\label{eq:Hs}
  H_s(z) - z
  = O(t^{-\frac{1}{2}-\frac{1}{6}}|y|^2)
    = O(t^{\frac{7}{30}}).
\]
in arbitrary orders of vertical differentiation. Estimates for
horizontal differentiation follow from the principle discussed at the
end of Section~\ref{subsec:c3}. It follows that the deviation caused
by $H_s$ can be ignored.

To estimate the fiberwise deformations, it is enough to estimate the
deformation vector field $V_{\tilde\gamma(s)}$:

\begin{lemma}\label{lemma:Vy}
  For $t^{9/20} < |y| < 2t^{9/20}$, we have
  \[
    \|V_y\|_{C^{1,\alpha}(g_{\mathbb{C}^3}|_{\hat L_y})} \le Ct^{\frac{53}{240}}.
  \]
\end{lemma}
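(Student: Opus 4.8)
The plan is to obtain $V_y$ by applying the quantitative version of McLean's theorem to $\hat L_y$, regarded as an approximate special Lagrangian inside the Calabi–Yau K3 surface $(X_y,\omega_{SRF}|_{X_y},\Omega_y)$, with the initial error supplied by the weighted estimate for $\psi_y'$ recorded in \cite[Proposition~2.4]{Li19}; that estimate was stated precisely as a measure of the failure of the vanishing cycle to be special Lagrangian in $X_y$. Recall that near the vanishing cycle $\omega_{SRF}|_{X_y}=\omega_y'+\ddb\psi_y'$ with $\omega_y'=EH_y$, that $\hat L_y$ is Lagrangian for $\omega_y'$ and special for the model holomorphic volume form $\Omega_y^{\mathrm{model}}$ on $\hat X_y$, and that since $[L_y]^2=-2$ the class $[L_y]$ has a unique special Lagrangian representative $L_y$ (Lemma~\ref{lem: SLag S2}). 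With $\theta_y=\operatorname{Arg}\alpha(y)$ the fiberwise phase, the deformation map of Section~\ref{subsec:deform} is $F(\eta)=*f_\eta^*\operatorname{Im}(e^{-i\theta_y}\Omega_y)+f_\eta^*(\omega_{SRF}|_{X_y})$, and $V_y$ is the normal part of the symplectic dual of the solution $\eta$ of $F(\eta)=0$.

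The linear input is that $d+d^*\colon\Lambda^1(\hat L_y)\to\Lambda^0(\hat L_y)\oplus\Lambda^2(\hat L_y)$ has trivial kernel, as $H^1(\hat L_y)=H^1(S^2)=0$, hence is an isomorphism onto $\{(f,\omega):\int_{\hat L_y}f=\int_{\hat L_y}\omega=0\}$; by the choice $\theta_y=\operatorname{Arg}\alpha(y)$ the $0$-form part of $F$ has vanishing integral (it equals $\operatorname{Im}(e^{-i\theta_y}\alpha(y))=0$) and the $2$-form part is exact, so $F(0)$ and its iterates stay in this subspace. After rescaling the induced metric so that $\hat L_y$ has unit diameter — at which scale $\hat L_y$ is uniformly close, independently of $t$, to a fixed round $S^2$ — the right inverse $P$ has a $t$-independent operator norm, and \eqref{eq:TermI} together with Proposition~\ref{prop:dirac_error} makes the associated fixed-point map a contraction once $\|F(0)\|$ is small; this gives $\|V_y\|\le C\|F(0)\|$ in the rescaled norms.

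It remains to estimate $\|F(0)\|$ and undo the rescalings. The $2$-form part of $F(0)$ is $(\omega_{SRF}|_{X_y})|_{\hat L_y}=\ddb\psi_y'|_{\hat L_y}$, which is read off from \cite[Proposition~2.4]{Li19} with $\beta=-\tfrac{2}{3}$: along $\hat L_y$ one has $r\sim|y|^{1/4}$, where the metric $r^{-2}EH_y$ appearing in the weighted norm is uniformly equivalent to the unit-scale model metric, so $\ddb\psi_y'|_{\hat L_y}$ has rescaled $C^{0,\alpha}$ norm $O(|y|^{11/18})$. The $0$-form part $*\operatorname{Im}(e^{-i\theta_y}\Omega_y)|_{\hat L_y}$ is controlled by the expansion $\Omega_y=\Omega_y^{\mathrm{model}}(1+O(\mathfrak{z}))$, in which $\Omega_y^{\mathrm{model}}$ calibrates $\hat L_y$ at phase $\theta_y$; on $\hat L_y$ one has $|\mathfrak{z}|\sim|y|^{1/2}$, and once the factor that the Hodge star acquires under the rescaling is taken into account this term is $O(|y|)$, of strictly higher order than the $2$-form part. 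Hence $\|F(0)\|\le C|y|^{11/18}$ and therefore $\|V_y\|\le C|y|^{11/18}$ in the unit-rescaled norm. Finally, the coordinate change \eqref{eq:coor_change} realizes $X_y\cap U_1$ inside $\hat X_{\tilde y}\subset\mathbb{C}^3$, where $\hat L_y$ has $g_{\mathbb{C}^3}$-diameter comparable to $(t/(2A_0))^{-1/6}|y|^{1/4}$; converting the $C^{1,\alpha}$ norm of the vector field $V_y$ from the unit scale to $g_{\mathbb{C}^3}|_{\hat L_y}$ multiplies it — through its $C^0$-part, which dominates — by this diameter, and with $|y|\sim t^{9/20}$ one obtains $\|V_y\|_{C^{1,\alpha}(g_{\mathbb{C}^3}|_{\hat L_y})}\le C(t/(2A_0))^{-1/6}|y|^{1/4+11/18}\le Ct^{53/240}$.

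The step I expect to be the main obstacle is this last one, together with passing from the weighted estimate of \cite[Proposition~2.4]{Li19} to a clean Hölder bound along $\hat L_y$: one must keep straight the three metrics in play ($\omega_{SRF}|_{X_y}$, the model Eguchi–Hanson metric $EH_y$, and $\omega_{\mathbb{C}^3}$), the two coordinate systems $\mathfrak{z}$ and $z$ related by \eqref{eq:coor_change}, and the fact that the invertibility of $d+d^*$ on $\hat L_y$ and the smallness of $F(0)$ are both uniform only after rescaling $\hat L_y$ to unit diameter — the essential point being that the same rescaling serves for both, so the contraction mapping closes and $V_y$ inherits the stated decay.
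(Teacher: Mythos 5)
Your proposal is essentially the paper's own proof: the same initial-error input (the weighted estimate for $\psi_y'$ from \cite[Proposition~2.4]{Li19} with $\beta=-\tfrac23$, giving the dominant $2$-form error on $\hat L_y$), the same inversion of $d+d^*$ on the vanishing sphere, and the same rescaling arithmetic $|y|^{1/4+11/18}\,t^{-1/6}=|y|^{31/36}t^{-1/6}\le Ct^{53/240}$ at $|y|\sim t^{9/20}$. The only differences are presentational: you phrase the bookkeeping through an explicit unit-diameter rescaling (and spell out the contraction mapping, the cohomological solvability conditions, and the higher-order $0$-form part, which the paper subsumes in ``dominated by $\omega_{SRF}|_{\hat L_y}$''), whereas the paper uses the weighted norms $C^{k,\alpha}_{\delta}$; since $r\sim|y|^{1/4}$ is essentially constant along $\hat L_y$, the two bookkeepings are equivalent and your intermediate quantities match the paper's ($|y|^{11/18}$ at unit scale corresponding to the paper's $|y|^{7/9}$ weighted bound and $|y|^{31/36}$ in $g_{SRF}$).
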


\begin{proof}
  The deviation of $\hat L_y$ from being special Lagrangian with
  respect to $\omega_{SRF}|_{X_y}$ is dominated by
  $\omega_{SRF}|_{\hat L_y}$. By Proposition~\ref{prop:k3metric}, this
  is estimated by
  \[
    \|\omega_{SRF}|_{\hat L_y}\|_{C^{0,\alpha}_{-2/3}} \le C|y|^{\frac{7}{9}}.
  \]
  By the mapping property of the Hodge-Dirac operator $d+d^*$, we can estimate
  \[
    \|V_y\|_{C^{1,\alpha}_{1/3}(g_{SRF}|_{\hat L_y})} \le C|y|^{\frac{7}{9}}.
  \]
  Unwinding the definition of the weighted norm, we get
  \[
    \|V_y\|_{C^{1,\alpha}(g_{SRF}|_{\hat L_y})} \le C|y|^{\frac{7}{9}+\frac{1}{12}}.
  \]
  Rescaling, we get
  \[
    \|V_y\|_{C^{1,\alpha}(g_{\mathbb{C}^3}|_{\hat L_y})} \le
    C|y|^{\frac{31}{36}}t^{-\frac{1}{6}}
    \le
    Ct^{\frac{31}{80}-\frac{1}{6}}
    =
    Ct^{\frac{53}{240}}.
  \]
\end{proof}

As a consequence of Lemma~\ref{lemma:Vy}, in the region
$t^{9/20} < |y| < 2t^{9/20}$, we have the pointwise estimate
\[
  \left(\frac{t}{2A_0}\right)^{\frac{1}{3}}\left|
    \left.\omega_{\mathbb{C}\times EH}\right|_{L_{\gamma,t}}
  \right|_{C^{0,\alpha}(L_{\mathbb{R}^3})} \le
  Ct^{\frac{1}{3}+\frac{53}{240}}\rho^{\frac{1}{4}}
  \le  Ct^{\frac{1}{3}+\frac{40}{240}}
  =  Ct^{\frac{1}{2}}.
\]
This is much smaller than the metric deviation given in Propositions
\ref{prop:error0} and \ref{prop:error1}, so can be ignored.

\subsection{Weighted spaces on the approximate solutions}

We now define the weighted spaces on the approximate solutions
$L_{\gamma,t}$. These weighted spaces are defined to be compatible
with the weighted spaces on $X$ as seen in
Section~\ref{sec:CY3}. Recall the open sets $U_1$ and $ U_3$ as
defined in Section~\ref{sec:CY3}. We now also write $U_1$ and $U_3$
for their intersections with $L_{\gamma,t}$. So
$U_1 = \{ |y| < 2t^{6/(14+\tau)}\}$ and
$U_3 = \{ |y| > t^{6/(14+\tau)}\}$. Recall that we have set
$\tau = -\frac{2}{3}$, and so
$\delta' = \frac{23}{60}+\frac{1}{20}\delta$. Since we are near the
vanishing cycles, we have $|\tilde y| \sim r^4$ and
$\rho \sim |\tilde y| \sim R^4$. Thus, we will use these functions
interchangeably in the following when we define weighted spaces.

For functions $f$ on $U_3$, define the weighted norm
\[
  \|f\|_{C^{k,\alpha}(|y| > t^{\frac{9}{20}})}
  = \sum_{j=0}^k \sup r^j|\nabla^j f|_{\omega_t}
  + [r^k\nabla^kf]_{0,\alpha}.
\]
We define a similar norm for $2$-forms on $U_3$.

For a function $f$ on $L_{\gamma,t}$, the weighted norm
$\|\cdot\|_{\mathcal{B}}$ on $X$ restricts to
\[
  \|f\|_{\mathcal{C}}
  =
  t^{-\delta'}t^{-\frac{\delta}{6}+\frac{1}{3}}\|f\|_{C^{0,\alpha}_{\delta}(U_1)}
  +
  \|t^{-\frac{1}{2}}|y|f\|_{C^{2,\alpha}(U_3)},
\]
where we recall that $\|f\|_{C^{0,\alpha}_{\delta}(U_1)}$ is the
weighted norm on $L_{\mathbb{R}^3}$ defined in
Section~\ref{sec:thimble}.

Similarly, for a $2$-form $\omega$ on $L_{\gamma,t}$, we define the
weighted norm
\[
  \|\omega\|_{\mathcal{B}}
  =
  t^{-\delta'}t^{-\frac{\delta}{6}}\|\omega\|_{C^{0,\alpha}_{\delta}(U_1)}
  +
  \|t^{-\frac{1}{2}}|y|\omega\|_{C^{k,\alpha}(U_3)}.
\]

Thus, for a pair
$\psi = (f,\omega) \in \Lambda^0(L_{\gamma,t})\oplus
\Lambda^2(L_{\gamma,t})$, we define the weighted norm
$\|\psi\|_{\mathcal{B}} = \|f\|_{\mathcal{B}}+ \|\omega\|_{\mathcal{B}}$.

We define $\mathcal{B}$ to be the space
$C^{0,\alpha}(\Lambda^0(L_{\gamma,t})\oplus \Lambda^2(L_{\gamma,t}))$
equipped to the weighted norm $\|\cdot\|_{\mathcal{B}}$. The weighted
space $\mathcal{B}$ should be thought of as the codomain of the
Hodge-Dirac operator $d+d^*$. The suitable domain of $d+d^*$ is
defined to be $\mathcal{C}$, which is the space of $C^{1,\alpha}$
$1$-forms $\eta$ on $L_{\gamma,t}$ equipped with the weighted norm
\[
  \|\eta\|_{\mathcal{C}} = t^{-\delta'-\frac{1}{6}\delta}\|\eta\|_{C^{1,\alpha}_{\delta+1/4}(U_1)}
  + t^{-\frac{1}{2}}\|y^{3/4}\eta\|_{C^{1,\alpha}(\{U_3\})}.
\]

Finally, we equip $\tilde{\gamma}=\pi(L_{\gamma,t})$ with the induced
metric from $\frac{1}{t}\tilde\omega_Y$ and define
$C^{1,\alpha}(\tilde\gamma)$ to be the completion of the space of
$1$-forms on $\tilde\omega$ supported away from the endpoints.

We can now estimate the deviation of $L_{\gamma,t}$ from special
Lagrangians using the newly defined weighted norms.

\begin{prop}\label{prop:initial_error}
  Let
  $\psi = (*\operatorname{Im}\Omega|_{L_{\gamma,t}},
  \tilde\omega_t|_{L_{\gamma,t}}) \in \Lambda^0(L_{\gamma,t}) \oplus
  \Lambda^2(L_{\gamma,t})$. Then we have
  \[
    \|\psi\|_{\mathcal{B}} \le C
  \]
  for a constant $C>0$ depending on $(X,\omega_X,\Omega)$ and
  $(Y,\tilde\omega_Y)$.
\end{prop}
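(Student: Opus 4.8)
The plan is to estimate the two components of $\psi$ separately in each of the regions $U_1$ and $U_3$, then assemble the pieces using the definition of $\|\cdot\|_{\mathcal B}$. The basic principle is that in $U_1$, the relevant model is $(\mathbb C^3,\omega_{\mathbb C^3})$ together with the special Lagrangian thimble $L_{\mathbb R^3}$, while in $U_3$ the model is the semi-Ricci flat picture, and in both regions the deviation of $L_{\gamma,t}$ from being special Lagrangian is controlled by a combination of (1) the deviation of $\tilde\omega_t$ from the model Calabi-Yau metric, and (2) the deviation of $L_{\gamma,t}$ from the model special Lagrangian.

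\textbf{The $2$-form part.} In $U_1$, write $\tilde\omega_t|_{L_{\gamma,t}}$ using the three-term splitting introduced just before Lemma~\ref{lemma:Vy}: the difference $\tilde\omega_t - (t/2A_0)^{1/3}\omega_{\mathbb C^3}$, the difference $(t/2A_0)^{1/3}(\omega_{\mathbb C^3}-\omega_{\mathbb C\times EH})$, and the term $(t/2A_0)^{1/3}\omega_{\mathbb C\times EH}|_{L_{\gamma,t}}$. By Proposition~\ref{prop:error1} the first term is bounded in $C^{k,\alpha}(\omega_{\mathbb C^3})$ by $Ct^{23/60+13\delta/60}\rho^{\delta}$ (up to the chart $W$ being close to the identity, which holds by \eqref{eq:Hs} and Lemma~\ref{lemma:Vy}), and by \eqref{eq:c3error} (the estimate on $\phi'_{\mathbb C^3}$) the second term is of strictly smaller order. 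Since $\omega_{\mathbb C\times EH}$ restricts to zero on $L_{\mathbb R^3}$ (as $L_{\mathbb R^3}$ is special Lagrangian, hence Lagrangian, for the warped-product metric — this is the content of the paragraph containing Lemma~\ref{lemma:Vy}), the third term is controlled by the fiberwise deformation vector field $V_y$, which by the displayed pointwise estimate after Lemma~\ref{lemma:Vy} is $O(t^{1/2}\rho^{1/4})$ — again negligible relative to the first term. Unwinding the weighted norm, $t^{-\delta'}t^{-\delta/6}\|\tilde\omega_t|_{L_{\gamma,t}}\|_{C^{0,\alpha}_{\delta}(U_1)}$ is bounded by $t^{-\delta'}t^{-\delta/6}\cdot t^{\delta/6}\cdot Ct^{23/60+13\delta/60} = Ct^{23/60+13\delta/60-\delta'}$, and since $\delta' = 23/60 + \delta/20 = 23/60 + 3\delta/60$ while $13\delta/60 > 3\delta/60$ for $\delta<0$... one must check the exponent is $\ge 0$; in fact $23/60+13\delta/60-\delta' = 10\delta/60 = \delta/6$, which is negative, so one rather uses that the bound in Proposition~\ref{prop:error1} is $\rho^{\delta-k/4}$ and the weighted norm on $L_{\mathbb R^3}$ for $2$-forms carries the $R^{-r}=R^{-2}\sim\rho^{-1/2}$ factor; tracking these factors carefully (this is the one genuinely fiddly bookkeeping step) gives the uniform bound $\le C$. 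In $U_3$, one uses Proposition~\ref{prop:error2}, which gives $|\tilde\omega_t - \omega_t| \le Ct^{1/2}|y|^{-1}$ in $C^{k,\alpha}(\omega_t)$, together with the fact that $\omega_t|_{L_{\gamma,t}}$ itself is controlled by the fiberwise SRF deformation as in Lemma~\ref{lemma:Vy}; multiplying by the weight $t^{-1/2}|y|$ from the definition of $\|\cdot\|_{\mathcal B}$ on $U_3$ gives a uniform bound.

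\textbf{The function part.} The form $\operatorname{Im}\Omega|_{L_{\gamma,t}}$ is supported in $|y| < 2t^{9/20} = U_1$, so only the $U_1$ contribution matters. Here we decompose $\operatorname{Im}\Omega - \operatorname{Im}\Omega_0$ as in the proof of Proposition~\ref{prop:dirac_error}, where $\Omega_0 = (t/2A_0)\cdot(\text{standard form on }\mathbb C^3)$ is the model holomorphic volume form for which $L_{\mathbb R^3}$ is exactly special Lagrangian (so $\operatorname{Im}\Omega_0|_{L_{\mathbb R^3}} = 0$). The error $\operatorname{Im}\Omega-\operatorname{Im}\Omega_0$ is controlled, via the normalization \eqref{eq:normalization}, by the metric deviation $\tilde\omega_t - (t/2A_0)^{1/3}\omega_{\mathbb C^3}$ (equivalently by $\|F\|$ from \eqref{eq:c3cmae}) plus the deviation of the chart $W$ from the identity; both are $O(t^{1/20+13\delta/60}\rho^{\delta})$ or smaller by Proposition~\ref{prop:error1}, \eqref{eq:Hs}, and Lemma~\ref{lemma:Vy}. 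Taking $*$ and then the weighted norm, which carries the scaling prefactor $t^{-\delta'}t^{-\delta/6+1/3}$, reproduces exactly the same exponent bookkeeping as for the $2$-form part (this is why $\delta'$ was defined the way it was in Theorem~\ref{thm:improved}), yielding a uniform bound.

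\textbf{Main obstacle.} The conceptual content is already packaged into Propositions~\ref{prop:error1}, \ref{prop:error2} and Lemma~\ref{lemma:Vy}; the real work is the exponent bookkeeping — verifying that the scaling prefactors $t^{-\delta'}t^{-\delta/6+1/3}$ (resp.\ $t^{-\delta'}t^{-\delta/6}$) in the definition of $\|\cdot\|_{\mathcal B}$, the weight functions $\rho,R,w$, and the $t$-powers coming from the coordinate change \eqref{eq:coor_change} conspire to give a bound independent of $t$, and that the $U_1$ and $U_3$ estimates are compatible on the overlap (this compatibility is exactly the computation in the proof of Proposition~\ref{prop:CY3ricci}, which can be quoted). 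I expect the transition/overlap region $|y|\sim t^{9/20}$, where one must simultaneously control the error from gluing in $L_{\mathbb R^3}$, the cutoff errors in $\tilde\gamma$, and the fiberwise deformation $V_y$, to be where the estimates are tightest — but Lemma~\ref{lemma:Vy} and the displayed estimate following it were precisely arranged to have room to spare there, so no new idea is needed.
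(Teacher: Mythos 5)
Your proposal is correct and follows essentially the same route as the paper, whose proof simply cites Theorem~\ref{thm:improved}, the discussion below Lemma~\ref{lemma:Vy}, and the compatibility of the weighted norms, with the $0$-form part dominated by the $2$-form part. The bookkeeping wobble you flag resolves exactly as you claim: with the $R^{-r}$ convention in the weighted norm on $L_{\mathbb{R}^3}$, the $C^{0,\alpha}_{\delta}(U_1)$ norm of a $2$-form is comparable to $\sup\rho^{-\delta}|\cdot|_{g_{\mathbb{C}^3}}$, so the prefactor gives $t^{-\delta'}t^{-\delta/6}\cdot t^{23/60+13\delta/60}=t^{0}$, a bound independent of $t$ (there is no extra $t^{\delta/6}$ gain).
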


\begin{proof}
  The $2$-form part follows directly from Theorem~\ref{thm:improved},
  the discussion below Lemma~\ref{lemma:Vy}, and the compatibility of
  the weighted norms. The $0$-from part is supported on $U_1$ and is
  dominated by the $2$-form part.
\end{proof}

\subsection{Inverting the Hodge-Dirac operator}\label{subsec:dirac}

In this subsection we prove that the Hodge-Dirac operator has a
bounded right inverse with respect to the weighted norms we
defined. To state the result, let $C^\infty(K^2(L_{\gamma,t}))$ denote the space
of smooth, closed $2$-forms on $L_{\gamma,t}$, and define
\[
  C^{\infty,ave}(L_{\gamma,t}) = \left\{ f \in C^\infty(L_{\gamma,t}) :
    \int_{L_{\gamma,t}} f = 0\right\}.
\]

\begin{prop}\label{prop:dirac2}
  Let $\tilde\gamma = \pi(L_{\gamma,t})$. Fix $\delta < 0$
  sufficiently close to $0$. Let
  $\psi = (f,\omega) \in C^{\infty,ave}(L_{\gamma,t})\oplus
  C^\infty(K^2(L_{\gamma,t}))$. For $t > 0$ sufficiently small, the
  inverse of Hodge-Dirac operator, $(d+d^*)^{-1}\psi$, satisfies the
  following. First, one can write
  $(d+d^*)^{-1}\psi = \eta_1 + \eta_2$, where
  $\eta_1 \in C^\infty(\Lambda^1(L_{\gamma,t}))$ and
  $\eta_2 \in C^\infty(\Lambda^1(\tilde\gamma))$. Second, we have
  \begin{enumerate}
  \item $\|\eta_1\|_{\mathcal{C}} \le C \|\psi\|_{\mathcal{B}}$, and
  \item $\|\eta_2\|_{C^{1,\alpha}(\tilde\gamma)} \le C\|\psi\|_{\mathcal{B}}$,
  \end{enumerate}
  where the constant $C$ is independent of $t$.
\end{prop}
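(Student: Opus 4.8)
The plan is to build a right parametrix for $d+d^*$ on $L_{\gamma,t}$ by patching together the model inverses of Sections~\ref{sec:thimble} and \ref{sec:CY3}, in close analogy with the proof of Proposition~\ref{prop:CY3laplacian} (this is in fact the argument to which that proof refers). First I would cover $L_{\gamma,t}$ by the two ``caps'' $U_1$ lying over $\{|y| < 2t^{9/20}\}$ near the nodal values $y_0, y_1$, each identified through the chart $W$ of Section~\ref{sec: ansatz} and the rescaling $\omega_t \sim (t/2A_0)^{1/3}\omega_{\mathbb{C}^3}$ with a large region inside the special Lagrangian thimble $L_{\mathbb{R}^3}$, together with the ``neck'' $U_3$ over $\{|y| > t^{9/20}\}$, where by Propositions~\ref{prop:error1}--\ref{prop:error2} and Lemma~\ref{lemma:Vy} the induced metric is, up to negligible errors, the warped product of a round $S^2$ of radius $\sim |y|^{1/4}$ with the interval $\tilde\gamma$ carrying $\frac{1}{t}\tilde\omega_Y$. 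Subdividing $U_3$ further into segments of bounded length in the rescaled metric $R^{-2}g$, exactly as in the proof of Lemma~\ref{lem:ODE}, each segment is modeled on a finite cylinder $[-A,A]\times S^2$. I would then fix cutoff functions subordinate to this cover, with derivatives controlled in the appropriate rescaled metrics.

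Next I would split the data $\psi = (f,\omega)$ into a \emph{fiber} part and a \emph{base} part: the fiber part consists of the components of $f$ and $\omega$ that are fiberwise $L^2$-orthogonal to harmonic forms on the $S^2$-fibers (for a function, this means fiberwise average zero), and the base part is the complementary piece, which is pulled back from $\tilde\gamma$. On each cap the fiber part is inverted by Proposition~\ref{prop:dirac}, producing $\eta_1 \in C^{1,\alpha}_{\delta+1/4}(L_{\mathbb{R}^3})$ together with a base contribution $\eta_2 \in C^{1,\alpha}_{\delta+1}(\pi(L_{\mathbb{R}^3}))$; on each neck segment the fiber part is inverted by Proposition~\ref{prop:dirac_cylinder}, whose solutions decay exponentially away from the support of the data. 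The fiberwise cohomology-class component of $\omega$, which is a constant multiple of the fiberwise volume form along the neck because $\omega$ is closed, is absorbed by the caps, where $[L_y]$ bounds a thimble. Consequently the patched fiber solution obeys $t$-independent weighted bounds, and the errors introduced by the cutoffs --- supported in the overlaps --- can be made smaller than $\frac{1}{2}$ in the $\|\cdot\|_{\mathcal{B}}$-norm by taking the overlaps in the cover large and $t$ small, exploiting the exponential localization of Proposition~\ref{prop:dirac_cylinder}. The prefactors $t^{-\delta'}t^{-\delta/6+1/3}$ over $U_1$ and the $t^{-1/2}$-weight over $U_3$ appearing in $\|\cdot\|_{\mathcal{B}}$ and $\|\cdot\|_{\mathcal{C}}$ are precisely those that make the rescaled estimates of Propositions~\ref{prop:dirac} and \ref{prop:dirac_cylinder} match across the transition region $|y|\sim t^{9/20}$, the same bookkeeping as in the compatibility discussion preceding Theorem~\ref{thm:improved} and in the proof of Proposition~\ref{prop:CY3ricci}.

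The base part is where the genuine difficulty lies, and this is the step I expect to be the main obstacle. Along the base direction the equation decouples into a one-dimensional problem on $\tilde\gamma$: one must invert a first-order linear ODE on $\tilde\gamma$ --- essentially $\frac{d}{ds}$, with a zeroth-order correction coming from the warping of the fiber volume $\sim |y|^{1/2}$ --- with the caps' base contributions $\eta_2 \in C^{1,\alpha}(\pi(L_{\mathbb{R}^3}))$ dictating the behavior at the two endpoints; this is solved by integration from an endpoint. The operator norm of this one-dimensional inverse is comparable to the length of $\tilde\gamma$ in $\frac{1}{t}\tilde\omega_Y$, which is of order $t^{-1/2}$, so a naive passage through $\mathcal{C}$ would lose a power of $t$. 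The point is that the $t^{-1/2}$-weight built into the $U_3$-part of $\|\cdot\|_{\mathcal{B}}$ supplies a compensating factor, so that one obtains $\|\eta_2\|_{C^{1,\alpha}(\tilde\gamma)} \le C\|\psi\|_{\mathcal{B}}$ with $C$ independent of $t$; treating $\eta_2$ in its own space $C^{1,\alpha}(\tilde\gamma)$ built from $\frac{1}{t}\tilde\omega_Y$, rather than inside $\mathcal{C}$, is exactly what makes this uniform. This is the linear manifestation of the ``base direction'' issue flagged in the introduction.

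Finally I would assemble the pieces: the cutoff-patched fiber solution together with the global base solution defines an approximate right inverse $P_0$ with $\|(d+d^*)P_0 - \operatorname{Id}\|_{\mathcal{B}\to\mathcal{B}} \le \frac{1}{2}$ for $t$ small, whence the genuine right inverse is $P = P_0\sum_{k\ge 0}(\operatorname{Id} - (d+d^*)P_0)^k$. Since $L_{\gamma,t}\cong S^3$ has $H^1 = H^2 = 0$, every closed $2$-form is exact and there is no cohomological obstruction; if one prefers to reduce the $2$-form component of $\psi$ to the $0$-form case at the outset, one may invoke a Poincar\'e-lemma step with optimal regularity exactly as in Lemma~\ref{lemma:d}. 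Smoothness of $\eta_1$ and $\eta_2$ follows from elliptic regularity wherever the metric is smooth; near the transition region $\omega_t$, hence $g_t$, is only $C^{2,\alpha}$, so the argument is carried out at finite regularity there and bootstrapped elsewhere, just as in Theorem~\ref{thm:improved} and Propositions~\ref{prop:error1}--\ref{prop:error2}.
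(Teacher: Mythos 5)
Your overall architecture is the same as the paper's: a parametrix built from the thimble inverse (Proposition~\ref{prop:dirac}) on the caps, the cylinder inverse (Proposition~\ref{prop:dirac_cylinder}) on $t^{1/2}$-sized neck segments, a one-dimensional integration along $\tilde\gamma$ for the base mode, cutoff errors beaten down for small $t$, and a Neumann series; your bookkeeping for why the $t^{-1/2}$-weight on $U_3$ makes the length-$t^{-1/2}$ base integration uniform in $t$ is also the right mechanism, and your remark that the fiberwise period of a closed $2$-form vanishes because $[L_y]$ bounds the thimble plays the role of the paper's decomposition Lemma~\ref{lemma:support}.

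The genuine gap is in the base step, precisely where you say the ODE ``is solved by integration from an endpoint.'' Writing the base mode as $\eta_2=u\,ds$, the equation is first order, $u'+(\log\operatorname{Vol}(L_s))'u=-f_{\tilde\gamma}$, but for $u\,ds$ to define an admissible $1$-form on $L_{\gamma,t}$ (and for the cutoff against the second cap not to create an $O(\|\psi\|_{\mathcal B})$ error) the solution must vanish at \emph{both} ends of $\tilde\gamma$; the problem therefore has a one-dimensional cokernel, with compatibility condition $\int_{\tilde\gamma}\operatorname{Vol}(L_s)f_{\tilde\gamma}\,ds=0$. Integrating from one endpoint alone leaves a mismatch $u(L)=-\operatorname{Vol}(L_L)^{-1}\int_0^L\operatorname{Vol}\,f_{\tilde\gamma}\,ds$ at the far end, which is in general of size $\|\psi\|_{\mathcal B}$ (in fact amplified by $\operatorname{Vol}(L_L)^{-1}\sim t^{-9/40}$ near the cap), so the resulting map would not be an approximate right inverse and the contraction would fail. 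The paper resolves this by subtracting the volume-weighted average $\bar f_{\tilde\gamma}$ before integrating, forcing vanishing at both ends, and then invoking the hypothesis $f\in C^{\infty,ave}(L_{\gamma,t})$ (global average zero) together with a direct estimate of the cap contribution to show $\|\bar f_{\tilde\gamma}\|_{\mathcal B}\le Ct^{9/40}\|\psi\|_{\mathcal B}$ (Lemma~\ref{lem:zero_ave}), so this discarded piece is absorbed into the iteration error. Your proposal never uses the average-zero hypothesis on $f$, and without it (or some equivalent orthogonality-to-cokernel argument) the base step does not close; this is the missing idea you would need to supply.
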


The rest of the subsection is dedicated to the proof of
Proposition~\ref{prop:dirac2}.

Let $\{\chi_i\}_{i=0}^{N}$ be a partition of unity in a neighborhood
of $\tilde\gamma = \pi(L_{\gamma,t}) \subset Y$. $\chi_0$ is chosen so that
$\chi_0 = 1$ for $|y| \le t^{9/20}$ and vanishes for
$|y| \ge 1.5t^{9/20}$. For $i = 1, \ldots, N$, $\chi_i$ has support
contained in $\{ |y| \ge t^{9/20} \}$, and we pick a point $y_i$ in
the support of $\chi_i$, which we think of as the center of that
support. We can demand that the support of each $\chi_i$ intersects
$\gamma$, all these $\chi_i$ have uniform $C^k$ bounds with respect to
the metric $\frac{1}{t}\omega_Y$ for any given positive $k$,
$y_i \in \gamma$, and the supports of $\chi_i$ and $\chi_j$ intersect
if and only if $|i-j|\le 1$.

We need the following lemma.

\begin{lemma}\label{lemma:support}
  Let $\omega$ be a smooth exact $2$-form on $L_{\gamma,t}$, Then
  there exist smooth closed $2$-forms $\omega_i$ on $L_{\gamma,t}$
  with the following properties:
  \begin{enumerate}
  \item the support of $\omega_i$ is contained
    in the support of $\chi_i$;
  \item for $i\ge 1$,
    $\omega_i \in C^{\infty,ave}(K^2(\mathbb{R}\times L_{y_i}))$
    (cf. Proposition~\ref{prop:dirac_cylinder});
  \item $\|\omega_i\|_{\mathcal{B}} \le C \|\omega\|_{\mathcal{B}}$,
    where $C$ does not depend on $t$;
  \item $\omega = \sum_i \omega_i$.
  \end{enumerate}

\end{lemma}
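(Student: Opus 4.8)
The plan is to produce a single global primitive $\zeta$ of $\omega$ on $L_{\gamma,t}$ carrying a uniform-in-$t$ weighted bound, and then simply set $\omega_i = d(\chi_i\zeta)$. With this choice, properties (1) and (4) are immediate: $\omega_i$ is supported in $\operatorname{supp}\chi_i$, it is exact and hence closed, and $\sum_i\omega_i = d\bigl(\sum_i\chi_i\,\zeta\bigr) = d\zeta = \omega$ because $\sum_i\chi_i\equiv 1$ on a neighborhood of $L_{\gamma,t}$. So the entire content of the lemma is (a) the existence of such a $\zeta$ with good estimates, and (b) the verification of the support-size estimate (3) and the orthogonality condition (2).

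For (a), we repeat the parametrix construction used in the proofs of Lemma~\ref{lem:ODE} and Lemma~\ref{lemma:d}, now on $L_{\gamma,t}$ in place of $L_{\mathbb{R}^3}$. On the cap region $U_1$, which under the rescaling of Section~\ref{sec:thimble} is identified with a subset of the thimble $L_{\mathbb{R}^3}$, Lemma~\ref{lemma:d} produces a $1$-form $\zeta'$ with $d\zeta' = \omega$ there and $\|\zeta'\|_{C^{1,\alpha}_{\delta+1/4}(U_1)}\le C\|\omega\|_{C^{0,\alpha}_\delta(U_1)}$. On the neck $U_3$, which is a long thin cylinder over $\tilde\gamma\cap\{|y|>t^{9/20}\}$ with fibers $L_y\cong S^2$, we cover the base by intervals of bounded size with respect to $\tfrac1t\omega_Y$, solve $d\zeta'' = \omega$ on each corresponding finite cylinder by combining fiberwise Hodge theory on $S^2$ with Proposition~\ref{prop:dirac_cylinder}, and patch along the base; the potential obstruction in $H^2(S^2)$ is absent because $\omega$, being exact on $L_{\gamma,t}\cong S^3$, restricts to an exact form on every fiber. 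Gluing $\zeta'$ and $\zeta''$ through the partition of unity $\{\chi_i\}$ gives an approximate primitive whose error $\sum_i d\chi_i\wedge(\zeta'-\zeta'')$ is closed, supported in the transition regions, and — using the compatibility of the weighted norms across $U_1\cap U_3$ established in Section~\ref{sec:CY3} and, if needed, the same choice of large overlap parameters as in Lemma~\ref{lemma:d} — has weighted norm dominated by a small multiple of $\|\omega\|_{\mathcal B}$. A Neumann iteration then upgrades this to a genuine primitive $\zeta$ with $\|\zeta\|_{\mathcal C}\le C\|\omega\|_{\mathcal B}$, $C$ independent of $t$.

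Granting $\zeta$, property (3) follows by writing $\omega_i = d(\chi_i\zeta) = d\chi_i\wedge\zeta + \chi_i\omega$: the second term is bounded by $\|\omega\|_{\mathcal B}$ directly, and the first by the bound on $\zeta$ together with the uniform $C^{k,\alpha}$ bounds on $\chi_i$ with respect to $\tfrac1t\omega_Y$ and the boundedness of $|y|$ on the base, yielding $\|\omega_i\|_{\mathcal B}\le C\|\omega\|_{\mathcal B}$. For property (2), note that for $i\ge 1$ the support of $\chi_i$ lies in $U_3$, where a neighborhood of that support in $L_{\gamma,t}$ is modeled on the cylinder $\mathbb{R}\times L_{y_i}$ with $L_{y_i}\cong S^2$; since $\omega_i$ is exact, its restriction to each fiber $\{s\}\times L_{y_i}$ is exact on $S^2$, so $\int_{S^2}\omega_i(s,\cdot) = 0$, while the $L^2$-orthogonality to harmonic $1$-forms required in the definition of $C^{\infty,ave}$ is vacuous because $H^1(S^2) = 0$. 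Hence $\omega_i\in C^{\infty,ave}(K^2(\mathbb{R}\times L_{y_i}))$. The only genuinely technical step is the construction of $\zeta$ with $t$-uniform weighted control in step (a); this is where the thimble analysis of Section~\ref{sec:thimble}, the cylinder result Proposition~\ref{prop:dirac_cylinder}, and the norm compatibilities of Section~\ref{sec:CY3} are combined, in a manner entirely parallel to the arguments already given there.
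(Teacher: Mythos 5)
Your reduction of the lemma to the existence of a single global primitive $\zeta$ on $L_{\gamma,t}$ with a $t$-uniform bound $\|\zeta\|_{\mathcal C}\le C\|\omega\|_{\mathcal B}$ contains the real gap. The step where you patch the local primitives and claim the error $\sum_i d\chi_i\wedge(\zeta'-\zeta'')$ has ``weighted norm dominated by a small multiple of $\|\omega\|_{\mathcal B}$'' is unjustified, and with the partition of unity fixed in Section~5.4 it is false: the cutoffs $\chi_i$ vary on the same scale $t^{1/2}$ as the pieces on which the local primitives are constructed, so after rescaling $|d\chi_i|=O(1)$, while the difference of two local primitives on an overlap is only bounded by $C\|\omega\|_{\mathcal B}$ (it is a closed $1$-form of that size, with no smallness). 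Hence the error is $O(\|\omega\|_{\mathcal B})$, not $o(\|\omega\|_{\mathcal B})$, and the Neumann iteration you invoke does not converge. In Lemma~\ref{lem:ODE} and Lemma~\ref{lemma:d} the analogous smallness comes from an adjustable cutoff scale $B\gg 1$ (and in Proposition~\ref{prop:dirac2} from $\Lambda_3\gg 1$), a mechanism you would have to build in explicitly; alternatively one can remove the error exactly by a \v{C}ech--de Rham correction (writing $\zeta_i-\zeta_{i+1}=dh_i$ on overlaps, which have $H^1=0$, and adding $h_i\,d\chi_i$ to the primitive), but neither of these appears in your argument. There is also a secondary point: a $\zeta$ produced by an iteration in $C^{1,\alpha}$-type spaces is not obviously smooth, whereas the lemma asserts smooth $\omega_i$.

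More structurally, you have front-loaded into this lemma a global statement (a uniformly bounded primitive on the whole degenerating $L_{\gamma,t}$) that is essentially the $2$-form part of the invertibility result, Proposition~\ref{prop:dirac2}, which this decomposition lemma is designed to feed into; the paper's proof is deliberately softer and purely local. It takes local primitives $\eta_i$ on overlapping pieces $U_i$ (the Poincar\'e lemma on the cap $U_0$, and fiberwise Hodge theory via Lemma~\ref{lemma:d0} on neck pieces, rescaled by $y_i^{-1/2}$ so the estimates are manifestly $t$-uniform), and then sets $\omega_i=d(\chi_i\eta_i)-d\chi_i\wedge(\eta_i-\eta_{i+1})$. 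This telescoping formula gives closedness, the support property, $\sum_i\omega_i=\omega$, the norm bound directly from the local bounds on $\eta_i$, and the ``ave'' condition because each $\omega_i$ is a sum of an exact form and a form containing the base direction $dy$ (your verification of property~(2) via fiberwise exactness and $\mathcal H^1(S^2)=0$ is correct and would apply equally to the paper's $\omega_i$). No global solving, no smallness, and no iteration are needed. If you want to pursue your route, you must either justify the smallness by enlarging the cutoff scale with a parameter independent of $t$ and re-deriving the local estimates uniformly on the larger pieces, or replace the iteration by the exact \v{C}ech-type correction; as written, the key estimate is missing.
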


\begin{proof}
  Let $U_i$ be a subset of $L$ whose interior contains the support of
  $\chi_i$ and whose size is also of the same order with respect to
  $\frac{1}{t}\tilde\omega_Y$. For $i\ge 1$, that is when the support
  lies in $|y| > t^{9/20}$, we would like to solve the equation
  $d\eta_i = \omega$ on $U_i$. Define $g_i = y_i^{-1/2}\tilde
  g_t|_L$. Note that $(U_i, g_i)$ is uniformly equivalent to
  $[-|y_i|^{-1/4},|y_i|^{-1/4}] \times S^2$ equipped with the product
  metric. By Lemma~\ref{lemma:d0} and the fact that $H^1(S^2)=0$,
  there exists a $1$-form $\eta_i$ on $U_i$ such that
  $d\eta_i = \omega$ together with the estimate
  \[
    \|\eta_i\|_{C^{0,\alpha}_{g_i}} \le C\|\omega\|_{C^{0,\alpha}_{g_i}},
  \]
  where $C>0$ is a uniform constant. Scaling back, we conclude that
  \[
    t^{-1/2}\|y^{3/4}\eta_i\|_{C^{0,\alpha}(\{|y| > t^{9/20}\})} \le C t^{-1/2}\|y\omega\|_{C^{0,\alpha}(\{|y| > t^{9/20}\})}.
  \]

  On $U_0$, we employ the Poincar\'e lemma to obtain $\eta_0 \in \Lambda^1(U_0)$ such
  that $d\eta_0=\omega$ and
  \[
    \|\eta_0\|_{C^{0,\alpha}_{\delta+1}(U_1)} \le C\|\omega\|_{C^{0,\alpha}_{\delta}(U_1)}.
  \]

  We now define
  \[
    \omega_i = d(\chi_i\eta_i) - d\chi_i\wedge(\eta_i-\eta_{i+1})
  \]
  for all $i$. Note that $d\omega_i = 0$. $\omega_i$ clearly has the
  same support of $\chi_i$. One checks directly that
  $\sum_i \omega_i = \omega$. The estimate for $\omega_i$ follows from
  the above estimates for $\eta_i$. Finally, since $\omega_i$ consists
  of an exact form and a form containing the base direction $dy$, it
  is follows that
  $\omega_i \in C^{\infty,ave}(K^2(\mathbb{R}\times L_{y_i}))$. This
  completes the proof.
\end{proof}

For $i = 1,\ldots, N$, let $\omega_i$ be in
Lemma~\ref{lemma:support}. For the function part, we decompose
$\chi_i f = f_i + f_i'$, where $f_i$ has fiberwise average $0$ with
respect to the product metric on $y_i^{-1/4}L_{y_i} \times \mathbb{R}$, and
$f_i'$ is a function on the base $\gamma$. We have
\[
  \|f_i\|_{C^{0,\alpha}_{\delta}} \le C\|f\|_{C^{0,\alpha}_{\delta}},
\]
since integration on fibers is a bounded operator for
$-2 + \alpha < \delta < 0$. For $i = 0$, we have $\omega_0$ from
Lemma~\ref{lemma:support} and $f_0 = \chi_0 f$. We obtain a
decomposition
\[
  f &= f_0 + f_1 + \cdots + f_{N} + f_{\tilde\gamma},
\]
where $f_{\tilde\gamma} = f_1' + \cdots + f_{N'}'$ is a function on
$\gamma$. For each $i$, we can then define
\[
  \psi_i = \omega_i + f_i,
\]
and define
\[
  \psi_{\tilde\gamma} = f_{\tilde\gamma}.
\]
By Lemma~\ref{lemma:support}, we have
\[
  \psi = \psi_{\tilde\gamma} + \sum_i\psi_i
\]
Note that $\psi_{\tilde\gamma}$ is supported in $\{ |y| \ge 2t^{9/20} \}$.

By Proposition~\ref{prop:dirac}, there exists a right inverse
\[
  \tilde\eta_1 + \tilde\eta_2 = P_{L_{\mathbb{R}^3}}(\omega_0+(t/2A_0)^{1/3}f_0) \in
C^{1,\alpha}_{\delta+1/4}(\Lambda^1(L_{\mathbb{R}^3}))\oplus
C^{1,\alpha}_{\delta+1}(\Lambda^1(\pi(L_{\mathbb{R}^3})))
\] with
\[
  \|\tilde\eta_1\|_{C^{1,\alpha}_{\delta+1/4}(\Lambda^1(L_{\mathbb{R}^3}))} &\le Ct^{\delta/6}\|\psi\|_{C^{0,\alpha}_{\delta}}, \\
  \|\tilde\eta_2\|_{C^{1,\alpha}_{\delta+1}(\Lambda^1(L_{\mathbb{R}^3}))} &\le Ct^{\delta/6}\|\psi\|_{C^{0,\alpha}_{\delta}}.
\]

We introduce a cutoff function $\chi_0'$ on $Y$, supported in
$\{ |y| < 2t^{9/20}\}$, is equal to $1$ on
$\{ |y| \le 1.75t^{9/20}\}$, and
\[
  |\nabla^k_{\frac{1}{t}\omega_Y}\chi_0'| \le C(k)(t^{1/2}|y|^{-1})^k.
\]

We can then transplant the right inverse back to $L$ using the above
cutoff function:
\[
  P_0\psi &= \chi_0'P_{L_{\mathbb{R}^3}}\psi_0.
\]

Since
\[
  (d+d^*)P_0\psi-\psi_0
  &=
  ((d+d^*)-(d+d^*)_{(t/2A_0)^{1/3}\omega_{\mathbb{C}^3}|_L})P_0\psi \\
  &\phantom{aa}+ d\chi_0'\wedge P_{L_{\mathbb{R}^3}}\psi_0,
\]
we see that the errors come from two sources: the metric deviation
going from $(t/2A_0)^{1/3}g_{\mathbb{C}^3}|_L$ to $\tilde g_t|_L$ and
the cutoff. The metric deviation is of $O(t^{1/20})$. To estimate the
cutoff error, first we note that the base direction $\tilde\eta_2$
dominates the fiber direction $\tilde\eta_1$. We have
\[
  |\nabla \chi_0'|_{L_{\mathbb{R}^3}}
  \le Ct^{1/6+1/20}
\]
and
\[
  |\tilde \eta_2|_{L_{\mathbb{R}^3}}
  &\le Ct^{23/60+13\delta/60}\|\psi\|_{\mathcal{B}}|\tilde y|^{1+\delta} \\
  &\le Ct^{1/6}\|\psi\|_{\mathcal{B}}
\]
in $|y| < 2t^{1/20}$. So
\[
  |\nabla\chi_0'|_{L_{\mathbb{R}^3}}|\tilde\eta_2|_{L_{\mathbb{R}^3}}
  \le Ct^{23/60}.
\]
It follows in the $\mathcal{B}$ norm, the cutoff error is bounded by
\[
  \|d\chi_0'\wedge P_{L_{\mathbb{R}^3}}\psi_0\|_{\mathcal{B}}
  &\le Ct^{-13\delta/60}\|\psi\|_{\mathcal{B}}.
\]
Since $\delta < 0$, by making $t$ sufficiently small, we have
\[
  \|(d+d^*)P_0\psi - \psi_0\|_{\mathcal{B}} \le \frac{1}{100}\|\psi\|_{\mathcal{B}}.
\]

Now we turn to dealing with $\psi_1, \ldots, \psi_{N}$. On each
cylinder $L_{y_i} \times \mathbb{R}$, we rescale the metric by
$y_i^{-1/2}$ so that the size of $y_i^{-1/4}L_{y_i}$ is uniform. By
Lemma~\ref{lemma:support},
$\omega_i \in C^{\infty,ave}(K^2(\mathbb{R} \times S^2))$. We write
$P_{y_i}\psi$ for the preimage of $(y_i^{1/2}f_i, \omega_i)$ under the
Dirac operator on $\mathbb{R}\times S^2$ obtained using
Proposition~\ref{prop:dirac_cylinder}. Scaling back, we see that
\[
  \|P_{y_i}\psi\|_{C^{1,\alpha}} \le y_i^{1/4}\|\psi\|_{C^{0,\alpha}}.
\]

We use the cutoff functions
$\chi_i' = \gamma_2\left(\frac{|y-y_i|}{\Lambda_3t^{1/2}}\right)$ to
piece them together:
\[
  P_1\psi = \sum_{i=1}^{N} \tilde\chi_iP_{y_i}\psi_i.
\]

By letting $\Lambda_3 \gg 0$ and $t$ sufficiently small, we get
\[
  \|(d+d^*)P_1\psi - \sum_{i=1}^{N}\psi_i\|_{\mathcal{B}}
  \le \frac{1}{100}\|\psi\|_{\mathcal{B}}
\]
and
\[
  \|P_1\psi\|_{\mathcal{C}}
  \le C\|\psi\|_{\mathcal{B}}.
\]

Finally, we deal with $\psi_{\tilde\gamma} = f_{\tilde\gamma}$. Equip
$\tilde\gamma$ with the induced metric from
$\frac{1}{t}\tilde\omega_Y$.  We define the average $\bar f_{\tilde\gamma}$ of
$f_{\tilde\gamma}$ along ${\tilde\gamma}$ as
$\bar f_{\tilde\gamma} = \left(\int_{\tilde\gamma}
  \operatorname{Vol}(L_y)\right)^{-1}\int_{\tilde\gamma}
\operatorname{Vol}(L_y)f_{\tilde\gamma}$. We integrate
$f_{\tilde\gamma}-\bar f_{\tilde\gamma}$ along $\tilde\gamma$ and get
\[
  u(s) =
  -\operatorname{Vol}(L_s)^{-1}
  \int_0^s \operatorname{Vol}(L_s)(f_{\tilde\gamma}-\bar f_{\tilde\gamma}) \,ds,
\]
where $s$ is the arc length parameter of $\tilde\gamma$.  We can then define
a $1$-form $P_\gamma\psi$ on $L_{\gamma,t}$ by
\[
  P_{\tilde\gamma}\psi = u\,ds.
\]
Note that this is well-defined since $P_{\tilde\gamma}\psi$ also vanishes at
the end of $\tilde\gamma$.

To estimate $P_{\tilde\gamma}\psi$, it is enough to see that
\[
  \operatorname{Vol}(L_s)^{-1}\int_{t^{9/20}}^{y}
  \operatorname{Vol}(L_y)^{-1}|f_{\tilde\gamma}|\frac{dy}{t^{1/2}} \le
  C\operatorname{Vol}(L_s)^{-1}\int_{t^{9/20}}^{y}
  \|\psi\|_{\mathcal{B}}y^{-1/2}dy \le C\|\psi\|_{\mathcal{B}}.
\]
for all $y > t^{9/20}$ measuring from the start of $\tilde\gamma$. So
$\|P_{\tilde\gamma}\psi\|_{C^{1,\alpha}(\gamma)} \le
C\|\psi\|_{\mathcal{B}}$. We also see that
\[
  d^* P_{\tilde\gamma}\psi
  &= d^*_{\tilde\gamma} P_{\tilde\gamma}\psi + (d^*-d^*_{\tilde\gamma})P_{\tilde\gamma}\psi \\
  &= f_{\tilde\gamma}-\bar f_{\tilde\gamma} + (d^*-d^*_{\tilde\gamma})P_{\tilde\gamma}\psi.
\]
The error here is dominated by metric deviation. By making $t$
sufficiently small, we have
\[
  \|d^*(P_{\tilde\gamma}\psi) - (f_{\tilde\gamma}-\bar f_{\tilde\gamma})\|_{\mathcal{B}}
  \le
  \frac{1}{100}\|\psi\|_{\mathcal{B}}.
\]

It remains to bound $\bar f_{\tilde\gamma}$. We need the following lemma,
exploiting the fact that $f \in C^{\infty,ave}(L_{\gamma,t})$.

\begin{lemma}\label{lem:zero_ave}
  \[
    \left|\int_{\tilde\gamma \cap \{y > t^{9/20}\}} \operatorname{Vol}(L_y)f_{\tilde\gamma}\right|
    \le Ct^{\frac{9}{40}}\|\psi\|_{\mathcal{B}}.
  \]
\end{lemma}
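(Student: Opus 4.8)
The plan is to exploit the one constraint available on $f$, namely $\int_{L_{\gamma,t}}f=0$, which is exactly what $f\in C^{\infty,ave}(L_{\gamma,t})$ means, and to extract the base integral from it. Recall the decomposition $f=f_0+\sum_{i=1}^N f_i+f_{\tilde\gamma}$, where $f_0=\chi_0 f$ is supported in $U_1=\{|y|<2t^{9/20}\}$, each $f_i$ with $i\ge1$ has vanishing fiberwise average with respect to the product metric on $y_i^{-1/4}L_{y_i}\times\mathbb{R}$, and $f_{\tilde\gamma}=\pi^*h$ for a function $h$ on $\tilde\gamma$ supported in $\{y>t^{9/20}\}$. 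By the coarea formula for $\pi\colon L_{\gamma,t}\to\tilde\gamma$, one has $\int_{L_{\gamma,t}}f_{\tilde\gamma}=\int_{\tilde\gamma\cap\{y>t^{9/20}\}}\operatorname{Vol}(L_y)f_{\tilde\gamma}$ up to an error of strictly higher order in $t$, coming from the failure of $\pi$ to be a Riemannian submersion and controlled exactly as in the third paragraph below. Hence
\[
\int_{\tilde\gamma\cap\{y>t^{9/20}\}}\operatorname{Vol}(L_y)f_{\tilde\gamma}
= -\int_{L_{\gamma,t}}f_0-\sum_{i=1}^N\int_{L_{\gamma,t}}f_i+O\big(t^{11/40}\|\psi\|_{\mathcal B}\big),
\]
and it remains to bound the two sums on the right by $C\,t^{9/40}\|\psi\|_{\mathcal B}$.

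The term involving $f_0$ is the one that saturates the claimed bound. Via the chart $W$ from Section~\ref{sec: ansatz}, the support $L_{\gamma,t}\cap U_1$ corresponds to the region $\{\rho\lesssim t^{-13/60}\}$ in $L_{\mathbb R^3}$, carrying a metric uniformly equivalent to $(t/2A_0)^{1/3}g_{\mathbb C^3}$ (the further deviations being negligible by Theorem~\ref{thm:improved} and the discussion after Lemma~\ref{lemma:Vy}). Since $L_{\mathbb R^3}$ has volume growth $\rho^{3/2}$, so $dV_{g_{\mathbb C^3}}\sim\rho^{1/2}\,d\rho$ in the radial variable, and since $|f|\le\rho^{\delta}\|f\|_{C^{0,\alpha}_\delta(U_1)}$ with $\delta+1/2>0$, the key move is to integrate this pointwise bound against $dV$ rather than to use $\operatorname{Vol}(U_1)\sup|f|$: accounting for the rescaling factor $(t/2A_0)^{1/2}$ for $3$-dimensional volumes,
\[
\Big|\int_{L_{\gamma,t}}f_0\Big|
\le C\,t^{1/2}\|f\|_{C^{0,\alpha}_\delta(U_1)}\int_1^{c\,t^{-13/60}}\rho^{\delta+1/2}\,d\rho
\le C\,t^{1/2-13/40}\,t^{-13\delta/60}\|f\|_{C^{0,\alpha}_\delta(U_1)}.
\]
Finally $\|f\|_{C^{0,\alpha}_\delta(U_1)}\le t^{\delta'+\delta/6-1/3}\|\psi\|_{\mathcal B}=t^{1/20+13\delta/60}\|\psi\|_{\mathcal B}$ by the definition of $\|\cdot\|_{\mathcal B}$ and $\delta'=23/60+\delta/20$, so the factors $t^{\pm13\delta/60}$ cancel and the exponent of $t$ becomes $1/2-13/40+1/20=9/40$, as required. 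Arranging this cancellation, rather than settling for a $\delta$-dependent bound, is the essential point.

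For the remaining terms, since $f_i$ has zero fiberwise average for the product metric at $y_i$, the fiber integral $\int_{L_y}f_i$ would vanish if the induced metric on $L_{\gamma,t}$ over $\operatorname{supp}\chi_i$ were exactly that product; the deviation from product is of relative size $O(t^{1/2}|y|^{-1})$ by Lemma~\ref{lemma:Li2.5} and Proposition~\ref{prop:error2}. Combining this with the fiber bound $|f|\le t^{1/2}|y|^{-1}\|\psi\|_{\mathcal B}$ valid on $U_3$, the fiber volume $\operatorname{Vol}(L_y)\sim|y|^{1/2}$, the unit-order length of each $\operatorname{supp}\chi_i$ with respect to $\frac{1}{t}\tilde\omega_Y$, and summation over the $O(t^{-1/2})$ indices $i$ as a Riemann sum of spacing $t^{1/2}$ in $|y|$ over $[t^{9/20},1]$, I expect $\sum_{i\ge1}\big|\int_{L_{\gamma,t}}f_i\big|\le C\,t^{11/40}\|\psi\|_{\mathcal B}$, of strictly higher order than $t^{9/40}$; the same bookkeeping bounds the coarea error from the first paragraph. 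The main obstacle will be the $f_0$ estimate of the second paragraph --- in particular the cancellation of the powers of $t^{\delta}$ --- while the rest is routine bookkeeping with the weighted estimates of Sections~\ref{sec:CY3} and~\ref{sec:thimble}.
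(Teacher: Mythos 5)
Your proposal is correct and follows essentially the same route as the paper: both exploit $\int_{L_{\gamma,t}}f=0$ to trade the base integral for an integral of $f$ over the cap region $\{|y|\lesssim t^{9/20}\}$, which is then bounded by combining the pointwise weighted bound $|f|\le \rho^{\delta}\|f\|_{C^{0,\alpha}_{\delta}(U_1)}\le t^{1/20+13\delta/60}\rho^{\delta}\|\psi\|_{\mathcal{B}}$ with the $\rho^{3/2}$ volume growth and the $t^{1/2}$ volume rescaling, the $t^{\pm 13\delta/60}$ factors cancelling to give exactly $t^{9/40}$. The only difference is presentational: the paper invokes Fubini directly on $f$ over $\{|y|>t^{9/20}\}$ and treats the fiberwise averages of the $f_i$ and the coarea factor as exact, whereas you track these secondary errors explicitly and check they are of higher order $O(t^{11/40})$, which is consistent with Propositions~\ref{prop:error2} and Lemma~\ref{lemma:Li2.5}.
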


\begin{proof}
  Using the fact that $f \in C^{\infty,ave}(L_{\gamma,t})$, we get
  \[
    0 = \int_{|y| < t^{9/20}}f + \int_{|y| > t^{9/20}} f.
  \]
  By Fubini's theorem, the second term on the right becomes
  \[
    \int_{\tilde\gamma \cap \{|y| > t^{9/20}\}} \int_{L_y} f &=
    \int_{\tilde\gamma \cap \{|y| > t^{9/20}\}} \operatorname{Vol}(L_y) f_{\tilde\gamma}.
  \]
  The first term on the right can be estimated as
  \[
    \int_{|y| < t^{9/20}}f
    &\le Ct^{1/2}\int_{|\tilde y| < 2t^{-2/3+9/20}} |f| \\
    &\le Ct^{1/2} \int_{|\tilde y| < 2t^{-2/3+9/20}}
    \|f\|_{C^{0,\alpha}_\delta(L_{\mathbb{R}^3})}|\tilde y|^\delta \\
    &\le Ct^{1/2+1/20+13\delta/60}\|\psi\|_{\mathcal{B}}\int_{|\tilde y| < 2t^{-2/3+9/20}}|\tilde y|^\delta \\
    &= Ct^{9/40}\|\psi\|_{\mathcal{B}}.
  \]
  The result follows.
\end{proof}

Using the above Lemma~\ref{lem:zero_ave},  we can estimate
\[
  \bar{f}_{\tilde\gamma} \le Ct^{1/2+9/40}\|\psi\|_{\mathcal{B}}
\]
It follows that
\[
  \|\bar f_{\tilde\gamma}\|_{\mathcal{B}} \le Ct^{9/40}\|\psi\|_{\mathcal{B}}.
\]
In sum, for $t$ sufficiently small, we have
\[
  \|(d+d^*)P_{\tilde\gamma}\psi - f_{\tilde\gamma}\|_{\mathcal{B}}
  \le \frac{1}{100}\|\psi\|_{\mathcal{B}}.
\]

Finally, define $P\psi = P_0\psi + P_1\psi + P_{\tilde\gamma}\psi$. Then we
have
\[
  \|(d+d^*)P\psi - \psi\|_{\mathcal{B}} \le \frac{1}{20} \|\psi\|_{\mathcal{B}}.
\]
It follows that we can write down the right inverse of the
Hodge-Dirac operator for $\psi$ as
\[
  (d+d^*)^{-1}\psi &= P\sum_{j=0}^\infty(1-(d+d^*)P)^j\psi \\
  &= \eta_1 + \eta_2,
\]
where
\[
  \eta_1 &= (P_0+P_1)\sum_{j=0}^\infty(1-(d+d^*)P)^j\psi \\
  \eta_2 &=P_\gamma\sum_{j=0}^\infty(1-(d+d^*)P)^j\psi.
\]
We have the following estimates:
\[
  \|\eta_1\|_{\mathcal{C}} &\le C \|\psi\|_{\mathcal{B}}, \\
  \|\eta_2\|_{C^{1,\alpha}(\gamma)} &\le C \|\psi\|_{\mathcal{B}}.
\]
when $t$ is sufficiently small. This completes the proof.

\subsection{Deforming to special Lagrangians}\label{subsec:quadratic}

We now set up the deformation of the approximate solutions $L_{\gamma,t}$
to special Lagrangians for all sufficiently small $t>0$.

As in Section~\ref{subsec:deform}, let $\eta \in \Lambda^1$, and let
$V$ be the dual vector field defined by
\[
  \eta = \tilde\omega_t(V, \cdot)
\]
on $L$. Let $f_\eta: L \to X$ denote the exponential map
$ x \mapsto \operatorname{exp}_x V_x^\perp$. The goal is to find
$\eta \in \Lambda^1$ such that $f_\eta: L \to X$ is special
Lagrangian. Define the map
\[
  F: \Lambda^1 &\to
  \Lambda^0\oplus \Lambda^2, \\
  F(\eta) &= (*f_\eta^*\operatorname{Im} \Omega, f_\eta^*
  \tilde\omega_t).
\]
Then $L_\eta$ being special Lagrangian is equivalent to
$F(\eta) = 0$.

Let us write
\[
  F(\eta) = F(0) + (d+d^*)\eta + Q(\eta),
\]
and denote $P$ the inverse of $d+d^*$. To find $\eta \in \Lambda^1$
such that $F(\eta) = 0$, it suffices to find
$\psi \in \Lambda^0\oplus\Lambda^2$ such that $F(P(\psi)) = 0$. Given
the inverse $P$ of $d+d^*$, this is equivalent to finding a fixed
point of the operator
\[
  N(\psi) = -F(0)-Q(P\psi).
\]

\begin{lemma}
  $\tilde\omega_t$ is exact on $L_{\gamma,t}$, and
  $\int_{L_{\gamma,t}}*\operatorname{Im}\Omega=0$. Consequently,
  $F(0) \in C^{\infty,ave}\oplus K^2$.
\end{lemma}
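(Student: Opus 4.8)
The plan is to verify the two vanishing statements separately, the first being topological and the second following from the structure of the approximate solution $L_{\gamma,t}$.

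First, I would show that $[\tilde\omega_t|_{L_{\gamma,t}}] = 0$ in $H^2(L_{\gamma,t}, \mathbb{R})$. Recall from Section~\ref{sec: ansatz} that $L_{\gamma,t}$ is diffeomorphic to $S^3$, so $H^2(L_{\gamma,t}, \mathbb{R}) = 0$ and the class is automatically zero; hence $\tilde\omega_t|_{L_{\gamma,t}}$ is exact. (In the admissible-loop case one instead invokes Lemma~\ref{lemma:loop}, which already gives $[\tilde\omega_t]|_{L_\gamma} = 0$.) Thus $\tilde\omega_t|_{L_{\gamma,t}} = d\beta$ for some $1$-form $\beta$.

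Next, for the statement $\int_{L_{\gamma,t}} *\operatorname{Im}\Omega = 0$, I would use Lemma~\ref{lemma:special} together with the construction of $L_{\gamma,t}$. Away from the gluing region $\{|y| < 2t^{9/20}\}$, the submanifold $L_{\gamma,t}$ agrees with $L_\gamma = \cup_{y\in\tilde\gamma} L_y$, on which $\operatorname{Im}(e^{-i\theta}\Omega)$ vanishes identically by Lemma~\ref{lemma:special} (since $\tilde\gamma$ is the straightened admissible path, still a geodesic with phase $\theta$ for the quadratic differential). So $*\operatorname{Im}\Omega$ is supported in $U_1 = \{|y| < 2t^{9/20}\}$, as already noted in the text. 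On this region $L_{\gamma,t}$ is a perturbation (via the chart $W = \exp(V_{\tilde\gamma(s)})\circ H_s$) of a rescaled copy of the special Lagrangian thimble $L_{\mathbb{R}^3}$, which is genuinely special Lagrangian by Proposition~\ref{prop:antiholo}, so $\operatorname{Im}\Omega|_{L_{\mathbb{R}^3}} = 0$ exactly. The integral $\int_{L_{\gamma,t}} *\operatorname{Im}\Omega = \int_{L_{\gamma,t}} \operatorname{Im}\Omega$ (identifying the top form with its Hodge dual times the volume, up to the constant phase $\theta$) can then be computed by Stokes' theorem: since $\Omega$ is closed on $X$, $\operatorname{Im}\Omega$ is a closed $3$-form, and $L_{\gamma,t}$ is a closed oriented $3$-manifold, so $\int_{L_{\gamma,t}} \operatorname{Im}\Omega$ depends only on the homology class of $L_{\gamma,t}$ in $X$ — and $L_{\gamma,t}$ bounds (it is homologous to $L_\gamma$, a $3$-sphere built from vanishing cycles over a path with two nodal endpoints, which is null-homologous). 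Alternatively, and more directly: $L_{\gamma,t}$ is isotopic to the genuinely special Lagrangian configuration obtained by collapsing the gluing, so its period $\int_{L_{\gamma,t}} \operatorname{Im}(e^{-i\theta}\Omega)$ equals that of a special Lagrangian and hence vanishes.

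The main obstacle is making precise the claim that $\int_{L_{\gamma,t}} \operatorname{Im}(e^{-i\theta}\Omega) = 0$ rather than merely small: one must check that the relevant relative homology class is well-defined and that the endpoint caps $L_{\mathbb{R}^3}$ contribute exactly zero. This is where Proposition~\ref{prop:antiholo} is essential — it gives $\operatorname{Im}\Omega_{\mathbb{C}^3}|_{L_{\mathbb{R}^3}} = 0$ on the nose, not approximately — so the scaling and the diffeomorphisms $H_s$, $\exp(V_{\tilde\gamma(s)})$ only shift the integrand by exact forms, whose integrals over the closed manifold $L_{\gamma,t}$ vanish by Stokes. The consequence $F(0) \in C^{\infty,ave}\oplus K^2$ is then immediate: the $0$-form part $*\operatorname{Im}\Omega|_{L_{\gamma,t}}$ has integral zero over $L_{\gamma,t}$, placing it in $C^{\infty,ave}$, and the $2$-form part $\tilde\omega_t|_{L_{\gamma,t}}$ is closed (being the restriction of a closed form), hence lies in $K^2$.
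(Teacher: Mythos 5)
Your first claim (exactness via $H^2(L_{\gamma,t})=H^2(S^3)=0$) is exactly the paper's argument, and the ``consequently'' step is fine. For the vanishing of $\int_{L_{\gamma,t}}\operatorname{Im}\Omega$, however, your argument as written contains two genuine errors. First, $L_{\gamma,t}$ is \emph{not} null-homologous in $X$: it is homologous to $L_\gamma$, and $\int_{L_\gamma}\operatorname{Re}\Omega=\operatorname{Re}\int_\gamma\alpha$ is the positive mass of the geodesic, so $[L_{\gamma,t}]\neq 0$ in $H_3(X,\mathbb{R})$ --- indeed no calibrated cycle in a compact Calabi-Yau bounds. Second, the caps do not contribute ``exactly zero'': Proposition~\ref{prop:antiholo} makes $L_{\mathbb{R}^3}$ special Lagrangian for the model data $(\omega_{\mathbb{C}^3},\, dz_1\wedge dz_2\wedge dz_3)$, whereas the ambient holomorphic volume form near the node is $\Omega=\sqrt{A_0}\,d\mathfrak{z}_1\wedge d\mathfrak{z}_2\wedge d\mathfrak{z}_3\,(1+O(\mathfrak{z}))$, so $\operatorname{Im}\Omega$ restricted to the glued-in thimble is small but not identically zero; and since the cap is a piece with boundary, exactness of the discrepancy does not kill its integral by Stokes. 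This is precisely why the paper's own proof only obtains a bound: it estimates $\left|\int_{L_{\gamma,t}}\operatorname{Im}\Omega\right|\le C t^{9/40}$ using the support in $\{|y|<2t^{9/20}\}$, the weighted norms and Proposition~\ref{prop:initial_error}, then notes that the $L_{\gamma,t}$ are mutually isotopic and $\operatorname{Im}\Omega$ is closed, so the integral is independent of $t$, and concludes by letting $t\to 0$.

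Your homological strategy can be repaired, and then it is a legitimate alternative to the paper's limiting argument: since $\operatorname{Im}\Omega$ is closed, $\int_{L_{\gamma,t}}\operatorname{Im}\Omega$ depends only on $[L_{\gamma,t}]=[L_\gamma]\in H_3(X,\mathbb{R})$, and $\operatorname{Im}\Omega|_{L_\gamma}\equiv 0$ by Lemma~\ref{lemma:special} (the two cone points of the Lipschitz cycle $L_\gamma$ are harmless), so the period vanishes. The vanishing thus comes from the pointwise vanishing of the form on the reference cycle, not from the cycle bounding; to make this fully rigorous you must still justify that the glued, perturbed $L_{\gamma,t}$ is homologous to the singular union $L_\gamma$, a step the paper avoids by comparing the smooth $L_{\gamma,t}$ only among themselves and passing to the adiabatic limit.
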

\begin{proof}
  That $\tilde\omega_t$ is exact follows from the fact that
  $H^2(L_{\gamma,t})=0$. To show that
  $\int_{L_{\gamma,t}}*\operatorname{Im}\Omega=0$, first note that
  $\operatorname{Im}\Omega|_{L_{\gamma,t}}$ is supported on
  $\{|y| < 2t^{9/20}\}$. Thus we have
  \[
    \left|\int_{L_{\gamma,t}}\operatorname{Im}\Omega\right|
    &=
    \left|\int_{L_{\gamma,t}\cap \{|y|<2t^{9/20}\}}\operatorname{Im}\Omega\right| \\
    &\le
    Ct^{1/2} \int_{L_{\mathbb{R}^3}\cap\{|\tilde y|<2t^{-13/60}\}}
    |(\operatorname{Im}\Omega-\operatorname{Im}\Omega_0)|_{L_{\mathbb{R}^3}}| \\
    &\le
    Ct^{1/2+1/20+13\delta/60} \int_{L_{\mathbb{R}^3}\cap\{|\tilde y|<2t^{-13/60}\}}
    \|\psi\|_{\mathcal{B}}|\tilde y|^\delta \\
    &\le
    Ct^{1/2+1/20+13\delta/60}\|\psi\|_{\mathcal{B}} 
    (t^{-13/60})^{3/2+\delta} \\
    &= C\|\psi\|_{\mathcal{B}}t^{9/40}.
  \]
  Here the third line of the above inequality follows from the
  definition of the weight norm. By
  Proposition~\ref{prop:initial_error}, $\|\psi\|_{\mathcal{B}} \le C$
  for some uniform constant $C>0$. Since for all sufficiently small
  $t>0$, $L_{\gamma,t}$ are isotopic,
  $\int_{L_{\gamma,t}} \operatorname{Im}\Omega$ does not depend on
  $t$. The lemma follows by letting $t\to 0$.
\end{proof}

\begin{lemma}
  For $\eta \in \Lambda^1$, we have
  $F(\eta) \in C^{\infty,ave}\oplus C^\infty(K^2)$. Consequently, for
  $\psi \in C^{\infty,ave}\oplus C^\infty(K^2)$, we have
  $N(\psi) \in C^{\infty,ave}\oplus C^\infty(K^2)$.
\end{lemma}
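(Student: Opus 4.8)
The plan is to verify directly, from the definition $F(\eta)=(*f_\eta^*\operatorname{Im}\Omega,\ f_\eta^*\tilde\omega_t)$, the three conditions that cut out $C^{\infty,ave}\oplus C^\infty(K^2)$ — smoothness of both components, closedness of the $2$-form component, and vanishing of the total integral of the $0$-form component — and then to deduce the statement for $N$ by linearity together with the preceding lemma.

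First I would record smoothness. Although the model metric $\omega_t$ and the potential $u$ of Theorem~\ref{thm:improved} are only $C^{2,\alpha}$, the Calabi-Yau metric $\tilde\omega_t$ is itself a smooth Kähler form on the smooth manifold $X$ by Yau's theorem, and $\Omega$ is smooth; since $L_{\gamma,t}$ is a smooth submanifold and $\eta$ is smooth, the deformation map $f_\eta=\exp(V^\perp)$ is smooth, so $f_\eta^*\tilde\omega_t$ and $f_\eta^*\operatorname{Im}\Omega$ are smooth, and with the Hodge star taken with respect to the (smooth) induced metric on $L_{\gamma,t}$ we get $*f_\eta^*\operatorname{Im}\Omega\in C^\infty$. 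Closedness of the $2$-form component is immediate from $d(f_\eta^*\tilde\omega_t)=f_\eta^*(d\tilde\omega_t)=0$, so $f_\eta^*\tilde\omega_t\in C^\infty(K^2)$. For the $0$-form component I would use that $\int_{L_{\gamma,t}}(*f_\eta^*\operatorname{Im}\Omega)\,\mathrm{vol}_{L_{\gamma,t}}=\int_{L_{\gamma,t}}f_\eta^*\operatorname{Im}\Omega$; since $X$ is compact, the family $r\mapsto\exp(rV^\perp)$, $r\in[0,1]$, is a homotopy from the inclusion $\iota$ to $f_\eta$, hence $(f_\eta)_*[L_{\gamma,t}]=\iota_*[L_{\gamma,t}]$ in $H_3(X;\mathbb{R})$, and since $\operatorname{Im}\Omega$ is closed this period equals $\int_{L_{\gamma,t}}\operatorname{Im}\Omega$, which vanishes by the preceding lemma. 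This gives $F(\eta)\in C^{\infty,ave}\oplus C^\infty(K^2)$.

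For $N$, fix $\psi\in C^{\infty,ave}\oplus C^\infty(K^2)$. By Proposition~\ref{prop:dirac2}, $P\psi=\eta_1+\eta_2$ with $\eta_1\in C^\infty(\Lambda^1(L_{\gamma,t}))$ and $\eta_2$ the pullback to $L_{\gamma,t}$ of a smooth $1$-form on $\tilde\gamma$ that vanishes near the endpoints, so $P\psi\in C^\infty(\Lambda^1(L_{\gamma,t}))$. Writing $Q(P\psi)=F(P\psi)-F(0)-(d+d^*)(P\psi)$, the term $F(P\psi)$ lies in $C^{\infty,ave}\oplus C^\infty(K^2)$ by the first part; $F(0)$ does by the preceding lemma together with smoothness of $\tilde\omega_t$; and $(d+d^*)(P\psi)=d(P\psi)+d^*(P\psi)$ does because $d(P\psi)$ is exact, hence closed, and $\int_{L_{\gamma,t}}d^*(P\psi)\,\mathrm{vol}_{L_{\gamma,t}}=\langle d^*(P\psi),1\rangle_{L^2}=\langle P\psi,d1\rangle_{L^2}=0$. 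Since $C^{\infty,ave}\oplus C^\infty(K^2)$ is a linear subspace, $Q(P\psi)$, and hence $N(\psi)=-F(0)-Q(P\psi)$, lies in it.

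The argument is essentially bookkeeping, and I do not expect a genuine obstacle. The only two points deserving a line of care are that $f_\eta$ is homotopic to the inclusion — so that the period $\int\operatorname{Im}\Omega$ is a topological quantity and the preceding lemma applies — which relies on completeness of $X$, and that the $\tilde\gamma$-component of $P\psi$ extends to a genuinely smooth form on the closed manifold $L_{\gamma,t}$, which relies on the structure of the collapsing fibration $\pi: L_{\gamma,t}\to\tilde\gamma$ near its ends.
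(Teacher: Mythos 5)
Your proof is correct and follows exactly the argument the paper intends: the paper states this lemma without proof, relying on the earlier remark in Section~\ref{subsec:deform} that the $0$-form part of $Q$ has zero integral and the $2$-form part is closed, which comes from precisely your three observations (closedness of $f_\eta^*\tilde\omega_t$, homotopy invariance of $\int_{L_{\gamma,t}}f_\eta^*\operatorname{Im}\Omega$ combined with the preceding lemma, and Stokes for the $d^*$-part of $(d+d^*)P\psi$). The two points you flag for care (the homotopy to the inclusion and smoothness of the base component of $P\psi$ across the ends) are handled implicitly by the paper via compactness of $X$ and Proposition~\ref{prop:dirac2}, so there is no gap.
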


\begin{prop}\label{prop:QP}
  For $\psi_1,\psi_0 \in C^{\infty,ave}\oplus C^{\infty}(K^2)$ with
  $\|\psi_1\|_{\mathcal{B}},\|\psi_0\|_{\mathcal{B}} \le C$, we have
  \[
    \|Q(P\psi_1)-Q(P\psi_0)\|_{\mathcal{B}} \le
    C(t)\|\psi_1-\psi_0\|_{\mathcal{B}},
  \]
  where $C(t) \to 0$ as $t \to 0$.
\end{prop}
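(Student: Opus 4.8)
The plan is to reduce the quadratic estimate for $N=-F(0)-QP$ on $L_{\gamma,t}$ to the model estimates of Section~\ref{subsec:deform}, applied region by region according to the decomposition of $L_{\gamma,t}$ into $U_1$ (the scaled thimble near the nodes) and $U_3$ (the elongated cylindrical region over $\tilde\gamma$). The key structural observation, already flagged at the end of Section~\ref{subsec:quadratic} of the introduction, is that we should \emph{not} try to bound $Q(P\psi_1)-Q(P\psi_0)$ directly in terms of $\|P\psi_i\|_{\mathcal C}$ — the factor $\|\eta_2\|_{C^{1,\alpha}(\tilde\gamma)}$ coming from the base direction is only $O(1)$, not small, because the diameter of $L_{\gamma,t}$ in the rescaled metric $t g_t$ is $\sim 1$. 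Instead we exploit Lemma~\ref{lem:cyl} and Proposition~\ref{prop:acyl}, which show that the quadratic term, in the cylindrical/acylindrical model, is controlled by $d^*(\eta_1-\eta_0)$ and by the \emph{fiber} component $\eta_{1,L}-\eta_{0,L}$, rather than by the full $1$-form. The base direction enters only through its derivative $\partial_u f$, i.e. through $d^*$, and this is exactly what makes the iteration close.

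Concretely, I would proceed as follows. First, write $\psi_i$, $i=0,1$, and set $\eta_i=P\psi_i=\eta_{i,1}+\eta_{i,2}$ as in Proposition~\ref{prop:dirac2}, so $\|\eta_{i,1}\|_{\mathcal C}+\|\eta_{i,2}\|_{C^{1,\alpha}(\tilde\gamma)}\le C$. Using $Q(\eta_1)-Q(\eta_0)=\mathrm{(I)}+\mathrm{(II)}$ from Section~\ref{subsec:deform}, I treat the two regions separately. On $U_1$: here the ambient metric is $(t/2A_0)^{1/3}\omega_{\mathbb C^3}$ up to the errors of Proposition~\ref{prop:error1}, and $L_{\gamma,t}\cap U_1$ is a neighborhood of $0\in L_{\mathbb R^3}$ up to the errors of Lemma~\ref{lemma:Vy} and \eqref{eq:Hs}. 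After rescaling the metric by $(t/2A_0)^{-1/3}$, I apply \eqref{eq:TermI}–\eqref{eq:tildeII} (the scaling variant) together with the weighted bound $|\partial\bar\partial$-type quantities$|=O(t^{1/20+13\delta/60})$ that the weighted norm $\|\cdot\|_{\mathcal C}$ encodes; the crucial point is that the weighted norm on $U_1$ carries the prefactor $t^{-\delta'-\delta/6}$, so $\|\eta_{i,1}\|_{\mathcal C}\le C$ translates to a genuinely small pointwise $C^{1,\alpha}$ bound on the rescaled thimble, and the quadratic estimate \eqref{eq:TermI} gains a factor of $\|\eta_i\|_{C^{1,\alpha}}=o(1)$. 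On $U_3$: here I use that $(L_{\gamma,t}\cap U_3, y_i^{-1/2}\tilde g_t)$ is uniformly a product $[-A_i,A_i]\times L_{y_i}$, so Lemma~\ref{lem:cyl} (and the acylindrical correction of Proposition~\ref{prop:acyl}, with $\epsilon_1,\epsilon_2,\epsilon_3$ given by Proposition~\ref{prop:error2}) gives
\[
  |\mathrm{(I)}|_{C^{0,\alpha}}+|\mathrm{(II)}|_{C^{0,\alpha}}
  \le C\bigl(1+\|\eta_{1,L}\|_{C^{1,\alpha}}+\|\eta_{0,L}\|_{C^{1,\alpha}}\bigr)
  \bigl(\|d^*(\eta_1-\eta_0)\|_{C^{0,\alpha}}+\|\eta_{1,L}-\eta_{0,L}\|_{C^{1,\alpha}}\bigr),
\]
where the $\eta_{i,L}$ are the \emph{fiber} parts. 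Both $\|\eta_{i,L}\|_{C^{1,\alpha}}$ and $\|d^*\eta_i\|_{C^{0,\alpha}}$, measured in the $\mathcal B$-weighted norm, are $o(1)$ as $t\to 0$: the fiber component of $\eta_i=P\psi_i$ inherits the smallness of $\|\psi_i\|_{\mathcal B}$ together with the $t^{1/2}|y|^{-\cdots}$ weight, and $d^*\eta_i$ lands in $\mathcal B$ by construction of $P$ with $\|d^*\eta_i\|_{\mathcal B}\le C\|\psi_i\|_{\mathcal B}$ — importantly $d^*$ of the base part $\eta_{i,2}$ is \emph{not} $O(1)$ but $o(1)$, since it is what the Dirac equation controls. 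Summing the two regional estimates, multiplying by the global weight and using a uniform bound on the number of overlapping cutoffs gives $\|Q(P\psi_1)-Q(P\psi_0)\|_{\mathcal B}\le C(t)\|\psi_1-\psi_0\|_{\mathcal B}$ with $C(t)\to 0$.

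The main obstacle is the bookkeeping of weights across the transition region $U_1\cap U_3$, i.e. making sure that the "quadratic gain" is genuinely $o(1)$ \emph{simultaneously} in both regions and on their overlap, since on $U_3$ near its inner end the weight $t^{1/2}|y|^{-1}$ and on $U_1$ near its outer end the weight $t^{-\delta'-\delta/6}\rho^{-\delta}$ must be reconciled — this is the analogue of the compatibility computation in the proof of Proposition~\ref{prop:CY3ricci}, and the exponent $\delta'=\frac{23}{60}+\frac{1}{20}\delta$ together with $\tau=-\frac23$, $-\frac{3}{13}<\delta<0$ is tuned precisely so that the leftover power of $t$ is positive. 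A secondary subtlety is that the base part $\eta_2$ of $P\psi$ a priori lives only on $\tilde\gamma$ with the "fat" norm $C^{1,\alpha}(\tilde\gamma)$ of size $O(1)$; one must check that when $\eta_2\,ds$ is viewed back on $L_{\gamma,t}$ its contribution to the quadratic term enters only through $d^*(\eta_2\,ds)=-\partial_u f_2$ plus mixed-direction errors controlled by $\epsilon_2$, $\epsilon_3$ from Proposition~\ref{prop:error2} — this is exactly the "differentiating the base direction" phenomenon, and it is the reason we run the iteration in the range of $d+d^*$ rather than its domain. Once these two points are in place, the estimate is a routine assembly of Lemma~\ref{lem:cyl}, Proposition~\ref{prop:acyl}, \eqref{eq:TermI}, and the weighted norm definitions, so I would state the regional estimates as two displayed inequalities and then combine.
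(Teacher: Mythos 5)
Your overall architecture is the paper's: regional estimates, with the rescaled crude quadratic bound \eqref{eq:tildeI}--\eqref{eq:tildeII} near the node and Proposition~\ref{prop:acyl} (base direction entering only through $d^*$ and the cross terms $\epsilon_2,\epsilon_3$) where the cylinder dominates; and your treatment of $U_3$ coincides with the paper's Region II (Proposition~\ref{prop:acyl} fed by Proposition~\ref{prop:error2}). The gap is in $U_1$: you handle \emph{all} of $U_1$ with the crude scaling estimate, arguing that the smallness is carried by $\|\eta_{i,1}\|_{\mathcal C}$. But on $U_1$ the dominant component of $\eta_i=P\psi_i$ is the base part $\eta_{i,2}$, which is only $O(1)$ in $C^{1,\alpha}(\tilde\gamma)$ and whose pointwise size on the rescaled thimble grows like $t^{\frac{23}{60}+\frac{13\delta}{60}}\rho^{1+\delta}\|\psi_i\|_{\mathcal B}$ (see the proof of Proposition~\ref{prop:dirac2}), i.e.\ about $t^{1/6}$ at the outer edge $\rho\sim t^{-13/60}$ of $U_1$. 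Feeding this into the crude difference bound $|Q(\eta_1)-Q(\eta_0)|\lesssim t^{-1/3}\bigl(|\eta_1|_{C^{1,\alpha}}+|\eta_0|_{C^{1,\alpha}}\bigr)|\eta_1-\eta_0|_{C^{1,\alpha}}$ and re-weighting by the $U_1$ prefactor $t^{-\frac{23}{60}-\frac{13\delta}{60}+\frac13}\rho^{-\delta}$ of $\|\cdot\|_{\mathcal B}$ gives, at $\rho\sim t^{-13/60}$, a contribution of order $t^{\frac{23}{60}+\frac{13\delta}{60}}\rho^{2+\delta}\sim t^{-1/20}$ times $\|\psi_1-\psi_0\|_{\mathcal B}$, which diverges instead of tending to zero. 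This ``neck'' region ($\rho$ large but $|y|\ll t^{9/20}$) is not rescued by your $U_3$ analysis, since Proposition~\ref{prop:error2} only applies for $|y|>t^{9/20}$; so the mechanism you correctly identify (base direction only through $d^*$, cross terms small) is deployed in the wrong place and the contraction constant $C(t)$ does not go to zero as written.

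What is missing is the second half of the paper's Region I. There one fixes $D_0$ with $D_0^{\delta'-\delta}<t^{\frac1{20}+\frac{13\delta}{60}}$ (with the auxiliary exponent $\delta'\in(-1,0)$ of Proposition~\ref{prop:error0}), uses the crude estimate only on $\{\rho<D_0\}$, and on annuli $D<\rho<2D$, $D>D_0$, compares $\tilde\omega_t$ with the product model $\hat\omega=D^{-1/2}\tfrac{\sqrt{-1}}{2}\partial\bar\partial|\tilde y|^2+\omega_{EH}$, taking $\epsilon_1,\epsilon_2,\epsilon_3,\epsilon_4$ from Propositions~\ref{prop:error0} and \ref{prop:error1}; the faster decay $\rho^{\delta-3/4}$ (resp.\ $\rho^{\delta'-3/4}$) of the mixed $(f,b)$ component is exactly what renders the $O(1)$ base part harmless there, and the resulting bound $Ct^{\frac{26}{60}+\frac{26\delta}{60}}D^{\delta}$ scales back to $Ct^{\frac1{20}+\frac{13\delta}{60}}\|\psi_1-\psi_0\|_{\mathcal B}$. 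Without this acylindrical argument on the neck of $U_1$ the proposed proof does not close; with it, your plan becomes the paper's.
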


\begin{proof}
  This is essentially a local estimate. We will prove this estimate in
  different regions. For $i=1,2$, let $\eta_i = P\psi_i$. As in
  Section~\ref{subsec:deform}, we write
  $Q(\eta_2)-Q(\eta_1) = \mathrm{(I)} + \mathrm{(II)}$, and decompose
  $\mathrm{(I)} = \mathrm{(I)}_0+\mathrm{(I)_2}$ into the $0$-form and
  $2$-form parts. The reader is advised to consult
  Section~\ref{subsec:deform} for relevant notations.

  \emph{Region I.} Here we look at the region
  $\{ |y| < 2t^{9/20} \}$, i.e. we are near a singular fiber. Fix
  large $D_0 > 0$ to be determined later. For $\rho < D_0$, by
  Proposition~\ref{prop:error1}, we have
  \[
    |(t/(2A_0))^{1/3}\tilde\omega_t-\omega_{\mathbb{C}^3}|_{C^{1,\alpha}(\omega_{\mathbb{C}^3})}
    \le Ct^{\frac{1}{20}+\frac{13}{60}\delta} \ll 1.
  \]
  for some constant $C > 0$ depending on $\omega_{\mathbb{C}^3}$. The norms of
  $\eta_1,\eta_0$ are dominated by their base directions, and we have
  the following coarse estimate
  \[
    |\eta_i|_{C^{1,\alpha}(L_{\mathbb{R}^3})}
    \le Ct^{\frac{23}{60}+\frac{13}{60}\delta}\|\psi_i\|_{\mathcal{B}}.
  \]
  It follows from the quadratic estimates \eqref{eq:tildeI} and
  \eqref{eq:tildeII} that
  \[
    |\mathrm{(I)}+\mathrm{(II)}|_{C^{0,\alpha}(L_{\mathbb{R}^3})}
    &\le C(t/(2A_0))^{-1/3}(t^{\frac{23}{60}+\frac{13}{60}\delta})^2
    \|\psi_1-\psi_0\|_{\mathcal{B}}.
  \]
  So
  \[
    \|Q(P\psi_1)-Q(P\psi_0)\|_{\mathcal{B}}
    &\le Ct^{\frac{1}{20}+\frac{13}{60}\delta}\|\psi_1-\psi_0\|_{\mathcal{B}}.
  \]

  Now let $D > D_0$. Fix $\tilde y_0 \in \mathbb{C}$ with
  $D < \rho(\tilde y_0) < 2D$ and consider the region
  $D^{1/4}< |\tilde y-\tilde y_0| < 2D^{1/4}$. Fix $-1 <\delta' < 0$
  such that $\delta'-\delta < 0$. In this region, we will compare
  $\tilde\omega_t$ with the model metric
  $\hat\omega = (D^{-1/2})\frac{\sqrt{-1}}{2}\partial\bar\partial|\tilde
  y|^2 + \omega_{EH}$, where we recall that
  $\omega_{EH}$ is the Eguchi-Hanson metric on the smooth
  quadric $\{z_1^2+z_2^2+z_3^2=1\} \simeq T^*S^2$.

  By Proposition~\ref{prop:error0} and Proposition~\ref{prop:error1},
  \[
    \tilde\omega_t -
    (t/(2A_0))^{1/3}D^{1/2}\hat\omega
    = (f,f) + (b,b) + (f,b),
  \]
  where
  \[
    |(f,f)|_{C^{1,\alpha}(\tilde\omega_t)},
    |(b,b)|_{C^{1,\alpha}(\tilde\omega_t)}
    &\le C\rho^{\delta'} + Ct^{\frac{1}{20}+\frac{13\delta}{60}}\rho^{\delta}, \\
    |(f,b)|_{C^{1,\alpha}(\tilde\omega_t)}
    &\le \rho^{\delta'-\frac{3}{4}}
    + Ct^{\frac{1}{20}+\frac{13\delta}{60}}\rho^{\delta-\frac{3}{4}}.
  \]
  Choosing $D_0$ large enough such that
  $D_0^{\delta'-\delta} < t^{1/20+\frac{13}{60}\delta}$, we then have
  \[
    |(f,f)|_{C^{1,\alpha}(\tilde\omega_t)},
    |(b,b)|_{C^{1,\alpha}(\tilde\omega_t)}
    &\le Ct^{\frac{1}{20}+\frac{13\delta}{60}}\rho^{\delta}, \\
    |(f,b)|_{C^{1,\alpha}(\tilde\omega_t)}
    &\le Ct^{\frac{1}{20}+\frac{13\delta}{60}}\rho^{\delta-\frac{3}{4}}.
  \]
  We can now apply Proposition~\ref{prop:acyl}. Adopting the notations in
  Proposition~\ref{prop:acyl}, we have
  \[
    \epsilon_1 &\le Ct^{\frac{1}{20}+\frac{13\delta}{60}}D^{\delta}, \\
    \epsilon_2 &\le Ct^{\frac{1}{20}+\frac{13\delta}{60}}D^{\delta-\frac{3}{4}}, \\
    \epsilon_3 &\le Ct^{\frac{1}{2}}D^{-\frac{3}{4}}, \\
    \epsilon_4 &=
    Ct^{\frac{1}{20}+\frac{13\delta}{60}}D^\delta(\|\psi_1\|_{\mathcal{B}}+\|\psi_0\|_{\mathcal{B}}).
  \]
  It is then easily checked that the assumptions of
  Proposition~\ref{prop:acyl} are satisfied. For example, one checks
  $O(\epsilon_2)|f| =
  O((t^{1/20+13\delta/60}D^\delta)^2\|\psi\|_{\mathcal{B}})$. Therefore,
  we have
  \[
    \mathrm{(I)} &\le C\epsilon_4^2, \\
    \mathrm{(II)} &\le C(\|\epsilon_1\|_{C^{1,\alpha}}+\|\epsilon_3\|_{C^{1,\alpha}})\epsilon_4.
  \]
  It follows that in this region, we have
  \[
    \|Q(P\psi_1)-Q(P\psi_0)\|_{C^{0,\alpha}_{\delta}(L_{\mathbb{R}^3})}
    \le Ct^{\frac{26}{60}+\frac{26\delta}{60}}D^\delta\|\psi_1-\psi_0\|_{\mathcal{B}}.
  \]
  Scaling back, we get
  \[
    \|Q(P\psi_1)-Q(P\psi_0)\|_{\mathcal{B}}
    \le Ct^{\frac{1}{20}+\frac{13\delta}{60}}\|\psi_1-\psi_0\|_{\mathcal{B}}.
  \]
  \emph{Region II.} We now look at the region $\{ t^{9/20} < |y|
  \}$. In this region, the approximate solution $L_{\gamma,t}$ is given by the
  union of (deformations of) special Lagrangian vanishing
  spheres. Thus up to scaling by $|y|^{-1/4}$, $L_{\gamma,t} \cap X_y$ has a
  tubular neighborhood in $X_y$ which is identified with the
  deformation of a neighborhood of $S^2$ in
  $(T^*S^2,\omega_{EH})$. Since $Y$ is compact, the amount of
  deformation is bounded. It follows that these neighborhoods (up to
  scaling) has bounded geometry.

  Let us fix $y' \in \gamma$ with $|y'| > t^{9/20}$, and consider the
  neighborhood $|y-y'| < t^{1/2}|y'|^{1/4}$. In this neighborhood, we
  compare the metrics
  \[
    \tilde\omega_t-\omega_t = (f,f) + (b,b) + (f,b).
  \]
  In the notations of Proposition~\ref{prop:acyl},
  Proposition~\ref{prop:error2} gives
  \[
    \epsilon_1 &\le Ct^{1/2}|y'|^{-1} \\
    \epsilon_2 &\le Ct|y'|^{-7/4} \\
    \epsilon_3 &\le Ct^1|y'|^{-1} \\
    \epsilon_4 &=Ct^{1/2}|y'|^{-1}(\|\psi_1\|_{\mathcal{B}}+\|\psi_0\|_{\mathcal{B}}).
  \]
  Again, it is easy to check that the assumptions of
  Proposition~\ref{prop:acyl} are satisfied. The crucial ingredient is
  that
  $O(\epsilon_2)f = O(t|y'|^{-7/4}\|\psi\|_{\mathcal{B}}).$
  The conclusion of Proposition~\ref{prop:acyl} then implies
  \[
    \|Q(P\psi_1)-Q(P\psi_0)\|_{\mathcal{B}}
    &\le Ct^{1/2}|y'|^{-1}\|\psi_1-\psi_0\|_{\mathcal{B}} \\
    &\le Ct^{1/20}\|\psi_1-\psi_0\|_{\mathcal{B}}.
  \]
\end{proof}

Given the above Proposition, we immediately have the following:

\begin{prop}\label{prop:N}
  For $t>0$ sufficiently small, we have
  \[
    \|N(\psi_1) - N(\psi_0)\|_{\mathcal{B}} \le
    \frac{1}{2}\|\psi_1-\psi_0\|_{\mathcal{B}}.
  \]
\end{prop}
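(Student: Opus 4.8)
The plan is to obtain Proposition~\ref{prop:N} as an essentially immediate consequence of the quadratic estimate in Proposition~\ref{prop:QP}. Since $N(\psi) = -F(0) - Q(P\psi)$ and the term $F(0)$ is independent of $\psi$, for any $\psi_0,\psi_1$ one has $N(\psi_1) - N(\psi_0) = -\bigl(Q(P\psi_1) - Q(P\psi_0)\bigr)$, so that $\|N(\psi_1) - N(\psi_0)\|_{\mathcal B}$ equals $\|Q(P\psi_1) - Q(P\psi_0)\|_{\mathcal B}$. It then suffices to apply Proposition~\ref{prop:QP} and to choose $t>0$ small enough that the constant $C(t)$ appearing there satisfies $C(t) \le \tfrac12$.

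To make this precise I would first fix the ball on which the iteration is run. Let $C_0$ be the constant from Proposition~\ref{prop:initial_error} bounding $\|F(0)\|_{\mathcal B}$, and set $B = \{\psi \in C^{\infty,\mathrm{ave}} \oplus C^\infty(K^2) : \|\psi\|_{\mathcal B} \le 2C_0\}$. For $\psi \in B$ the pair $(\psi, 0)$ lies in $B \times B$, so Proposition~\ref{prop:QP} (with $\psi_1 = \psi$, $\psi_0 = 0$) gives $\|Q(P\psi)\|_{\mathcal B} \le C(t)\|\psi\|_{\mathcal B}$, whence $\|N(\psi)\|_{\mathcal B} \le \|F(0)\|_{\mathcal B} + C(t)\|\psi\|_{\mathcal B} \le C_0 + 2C_0\, C(t)$; for $t$ small enough that $C(t) \le \tfrac12$ this is at most $2C_0$, so $N$ maps $B$ into itself (staying in $C^{\infty,\mathrm{ave}}\oplus C^\infty(K^2)$ by the lemma preceding Proposition~\ref{prop:QP}). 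Then for arbitrary $\psi_0,\psi_1 \in B$, Proposition~\ref{prop:QP} applies directly and yields $\|N(\psi_1)-N(\psi_0)\|_{\mathcal B} = \|Q(P\psi_1)-Q(P\psi_0)\|_{\mathcal B} \le C(t)\|\psi_1-\psi_0\|_{\mathcal B} \le \tfrac12\|\psi_1-\psi_0\|_{\mathcal B}$, which is the claimed estimate.

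There is essentially no obstacle here beyond Proposition~\ref{prop:QP} itself, which already did the hard work region by region; the only point requiring a word of care is that the smallness $C(t)\to 0$ in Proposition~\ref{prop:QP} is uniform over pairs in the fixed ball $B$, so a single threshold on $t$ suffices simultaneously for the self-mapping property of $N$ and for the contraction estimate. Once Proposition~\ref{prop:N} is established, the Banach fixed point theorem on $B$ produces the unique fixed point $\psi$ of $N$, and $\eta = P\psi$ (well defined by Proposition~\ref{prop:dirac2}) solves $F(\eta) = 0$, giving the genuine special Lagrangian deformation of $L_{\gamma,t}$; this is the payoff that motivates the proposition and completes the proof of Theorems~\ref{thm:main} and~\ref{thm:loop}.
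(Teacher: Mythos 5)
Your proposal is correct and matches the paper, which states Proposition~\ref{prop:N} as an immediate consequence of Proposition~\ref{prop:QP}: one cancels the $\psi$-independent term $F(0)$, applies the quadratic estimate, and takes $t$ small enough that $C(t)\le\tfrac12$. Your additional care in fixing the ball $\|\psi\|_{\mathcal B}\le 2C_0$ so that the boundedness hypothesis of Proposition~\ref{prop:QP} holds uniformly along the iteration is exactly what the paper leaves implicit, and is a welcome precision rather than a deviation.
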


We now consider the sequence $\psi_i$ defined as follows. Let
$\psi_0 = 0$, and for $i \ge 1$, define $\psi_i = N(\psi_{i-1})$. Thus
we have a sequence $\psi_i \in C^\infty \cap \mathcal{B}$. It follows
from Proposition~\ref{prop:N} that $\psi_i$ form a Cauchy sequence in
$\mathcal{B}$. Similarly, the $1$-forms $\eta_i = P\psi_i$ form a
Cauchy sequence in the corresponding weighted space
$\mathcal{C}\oplus C^{1,\alpha}(\gamma)$ (see
Proposition~\ref{prop:dirac2}). Taking limit, we obtain
$\psi \in \mathcal{B}$ and
$\eta \in \mathcal{C}\oplus C^{1,\alpha}(\gamma)$. Now we claim that
$\eta$ is the desired solution of the deformation problem. To see
this, note that
\[
  F(\eta)
  = \lim_{i \to \infty} F(P\psi_i)
  = \lim_{i \to \infty} \left( \psi_i- \psi_{i+1} \right)
  = 0.
\]
As a $C^{1,\alpha}$ minimal surface, $f_\eta: L_{\gamma,t} \to X$ is
in fact smooth by Morrey's regularity theorem \cite{Morrey}. This
completes the proof of Theorem~\ref{thm:main}.

\subsection{The case of admissible loops} We now give a sketch of
proof of Theorem~\ref{thm:loop}, obtaining special Lagrangians
$\tilde L_{\gamma,t}$ in $(X,\tilde\omega_t,\Omega)$ from an
admissible loop $\gamma$ defined in Section~\ref{sec: quadratic
  diff}. As this is the easier case, we will omit the details that are
covered in the previous subsections, and only mention the differences
whenever necessary.

Without loss of generality, we assume that the phase $\theta$ of
$\gamma$ is $0$. Let us write $L_{\gamma,t} = L_\gamma$ to emphasize
the induced geometry on $L_\gamma$ from the ambient geometry
$(X,\tilde\omega_t)$. By definition, $L_{\gamma,t}$ is a mapping torus
for some orientation-preserving diffeomorphism of a fiber.

As before, for $1$-forms $\eta$ on $L_{\gamma,t}$ we define
$F(\eta) = (*f_\eta^*\operatorname{Im}\Omega,
f_\eta^*\tilde\omega_t)$, where $f_\eta: L_{\gamma,t} \to X$ is the
deformation map. The goal is to find $\eta$ such that $F(\eta) = 0$.

The following shows that the topological obstructions for perturbing
$L_{\gamma,t}$ to special Lagrangian vanish:

\begin{lemma}
  $\tilde\omega_t|_{L_{\gamma,t}}$ is exact, and
  $\int_{L_{\gamma,t}}*\operatorname{Im}\Omega = 0$. Consequently,
  $F(0) \in C^{\infty,ave}(L_{\gamma,t})\oplus K^2(L_{\gamma,t})$.
\end{lemma}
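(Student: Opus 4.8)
The plan is to verify the two vanishing statements directly, using only the cohomological and phase inputs already recorded for admissible loops; no analysis is needed, in contrast with the path case.

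First I would treat $\operatorname{Im}\Omega|_{L_{\gamma,t}}$. By part~(2) of Definition~\ref{def: admissible loop}, $L_{\gamma,t}\to\gamma$ is a smooth fibration whose fibre $L_{\gamma(s)}$ over $\gamma(s)$ is a genuine special Lagrangian representative of $[L]$ in the K3 surface $X_{\gamma(s)}$; hence $\Omega_{\gamma(s)}|_{L_{\gamma(s)}}=e^{i\theta_s}\operatorname{vol}_{L_{\gamma(s)}}$ with constant phase $\theta_s=\operatorname{Arg}\int_{L_{\gamma(s)}}\Omega_{\gamma(s)}$, exactly as in the proof of Lemma~\ref{lemma:special}. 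Writing $\Omega=dy\wedge\Omega_y$ and restricting along the loop gives $\Omega|_{L_{\gamma,t}}=\gamma'(s)\,ds\wedge\Omega_{\gamma(s)}|_{L_{\gamma(s)}}$, so $\operatorname{Arg}\big(\Omega|_{L_{\gamma,t}}\big)=\operatorname{Arg}\gamma'(s)+\theta_s=\operatorname{Arg}\alpha(\gamma'(s))$. The geodesic condition in part~(1) of Definition~\ref{def: admissible loop} is precisely $\operatorname{Arg}\alpha(\gamma')\equiv\theta$, and we have normalised $\theta=0$; therefore $\Omega|_{L_{\gamma,t}}$ is a positive real multiple of $\operatorname{vol}_{L_{\gamma,t}}$, i.e. $\operatorname{Im}\Omega|_{L_{\gamma,t}}\equiv 0$. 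In particular $*\operatorname{Im}\Omega\equiv 0$, so $\int_{L_{\gamma,t}}*\operatorname{Im}\Omega=0$ and $*\operatorname{Im}\Omega\in C^{\infty,ave}(L_{\gamma,t})$. This is the same computation as in Lemma~\ref{lemma:special}, now read along a closed loop instead of a path, and it applies verbatim because an admissible loop carries exactly the two hypotheses used there.

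For the exactness of $\tilde\omega_t|_{L_{\gamma,t}}$ I would simply invoke Lemma~\ref{lemma:loop}, which gives $[\tilde\omega_t]|_{L_{\gamma,t}}=0$ in $H^2(L_{\gamma,t};\mathbb{R})$; since $L_{\gamma,t}$ is a closed smooth $3$-manifold, de Rham theory then says the closed form $\tilde\omega_t|_{L_{\gamma,t}}$ is exact, hence in particular lies in $K^2(L_{\gamma,t})$. Before citing that lemma one should check that its hypotheses hold for the $L_{\gamma,t}$ built from the admissible loop: that it is a fibration of smooth special Lagrangians in the class $[L]$ (Definition~\ref{def: admissible loop}(2)), that it is a mapping torus of an orientation-preserving fibre diffeomorphism (recorded in the text preceding Theorem~\ref{thm:loop}), and that its fibres are Lagrangian for $\tilde\omega_t$ so that $\int_{L_{\gamma(s)}}\tilde\omega_t=0$ — this last point being cohomological, since restriction to a fibre kills the $\pi^*$-part of the class and $[\tilde\omega_t|_{X_y}]=[\omega_X|_{X_y}]=[\omega_{SRF}|_{X_y}]$. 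Combining the two parts yields $F(0)=\big(*\operatorname{Im}\Omega|_{L_{\gamma,t}},\,\tilde\omega_t|_{L_{\gamma,t}}\big)\in C^{\infty,ave}(L_{\gamma,t})\oplus K^2(L_{\gamma,t})$.

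The only real difficulty is bookkeeping — confirming that the object constructed from an admissible loop genuinely satisfies the hypotheses of Lemmas~\ref{lemma:special} and~\ref{lemma:loop} — and there is essentially no analytic content. I do not expect a serious obstacle; the point worth emphasising is that, because the loop is closed and its fibres are already special Lagrangian, $\operatorname{Im}\Omega$ restricts to zero identically rather than merely having vanishing average, which also explains why no thimble correction is needed here.
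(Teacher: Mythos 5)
Your proposal is correct and follows essentially the same route as the paper: the paper's proof is precisely the two-line argument that $\operatorname{Im}\Omega|_{L_{\gamma,t}}\equiv 0$ by Lemma~\ref{lemma:special} (whose computation you reproduce inline) and that exactness of $\tilde\omega_t|_{L_{\gamma,t}}$ follows from Lemma~\ref{lemma:loop}. Your extra verification of the hypotheses of Lemma~\ref{lemma:loop} and the remark that no thimble correction is needed in the loop case are consistent with the paper but add nothing beyond its argument.
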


\begin{proof}
  By Lemma~\ref{lemma:special}, we actually have
  $\operatorname{Im}\Omega|_{L_{\gamma,t}} = 0$. The exactness of
  $\tilde\omega_t|_{L_{\gamma,t}}$ follows from Lemma~\ref{lemma:loop}.
\end{proof}

We refer the reader to the previous subsection for notations involved
in setting up the deformation problem.

Since the admissible loop $\gamma$ has a fixed distance to the
discriminant locus $S$ with respect to $\omega_Y$, the usual H\"older
norms on $L_{\gamma,t}$ are sufficient for our purposes. We have the
following.

\begin{lemma}
  $\|F(0)\|_{C^{0,\alpha}(L_{\gamma,t})} \le Ct^{1/2}$ for some constant $C>0$.
\end{lemma}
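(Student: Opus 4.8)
The plan is to split $F(0) = (*\operatorname{Im}\Omega|_{L_{\gamma,t}},\, \tilde\omega_t|_{L_{\gamma,t}})$ and treat the two factors separately. The $0$-form factor vanishes identically: $\gamma$ is a geodesic of phase $0$ for the quadratic differential defined by $[L]$, and the fibers of $L_{\gamma,t}\to\gamma$ are special Lagrangian $2$-cycles in the K3 fibers, so Lemma~\ref{lemma:special} gives $\operatorname{Im}\Omega|_{L_{\gamma,t}}=0$ (this is exactly the previous lemma). It therefore remains to bound $\|\tilde\omega_t|_{L_{\gamma,t}}\|_{C^{0,\alpha}(L_{\gamma,t})}$, the norm being taken with respect to the induced metric.

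First I would introduce the semi-Ricci flat metric $\omega_{SRF}=\omega_X+\sqrt{-1}\partial\bar\partial\psi+\tfrac{1}{t}\pi^*\tilde\omega_Y$, which is a smooth K\"ahler metric in a neighborhood of $L_{\gamma,t}$ because $\gamma$ stays at a fixed $\omega_Y$-distance from $\mathcal{S}$, so $|y|\sim 1$ along $L_{\gamma,t}$. Write $\tilde\omega_t|_{L_{\gamma,t}}=(\tilde\omega_t-\omega_{SRF})|_{L_{\gamma,t}}+\omega_{SRF}|_{L_{\gamma,t}}$. For the first term, $\tilde\omega_t-\omega_{SRF}=(\tilde\omega_t-\omega_t)+(\omega_t-\omega_{SRF})$; by Theorem~\ref{thm:improved}, in the region $U_3$ with $|y|\sim 1$ the weighted norms reduce to $t^{1/2}$ times the ordinary $C^{2,\alpha}(\tilde\omega_t)$ norm, so $\|\tilde\omega_t-\omega_t\|_{C^{0,\alpha}(\tilde\omega_t)}\le Ct^{1/2}$, while $\|\omega_t-\omega_{SRF}\|_{C^{0,\alpha}}\le Ct^{1/2}$ is the transition/cutoff error, controlled by Lemma~\ref{lemma:Li2.5} since the cutoff patches have $\omega_Y$-diameter $\sim t^{1/2}$. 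Since $L_{\gamma,t}$ has bounded second fundamental form in $X$ (the fibers have bounded geometry and $L_{\gamma,t}$ is very flat in the $\gamma$-direction), restriction of a $2$-form to $L_{\gamma,t}$ with the induced metric does not increase its $C^{0,\alpha}$ norm, so $\|(\tilde\omega_t-\omega_{SRF})|_{L_{\gamma,t}}\|_{C^{0,\alpha}(L_{\gamma,t})}\le Ct^{1/2}$.

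For the second term I would exploit the structure of $\omega_{SRF}|_{L_{\gamma,t}}$. Decompose $TL_{\gamma,t}=T(\text{fiber})\oplus\langle\xi_s\rangle$, where $\xi_s$ is the connecting vector field of $L_{\gamma,t}\to\gamma$ projecting to $\gamma'(s)$. On two fiber vectors $\omega_{SRF}$ restricts to $\omega_{SRF}|_{X_{\gamma(s)}}$ pulled back to $L_{\gamma(s)}$, which vanishes because $L_{\gamma(s)}$ is special Lagrangian, hence Lagrangian, for the Calabi--Yau metric $\omega_{SRF}|_{X_{\gamma(s)}}$; there is no $\xi_s\wedge\xi_s$ component as $\gamma$ is one-dimensional; and the surviving mixed part is $\omega_{SRF}(\xi_s,\cdot)$ against fiber vectors. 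Writing $\xi_s=\xi_s^h+\xi_s^v$ as the sum of its $\omega_{SRF}$-horizontal and vertical parts, the horizontal part contributes nothing — the horizontal distribution of a K\"ahler submersion is $J$-invariant, hence $\omega_{SRF}$-orthogonal to the $J$-invariant fiber tangent spaces — so the mixed part equals $\omega_{SRF}(\xi_s^v,\cdot)|_{L_{\gamma(s)}}$, a $1$-form of bounded $C^{0,\alpha}$ norm on the fiber (the fibers vary with $s$ at a bounded rate in $\omega_Y$, hence in $\omega_{SRF}|_{X_y}$, so $\xi_s^v$ is bounded). Because the $\gamma$-direction of $L_{\gamma,t}$ has $\tilde\omega_t$-length $\sim t^{-1/2}$ (the base metric being $\sim\tfrac{1}{t}\tilde\omega_Y$), this component of the $2$-form $\omega_{SRF}|_{L_{\gamma,t}}$, measured in the induced metric, carries a factor $t^{1/2}$; hence $\|\omega_{SRF}|_{L_{\gamma,t}}\|_{C^{0,\alpha}(L_{\gamma,t})}\le Ct^{1/2}$, and combining with the previous paragraph gives the claim. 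The point requiring care is precisely this last step: that the ``long'' direction of $L_{\gamma,t}$ genuinely discounts the mixed $2$-form component in the induced metric, since without this gain the mixed term would only be $O(1)$; one can sidestep it entirely by building the approximate solution $L_{\gamma,t}$ so that $\xi_s$ is exactly $\omega_{SRF}$-horizontal, in which case $\omega_{SRF}|_{L_{\gamma,t}}=0$ identically.
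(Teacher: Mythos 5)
Your main argument is correct and is essentially the paper's own proof: the $0$-form part vanishes identically because the fibers are special Lagrangian and $\gamma$ is a geodesic of the associated quadratic differential (Lemma~\ref{lemma:special}, i.e.\ Definition~\ref{def: admissible loop}(1)); the $2$-form part splits into the metric deviation $\tilde\omega_t-\omega_t$ (plus the patching error to $\omega_{SRF}$), which is $O(t^{1/2})$ at fixed distance from $\mathcal{S}$ by Theorem~\ref{thm:improved}/Proposition~\ref{prop:error2} together with Lemma~\ref{lemma:Li2.5}, and the mixed term, which you control exactly as the paper does: the vertical component of the lift of $\gamma'$ is bounded (of the same order as the variation of the fiberwise structures), while the base direction of the induced metric has length $\sim t^{-1/2}$, so the $ds\wedge(\text{fiber }1\text{-form})$ component picks up the factor $t^{1/2}$.

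The one thing to retract is the closing remark that the mixed term could be killed altogether by building $L_{\gamma,t}$ so that $\xi_s$ is exactly $\omega_{SRF}$-horizontal. You have no such freedom: the fibers are forced to be the special Lagrangian representatives of $[L]$ in $X_{\gamma(s)}$ (unique when $[L]^2=-2$, cf.\ Lemma~\ref{lem: SLag S2}), whereas the $\omega_{SRF}$-symplectic parallel transport of $L_{\gamma(0)}$ along $\gamma$ produces fibers that are merely Lagrangian, not special Lagrangian, and for a loop its monodromy need not even return $L_{\gamma(0)}$ to itself. Demanding both special Lagrangian fibers and an exactly horizontal connecting field is overdetermined, and the generic failure of exact horizontality is precisely why Definition~\ref{def: admissible loop}(3) only requires $\tilde\omega_t(\gamma',\cdot)$ to be $L^2$-orthogonal to the fiberwise harmonic $1$-forms --- the weaker condition that is actually needed later when inverting $d+d^*$ on the space $E^2(L_{\gamma,t})$.
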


\begin{proof}
  This is similar to Proposition~\ref{prop:initial_error}.  The
  estimate for the $0$-form part follows from Definition~\ref{def:
    admissible loop}~(1). For the $2$-form part, first we have the
  metric deviation $\tilde\omega_t-\omega_t$, which can be estimated
  by Proposition~\ref{prop:error2}. Since we are away from the
  singular fibers, i.e., $|y| > \epsilon'>0$ for some constant
  $\epsilon' >0$ depending on $\gamma$, the estimate follows from
  unwinding the definition of the weighted spaces $\mathcal{B}$. We
  still need to deal with the fiberwise variations: the special
  Lagrangian $2$-cycles over $\gamma$ have nontrivial moduli spaces
  when $[L]^2 \ge 0$, or equivalently $L_{\gamma(s)}$ have higher
  genus. This can however be taken care of by noting that the vertical
  direction of the lifting of $\gamma'$ is bounded. Thus, the vertical
  variation is of the same order as the variation of complex
  structures.
\end{proof}

We now state an analogous result of Proposition~\ref{prop:dirac2}. Let
$E^2(L_{\gamma,t})$ denote the space of exact $2$-forms on
$L_{\gamma,t}$ such that $\omega(\gamma',\cdot)$ is $L^2$-orthogonal
to the space of harmonic $1$-forms $\mathcal{H}^1(L_y)$ for
$y \in \gamma$. Here we recall that $\gamma'$ is the tangent vector of
the horizontal lifting of $\gamma$.

\begin{lemma}
  For $t>0$ sufficiently small, the Hodge-Dirac operator $d+d^*$ on
  $L_{\gamma,t}$ admits a right inverse
  \[P: \psi=(f,\omega) \in C^{\infty,ave}(\Lambda^0)\oplus C^{\infty}(E^2) \to
    C^\infty(\Lambda^1)\] such that one can write
  $P\psi = \eta_1+\eta_2$, where
  $\eta_1 \in C^{\infty}(\Lambda^1(L_{\gamma,t}))$ and
  $\eta_2 \in C^\infty(\Lambda^1(\gamma))$. Here we have
  \[
    \|\eta_1\|_{C^{1,\alpha}(L_{\gamma,t})} &\le C\|\psi\|_{C^{0,\alpha}(L_{\gamma,t})}, \\
    \|\eta_2\|_{C^{1,\alpha}(\gamma)} &\le Ct^{-1/2}\|\psi\|_{C^{0,\alpha}(L_{\gamma,t})},
  \]
  and the constant $C>0$ does not depend on $t$.
\end{lemma}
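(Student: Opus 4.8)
The plan is to follow the parametrix construction of Proposition~\ref{prop:dirac2}, simplified by the fact that $L_{\gamma,t}$ stays at bounded distance from the singular fibers, so the model geometry near every point is a product cylinder $[-A,A]\times L_y$ which, after rescaling the fibers by a bounded factor, has uniformly bounded geometry, and there is no thimble contribution. First I would fix a large constant $\Lambda\gg0$ and cover $\gamma\cong S^1$ by $N\sim\Lambda^{-1}t^{-1/2}$ arcs $I_i$ of length $\Lambda$ in the metric $\frac1t\tilde\omega_Y$, with an associated partition of unity $\{\chi_i\}$ whose derivatives are $O(\Lambda^{-1})$ in the induced metric on $L_{\gamma,t}$; over each $I_i$ the preimage in $L_{\gamma,t}$ is uniformly equivalent to a cylinder $[-\Lambda,\Lambda]\times L_{y_i}$, using that the fiberwise special Lagrangians and the horizontal variation of complex structure along $\gamma$ are bounded, as in Section~\ref{sec: ansatz}.

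Next I would split $\psi=(f,\omega)$ into a fiberwise part and a base part. For the $2$-form, exactness of $\omega$ gives $\int_{L_y}\omega=0$ for every fiber, while the integrability condition in Definition~\ref{def: admissible loop}(3) says that $\iota_{\gamma'}\omega$ is fiberwise $L^2$-orthogonal to $\mathcal{H}^1(L_y)$; this is exactly the hypothesis needed to apply Lemma~\ref{lemma:d0} and Lemma~\ref{lemma:support} (with $H^1(S^2)$ replaced by $\mathcal{H}^1(L_y)$) and to land, after localization, in the spaces $C^{\infty,ave}(K^2(\mathbb{R}\times L_{y_i}))$ of Proposition~\ref{prop:dirac_cylinder}. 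For the function part I would write $\chi_if=f_i+f_i'$ with $f_i$ of zero fiberwise average and $f_i'$ a function on $\gamma$, and set $f_\gamma=\sum_if_i'$. Applying Proposition~\ref{prop:dirac_cylinder} to $(f_i,\omega_i)$ on each infinite model cylinder $\mathbb{R}\times L_{y_i}$, then rescaling and patching with the $\chi_i$, produces $\eta_1\in C^\infty(\Lambda^1(L_{\gamma,t}))$ with $\|\eta_1\|_{C^{1,\alpha}}\le C\|\psi\|_{C^{0,\alpha}}$; the cutoff errors are $O(\Lambda^{-1})$ (here the exponential decay in Proposition~\ref{prop:dirac_cylinder} is used on each length-$\Lambda$ piece) and the errors from replacing $\tilde\omega_t$ by the product metric are $O(t^{1/2})$ by Proposition~\ref{prop:error2}, so both can be absorbed once $\Lambda$ is large and $t$ is small.

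Then I would treat the base part $f_\gamma$ as a function on the circle $\gamma$ of total length $\sim t^{-1/2}$ in the arc-length parametrization of $\frac1t\tilde\omega_Y$. Writing $\eta_2=u\,ds$, the equation $d^*_\gamma\eta_2=f_\gamma-\bar f_\gamma$ is the ODE $u'+(\text{bounded})u=f_\gamma-\bar f_\gamma$, which closes up on the circle because $\int_\gamma\operatorname{Vol}(L_y)(f_\gamma-\bar f_\gamma)=0$ by definition of the weighted average, and the leftover constant $\bar f_\gamma$ is controlled using $f\in C^{\infty,ave}(L_{\gamma,t})$ exactly as in Lemma~\ref{lem:zero_ave}. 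Integrating over a circle of length $\sim t^{-1/2}$ gives $\|\eta_2\|_{C^{1,\alpha}(\gamma)}\le Ct^{-1/2}\|\psi\|_{C^{0,\alpha}}$, which is the source of the $t^{-1/2}$ in the statement. Assembling $P\psi=\eta_1+\eta_2$ yields $\|(d+d^*)P\psi-\psi\|_{C^{0,\alpha}}\le\tfrac12\|\psi\|_{C^{0,\alpha}}$, and the genuine right inverse is $P\sum_{j\ge0}(1-(d+d^*)P)^j$.

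I expect the main obstacle to be the same one flagged in the introduction: the base-direction inverse has operator norm growing linearly in the diameter $\sim t^{-1/2}$, so one must verify that every error fed back into the Neumann iteration — the cutoff errors, the metric deviation $\tilde\omega_t-\omega_t$, and the mismatch between $d^*$ and the base operator $d^*_\gamma$ — is $o(1)$ relative to this large factor and does not compound in the base direction. Since those errors are $O(\Lambda^{-1})+O(t^{1/2})$ while the base solution carries a factor of at most $t^{-1/2}$, choosing $\Lambda$ large and then $t$ small closes the estimate; the care lies in keeping the base and fiber directions separate throughout the iteration, exactly as in the proof of Proposition~\ref{prop:dirac2}.
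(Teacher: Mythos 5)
Your overall architecture is the same as the paper's: localize along $\gamma$ to product cylinders $\mathbb{R}\times L_{y_i}$ at scale $t^{1/2}$ in $\omega_Y$, split $\psi$ into a fiberwise part (inverted by the analogue of Lemma~\ref{lemma:support} together with Proposition~\ref{prop:dirac_cylinder}) and a base part (inverted by the weighted ODE on the circle, which is where the $t^{-1/2}$ comes from), and then correct by a Neumann series. However, two steps are asserted rather than proved, and as written one of them would fail.

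First, the higher-genus harmonic bookkeeping. The hypothesis $\omega\in E^2$ (equivalently, the integrability condition of Definition~\ref{def: admissible loop}) gives that $\iota_{\gamma'}\omega$ is $L^2$-orthogonal to $\mathcal{H}^1(L_y)$ with respect to the \emph{varying} fiber metric at each $y$. After you freeze the model fiber $L_{y_i}$ to apply Proposition~\ref{prop:dirac_cylinder}, the $ds$-component $\mu$ of the localized form $\omega_i$ is only \emph{approximately} orthogonal to $\mathcal{H}^1(L_{y_i})$; it does not land in $C^{0,\alpha,\mathrm{ave}}(K^2(\mathbb{R}\times L_{y_i}))$ on the nose, as you claim. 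The paper's proof makes this the central point of the loop case: one must subtract the projection $\sum_i\bigl(\int_{L_{y_i}}\langle\mu(y),h_i\rangle\bigr)h_i$ onto an orthonormal basis of $\mathcal{H}^1(L_{y_i})$, and then show the discarded term is $O(t^{1/2})\|\omega\|_{C^{0,\alpha}}$ because the fiber metrics over a patch of $y$-diameter $t^{1/2}$ deviate by $O(t^{1/2})$. Without this projection the cylinder inverse of Proposition~\ref{prop:dirac_cylinder} simply does not apply, since $dy\wedge h_i$ lies in its cokernel direction.

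Second, your closing estimate for the iteration is too quick. You say the errors are $O(\Lambda^{-1})+O(t^{1/2})$ while the base inverse costs at most $t^{-1/2}$, so ``choosing $\Lambda$ large and then $t$ small closes the estimate''; but an $O(t^{1/2})$ error operator composed with a $t^{-1/2}$-sized base solution gives an $O(1)$ contribution, which is not a contraction. To close the loop one has to use the finer directional structure: the error operators acting on base-direction $1$-forms $u\,ds$ (metric deviation in the mixed $(f,b)$ components, and the discrepancy $d^*-d^*_\gamma$ after weighting by $\operatorname{Vol}(L_y)$) are strictly better than $O(t^{1/2})$ --- this is exactly why the paper records in Proposition~\ref{prop:error2} that the cross term $(f,b)$ decays faster than $(f,f)$ and $(b,b)$, and why both here and in Proposition~\ref{prop:dirac2} the base and fiber directions are estimated separately rather than through a single norm. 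You acknowledge that ``the care lies in keeping the base and fiber directions separate,'' but the mechanism (faster decay of the mixed terms, plus the harmonic projection above) is the actual content of the proof and needs to be supplied, not just invoked.
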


\begin{proof}
  The overall strategy follows very closely the proof of
  Proposition~\ref{prop:dirac2}. We first need an analogue of
  Lemma~\ref{lemma:support}. The presence of higher genus fibers poses
  no extra difficulties, as we have $\omega \in E^2$. In fact, the
  proof is slightly easier as the thimble $L_{\mathbb{R}^3}$ does not
  appear here.

  We now explain the construction of the parametrix in the loop case.
  Since the thimble $L_{\mathbb{R}^3}$ does not appear as local
  models, the analysis is much easier. However, there is an extra
  complication in the loop case due to the presence of higher genus
  fibers. In the following we will assume the content of the proof of
  Proposition~\ref{prop:dirac2}.

  Let us fix $y' \in \gamma$ and consider a neighborhood $U$ of $y'$
  with size $t^{1/2}$ with respect to $\omega_Y$. Then we can identify
  $\pi^{-1}(U) \cap L_{\gamma,t}$ with the geometry
  $\mathbb{R} \times L_{y'}$, where $L_{y'}$ is equipped with the
  metric induced from the Calabi-Yau metric on the K3 fiber
  $X_{y'}$. The error between the actual induced metric on
  $\pi^{-1}(U) \cap L_{\gamma,t}$ and $\mathbb{R} \times L_{y'}$ is of
  order $O(t^{1/2})$, as explained in the proof of the previous
  proposition.

  We now let $(f,\omega)$ be given by the analogue of
  Lemma~\ref{lemma:support}, supported in
  $\pi^{-1}(U) \cap L_{\gamma,t}$. To apply
  Proposition~\ref{prop:dirac_cylinder}, we need
  \[
    (f,\omega) \in C^{0,\alpha,ave}(\Lambda^0(\mathbb{R}\times
    L_{y'})\oplus\Lambda^2(\mathbb{R}\times L_{y'})).
  \]
  The function part $f$ can be modified by taking out the fiberwise
  average with respect to the geometry $\mathbb{R}\times L_{y'}$. This
  fiberwise average can then be dealt with in a way similar to the
  proof of Proposition~\ref{prop:dirac2}.

  To deal with the $2$-form part $\omega$, write $\partial_y$ for the
  horizontal lifting of $\gamma'$. Then $\omega$, given by the analogue
  of Lemma~\ref{lemma:support}, can be written as
  \[
    \omega = t^{-\frac{1}{2}}dy \wedge \mu + \nu, 
  \]
  where $\mu$ and $\nu$ are $1$-form and $2$-form tangent to the fiber
  $L_{y'}$. Since the $2$-form $\omega$ is exact, we clear have
  $\nu \in C^{0,\alpha,ave}(\Lambda^2(\mathbb{R}\times L_y))$. On the
  other hand, by construction (the analogue of
  Lemma~\ref{lemma:support}), we have
  \[
    \int_{L_y} \langle \nu, h\rangle = 0
  \]
  for all harmonic $1$-forms $h \in \mathcal{H}^1(L_y)$ and $y \in
  U$. To satisfy the assumption in
  Proposition~\ref{prop:dirac_cylinder}, we define
  \[
    \tilde\mu = \mu - \left(\sum_i\int_{L_{y'}}\langle\mu(y), h_i\rangle\right)h_i,
  \]
  where $\{h_i\}\subset \mathcal{H}^1(L_{y'})$ is an orthonormal
  basis. Then by definition, we have
  \[
    t^{-1/2}dy \wedge \tilde\mu \in C^{0,\alpha,ave}(\Lambda^2(\mathbb{R}\times L_{y'})),
  \]
  and we can apply Proposition~\ref{prop:dirac_cylinder}. The result
  is that using cutoff functions, we can construct a parametrix
  similar to the proof of Proposition~\ref{prop:dirac_cylinder}.
  
  Now we claim that the fiberwise average part
  \[
    t^{-1/2}dy\wedge\left(\sum_i\int_{L_{y'}}\langle\mu(y), h_i\rangle\right)h_i
  \]
  is much smaller and can be ignored in the iteration. This boils down
  to the fact that the metric deviation between $L_y$ and $L_{y'}$ is
  $O(t^{1/2})$. Using this, a straightforward computation then yields
  the estimate
  \[
    \left\|\int_{L_{y'}}\langle\mu(y), h_i\rangle\right\|_{C^{0,\alpha}(L_{y'})}
    \le Ct^{\frac{1}{2}}\|\omega\|_{C^{0,\alpha}(L_{\gamma,t})}
  \]
  for a constant $C>0$ depending on $L_{\gamma}$. Thus, by having
  $t>0$ sufficiently small, we can indeed ignore this contribution.
\end{proof}

To complete the deformation setup, we need the following estimate for
the quadratic term:

\begin{lemma}
  For $\psi_1,\psi_0 \in C^{\infty,ave}\oplus C^\infty(E^2)$ with
  $\|\psi_1\|_{C^{0,\alpha}},\|\psi_0\|_{C^{0,\alpha}} \le Ct^{1/2}$,
  we have
  \[
    \|Q(P\psi_1)-Q(P\psi_0)\|_{C^{1,\alpha}} \le
    C't^{1/2}\|\psi_1-\psi_0\|_{C^{0,\alpha}},
  \]
  where $C'>0$ is a constant depending on $C$.
\end{lemma}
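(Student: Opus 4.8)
The plan is to follow the proof of Proposition~\ref{prop:QP}, which becomes considerably simpler in the loop case: since the admissible loop $\gamma$ stays at a fixed distance from the discriminant locus $S$, the thimble model $L_{\mathbb{R}^3}$ never enters, only the analogue of ``Region~II'' is needed, and all the weight functions are trivial. As in Section~\ref{subsec:deform}, for $i=0,1$ I set $\eta_i = P\psi_i$ and decompose $\eta_i$ into its component $\eta_{i,L}$ tangent to the fibers and its component $\eta_{i,b}$ along $\gamma$. The inverse estimate established above gives $\|\eta_{i,L}\|_{C^{1,\alpha}} \le C\|\psi_i\|_{C^{0,\alpha}} \le Ct^{1/2}$ and $\|\eta_{i,b}\|_{C^{1,\alpha}(\gamma)} \le Ct^{-1/2}\|\psi_i\|_{C^{0,\alpha}} \le C$, while the construction of $P$ shows that $d^*\eta_i$ differs from the fiberwise-average-zero function part of $\psi_i$ by an error dominated by the metric deviation --- here the fiberwise average along $\gamma$ vanishes because $\psi_i$ has total integral zero on $L_{\gamma,t}$ --- so $\|d^*\eta_i\|_{C^{0,\alpha}} \le C\|\psi_i\|_{C^{0,\alpha}}$ with $t$-independent constant. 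I then write $Q(\eta_1)-Q(\eta_0) = \mathrm{(I)}+\mathrm{(II)}$ exactly as in Section~\ref{subsec:deform}, with $\mathrm{(II)} = E(\eta_1-\eta_0)$.

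The estimate is local. I cover $L_{\gamma,t}$ by the preimages $\pi^{-1}(U_{y'})$ of coordinate balls $U_{y'}\ni y'$ of size $t^{1/2}$ with respect to $\omega_Y$, with bounded overlap. Over each such ball, after rescaling the base metric so that the segment has unit size, $\pi^{-1}(U_{y'})\cap L_{\gamma,t}$ is identified with a neighborhood in the product $\mathbb{R}\times L_{y'}$, where $L_{y'}\subset X_{y'}$ is the Calabi-Yau-induced special Lagrangian fiber, and the actual induced geometry deviates from this product by $O(t^{1/2})$ in the fiber and base directions and by $O(t)$ in the mixed direction (Proposition~\ref{prop:error2}, with $|y|\sim 1$ since $\gamma$ avoids $S$). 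This is precisely the setting of Proposition~\ref{prop:acyl}, applied with $X = X_{y'}$ and $L = L_{y'}$: in its notation one has $\epsilon_1,\epsilon_3\lesssim t^{1/2}$, $\epsilon_2\lesssim t$ and $\epsilon_4 = \|\eta_{i,L}\|_{C^{1,\alpha}}+\|d^*\eta_i\|_{C^{0,\alpha}}\lesssim t^{1/2}$. Its three hypotheses are then verified directly: $\|\epsilon_2\eta_i\|_{C^{1,\alpha}}\lesssim t\ll t^{1/2}\sim\epsilon_4$, and the last hypothesis follows after bounding the full norms of $\eta_i$ and of $\eta_1-\eta_0$ by $Ct^{-1/2}$ times the corresponding $\psi$-norms, while the ``reduced'' quantities $\|d^*(\eta_1-\eta_0)\|_{C^{0,\alpha}}+\|\eta_{1,L}-\eta_{0,L}\|_{C^{1,\alpha}}$ are bounded by $C\|\psi_1-\psi_0\|_{C^{0,\alpha}}$ with $t$-independent constant. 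Proposition~\ref{prop:acyl} then gives, over each $\pi^{-1}(U_{y'})$,
\[
  |\mathrm{(I)}|_{C^{0,\alpha}},\ |\mathrm{(II)}|_{C^{0,\alpha}}
  \ \le\ Ct^{1/2}\bigl(\|d^*(\eta_1-\eta_0)\|_{C^{0,\alpha}} + \|\eta_{1,L}-\eta_{0,L}\|_{C^{1,\alpha}}\bigr)
  \ \le\ Ct^{1/2}\|\psi_1-\psi_0\|_{C^{0,\alpha}},
\]
and summing over the bounded cover yields the claimed bound, in the $C^{0,\alpha}$ norm on the codomain of $d+d^*$.

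Two points require a little extra care, both already met in the preceding lemmas. First, when $[L]^2\ge 0$ the fibers $L_{y'}$ move in a nontrivial moduli space over $\gamma$; but the horizontal lift of $\gamma$ has uniformly bounded vertical component, so the variation of $L_{y'}$ in $y'$ is of the same order as the variation of complex structures and is absorbed into the $O(t^{1/2})$ deviation between $\pi^{-1}(U_{y'})\cap L_{\gamma,t}$ and $\mathbb{R}\times L_{y'}$. Second --- and this is the main, if modest, technical point --- the $t^{-1/2}$ blow-up of the base component $\eta_{i,b}$ does not spoil the estimate: by the structural fact established in Lemma~\ref{lem:cyl} and propagated to Proposition~\ref{prop:acyl}, the base direction enters $\mathrm{(I)}$ and $\mathrm{(II)}$ only through $d^*$ --- equivalently, the quadratic term differentiates the base component along $\gamma$ --- and $\|d^*\eta_i\|_{C^{0,\alpha}}$ is bounded by $\|\psi_i\|_{C^{0,\alpha}}$ with a constant independent of $t$. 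Keeping careful track of which quantities carry the factor $t^{-1/2}$ and which do not, so that the hypotheses of Proposition~\ref{prop:acyl} hold with the correct powers of $t$, is where the bulk of the (routine) bookkeeping lies.
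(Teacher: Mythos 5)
Your proof is correct and takes essentially the same route the paper intends: the paper omits the argument with the remark that it follows from Proposition~\ref{prop:acyl} since the fiber variation is uniformly bounded, and your localization over base balls of size $t^{1/2}$, comparison with the product $\mathbb{R}\times L_{y'}$ via Proposition~\ref{prop:error2}, and bookkeeping of which quantities carry the $t^{-1/2}$ base factor simply reproduce the Region~II analysis of Proposition~\ref{prop:QP} in this easier setting. Note that what you (correctly) obtain is a $C^{0,\alpha}$ bound on $Q(P\psi_1)-Q(P\psi_0)$, which is what the contraction argument uses; the $C^{1,\alpha}$ on the left-hand side of the stated lemma appears to be a typo.
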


The proof essentially follows from Proposition~\ref{prop:acyl}, since
the fiber variation is uniformly bounded. Therefore we omit it.

\section{Examples and further discussions} \label{sec: applications}
Theorems \ref{thm:main} and \ref{thm:loop} reduce the problem of
finding special Lagrangian submanifolds to establishing the existence
of admissible paths and loops, which are geodesics with respect to
certain (locally defined) quadratic differentials. In this section, we
will construct explicit examples of admissible paths and loops,
leading to new examples of special Lagrangian submanifolds. We will
also relate of our work to the Thomas-Yau conjecture, mirror symmetry,
and the Donaldson-Scaduto conjecture.

\subsection{Special Lagrangian vanishing spheres}
Let $\phi$ be a quadratic differential on an open set $U$ of a Riemann
surface.  We first
recall the following basic result of quadratic differentials:

\begin{lemma} \label{lem: existence of geodeiscs} \cite[Theorem
  14.2.1, Theorem 18.2.1]{St} Assume that $U$ is a simply
  connected. Then we have the following:
  \begin{enumerate}
  \item There exists at most one smooth geodesic connecting any given
    two zeros of the quadratic differential. Moreover, there exists no
    geodesic loop in $U$.
  \item Any two zeros of the quadratic differential are connected by a
    sequence of geodesics possibly of different phases. Moreover, if
    $y_0,y_1\in U$ such that the distance $dist(y_0,y_1)$ is less than
    the distance of $y_0$ to $\partial U$, then there exists a smooth
    geodesic connecting $y_0,y_1$.
	\end{enumerate}	
\end{lemma}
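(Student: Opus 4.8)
The plan is to derive everything from the flat metric with conical singularities defined by $\phi$, following the classical theory in Strebel~\cite{St}. First I would set up the local model: writing $\phi = f(y)\,dy\otimes dy$, the metric $g_\phi = |f(y)|\,|dy|^2$ is flat in the natural coordinate $w = \int \sqrt{f(y)}\,dy$ away from the zeros of $\phi$, while at a zero of order $k$ one has $w \sim y^{(k+2)/2}$, so $g_\phi$ has a cone point of angle $(k+2)\pi \ge 3\pi$. Thus $(U,g_\phi)$ is a flat surface all of whose cone angles exceed $2\pi$, i.e.\ a locally $\mathrm{CAT}(0)$ length space, and a local geodesic is a path which in the $w$-coordinate is a Euclidean segment off the zeros and ``goes straight across'' each zero it passes through; in particular its phase $\theta$, given by $\sqrt{f(y)}\,\frac{dy}{ds} = e^{i\theta}$, is locally constant on the complement of the zeros and can only jump at a zero. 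This already identifies ``geodesic of $g_\phi$'' with ``concatenation of straight arcs, possibly of different phases, meeting only at zeros''.

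For part (1), I would argue by the flat Gauss--Bonnet formula. If two distinct smooth geodesics joined the same pair of zeros, then (passing to an innermost one if they also meet in between) they enclose an embedded bigon with interior angles $\alpha_0,\alpha_1 > 0$ at the two vertices and, say, interior cone points of angles $\theta_j \ge 3\pi$; Gauss--Bonnet gives $(\pi-\alpha_0)+(\pi-\alpha_1)+\sum_j(2\pi-\theta_j) = 2\pi$, i.e.\ $\alpha_0+\alpha_1 = \sum_j(2\pi-\theta_j) \le 0$, contradicting $\alpha_0,\alpha_1 > 0$. The same computation applied to a geodesic loop (a disk with a single corner of interior angle $\alpha_p > 0$, or with none at all in the closed-geodesic case) forces $\alpha_p \le -\pi$, again impossible; hence $U$ being simply connected there is no geodesic loop. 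This is the uniqueness one expects for a simply connected non-positively curved space, and is \cite[Thm.~14.2.1]{St}.

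For part (2), in the quantitative case I would take $d = \mathrm{dist}_{g_\phi}(y_0,y_1) < \mathrm{dist}_{g_\phi}(y_0,\partial U)$, observe that the closed ball $\overline{B}(y_0,d)$ is then a compact subset of $U$ so that every path from $y_0$ to $y_1$ of length close to $d$ stays inside it, and run the direct method: a length-minimizing sequence reparametrized proportionally to arc length is equicontinuous with bounded image, so by Arzel\`a--Ascoli a subsequence converges uniformly to a path $\gamma$ realizing $d$; a length minimizer is a local geodesic, hence by the local model a broken straight arc with breaks only at zeros, and confinement to a small ball together with part~(1) rules out any break, so $\gamma$ is a single smooth geodesic. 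For the unconditional assertion that any two zeros are joined by \emph{some} broken geodesic, I would again minimize length, now within the unique rel-endpoint homotopy class of an arbitrary connecting arc, but here the compactness needed to extract a limit is supplied by Strebel's structure theorem for trajectories in a simply connected domain (no recurrent or spiralling trajectories, so the complement of the critical graph is a finite union of half-plane and strip domains), which is exactly \cite[Thm.~18.2.1]{St}.

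The step I expect to be the main obstacle is the existence argument in part~(2): preventing the minimizing sequence from escaping to $\partial U$ --- which in the quantitative case is precisely what the hypothesis $\mathrm{dist}(y_0,y_1) < \mathrm{dist}(y_0,\partial U)$ secures, via precompactness of small metric balls, and in the general case requires the full trajectory-structure theory of Strebel --- and then identifying the limit as a genuine broken geodesic with vertices only at zeros, which rests on the conical normal form of $g_\phi$ near a zero. Everything else is the soft metric geometry of flat surfaces with cone angles $\ge 3\pi$.
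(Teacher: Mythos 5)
You should first note that the paper itself gives no proof of Lemma~\ref{lem: existence of geodeiscs}: it is quoted verbatim from Strebel \cite{St}, so there is no argument of the authors' to compare yours against step by step. Judged on its own terms, your part (1) is correct and is the standard argument: the flat Gauss--Bonnet (Teichm\"uller's lemma) computation on an innermost bigon, and on the disk bounded by an (embedded sub-)loop, with every interior cone contributing $2\pi-\theta_j\le -\pi$, does give $\alpha_0+\alpha_1\le 0$ resp.\ $\alpha_p\le-\pi$, which is the contradiction you want.

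The genuine gap is in part (2), at the step ``confinement to a small ball together with part~(1) rules out any break.'' Part (1) says nothing about breaks: in a flat metric all of whose cone angles are $\ge 3\pi$, a length minimizer may pass through a zero and bend there, as long as both angles at the zero are $\ge\pi$, and this actually happens under your hypotheses. Take $\phi=y\,dy^{\otimes 2}$ on a large disk $U$, $y_0=1$, $y_1=-1$: the metric is a cone of angle $3\pi$, the angular separation of the two points is $\tfrac32\pi\ge\pi$ on both sides, so the unique minimizer is the broken path through the zero (of length $\tfrac43$), while $\operatorname{dist}_{g_\phi}(y_0,\partial U)$ is as large as you like; moreover \emph{no} constant-phase geodesic joins $y_0$ to $y_1$ at all, since a straight segment in the developing map subtends an angle $<\pi$ at the apex. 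So the distance hypothesis only buys existence of a minimizing geodesic (your Arzel\`a--Ascoli argument for that is the right one); smoothness in the sense of a single phase requires a separate input, or the word ``geodesic'' must be understood in Strebel's sense (locally shortest, allowed to pass through critical points with both angles $\ge\pi$). This is in fact how the paper uses the lemma: in the proof of Proposition~\ref{prop: SLag vanishing spheres} the minimizer is a priori only piecewise smooth, possible bends are located at critical values, and they are excluded by the additional hypothesis $l_1(s)<l_j(s)$ together with a local shortening argument at regular points --- not by uniqueness of geodesics. A secondary weakness: for the unconditional first assertion of (2) you appeal to ``Strebel's structure theorem \dots which is exactly [St, Thm.~18.2.1],'' i.e.\ to the very result being proved; the real issue there, preventing the minimizing sequence between two zeros from escaping to $\partial U$ when $g_\phi$ is incomplete on $U$, is left unaddressed.
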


The second part of the lemma can be viewed as an analogue of
\cite[Theorem 4.3]{TY}. Given this, we are now ready to present
examples of admissible paths corresponding to vanishing cycles.

\begin{prop}\label{prop: SLag vanishing spheres}
  Let $\mathcal{X}\rightarrow \mathbb{D}$ be a projective family of
  Calabi-Yau $3$-folds over a disc $\mathbb{D}$. Denote by $X_s$ the
  fiber over $s\in \mathbb{D}$. Assume that
	\begin{itemize}
		\item $X_{s}$ is smooth for each $s\neq 0$.
		\item $X_0$ is smooth except a point where it has a conifold singularity. 
		\item For all $s\in \mathbb{D}$, $X_s$ admits a Lefschetz K3
          fibration $\pi_s:X_s\rightarrow Y_s\cong \mathbb{P}^1$.
	\end{itemize}
	Then there exist $s_0>0$ and $t_0(s)>0$ such that $X_s$ admits a
    special Lagrangian vanishing sphere with respect to the unique
    Ricci-flat metric in the class
    $[\omega_{X_s}+\frac{1}{t}\omega_Y]$ for all $t<t_0(s)$ and
    $|s|<s_0$.
\end{prop}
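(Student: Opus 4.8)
The plan is to realise the $S^3$ vanishing cycle of the conifold as one of the special Lagrangian spheres produced by Theorem~\ref{thm:main}: for each small $s$ I would build an admissible path $\gamma_s$ and then quote that theorem. First I would analyse the local geometry at the conifold point $p\in X_0$. After choosing coordinates compatible with the fibration, the family near $p$ is modelled on $X_s=\{z_0^2+z_1^2+z_2^2+z_3^2=s\}\subset\mathbb{C}^4$ with $\pi_s=z_1$; here $X_0$ has an ordinary double point at the origin, while for $s\neq0$ the map $\pi_s$ is a Lefschetz fibration with exactly two critical values $z_1=\pm\sqrt s$, whose vanishing cycles — the exceptional $S^2$ of the $A_1$-surface sitting in each nearby K3 fibre — coincide up to sign, and the $S^3$ vanishing cycle of the degeneration is swept out by these $2$-spheres over the segment joining $-\sqrt s$ to $+\sqrt s$. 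Globally this gives, for $|s|<s_0$, two discriminant points $y_0(s),y_1(s)\in\mathcal S_s\subset Y_s$ that collide to $y_*:=\pi_0(p)$ as $s\to0$, with matching vanishing cycles $L_y$ (so $[L_y]^2=-2$) in the K3 fibres over a fixed small disc $U\ni y_*$. Consequently the $1$-form $\alpha_s(y)=\int_{L_y}\Omega_s$ and the quadratic differential $\phi_s=\alpha_s\otimes\alpha_s$ are defined on $U$, with $\phi_s$ having simple zeros exactly at $y_0(s)$ and $y_1(s)$, degenerating as $s\to0$ to a $\phi_0$ with a double zero at $y_*$.

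Next I would produce the geodesic. The flat metric $g_{\phi_s}$ on $U$ has cone angle $3\pi$ at each of the two zeros; since these zeros merge and $\phi_s$ varies continuously, $\operatorname{dist}_{g_{\phi_s}}(y_0(s),y_1(s))\to0$ while $\operatorname{dist}_{g_{\phi_s}}(y_0(s),\partial U)$ stays bounded below, so after shrinking $s_0$ I can apply Lemma~\ref{lem: existence of geodeiscs}(2) to obtain a unique smooth geodesic $\gamma_s\subset U$ from $y_0(s)$ to $y_1(s)$, with some phase $\theta_s$; its interior avoids $\mathcal S_s$ because $U$ contains no further zeros of $\phi_s$, and $\gamma_s$ lies in a $g_{\phi_s}$-ball about $y_0(s)$ of radius $\operatorname{dist}_{g_{\phi_s}}(y_0(s),y_1(s))$, hence collapses onto $y_*$ as $s\to0$. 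Then I would verify the remaining admissibility condition, Definition~\ref{def: admissible path}(2): every $y\in\gamma_s$ satisfies $|y-y_i(s)|\le C\,|y_0(s)-y_1(s)|\to0$ for $i=0$ or $1$, so the estimate in Proposition~\ref{prop:k3metric} (that is, \cite[Proposition~2.4]{Li19}) bounds the deviation of the Eguchi--Hanson sphere $\widehat L_y\subset X_y$ from being special Lagrangian by a positive power of $|y-y_i(s)|$; since $H^1(S^2)=0$, McLean's deformation theory on the K3 fibre is unobstructed and rigid, so this small deviation can be corrected to a genuine smooth special Lagrangian representative of $[L_y]$ in $(X_y,\omega_{SRF}|_{X_y})$. (Alternatively one may quote Lemma~\ref{lem: SLag S2}, noting that a $(-2)$-class whose $\Omega$-period is uniformly small along $\gamma_s$ cannot decompose into effective classes of the same phase.) Thus $\gamma_s$ is an admissible path with phase $\theta_s$, and Theorem~\ref{thm:main} then gives, for each $s$ with $|s|<s_0$, a threshold $t_0(s)>0$ such that for $0<t<t_0(s)$ there is a special Lagrangian $3$-sphere $\tilde L_{\gamma_s,t}$ in the Ricci-flat metric in $[\omega_{X_s}+\tfrac1t\omega_Y]$; since $\tilde L_{\gamma_s,t}$ is isotopic to $L_{\gamma_s,t}=\bigcup_{y\in\gamma_s}L_y$, which is exactly the topological vanishing $S^3$ identified in the first step, it represents the vanishing-cycle class, as claimed.

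I expect the main obstacle to be in the second step: pinning down the correct local picture of the conifold inside the Lefschetz K3 fibration — in particular that it forces two critical values to collide with coinciding vanishing cycles — and, above all, checking that the $(-2)$-class $[L_y]$ is represented by an honestly smooth (irreducible) special Lagrangian $S^2$ at \emph{every} point of the geodesic, i.e.\ that no fibrewise wall-crossing occurs along $\gamma_s$; the point that makes this tractable is that $\gamma_s$, together with the fibres over it, collapses onto $y_*$ as $s\to0$, so that the deviation estimate in Proposition~\ref{prop:k3metric} becomes small uniformly along $\gamma_s$. By contrast, the geodesic-existence input (Lemma~\ref{lem: existence of geodeiscs}) and the final gluing (Theorem~\ref{thm:main}) are used essentially as black boxes, with only the quantitative observation ``the two zeros are close, and the geodesic stays near them'' requiring care.
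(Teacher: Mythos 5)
Your overall strategy coincides with the paper's: read off from the local model of the conifold inside the Lefschetz fibration that two critical values collide with matching vanishing cycles, connect them for small $s$ by a smooth geodesic of the quadratic differential, check admissibility, and quote Theorem~\ref{thm:main}. On the geodesic step you take a slightly different (and legitimate) shortcut: you apply the last statement of Lemma~\ref{lem: existence of geodeiscs}(2) directly in a small disc $U$, using that $\mathrm{dist}(y_0(s),y_1(s))\to 0$ while the distance to $\partial U$ stays bounded below, whereas the paper minimizes length among paths joining $y_0$ to the various critical values and removes corners by a shortcut argument under the condition $l_1(s)<l_j(s)$; both give a smooth geodesic confined near $y_*$.

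The place where your argument is genuinely thinner is Definition~\ref{def: admissible path}(2). The paper obtains smooth special Lagrangian $2$-spheres in $(X_s)_y$ for \emph{all} $(s,y)$ in a fixed neighborhood $\mathcal V$ of $(0,0)$ in $\mathbb{D}\times\mathbb{P}^1$ by invoking the versal deformation of the ordinary double point together with Chan \cite{Chan}, Spotti \cite{Spotti} and \cite[Corollary~A.2]{HS}; the essential point is uniformity in the two parameters $(s,y)$, after which one only needs to shrink $s_0$ so the geodesic stays in $\mathcal V$. Your route via \cite[Proposition~2.4]{Li19} plus a McLean-type perturbation of the Eguchi--Hanson sphere is in the spirit of the paper's own remark that this estimate yields the fiberwise vanishing sphere, but as stated that estimate is for a fixed $3$-fold $X_s$, with constants and the threshold $\epsilon_1$ a priori depending on $s$, so you still owe the uniformity in $s$ (this is exactly what the versal-deformation input supplies); moreover the perturbation must be performed in norms rescaled to the size $|y|^{1/4}$ of the shrinking sphere, as in Lemma~\ref{lemma:Vy}, which you do not spell out. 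Finally, your parenthetical fallback through Lemma~\ref{lem: SLag S2} does not work as written: smallness of the period $\int_{[L_y]}\Omega$ does not by itself exclude a decomposition $[L]=[L_1]+[L_2]$ into effective classes of the same phase (each piece could have small area), and that lemma in general produces a possibly singular representative, giving uniqueness only once a smooth one is known. So the skeleton is correct and matches the paper; to close step (2), either import the uniform two-parameter statement the paper extracts from the versal deformation and \cite{Chan}, \cite{Spotti}, \cite{HS}, or upgrade your fiberwise perturbation argument to be uniform in $(s,y)$ with the correct rescaled norms.
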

\begin{proof}

  From Theorem \ref{thm:main}, it suffices to construct an admissible
  path on the base $Y_s$ of $X_s$ for $s$ sufficiently small.
	
  By identifying the bases $Y_s$ of the K3 fibrations $X_s$ with a
  fixed $\mathbb{P}^1$, we may assume that the discriminant loci in
  $Y_s$ vary holomorphically with respect to $s$.  Near the conifold
  singularity in $X_0$, the fibration
  $\pi_s:X_s\to \mathbb{P}^1$ is locally given by
  $\{(z_1,z_2,z_3,y): z_1^2+z_2^2+z_3^2=y(y-y_1(s))\}\mapsto  y$ after
  a suitable choice of coordinates, where $y_1(s)\rightarrow 0$ as
  $s\to 0$. Then $y_0=0,y_1$ are two critical values of
  $\pi_s$. Denote by $y_0=0,y_1(s),\ldots, y_N(s)$ the critical values
  of $\pi$. Define
  \begin{align*}
    l_j(s)=\inf_{\gamma}\int_{\gamma} |\alpha|, 
  \end{align*}
  where the infimum is taken over all admissible paths $\gamma$
  connecting $y_0$ to $y_j$ and $\alpha$ is the locally holomorphic
  $1$-form defined in a neighborhood of $\gamma$ corresponding to the
  vanishing cycle of $y_0$. In other words, $l_j(s)$ is the distance
  from $0$ to $y_j(s)$ with respect to the one coming from the
  quadratic differential $\alpha\otimes \alpha$. For $s$ small enough,
  we may assume that $l_1(s)<l_j(s)$ for all $j>1$. From Lemma
  \ref{lem: existence of geodeiscs}~(2), there exists a piecewise
  smooth geodesic $\gamma$ connecting $y_0,y_1(s)$ (possibly with
  different phases along each smooth component) that minimizes the
  length with respect to the metric defined by
  $\alpha \otimes \alpha$. Let $y_*=\gamma(t_*)$ be a non-smooth point
  of such length minimizing path if there exists any. The assumption
  $l_1(s)<l_j(s)$ guarantees that $y_*\neq y_j$ and thus $y_*$ is not
  a critical value of $\pi_s$. Therefore, there exists a neighborhood
  of $y_*$ such that any two points can be connected by a smooth
  geodesic with respect to the flat metric which strictly realizes the
  shortest distance. In particular, one can replace
  $\gamma|_{[t_*-\epsilon,t_*+\epsilon]}$ by a smooth geodesic
  connecting $\gamma(t_*\pm \epsilon)$ for small enough $\epsilon>0$,
  which contradicts the definition of $\gamma$. Therefore, $\gamma$ is
  a smooth geodesic. From the explicit local equation, one sees that
  the vanishing cycles corresponding to these two critical values are
  the same up to sign and thus Definition \ref{def: admissible path}
  (1) and (2) hold.

  It is known that the versal deformation of the singularity modeled
  by $z_1^2+z_2^2+z_3^2=0$ is given by
  $z_1^2+z_2^2+z_3^2=y$. Therefore, there exists a neighborhood
  $\mathcal{V}$ of $(s=0,y=0)$ in $\mathbb{D} \times \mathbb{P}^1$
  such that there exists a smooth special Lagrangian $2$-sphere in
  $(X_s)_y$ for all $(s,y)\in \mathcal{V}$ by combining the works of
  Chan~\cite{Chan} and Spotti~\cite{Spotti}. See also
  \cite[Corollary~A.2]{HS}. Now it suffices to take $s_0$ small enough
  such that the distance between $y=0,y_1(s)$ is smaller than the
  distance from $y=0$ to
  $\partial (\mathcal{V}\cap \{s\}\times \mathbb{P}^1)$ for all
  $|s|<s_0$. Then Definition \ref{def: admissible path} (3) can be
  achieved.
	
  Finally we remark that such a family $\chi\rightarrow \mathbb{D}$
  can be constructed as follows: For instance, let $W$ be a Fano
  $3$-fold such that $-K_{W}$ is very ample. An anti-canonical pencil
  $P$ of $W$ uniquely determines a hypersurface $X_P$ of bi-degree
  $(1,-K_W)$ in $\mathbb{P}^1\times W$ and vice versa. By the Bertini
  theorem and adjunction formula, such generic hypersurfaces are
  smooth and fibers are anti-canonical. Let
  $\mathcal{D}\subseteq |-K_W|\cong \mathbb{P}^N$ be the set
  parametrizing to singular anti-canonical divisors. Then the generic
  element of $\mathcal{D}$ corresponds to anti-canonical divisors with
  exactly one nodal singularity \cite[Corollary 2.10]{V}. Now choose a
  generic point of $\mathcal{D}$ and $P$ is the pencil determined by
  the tangent line of $\mathcal{D}$, then $X_P$ has a conifold
  singularity and a double cover of $X_{P}$ ramified at two generic
  smooth fibers is a Calabi-Yau $3$-fold $X_0$ with a conifold
  singularity. A generic complex deformation of $X_0$ yields a family
  described in the proposition.

\end{proof}

In projective Calabi-Yau manifolds near a conifold singularity,
Hein-Sun \cite{HS} also constructed special Lagrangian vanishing
spheres modeled on the zero section of $T^*S^n$ with respect to the
Stenzel metric. Due to the nature of adiabatic limits, The special
Lagrangian spheres in Proposition \ref{prop: SLag vanishing spheres}
are elongated and thin, with their ``ends'' sharpening as $t \to
0$. This contrasts with the more rounded shape of the special
Lagrangian vanishing spheres described in \cite{HS}.

Proposition \ref{prop: SLag vanishing spheres} can be generalized to
higher dimensions. The construction of the geodesics remains
unchanged. By \cite{HS}, smooth special Lagrangian spheres exist in
the Calabi-Yau fibers above the geodesic, assuming the conditions in
Proposition \ref{prop: SLag vanishing spheres}. This establishes the
existence of the admissible path. The refinement of \cite{Li19} in
Section~\ref{sec:CY3} can be generalized to higher dimensions using
the Calabi-Yau metrics on $\mathbb{C}^n$ constructed by
Sz\'ekelyhidi~\cite{Sz19} and Conlon-Rochon~\cite{CR21}, and
Hein-Sun's result \cite{HS} on Calabi-Yau metrics with conical
singularities (see also \cite{CSz}). Finally, perturbing the
approximate solutions to genuine special Lagrangians follows the same
analysis in Sections~\ref{sec:thimble} and \ref{sec: main
  construction}.

It is a long-standing question how far the special Lagrangian
vanishing spheres can persist in the complex moduli of Calabi-Yau
manifolds. Near the adiabatic limit, this question can be reduced to
studying the persistence of admissible paths. The admissible path
constructed in Proposition \ref{prop: SLag vanishing spheres} persists
in $Y_s$ as a geodesic, i.e., satisfies Definition \ref{def:
  admissible path} (1)(2), as long as $l_1(s)<l_j(s)$. Notably, this
condition is more relaxed than the usual requirement of a small
neighborhood near the conifold, typically expressed as
$l_1(s) \ll l_j(s)$. However, the geodesic $\gamma_s: [0,1] \to Y_s$
fails to satisfy the condition in Definition \ref{def: admissible
  path}~(3) when there exists a point $y$ along the geodesic
$\gamma_s$ such that the special Lagrangian vanishing cycle in
$(X_s)_y$ degenerates. This phenomenon is linked to the concept of
surgery triples, as described by Donaldson-Scaduto~\cite{DS}. See
Section~\ref{trop SLag} below.

\subsection{Special Lagrangian $S^1\times S^2$}
We now turn to constructing explicit examples of admissible loops,
leading to examples of $S^1\times S^2$ by Theorem~\ref{thm:loop}.

\begin{ex}\label{ex: S1S2}
  Choose $P$ to be a generic anti-canonical pencil of
  $\mathbb{P}^1\times \mathbb{P}^3$ and $X_P$ is the corresponding
  bi-degree $(1,4)$ hypersurface in $\mathbb{P}^1\times \mathbb{P}^3$
  with a Lefschetz fibration $X_P\rightarrow P$. Consider $X$, a
  double cover of $X_{P}$ ramified over two smooth fibers
  $y_*,y'_*\in P$. Then $X$ is a compact Calabi-Yau $3$-fold admitting
  a quartic fibration structure and the base $Y$ of $X$ is a double
  cover of $P$ ramified over $y_*,y'_*$. By abuse of notation, we will
  still denote by $y_*,y'_*$ the corresponding points over $y_*,y'_*$
  in $Y$.
	
  For $y'_*$ being sufficiently close to $y_*$ in $P$, there exists a
  neighborhood $U\subseteq Y$ of $y_*,y'_*$ that is diffeomorphic to
  an annulus where the fibration over $U$ is topologically
  trivial.  Choose a $(-2)$-class $[L]$ in $H_2(X_{y_*},\mathbb{Z})$
  with vanishing pairing with the hyperplane class on the quartic
  $X$. Such $(-2)$-classes exist in quartic K3 surfaces by a theorem
  of Eichler \cite[\S 10]{Eich} (see also \cite[Lemma 3.5]{GHS}).

  Choose a generic quartic K3 surface $S$ with holomorphic volume form
  $\Omega$ such that
  $\int_{[L_1]}\Omega/\int_{[L_2]}\Omega\notin \mathbb{R}_+$ if
  $[L_1]+[L_2]=[L]$ . This can be achieved by the Torelli theorem of
  polarized K3 surface \cite[Chapter 6, Remark 3.7]{Huy}.  Choose $P$
  to be a generic pencil passing though the point corresponding to $S$
  and denote by $y_*\in P$ be the point. Then $[L]$ can be realized as
  a smooth special Lagrangian sphere in $X_{y_*}$ by Lemma \ref{lem:
    SLag S2} and thus can be realized as smooth special Lagrangian
  $S^2$ for $y\in P$ near $y_*$. In particular, if we choose $y'_*$
  sufficiently close to $y_*$, then $[L]$ can be represented by a
  smooth special Lagrangian $S^2$ in $X_y$ for $y\in U$. The
  triviality of the fibration over $U$ allows us to define a quadratic
  differential on $U$ via the construction in Section \ref{sec:
    quadratic diff}. The holomorphic volume form on the fibration over
  $U$ is of the pull back of
  $\frac{1}{\sqrt{(y-y_*)(y-y'_*)}}dy\wedge \Omega_y$, where $y$
  serves as a local coordinate on $P$. Therefore, the distance between
  $y_*,y'_*$ is less than the distance from $y_*$ to $\partial U$ when
  $y'_*$ is sufficiently close to $y_*$ by straight-forward
  calculation.  Considering the universal cover of $U$ equipped with
  the pull-back quadratic differential, we can find a geodesic
  connecting the lifting of $y_*,y'_*$ in the same fundamental domain
  as established in Lemma \ref{lem: existence of geodeiscs}.  We claim
  that the geodesic concatenates with its $\mathbb{Z}_2$-mirror image
  form a closed geodesic in $Y$. It suffices to show that the
  concatenation is smooth at $y_*,y'_*$. Notice that the holomorphic
  volume form in a tubular neighborhood is of the form
  $dy\wedge \Omega_y$, for some suitable coordinates $y$ on $U$ such
  that $y(y_*)=0$. From the double cover structure of $X$, there
  exists an involution $\sigma$ such that $\sigma(y)=-y$ and
  $\sigma^*\Omega_y=c\Omega_{-y}$, where $c$ is a
  constant. Restricting the above equation to $X_{y_*}$ gives
  $c=1$. As a consequence, the flat metric induced by the
  corresponding quadratic differential is $\sigma$-invariant. In
  particular, the geodesic emanating from $y_*$ concatenate with its
  $\mathbb{Z}_2$ mirror image form a smooth geodesic near $y_*$. The
  smoothness at $y'_*$ can be proved similarly.  By invoking Theorem
  \ref{thm:loop}, we conclude that there exists a special Lagrangian
  diffeomorphic to $S^1\times S^2$ in $X$, provided that the K3 fibers
  are small enough.
\end{ex}

We end this section by remarking that the example can also have higher
dimension analogue. For instance, one can consider hypersurface in
$\mathbb{P}^1\times \mathbb{P}^1\times \mathbb{P}^3$ of degree
$(2,2,4)$, which is fibered over $\mathbb{P}^1$ with fibers bi-degree
$(2,4)$-hypersurface in $\mathbb{P}^1\times \mathbb{P}^3$. One can
construct a closed geodesic similar to Example \ref{ex: S1S2} and take
the fiber over $y_*$ to contain a special Lagrangian constructed in
Example \ref{ex: S1S2}. Then for $y_*,y_*'$ close enough, Definition
\ref{def: admissible loop} (3) can be achieved and thus the closed
geodesic is also an admissible loop. The construction can be
generalized further iteratively to any dimension.

\subsection{Thomas-Yau conjecture} \label{sec: TY}

We now recall the stability condition introduced by Thomas \cite{To},
which is also referred to as Thomas-Yau stability in
\cite{Li22}. Following the exposition in \cite{LO1}, we have the
following definition:

\begin{definition} Let $(X,\omega,\Omega)$ be a Calabi-Yau
  manifold. We say a compact zero Maslov Lagrangian submanifold $L$ is
  unstable if it is Hamiltonian isotopic to a graded Lagrangian
  connect sum $L_1 \# L_2$ of compact graded Lagrangians $L_1$ and
  $L_2$, with variations of their Lagrangian angles less than $2\pi$,
  so that
  \[
    \operatorname{Arg}\int_{L_1} \Omega \ge
    \operatorname{Arg}\int_{L_2} \Omega.
  \]
  The Lagrangian $L$ is called stable if it is not unstable.
\end{definition}

Given the above definition, we are ready to state the following
conjecture, originally proposed by Thomas \cite[Conjecture 5.2]{To}
(see also \cite{TY}]), which draws an analogy to slope stability and
Hermitian Yang-Mills connections on vector bundles:

\begin{conj}
  \label{conj: TY} Let $L$ be a compact zero Maslov Lagrangian in
  $(X,\omega,\Omega))$. Then there exists a unique special Lagrangian
  in the Hamiltonian isotopy class of $L$ if and only if it is stable.
\end{conj}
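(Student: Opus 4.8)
The plan is to follow the strategy proposed by Thomas and Yau themselves in \cite{TY}, which reads Conjecture~\ref{conj: TY} as a statement about the long-time behaviour of Lagrangian mean curvature flow (LMCF) in the almost calibrated setting of M.-T.~Wang \cite{Wang}. For the direction ``stable $\Rightarrow$ existence'', one starts LMCF from a representative of the Hamiltonian isotopy class of $L$; the zero-Maslov/graded hypothesis is preserved along the flow, and one seeks long-time existence and subconvergence to a special Lagrangian. The key inputs needed are: (i) an $\epsilon$-regularity/clearing-out theorem confining where singularities can form; (ii) a classification of blow-up limits and tangent flows as special Lagrangian cones or as neck-type shrinkers/expanders (Lawlor necks); and (iii) a surgery procedure at each singular time, along the lines of Joyce's program \cite{J2}, which either strictly simplifies the Lagrangian (lowering a complexity such as area or number of components) or else exposes an explicit graded connect-sum decomposition $L_1 \# L_2$ with $\operatorname{Arg}\int_{L_1}\Omega \ge \operatorname{Arg}\int_{L_2}\Omega$. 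Iterating (iii), the flow must terminate at a special Lagrangian, since the only alternative contradicts stability.

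For the converse, ``unstable $\Rightarrow$ non-existence'', suppose $L$ is Hamiltonian isotopic to $L_1 \# L_2$ with the phase inequality above. The plan is to produce a quantity that is monotone along LMCF --- built from the Lagrangian angle, e.g.\ the functional of \cite{J2} or the central-charge phases of the summands --- which strictly decreases and whose behaviour forces the flow to break along the connect-sum neck in the limit, so that no single special Lagrangian can be reached in the isotopy class. Uniqueness in the stable case would then follow because any two special Lagrangians in the same Hamiltonian class are homologous and calibrated by the same constant phase $\operatorname{Arg}\int_L\Omega$; a unique-continuation/constancy-of-phase argument, or strict convexity of the relevant functional near a special Lagrangian, rules out two distinct minimizers.

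The main obstacle --- and the reason the conjecture is still open --- is steps (ii)--(iii): Neves' examples \cite{N} show that finite-time singularities along LMCF are unavoidable even under the almost calibrated hypothesis, and at present there is neither a classification of the admissible singularity models nor a general surgery theory, so no complete proof is available (see \cite{Li22} for the state of the art). What the present paper contributes toward the conjecture is existence in the adiabatic regime: Theorems~\ref{thm:main} and \ref{thm:loop}, together with the examples of Section~\ref{sec: applications}, produce special Lagrangians in Hamiltonian classes that are expected to be stable, while the failure of an admissible path to persist once a fibrewise vanishing cycle degenerates realizes geometrically the wall-crossing of Thomas--Yau stability, consistent with Conjecture~\ref{conj: TY}.
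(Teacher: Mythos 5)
There is nothing in the paper for your proposal to be measured against: the statement you were asked about is Conjecture~\ref{conj: TY}, which the paper attributes to Thomas \cite{To} (see also \cite{TY}) and explicitly leaves open. The paper offers no proof; its contribution in Section~\ref{sec: TY} is only \emph{evidence} --- via Theorem~\ref{thm:main}, Lemma~\ref{wall-crossing} and Proposition~\ref{prop: TY ex} it exhibits, in the adiabatic regime, special Lagrangian spheres whose existence and wall-crossing behaviour track the stability of the connect sum, and it explicitly cautions that it is not even known whether the constructed sphere lies in the Hamiltonian isotopy class of the connect sum, nor whether non-existence holds in the unstable case (that would need something like \cite[Theorem~3.21]{Li22}, ``out of reach at the moment''). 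So the honest assessment is that your submission is not a proof but a restatement of the Thomas--Yau/Joyce LMCF program, and you concede as much: steps (ii)--(iii) --- classification of blow-up limits and a surgery theory continuing the flow past singularities --- are precisely what is missing in the field, and Neves \cite{N} shows singularities genuinely occur. Asserting ``iterating (iii), the flow must terminate at a special Lagrangian'' has no justification without those inputs; no monotone quantity forcing neck-breaking in the unstable case has been constructed in general (only in the $S^1$-invariant Gibbons--Hawking setting of \cite{LO1}\cite{LO2}).

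Your uniqueness sketch also has a concrete gap: two distinct special Lagrangians in the same Hamiltonian isotopy class are automatically homologous and calibrated by the same phase, so ``constancy of phase'' rules out nothing; the known uniqueness statement \cite[Theorem~4.3]{TY} requires Floer-theoretic hypotheses (well-defined Floer homology / no holomorphic discs issues), which is exactly the caveat the paper records. If you want to write something defensible here, state the conjecture as a conjecture, summarize the program with its known obstructions, and present Theorems~\ref{thm:main}--\ref{thm:loop} together with Proposition~\ref{prop: TY ex} as supporting evidence --- which is what the paper itself does --- rather than packaging the program as a proof outline.
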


Y. Li~\cite{Li22} proved the conjecture under technical assumptions in
the setting of Calabi-Yau Stein manifolds. Notably, Li's results are
variational in nature, and make explicit use of holomorphic curves to
establish both existence and non-existence results.

Soon after \cite{To}, Thomas-Yau~\cite{TY} proposed a stronger version
of the conjecture, relating the notion of stability to convergence of
Lagrangian mean curvature flow (LMCF) starting from
$L$. Joyce~\cite{J3} has since significantly updated the Thomas-Yau
conjecture.  The expectation is that once $L$ is equipped with a
``bounding cochain'', singularity formation of LMCF should be
well-understood, and the flow could be continued beyond the formation
of singularities.

More recently, Lotay-Oliveira~\cite{LO1} proved the Thomas conjecture
and a version of the Thomas-Yau conjecture for $S^1$-invariant compact
Lagrangians in the Gibbons-Hawking ansatz. In the same setting,
Lotay-Oliveira~\cite{LO2} also showed that finite time singularities
of LMCF are neck pinches, thus proving various conjectures in
\cite{J3} by demonstrating how ``unstable'' Lagrangians decompose into
unions of special Lagrangians under the flow. Our main results,
Theorem~\ref{thm:main} and \ref{thm:loop}, can be viewed as higher
dimensional generalizations of the $S^1$-invariant setting found in
\cite{LO1}\cite{LO2}.

We now make connections of the Thomas-Yau conjecture with our
geometric setup. Let $(X,\Omega)$ be a Calabi-Yau $3$-fold with a
Lefschetz K3 fibration $\pi: X \to Y$, where $Y =
\mathbb{P}^1$. Suppose that there exist admissible paths
$\gamma_1, \gamma_2$ in an open set $U\subset Y$ such that $\gamma_1$
goes from $y_0$ to $y_1$ and $\gamma_2$ goes from $y_1$ to $y_2$. Let
$\psi$ denote the angle between $\gamma_1$ and $\gamma_2$. In a
conformal coordinate $y$ near a simple zero of the quadratic
differential, any geodesic emanating from $y_1$ with phase
$\theta \in [0,2\pi)$ is of the form
\[
  y(t) = \left(\frac{3}{2}e^{i\theta}t\right)^{\frac{2}{3}}.
\]
Thus $\operatorname{Arg} y'(0) = \frac{2}{3}\theta$. Now, let
$\bar\gamma_1$ be $\gamma_1$ with orientation reversed. Thus the phase
$\bar\theta_1$ of $\bar\gamma_1$ satisfies
$\theta_1 = \bar\theta_1-\pi \mod 2\pi$. From the local expression of
geodesics we see that
\[
\operatorname{Arg} \bar\gamma_1' = \frac{2}{3}\theta_2+\psi,
\]
or equivalently
\[
  \bar\theta_1 = \theta_2+\frac{3}{2}\psi.
\]
It follows that
\[
  \theta_1 < \theta_2 
\]
if and only if
\[
  \psi < \frac{2}{3}\pi.
\]
By Theorem~\ref{thm:main}, for sufficiently small $t>0$, there exist
special Lagrangians $\tilde L_{\gamma_1,t}$ and
$\tilde L_{\gamma_2,t}$ in the collapsing Calabi-Yau $3$-fold
$(X,\tilde\omega_t, \Omega)$. Thus the stability of the Lagrangian
connect sum $\tilde L_{\gamma_1,t} \# \tilde L_{\gamma_2,t}$ is
determined by the angle $\psi$ between $\gamma_1$ and $\gamma_2$.

We have the following lemma, which can be found in \cite{SV}:

\begin{lemma} \label{wall-crossing} In the setting above, there exists
  a unique smooth geodesic from $y_0$ to $y_2$ if and only if the
  angle $\psi$ between $\gamma_1$ and $\gamma_2$ is less than
  $\frac{2\pi}{3}$.
\end{lemma}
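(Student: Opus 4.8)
\textbf{Proof plan for Lemma \ref{wall-crossing}.}

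The plan is to work entirely in the base $Y$ with its flat metric induced by the quadratic differential $\phi$, which has a single simple zero at $y_1$ inside the relevant region. Near such a simple zero, $\phi$ induces a flat cone of total cone angle $3\pi$ (a ``turning point'' in the terminology of quadratic differentials). First I would use the local coordinate in which geodesics emanating from $y_1$ are given by $y(t) = \left(\tfrac{3}{2}e^{i\theta}t\right)^{2/3}$, already recorded above, to make precise the identification of a punctured neighborhood of $y_1$ with a flat cone of angle $3\pi$: the three sheets of the cube root correspond to the three ``sectors'' of angle $\pi$ each. In this model the two incoming geodesic segments $\bar\gamma_1$ (the reversal of $\gamma_1$, which \emph{leaves} $y_1$ toward $y_0$) and $\gamma_2$ (which leaves $y_1$ toward $y_2$) are two straight rays from the cone point, and the quantity $\psi$ is the angle between them, measured in the cone metric. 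The concatenation $\gamma_1 * \gamma_2$ is a single broken geodesic from $y_0$ to $y_2$ passing through the cone point $y_1$.

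The core of the argument is the standard fact from the theory of quadratic differentials (this is essentially \cite[Theorem 14.2.1, Theorem 18.2.1]{St}, already invoked as Lemma \ref{lem: existence of geodeiscs}) that a broken geodesic through a cone point of angle $3\pi$ can be shortened by a local perturbation \emph{if and only if} it subtends an angle $<\pi$ on one of the two sides at the cone point; equivalently, the concatenation fails to be locally length-minimizing precisely when the angle on one side is $<\pi$. Since the total cone angle is $3\pi$, the two sides of the broken path at $y_1$ have angles $\psi$ and $3\pi - \psi$, so the path is \emph{not} straightenable exactly when $\psi \ge \pi$ and $3\pi-\psi\ge \pi$, i.e. $\pi \le \psi \le 2\pi$; and it \emph{is} straightenable (so there is a strictly shorter smooth geodesic from $y_0$ to $y_2$ not through $y_1$) when $\psi < \pi$. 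Here, though, we must be careful about the normalization: the angle $\psi$ in the statement is the angle between $\gamma_1$ and $\gamma_2$ as oriented paths, and the relation $\bar\theta_1 = \theta_2 + \tfrac{3}{2}\psi$ derived above (coming from $\operatorname{Arg} y'(0) = \tfrac{2}{3}\theta$) shows that the angle \emph{in the cone metric} between the two rays is $\tfrac{3}{2}\psi$. Thus the straightenability threshold $\pi$ in the cone metric translates to $\tfrac{3}{2}\psi = \pi$, i.e. $\psi = \tfrac{2}{3}\pi$, which is exactly the claimed wall. For $\psi < \tfrac{2}{3}\pi$ the broken path is not minimizing and can be replaced by a genuine smooth geodesic from $y_0$ to $y_2$; uniqueness of this smooth geodesic then follows from Lemma \ref{lem: existence of geodeiscs}(1), which asserts there is at most one smooth geodesic connecting two zeros of $\phi$ in a simply connected region (and no geodesic loops), after passing to a simply connected neighborhood of $\gamma_1 * \gamma_2$. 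For $\psi \ge \tfrac{2}{3}\pi$ no smooth geodesic from $y_0$ to $y_2$ exists, since any such geodesic would have to avoid $y_1$, but a Gauss--Bonnet / developing-map argument in the flat cone shows that the region bounded by $\gamma_1 * \gamma_2$ and a hypothetical competing smooth geodesic would contain the cone point with the wrong sign of curvature concentration, a contradiction — this is again contained in the cited results from \cite{St}.

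The main obstacle I anticipate is purely bookkeeping: keeping straight the three distinct angle conventions — the phase $\theta\in S^1$ of a geodesic, the Euclidean argument $\tfrac{2}{3}\theta$ of its tangent vector in the local coordinate $y$, and the cone-metric angle $\tfrac{3}{2}\psi$ between two rays at the simple zero — and making sure the factor $\tfrac{2}{3}$ (equivalently the cone angle $3\pi$ versus $2\pi$) is inserted in exactly the right place so that the straightening threshold comes out as $\tfrac{2}{3}\pi$ rather than $\pi$ or $\tfrac{4}{3}\pi$. The analytic content is entirely classical and can be cited; the only thing that needs care is this translation between the phase normalization used in Section \ref{sec: TY} and the metric geometry of the cone. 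Once that dictionary is fixed, the ``if and only if'' is immediate from \cite[Theorems 14.2.1, 18.2.1]{St}.
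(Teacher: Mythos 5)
Your plan is correct, and on the existence half it coincides with the paper's own argument: both proofs use that the two rays meet the simple zero $y_1$ with cone-metric angle $\tfrac{3}{2}\psi$ inside the total cone angle $3\pi$, so for $\psi<\tfrac{2\pi}{3}$ the concatenation can be shortened by cutting the corner (the paper implements the shortcut via a smooth geodesic between nearby points $y_i'\in\gamma_i$, citing \cite[Theorem 8.1]{St}), after which Lemma \ref{lem: existence of geodeiscs} yields a smooth geodesic and its uniqueness. In both write-ups the passage from ``$\gamma_1\#\gamma_2$ is not minimizing'' to ``a smooth geodesic exists'' deserves one more sentence: a length minimizer is a chain of smooth geodesics broken only at zeros, and by Lemma \ref{lem: existence of geodeiscs}(1) the only such chain through $y_1$ is $\gamma_1\#\gamma_2$ itself, so the minimizer must be smooth. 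Where you genuinely diverge is the non-existence direction ($\psi\ge\tfrac{2\pi}{3}$): you argue that the broken path is then a local geodesic at the cone point, so a competing smooth geodesic would produce a geodesic bigon, which is impossible in a flat metric all of whose cone points (the zeros, of angle $3\pi$) carry non-positive curvature concentration — equivalently, the metric is locally CAT(0) and geodesics in a simply connected domain are unique. The paper instead uses the foliation property: geodesics of a fixed phase foliate the region and cannot cross, so a smooth geodesic $\gamma_3$ from $y_0$ to $y_2$ would be met by the constant-phase leaf through $y_1$, contradicting uniqueness of solutions of the ODE $\sqrt{f}\,\frac{dy}{ds}=e^{i\theta}$. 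Both mechanisms are classical and valid; yours is purely metric and makes the wall $\psi=\tfrac{2\pi}{3}$ transparent as the local-geodesity threshold, while the paper's stays closer to the constant-phase/BPS picture used throughout Section 6. Two small points to tighten if you write this up: the cone point $y_1$ lies on the \emph{boundary} of your bigon, not in its interior, so the Gauss--Bonnet count should be phrased as ``the interior angle at $y_1$ is $\ge\pi$ and no interior cone point can supply the positive curvature a flat geodesic bigon requires''; and your dictionary tacitly takes $\psi\in[0,\pi]$, so that the complementary sector $3\pi-\tfrac{3}{2}\psi$ is automatically $\ge\pi$ and only the sector $\tfrac{3}{2}\psi$ matters.
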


\begin{proof}
  If the angle between the geodesics is larger than $\frac{2\pi }{3}$
  and there exists a smooth geodesic $\gamma_3$ connecting $y_0,y_2$,
  then there always exists a geodesic emanating from $y_1$ with the
  same phase of $\gamma_3$ intersecting $\gamma_3$. However, this
  contradicts the uniqueness of the solutions to \eqref{same phase} at
  the point of intersection. If the angle between the geodesics is
  less than $\frac{2\pi }{3}$, then choose $y'_i\in \gamma_i$ close to
  $y_1$. Then there exists a smooth geodesic $\gamma_{12}$ connecting
  $y'_1,y'_2$ \cite[Theorem 8.1]{St}. Consider the path $\gamma_3$
  concatenating the part of $\gamma_1$ from $y_0$ to $y'_1$,
  $\gamma_{12}$ and the part of $\gamma_2$ from $y'_2$ to $y_2$. Then
  one has $\ell(\gamma_3)<\ell(\gamma_1\# \gamma_2)$ and there must be
  a smooth geodesic from $y_0$ to $y_2$ by Lemma \ref{lem: existence
    of geodeiscs} (2).
\end{proof}

Given the lemma, let us consider the following geometric setting which
realizes the configuration of admissible paths above:

\begin{prop}\label{prop: TY ex}
  Let $W$ be a Fano $3$-fold such that $-K_W$ is very ample. Then
  there exists a $1$-parameter family of Calabi-Yau $3$-folds $X_s$,
  $s \in (-\epsilon, \epsilon)$, of bi-degree $(2, -K_W)$
  hypersurfaces in $\mathbb{P}^1 \times W$ such that the following
  hold:
  \begin{enumerate}
  \item The projection $\pi_s:X_s\rightarrow Y_s = \mathbb{P}^1$ is a
    Lefschetz K3 fibration whose fibers are anti-canonical divisors of
    $W$.
  \item There are at least four critical values of $\pi_s$, say
    $y_1^s,y_2^s,y_3^s, y_4^s\in Y$ in a small neighborhood in $Y_s$
    such that the vanishing cycles corresponding to $y_i$ are the same
    up to sign and parallel transport within the neighborhood.
  \item There exist at least three admissible paths with end points
    among $\{y_1^s,y_2^s,y_3^s, y_4^s\}$.
  \item There exist two admissible paths intersecting at one of the
    above critical values. Let $\psi_s$ denote the angle between
    them. Then we have $\psi_s > \frac{2\pi}{3}$ if $s > 0$,
    $\psi_s = \frac{2\pi}{3}$ if $s = 0$, and
    $\psi_s < \frac{2\pi}{3}$ if $s > 0$.
  \end{enumerate}
\end{prop}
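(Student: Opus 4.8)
The plan is to produce the desired family $X_s$ by a pencil construction on $\mathbb{P}^1 \times W$ that is tuned so that the discriminant curve in $\mathbb{P}^1 \times \mathbb{P}^1$ (the parameter $\mathbb{P}^1$ of the pencil times the base $Y = \mathbb{P}^1$) acquires a tangency of the right order at $s=0$. Concretely, bi-degree $(2, -K_W)$ hypersurfaces in $\mathbb{P}^1 \times W$ that project to $Y = \mathbb{P}^1$ with anti-canonical K3 fibers are parametrized by a projective space $\mathbb{P}^M = |\mathcal{O}(2) \boxtimes (-K_W)|$; inside this linear system sits the discriminant hypersurface $\mathcal{D}$ whose generic point corresponds (by a Bertini/adjunction argument exactly as in the proof of Proposition~\ref{prop: SLag vanishing spheres} and \cite[Corollary 2.10]{V}) to a Lefschetz K3 fibration with the minimal number of nodal fibers, all with distinct critical values. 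First I would take a generic pencil $\{X_s\}$ inside $|\mathcal{O}(2)\boxtimes(-K_W)|$ through a generic point, so that $X_s$ is a Lefschetz K3 fibration for all $s$, giving (1), and so that by \cite{Eich}\cite{GHS} a fixed $(-2)$-class $[L]$ with zero pairing against the polarization persists and is the vanishing cycle class of several of the nodes, giving the matching-up-to-sign statement in (2). The number of critical values of $\pi_s$ is a positive constant (the topological Euler characteristic count), and by shrinking to a small base disc in $Y_s$ one isolates four of them $y_1^s,\dots,y_4^s$ whose vanishing cycles are all $\pm[L]$ up to parallel transport; this is (2).

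For (3) and (4) I would work with the quadratic differential $\phi_s = \alpha_s \otimes \alpha_s$ on the small simply connected neighborhood $U \subset Y_s$, where $\alpha_s = \int_{[L_y]}\Omega$ as in Section~\ref{sec: quadratic diff}, with simple zeros at the $y_i^s$ (Lemma following Definition~\ref{def: admissible path}). On a simply connected disc a holomorphic quadratic differential with four simple zeros and no other singularities has a standard normal form, and by Lemma~\ref{lem: existence of geodeiscs}~(2) the four zeros are pairwise joined by sequences of geodesics. To get genuine \emph{admissible} paths I would additionally invoke that $[L]$ can be represented by a smooth special Lagrangian $S^2$ in every K3 fiber over $U$: choosing the generic quartic-type fiber by the Torelli argument exactly as in Example~\ref{ex: S1S2} (so that no sub-splitting $[L]=[L_1]+[L_2]$ has positively proportional periods), Lemma~\ref{lem: SLag S2} forces the unique smooth special Lagrangian representative to persist along the geodesics, giving condition (2) of Definition~\ref{def: admissible path}. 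This yields at least three admissible paths among $\{y_1^s,\dots,y_4^s\}$ — for instance, for $s$ in a suitable range, the geodesics $y_1^s y_2^s$, $y_2^s y_3^s$, $y_3^s y_4^s$ — establishing (3).

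The heart of the matter is (4): the angle $\psi_s$ at the common endpoint between two of these admissible paths must cross $\tfrac{2\pi}{3}$ transversally as $s$ passes through $0$. Here I would use the computation already carried out in Section~\ref{sec: TY}: in a conformal coordinate near a simple zero, a geodesic of phase $\theta$ emanating from $y_1^s$ has $\operatorname{Arg} y'(0) = \tfrac{2}{3}\theta$, so the angle between two such geodesics is $\tfrac{2}{3}$ times the difference of their phases, and by Lemma~\ref{wall-crossing} the threshold $\psi = \tfrac{2\pi}{3}$ is precisely the wall where the smooth geodesic from the outer endpoints appears or disappears. Thus $\psi_s = \tfrac{2\pi}{3}$ is equivalent to the existence of a (non-generic) \emph{saddle connection} of $\phi_s$ directly joining the two outer zeros; this is a codimension-one condition on the quadratic differential, hence on $s$. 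The task is therefore to arrange the pencil so that (i) at $s=0$ the three zeros are collinear in the flat $\phi_0$-metric — i.e. there is a single geodesic saddle connection passing through the middle zero — and (ii) the derivative in $s$ of the relevant period difference $\int \alpha_s$ (equivalently the $\phi_s$-distance imbalance) is nonzero, so that the saddle connection breaks on one side for $s>0$ and reassembles on the other for $s<0$. For (i) I would either cite a degeneration where the two outer fibers literally collide into a worse singularity (forcing the three critical values onto a common horizontal trajectory, as in the surgery-triple picture of Donaldson-Scaduto~\cite{DS} mentioned at the end of Section~\ref{sec: applications}), or simply note that by a dimension count the locus in $|\mathcal{O}(2)\boxtimes(-K_W)|$ with such a saddle connection is a genuine hypersurface, and choose the pencil to meet it transversally at $s=0$; (ii) is then exactly the transversality of this intersection, which holds for a generic pencil. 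The main obstacle I expect is making (i) and the transversality (ii) precise and simultaneously compatible — i.e. showing the "wall" locus is nonempty, of the expected codimension one, and crossed transversally by some allowed pencil — since this requires either an explicit model computation of periods near a collision of two conifold fibers, or a careful deformation-theoretic dimension count for configurations of simple zeros of quadratic differentials with a prescribed saddle connection.
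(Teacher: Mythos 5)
Your framework (discriminant loci, the quadratic differential, Lemma~\ref{lem: existence of geodeiscs}, and the $\tfrac{2\pi}{3}$ criterion of Lemma~\ref{wall-crossing}) is the right one, but there are two genuine gaps, and the second is exactly the point you flag as unresolved. First, for (2): taking a \emph{generic} pencil in $|\mathcal{O}(2)\boxtimes(-K_W)|$ does not produce four critical values lying in a small disc of $Y_s$ whose vanishing cycles agree up to sign and parallel transport; for a generic member the nodal fibers sit at general positions and their vanishing cycles need not match, and the Eichler/GHS existence of a $(-2)$-class says nothing about it being the common vanishing class of several nodes. The paper gets this by a different and essential device: a bi-degree $(2,-K_W)$ hypersurface is the same as a \emph{conic} $C$ in $|-K_W|$, the critical values of $\pi$ are the intersection points $C\cap\mathcal{D}$ with the discriminant $\mathcal{D}\subset|-K_W|$ of singular anticanonical divisors, and one chooses $C_s$ to hug a single generic point of $\mathcal{D}$ (local model: $\mathcal{D}$ a parabola $w_2=w_1^2$, $C_s$ given by $\epsilon_2w_1^2+\epsilon_1w_1=w_2^2$), so that four intersection points cluster near that point and all arise from the same local nodal degeneration, exactly as in Proposition~\ref{prop: SLag vanishing spheres}.

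Second, and more importantly, your treatment of (4) reduces the statement to nonemptiness and transverse crossing of a codimension-one ``saddle-connection wall,'' which you explicitly admit you cannot verify; as written this is not a proof. The conic picture is what closes this gap in the paper: the four critical values are $w_1=0$ together with the roots of a cubic whose only constraint is $y_1+y_2+y_3=0$, so one may \emph{prescribe} them, e.g. $y_1(s)=\epsilon_3+\epsilon_4e^{is}$, $y_2(s)=\epsilon_3-\epsilon_4e^{is}$, $y_3(s)=-2\epsilon_3$ with $0<\epsilon_4\ll\epsilon_3$, and recover $\epsilon_1,\epsilon_2$ from the symmetric functions. Rotating the nearby pair $y_1,y_2$ and applying a continuity/intermediate value argument, the admissible path from $y_3$ to $y_1$ must at some parameter degenerate into the broken geodesic through $y_2$, and at that parameter the angle is exactly $\tfrac{2\pi}{3}$ by Lemma~\ref{wall-crossing}; translating in $s$ places the crossing at $s=0$. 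So no abstract dimension count or transversality of a wall in the full linear system is needed — the explicit control of the critical values substitutes for it. Without that control (which a generic pencil of hypersurfaces does not give), your step (4) remains an unproved assertion, so the proposal has a genuine gap precisely at the heart of the proposition.
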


\begin{proof}
  We will follow the notation in the proof of Proposition \ref{prop:
    SLag vanishing spheres}. Let
  $\mathcal{D}\subseteq |-K_W|\cong \mathbb{P}^N$ be the set
  parametrizing singular anti-canonical divisors. Choose a generic
  point of $\mathcal{D}$, let $P$ be the pencil determined by the
  tangent line of $\mathcal{D}$ at this generic point, and let $P'$ be
  a generic small perturbation of $P$. We can choose coordinates
  ${w_i}$ on $|-K_W|$ such that locally around the chosen point of
  $\mathcal{D}$ we have $P$ is given by $w_2=\cdots=w_N=0$ and
  $\mathcal{D}$ is given by $w_2=w_1^2,w_3=\cdots=w_N=0$, where $N$ is
  the dimension of $|-K_W|$. Now we choose a conic $C_s$ given by
  $\epsilon_2 w_1^2+\epsilon_1 w_1=w_2^2, w_3=\cdots=w_N=0$ in
  $|-K_W|$ with $\epsilon_1,\epsilon_2$ to be determined later. Later
  we will omit $w_3=\cdots=w_N=0$ for simplicity. Then the four
  intersections of $C_s$ and $\mathcal{D}$ are given by $w_1=0$ and
  $ w_1^3+\epsilon_2w+\epsilon_1=0$. The only constraint for the three
  roots $y_1(s), y_2(s),y_3(s)$ of $w_1^3+\epsilon_2w+\epsilon_1=0$ is
  $y_1+y_2+y_3=0$. We may choose
  $y_1(s)=\epsilon_3+\epsilon_4e^{is},
  y_2(s)=\epsilon_3-\epsilon_4e^{is}, y_3(s)=-2\epsilon_3$ with
  $0<\epsilon_4 \ll \epsilon_3$. Then we take $\epsilon_1=-y_1y_2y_3$
  and $\epsilon_2=y_1y_2+y_2y_3+y_3y_1$. We denote the Calabi-Yau
  $3$-folds corresponding to the conics $C_s$ by $X_s$. Then it is
  easy to see that (1), (2), and (3) hold for $X_s$ when
  $\epsilon_3,\epsilon_4\ll 1$ from an argument similar to the proof
  of Proposition \ref{prop: SLag vanishing spheres}.
	
  Let $\gamma^s_1$ be the smooth admissible path from $y_1^s$ to
  $y_2^s$. Since there exist at least three admissible paths, for a
  fixed $s_0$, there exist an admissible path $\gamma_2^{s_0}$ from
  $y_3$ to $y_1$, say. Now we vary $s_0$. By the intermediate value
  theorem, as we vary $s_0$, the geodesic $\gamma_2^{s_0}$ must become
  a piecewise geodesic that first goes from $y_3$ to $y_2$ and then
  from $y_2$ to $y_1$. At this $s_0$ the angle between these geodesics
  is $2\pi/3$ by Lemma~\ref{wall-crossing}. By translation, we can set
  $s_0=0$. This completes the proof.
\end{proof}

Together with Theorem~\ref{thm:main}, Proposition~\ref{prop: TY ex}
provides evidence supporting Conjecture~\ref{conj: TY}. This
wall-crossing phenomenon has been described by Joyce in \cite[Theorem
9.8]{J} under the assumption that the two special Lagrangians
intersect transversely. In the setting of gradient cycles, similar
phenomena have also been predicted in the work of Donaldson-Scaduto
\cite[Section 6.5]{DS}. It is worth mentioning that the transversality
assumption is crucial for the gluing argument in \cite{J}. However,
this assumption is in general difficult to verify for special
Lagrangians in compact Calabi-Yau manifolds. For example, there are no
known examples in dimension $3$ or higher, except for special
Lagrangians in flat tori \cite{Lee04}.

We remark that in the above setting, Theorem~\ref{thm:main} produces a
special Lagrangian spheres in the same homology class of the connect
sum. However, it is currently unknown whether the special Lagrangian
sphere actually lies in the same Hamiltonian isotopy class of the
connect sum. If it does, then the Thomas-Yau uniqueness theorem,
\cite[Theorem 4.3]{TY}, would imply the uniqueness of the special
Lagrangian sphere within the Hamiltonian isotopy class of the connect
sum assuming the Floer homology of the connect sum is well-defined. On
the other hand, when the connect sum is unstable, it remains unclear
whether a special Lagrangian representative exists at all. Proving
non-existence in such case would likely require results analogous to
\cite[Theorem~3.21]{Li22}, which seems out of reach at the moment.

\subsection{Mirror symmetry and DHT Conjecture}

Here we anticipate that adiabatic limits discussed in this paper is
mirror to the Tyurin degeneration from the viewpoint of homological
mirror symmetry.  Let $\pi:X\rightarrow Y$ be a Calabi-Yau $3$-fold
with K3-fibration.  Fix $\omega$ a K\"ahler form of $X$ and
$\omega_{Y,t}$ is a semi-positive $2$-form on the base $Y$ with
support in a $\frac{1}{t}$-neighborhood of $\infty\in Y$ and
$\int_Y \omega_{Y,t}=\frac{1}{t}$.  Take
$\omega'_t=\omega_X+\pi^*\omega_{Y,t}$. Then, any holomorphic disc
with Lagrangian boundary condition in the symplectic manifold
$(X,\omega'_t)$ has area goes to infinity if and only if the
holomorphic disc intersects the fiber $\pi^{-1}(\infty)$.  Therefore,
we expect that the Fukaya categories
$\mbox{Fuk}(X,\tilde{\omega}_t)\cong\mbox{Fuk}(X,\omega'_t)$
Gromov-Hausdorff converge to the compact Fukaya category
$\mbox{Fuk}(X_0)$, where $X_0=X\setminus \pi^{-1}(\infty)$.
Harder-Kartzarkov~\cite{HK} conjectured that there exists a Tyurin
degeneration
$\check{X}_t \rightsquigarrow
\check{X}_0=\check{Y}_1\coprod_{\check{D}} \check{Y_2}$,
i.e. $\check{X}_t$ are a family of Calabi-Yau manifolds degenerate to
$\check{X}_0$ and $\check{Y}_1,\check{Y}_2$ are quasi-Fano manifolds
sharing a common smooth anti-canonical divisor $\check{D}$ such that
\begin{align*}
  N_{\check{D}/\check{Y}_1}\otimes N_{\check{D}/\check{Y}}\cong \mathcal{O}_{\check{D}}. 
\end{align*}
Moreover, the homological mirror symmetry holds
\begin{align}\label{eq: HMS0}
  \mbox{Fuk}(X_0)\cong \mbox{Per}(\check{X}_0),
\end{align}
where $\mbox{Per}(\check{X}_0)$ denotes the derived category of
perfect complexes on $\check{X}_0$. In other words, $X_0$ and
$\check{X}_0$ are mirror pairs. The mirrors of
$(\check{Y}_i,\check{D})$ are the Landau-Ginzburg models
$W_i:X_i\rightarrow \mathbb{C}$, which recover the geometry of the
components.  Notably, the Tyurin degeneration implies that the total
monodormies of the $W_i$ are inverses of each other.  The
Doran-Harder-Thompson conjecture \cite{DHT} proposes that the mirror
$(X,\tilde{\omega}_t)$ of $\check{X}_{\check{t}}$ can be derived from
``gluing'' the mirror Landau-Ginzburg superpotential. Consequently,
$X$ is a Calabi-Yau fibration over $\mathbb{P}^1$ and the homological
mirror symmetry holds
\begin{align}\label{eq: HMS}
  \mbox{Fuk}(X,\tilde{\omega}_t)\cong D^b\mbox{Coh}(\check{X}_{\check{t}(t)}),
\end{align}
where $\check{t}(t)$ is the appropriate mirror map. In particular,
both sides of \eqref{eq: HMS} are deformations of the corresponding
categories in \eqref{eq: HMS0}. To sum up, the adiabatic limits
considered here are mirror to the Tyurin degenerations from
Doran-Harder-Thompson viewpoint toward the homological mirror
symmetry.  Moreover, in mirror symmetry, special Lagrangians with a
$U(1)$-flat connection are mirror to holomorphic cycles with a
deformed Hermitian connection \cite{LYZ}. The vanishing cycles in a
Calabi-Yau $3$-fold $X$ form spherical objects in the Fukaya category
$\mbox{Fuk}(X,\tilde{\omega}_t)$. Therefore, we expect that the
special Lagrangian spheres constructed in Example \ref{prop: SLag
  vanishing spheres} are mirror to spherical objects
$D^b\mbox{Coh}(\check{X}_{\check{t}})$ for small $\check{t}$. For
example, invertible sheaves on $\check{X}$, the structure sheaves
rational surfaces, Enrique surfaces or $(-1,-1)$-curves in $\check{X}$
are instances of the spherical objects in $D^b\mbox{Coh}(\check{X})$
\cite[Section 3f]{ST}. Additionally, the connected sum $L_1\#L_2$ of
two Lagrangians $L_1,L_2$ in $X$ forms an exact triangle with
$L_1,L_2$ in $\mbox{Fuk}(X,\tilde{\omega}_t)$ \cite{FOOO}. Therefore,
the smoothing phenomenon in the previous subsection is mirror to the
extension of sheaves that can transition from unstable to stable as
the stability varies \cite{To}.
	
\subsection{Donaldson-Scaduto Conjecture}\label{trop SLag}

We begin by recalling the following folklore conjecture, which serves
as part of the motivation behind tropical geometry:
\begin{conj}\label{conj: trop}
  Let $(X_t,g_t, C_t)$ be a $1$-parameter family of compact Calabi-Yau
  manifolds $(X_t,g_t)$ admitting special Lagrangian fibration
  structures and with bounded diameters, and let $C_t$ be a
  holomorphic curve in $X_t$.  Assume that $X_t$ are collapsing, i,e.,
  $(X_i,g_i)\xrightarrow{GH} (B_{\infty},g_{\infty})$, where
  $B_{\infty}$ is an integral affine manifold equipped with a Hessian
  type metric outside of a codimension two locus. Then $C_t$ converges
  to a tropical curve on $B_{\infty}$.
\end{conj}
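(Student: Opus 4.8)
The plan is to treat the holomorphic curves $C_t$ as calibrated --- hence area-minimizing and minimal --- integral $2$-currents in $(X_t,g_t)$ and to analyze their behaviour under the collapsing family of maps onto the base. First I would record that, since $C_t$ is calibrated by the K\"ahler form of $g_t$, its $g_t$-area equals a fixed cohomological pairing, so (after the normalization making $\operatorname{diam}(X_t,g_t)$ bounded) the masses $\mathbf{M}(C_t)$ and the pushed-forward area measures $\mu_t := f_{t*}\big(\mathcal{H}^2\llcorner C_t\big)$ on $B_\infty$, where $f_t:X_t\to B_\infty$ is the approximate Riemannian submersion realizing the Gromov--Hausdorff convergence, are uniformly bounded. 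By weak-$*$ compactness of Radon measures, together with Allard's compactness theorem for the associated integral varifolds $V_t$, a subsequence satisfies $\mu_t\rightharpoonup\mu$ and $V_t\to V$ for a stationary integral varifold $V$ in $(B_\infty,g_\infty)$; stationarity passes to the limit from the first variation identity for the minimal surfaces $C_t$, since the vertical torus directions collapse while the horizontal directions survive. A dimension-reduction and monotonicity argument then forces $\operatorname{supp}\mu$ to be a locally finite $1$-complex: over the region $B_\infty\setminus\Delta$ where the SYZ fibration is a smooth Lagrangian torus bundle, the semiflat structure identifies $C_t$ near a generic point with an approximately holomorphic multisection, whose collapse to the base is either $0$-dimensional (if its fiber homology class is trivial) or $1$-dimensional, ruling out $2$-dimensional pieces in the limit.

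Second, I would extract the tropical structure --- balancing, rational slopes, integer weights. Stationarity of $V$ with respect to the Hessian metric $g_\infty$ makes $\operatorname{supp}V$ a geodesic network that is balanced at each vertex relative to the affine structure. The direction of an edge is constrained by the $T^n$-fiber homology class of the corresponding piece of $C_t$, which is locally constant and lies in the integral lattice $H_1(\text{fiber};\mathbb{Z})$; the SYZ dictionary identifies this class with a primitive integral covector of the integral affine manifold $B_\infty$, so the edge has rational slope, and the multiplicity of the edge in $\mu$ equals the associated winding number, hence a positive integer after the standard normalization. Balanced, plus integral edge directions, plus integer weights is precisely the definition of a tropical curve on $B_\infty$. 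Finally I would upgrade the weak-$*$ convergence $\mu_t\rightharpoonup\mu$ to Gromov--Hausdorff convergence of the supports $C_t\to\operatorname{supp}\mu$, using the mass bound and an $\varepsilon$-regularity / clearing-out lemma for $C_t$, valid in the region where the collapse is controlled, to forbid thin tentacles of $C_t$ that would be invisible in the measure limit.

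The hard part --- and the reason the statement remains a conjecture --- is the analysis near the codimension-two discriminant $\Delta\subset B_\infty$, precisely where the semiflat approximation to $g_t$ breaks down (just as $\omega_{SRF}$ fails to be smooth near the singular fibers in Section~\ref{sec:CY3}). There the fiber geometry is modeled on local pieces such as $\mathbb{C}\times\mathbb{C}^2/\mathbb{Z}_2$, or worse; the curve $C_t$ may concentrate mass near $\Delta$ so that the limit tropical curve acquires vertices on $\Delta$ with prescribed local balancing, and one needs refined weighted estimates for $g_t$ in the transition region --- the analogue of those developed here for the K3-fibered case --- together with a uniform diameter bound for the components of $C_t\cap f_t^{-1}(U)$ over small balls $U$, to prevent mass from escaping and to pin down the multiplicities at the vertices. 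A second genuine difficulty is that weak convergence of measures does not by itself exclude oscillation of $C_t$ within a thin neighborhood of a single edge; eliminating this requires an almost-monotonicity estimate stable under the collapse, and this is where most of the new analytic work would be concentrated.
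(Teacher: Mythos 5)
The statement you are trying to prove is Conjecture~\ref{conj: trop}: the paper does not prove it, but records it as a folklore conjecture, noting that it is known only in special cases (algebraic curves in $(\mathbb{C}^*)^2$ and torus fiber bundles) and that ``no results have yet been established for compact $\operatorname{SU}(n)$-Calabi-Yau manifolds.'' So there is no proof in the paper to compare against, and your text should be assessed as a research program rather than a proof. As such it is a sensible program, but it does not close the statement, and beyond the difficulties you yourself flag (the discriminant locus, mass loss, oscillation along edges) there are gaps in the steps you present as settled. First, the compactness step is not available as stated: Allard's compactness theorem and the persistence of stationarity are theorems about varifolds in a fixed (or non-collapsing, uniformly controlled) ambient manifold, whereas here the ambient spaces $(X_t,g_t)$ collapse onto a lower-dimensional, singular limit $B_\infty$. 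Pushing forward the area measures by the approximate submersions $f_t$ gives Radon measures on $B_\infty$ with a weak-$*$ limit, but the limit object is just a measure; it carries no a priori varifold structure on $B_\infty$, and the first variation identity for $C_t$ in $X_t$ does not descend to a first variation identity for $g_\infty$ on the base, precisely because the fiber volume enters as a $t$-dependent weight.

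Second, and more substantively, even if one could make the limit a stationary network for some functional, stationarity with respect to $g_\infty$ is the wrong condition: a tropical curve in the sense of this conjecture has edges that are affine line segments for the complex integral affine structure, equivalently gradient flow lines of the symplectic area functional of the fiber homology classes (this is exactly the picture in Section~\ref{trop SLag} and in the quadratic-differential discussion of Section~\ref{sec: quadratic diff}, where the relevant geodesics are those of the flat metric $|\phi|$, not of $\tilde\omega_Y$). These affine segments are in general not geodesics of the Hessian metric $g_\infty$, so your inference ``stationary for $g_\infty$ $\Rightarrow$ geodesic network $\Rightarrow$ balanced tropical curve'' conflates two different variational problems; the correct limiting functional is the fiber-class--weighted length, and identifying it as the $\Gamma$-limit (or varifold limit) of the areas of $C_t$ under collapse is itself a major unproven step, not a formality. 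Combined with the open issues near the codimension-two discriminant that you correctly identify, this is why the statement remains a conjecture; your outline is a reasonable roadmap but does not constitute a proof.
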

A more geometric scenario arises when $X_t$ admits a special
Lagrangian fibration, $X_t \to B_t$. In this case, $B_t$ naturally
inherits the structure of an affine manifold with singularities and a
Hessian-type metric. As the special Lagrangian fibration collapses, it
is expected that $B_t \xrightarrow{\text{GH}} B_\infty$, while the
structure of the affine manifold with singularities is preserved. In
this setting, tropical curves correspond to images of graphs, with
each edge mapped to an affine line segment in terms of the complex
affine structure. Meanwhile, the affine lines with respect to the
complex structure are gradient flow lines of a symplectic area
functional. This folklore conjecture has been proved only in specific
cases, such as algebraic curves in $(\mathbb{C}^*)^2$ with incidence
relations \cite{Mil}, and for torus fiber bundles \cite{Par}, but no
results have yet been established for compact
$\operatorname{SU}(n)$-Calabi-Yau manifolds.

In the setting of Conjecture \ref{conj: trop}, one could also ask
whether the special Lagrangian submanifolds in $X_t$ collapse to
tropical-like objects. For example, Mikhalkin reconstructed
Lagrangians from tropical curves \cite{Mil2}. It is interesting to
know if these Lagrangians can be made special.

Motivated by tropical geometry, Donaldson-Scaduto considered another
geometric setup where the roles of special Lagrangians and holomorphic
submanifolds are reversed, leading the following conjecture:
\begin{conj}\cite{DS}\label{conj: DS}
  Let $X$ be a Calabi-Yau $3$-fold with a K3-fibration
  $\pi:X\rightarrow Y$. Let $\omega_X,\omega_Y$ be  K\"ahler forms on
  $X$ and $Y$, respectively. Denote by $\tilde{\omega}_t$ the unique
  Calabi-Yau metric in the K\"ahler class
  $[\omega_X]+\frac{1}{t}\pi^*[\omega_Y]$, for $t\ll 1$. If $L_t$ is a
  $1$-parameter family of special Lagrangian submanifold with respect
  to $\tilde{\omega}_t$, then $L_t$ Gromov-Hausdorff converges to a
  graph in $Y$ where each edge is a geodesic introduced in Section
  \ref{sec: quadratic diff}, or, equivalently, a gradient flow line of
  certain area functional.
\end{conj}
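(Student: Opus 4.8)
The natural plan is to reduce Conjecture~\ref{conj: DS} to the collapsing analysis of Section~\ref{sec:CY3} together with a compactness statement for calibrated submanifolds in the degenerating metrics $\tilde\omega_t$. For the families $L_t$ produced by Theorem~\ref{thm:main} or Theorem~\ref{thm:loop} the conclusion is already contained in those results, since $(L_t,tg_t)$ was shown to converge in the Gromov--Hausdorff sense to the admissible path $\gamma$, which by Definition~\ref{def: admissible path} is exactly a geodesic for the relevant quadratic differential. So the substance of the conjecture is to handle an \emph{arbitrary} family $L_t$.

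First I would pass to the rescaled metrics $t\tilde\omega_t$, which have diameter $\sim 1$ and converge to $(Y,\tilde\omega_Y)$, and note that since each $L_t$ is calibrated by $\operatorname{Re}(e^{-i\theta}\Omega)$ its volume is a fixed cohomological pairing; tracking the rescaling shows $\operatorname{Vol}_{t\tilde\omega_t}(L_t)$ is bounded independently of $t$. By a monotonicity/compactness argument one extracts a subsequential limit: the images $\pi(L_t)$ converge in the Hausdorff sense to a compact set $\Gamma\subset Y$, and $L_t$ itself (as an integral current or varifold) converges to a limit supported over $\Gamma$. Because the K3 fibers collapse while the base does not, the bounded-volume constraint forces $\Gamma$ to be $1$-dimensional, i.e.\ a finite graph. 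Over a small disc $U\subset Y$ disjoint from the discriminant and from the vertices of $\Gamma$, one identifies $L_t\cap\pi^{-1}(U)$ with a family of $2$-cycles in the nearly product region $\mathbb{R}\times X_y$, exactly the picture of Section~\ref{sec: main construction}; an adiabatic blow-up using the semi-Ricci-flat model of $\tilde\omega_t$ and the minimality of $L_t$ then shows that the fiber slices $L_t\cap X_y$ converge to a special Lagrangian $2$-cycle $L_y$ in $(X_y,\omega_{SRF}|_{X_y},\Omega_y)$ and that the tangent direction of $\Gamma$ at $y$ satisfies the phase equation~\eqref{same phase} for the quadratic differential attached to $[L_y]$. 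This is the converse of Lemma~\ref{lemma:special}: in the limit the calibration condition on $L_t$ splits into the fiberwise special condition plus the base geodesic condition, via $\operatorname{Arg}\Omega|_\Gamma=\operatorname{Arg}\Omega_y|_{L_y}+\operatorname{Arg}\Gamma'=\theta$. Near the discriminant and near the vertices of $\Gamma$ one instead compares $L_t$ to a rescaled copy of the thimble $L_{\mathbb{R}^3}$ of Proposition~\ref{prop:antiholo} (an edge of $\Gamma$ terminating at a nodal value) or of the Esfahani--Li special Lagrangian in $A_2$-type $\mathrm{ALE}\times\mathbb{C}$~\cite{EL} (a trivalent vertex, with the three incident edges meeting at the angle $\tfrac{2\pi}{3}$ of Lemma~\ref{wall-crossing}), which pins down the local structure of $\Gamma$ as a gradient cycle.

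The main obstacle is the a priori regularity and compactness input: one is handed an arbitrary $L_t$ with no control on its topology, and one must exclude singularities of the limit other than the expected vertices. This requires an $\epsilon$-regularity theorem for special Lagrangians valid uniformly along the collapsing family $(X,\tilde\omega_t)$, which is genuinely delicate because $t\tilde\omega_t$ does not have uniformly bounded curvature near the singular fibers; I expect this to need a substantial extension of the regularity theory for calibrated currents to the adiabatic setting, and it is not supplied by the present paper. A secondary point is to show the limit graph carries no higher multiplicity and that the phase $\theta$ is locally constant along $\Gamma$, so that $\Gamma$ is a single geodesic of fixed phase rather than a concatenation of arcs of different phases joined at smooth points of the discriminant; this should follow from the BPS inequality~\eqref{eq:BPS} applied to the limiting mass.
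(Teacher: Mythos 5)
There is a genuine gap --- and a more basic mismatch: the statement you are addressing is stated in the paper as a \emph{conjecture} (attributed to Donaldson--Scaduto), and the paper contains no proof of it. Theorems~\ref{thm:main} and \ref{thm:loop} go in the opposite direction: they \emph{construct} special Lagrangians whose rescaled limits are admissible paths or loops, and the paper offers these only as evidence for Conjecture~\ref{conj: DS}, together with informal consistency checks (the Leray--Serre computation of $H_3$, the analogy with tropicalization). So your proposal cannot be compared with ``the paper's proof''; it has to stand on its own, and as written it is a strategy outline rather than a proof.

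Concretely, the steps you would need are exactly the ones you do not supply. Your ``monotonicity/compactness argument'' requires an $\epsilon$-regularity and compactness theory for calibrated currents that is \emph{uniform along the collapsing family} $(X,t\tilde\omega_t)$; but the rescaled metrics have unbounded curvature near the singular fibers and collapsing fibers elsewhere, so the standard Allard/Almgren constants degenerate, and no substitute is provided --- you flag this yourself, but it is the core of the argument, not a side issue. The assertion that bounded (in fact vanishing) rescaled mass forces the limit to be a finite graph, with multiplicity one and edges solving the phase equation~\eqref{same phase}, is precisely the structure theorem to be proved: a priori the support of a subsequential limit need not be rectifiable, need not be a finite graph, and the claimed ``converse of Lemma~\ref{lemma:special}'' (that the calibration condition splits in the limit into fiberwise specialness plus the base geodesic condition) requires quantitative control of $L_t$ at the adiabatic scale $t^{1/2}$ --- e.g.\ second fundamental form bounds on the slices --- which minimality alone does not give without the missing regularity input. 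Likewise, modelling vertices on the Esfahani--Li solution and edge endpoints on the thimble $L_{\mathbb{R}^3}$ presupposes a classification of blow-up limits in $\mathbb{C}\times\mathrm{ALE}$, resp.\ $(\mathbb{C}^3,\omega_{\mathbb{C}^3})$, that is not currently available; and even your final appeal to the BPS inequality~\eqref{eq:BPS} to fix the phase along $\Gamma$ already assumes the limit is a rectifiable set with well-defined tangents and a well-defined decorating class $[L_y]$ along each edge. In short, the proposal correctly identifies the expected picture, but every analytically substantive step is open, so it does not establish the statement.
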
 

\begin{remark}
  The original conjecture of Donaldson-Scaduto conjecture concerns
  $G_2$-manifolds. The Calabi-Yau setting considered here can be
  thought of as an $S^1$ reduction of the original conjecture.
\end{remark}

As a sanity check, we observe the following: the Leray-Serre spectral
sequence for $\pi:X\rightarrow Y$ degenerates at its $E_2$-page. In
particular, we have
$H^3(X,\mathbb{Z})\cong H^1(Y, R^2\pi_*\mathbb{Z})$ and any homology
class in $H_3(X,\mathbb{Z})$ can be represented by a $3$-cycle which
projects onto a graph with generic fibers that are $2$-cycles in the
K3 fibers. Such graphs in Conjecture \ref{conj: DS} are referred to as
gradient cycles in \cite{DS}: each edge is decorated by a $2$-cycle in
the K3 fiber and is the gradient flow line of the area functional of
that $2$-cycle. Moreover, the decorated $2$-cycles associate to edges
adjacent to a fixed vertex sum up to zero, which is the analogue to
balancing conditions in tropical geometry. In particular, the
admissible paths and admissible loops considered here are the simplest
examples of gradient cycles.  In a similar context, these graphs are called spectral
networks in the work of
Gaiotto-Moore-Neitzke \cite{GMN2}.

An important problem in tropical geometry is the realizability of
tropical curves, i.e. determining whether there exists a holomorphic
curve that realizes a given tropical curve. This leads to the
following conjecture:

\begin{conj}\cite{DS}\label{conj: DS2}
  Given a Calabi-Yau $3$-fold with Lefschetz K3 fibration
  $\pi:X\rightarrow Y$ and a gradient cycle, there exists a
  $1$-parameter family of special Lagrangian $L_t$ in
  $(X,\tilde{\omega}_t)$ such that $L_t$ converges to the gradient
  cycle as $t\rightarrow 0$.
\end{conj}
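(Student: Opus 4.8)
The plan is to extend the gluing construction of Theorems~\ref{thm:main} and \ref{thm:loop} from admissible paths (a single geodesic edge) to general gradient cycles, i.e.\ trivalent graphs $\Gamma \subset Y$ satisfying the balancing condition. The admissible path case already builds the desired family over each edge and glues scaled copies of $L_{\mathbb{R}^3}$ at univalent vertices on $\mathcal{S}$; the genuinely new feature is a trivalent vertex $v$ lying over a regular value of $\pi$, where three edges meet, decorated by classes $[L^{(1)}],[L^{(2)}],[L^{(3)}] \in H_2(X_v,\mathbb{Z})$ with $[L^{(1)}]+[L^{(2)}]+[L^{(3)}] = 0$. First I would build the approximate solution $L_{\Gamma,t}$: over the interior of each edge $e$, take the union $\bigcup_{y \in e} L_y$ of special Lagrangian $2$-cycles in the class carried by $e$, fibered over the geodesic segment, using the semi-Ricci-flat model $\omega_{SRF}$ as in Section~\ref{sec: ansatz}; near each univalent vertex glue in a rescaled $L_{\mathbb{R}^3}$ using the model $\omega_{\mathbb{C}^3}$; near each trivalent vertex $v$ glue in a rescaled copy of the Esfahani--Li special Lagrangian in $A_2$-type $\mathrm{ALE}\times\mathbb{C}$ from \cite{EL}, whose three ends are asymptotic to $S^2 \times \mathbb{R}$, matched to the three incoming edge-cylinders. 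The gluing scales should be chosen, as in Section~\ref{sec:CY3}, so that the gluing region for the trivalent vertex coincides with the region where $\tilde\omega_t$ transitions from the relevant local model to $\omega_{SRF}$; this requires first proving a Theorem~\ref{thm:improved}-type refinement of the estimates for $\tilde\omega_t$ near a regular value where a ``surgery'' of vanishing cycles occurs, identifying the appropriate local model (a product of $\mathbb{C}$ with an $A_2$-type ALE-fibered K3, suitably rescaled).

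Next I would define weighted spaces on $L_{\Gamma,t}$ compatible with the three model geometries: the doubly-weighted spaces of Section~\ref{sec:CY3} near univalent vertices, the cylindrical weighted spaces along edges, and new weighted spaces near each trivalent vertex modeled on the asymptotic geometry of the Esfahani--Li submanifold. One then needs the analogue of Proposition~\ref{prop:dirac2}: a bounded right inverse for $d+d^*$ on these spaces, constructed by a parametrix patched from (i) the inverse on $L_{\mathbb{R}^3}$ (Proposition~\ref{prop:dirac}), (ii) the inverse on cylinders $\mathbb{R}\times M$ (Proposition~\ref{prop:dirac_cylinder}), and (iii) a new inverse on the Esfahani--Li manifold. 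As in Section~\ref{subsec:dirac}, the ``base direction'' of the inverse along each edge and at each vertex must be split off and handled separately, and the linear-in-diameter growth of its norm must be offset in the iteration by running the contraction in the range of $d+d^*$, exactly as in Section~\ref{subsec:quadratic}. The quadratic estimates then follow from Lemma~\ref{lem:cyl} and Propositions~\ref{prop:acyl}, \ref{prop:QP} applied region by region, giving a contraction mapping whose fixed point is the desired special Lagrangian; smoothness follows from Morrey's theorem \cite{Morrey}. Finally one invokes the existence of gradient cycles on $Y$ --- a trivalent-graph analogue of Proposition~\ref{prop: SLag vanishing spheres} using the quadratic-differential geometry of Section~\ref{sec: quadratic diff}, which in turn presupposes a version of Conjecture~\ref{conj: trop} or at least explicit families realizing the chosen graph.

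The main obstacle is step (iii): establishing that $d+d^*$ on the Esfahani--Li special Lagrangian has a bounded right inverse with polynomial (rather than merely exponential) growth control, in weighted H\"older spaces adapted to its three cylindrical-type ends and to the base direction. This is the exact analogue of Section~\ref{sec:thimble} but for a manifold with three ends and nontrivial cross-section topology, so one cannot simply quote \cite{Lockhart}; one would need a Green-function construction and a parametrix gluing argument in the spirit of Lemmas~\ref{lem:ODE} and \ref{lemma:d}, combined with Hein's Poisson solvability \cite{Hein}, carefully tracking how the three ends interact with the zero-modes of $d+d^*$. A secondary difficulty is obtaining optimal asymptotics for $\tilde\omega_t$ and for the Esfahani--Li metric near the trivalent vertex with the cross-term $(f,b)$ decaying strictly faster than the fiber and base terms $(f,f),(b,b)$ --- the feature of Propositions~\ref{prop:error0}--\ref{prop:error2} that makes the quadratic term controllable --- which may require extending the estimates of \cite{Li19} and \cite{HT21} to this nonstandard degeneration. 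Once these two analytic inputs and the existence of trivalent gradient cycles are in place, the remainder is a region-by-region repetition of the arguments in Section~\ref{sec: main construction}.
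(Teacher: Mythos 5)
The statement you are asked about is a \emph{conjecture} (quoted from Donaldson--Scaduto \cite{DS}), and the paper does not prove it: Section~\ref{trop SLag} explicitly states that Conjecture~\ref{conj: DS2} ``remains far from reach at the moment,'' and that Theorems~\ref{thm:main} and \ref{thm:loop} merely provide the first supporting examples, namely the two simplest kinds of gradient cycles (a single geodesic edge joining two nodal values, and an admissible loop). The authors only \emph{anticipate}, in the closing paragraph, that combining their techniques with the Esfahani--Li model \cite{EL} should eventually handle gradient cycles with trivalent vertices. So there is no proof in the paper against which your argument can be matched, and your text is not a proof either: it is a research program whose decisive steps you yourself flag as open.

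Concretely, the gaps are not minor technicalities. (1) Your step (iii) --- a bounded right inverse for $d+d^*$ on the Esfahani--Li special Lagrangian in $A_2$-ALE$\times\mathbb{C}$ with polynomial (not exponential) growth control, separating off the base directions of its three ends --- is exactly the kind of result that occupied all of Section~\ref{sec:thimble} for the much simpler thimble $L_{\mathbb{R}^3}$, and nothing in the paper or in \cite{EL} supplies it. (2) The required Theorem~\ref{thm:improved}-type asymptotics for $\tilde\omega_t$ near a trivalent vertex, where the local Calabi-Yau model is a rescaled ALE-fibered region of a smooth K3 fiber rather than the $\mathbb{C}^3$ metric at a Lefschetz node, is a genuinely different degeneration from the one treated in \cite{Li19} and \cite{HT21}; in particular the crucial structural feature that the mixed term $(f,b)$ decays strictly faster than $(f,f)$ and $(b,b)$ (Propositions~\ref{prop:error0}--\ref{prop:error2}), on which the quadratic estimates of Proposition~\ref{prop:acyl} and hence the whole iteration of Section~\ref{subsec:quadratic} depend, is not known there. (3) Even granting (1) and (2), the conjecture asserts realizability of \emph{every} gradient cycle, whereas your construction would at best cover graphs whose vertices are nodal endpoints or generic trivalent vertices with the specific Esfahani--Li local model, and it additionally presupposes admissibility conditions (smooth special Lagrangian representatives over every edge point, the analogue of Definition~\ref{def: admissible path}(2)) that can fail, as the paper itself notes in connection with surgery triples. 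Presenting this outline as a proof of the conjecture would therefore be incorrect; it is a reasonable description of the expected strategy, consistent with the outlook the authors themselves sketch, but the statement remains open.
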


While Conjecture \ref{conj: DS2} remains far from reach at the moment,
Theorems \ref{thm:main} and \ref{thm:loop} provide the first examples
that offer support for it. Recently, S. Esfahani and Y. Li constructed
a local model of special Lagrangian pair of pants in the product
$ALE\times \mathbb{C}$ for star graphs \cite{EL}, where $ALE$ refers
to $ALE$-gravitational instantons. Building on the techniques
developed in this paper, we expect to construct some special Lagrangian
3-spheres in K3-fibered Calabi-Yau $3$-folds, which, under the adiabatic
limit, should Gromov-Hausdorff converge to gradient cycles that
include trivalent vertices. This represents a key step toward
advancing Conjecture \ref{conj: DS2}.

\end{document}